\DeclareSymbolFont{cyrletters}{OT2}{wncyr}{m}{n}
\DeclareMathSymbol{\Sha}{\mathalpha}{cyrletters}{"58}
\DeclareMathOperator{\Norm}{\mathsf{N}}
\DeclareMathOperator{\rk}{rk}
\DeclareMathOperator{\val}{val}
\DeclareMathOperator{\Hom}{Hom}
\DeclareMathOperator{\Gal}{Gal}
\DeclareMathOperator{\Cl}{Cl}
\DeclareMathOperator{\GL}{GL}
\DeclareMathOperator{\rank}{rk}
\DeclareMathOperator{\Sym}{Sym}
\DeclareMathOperator{\Res}{Res}
\DeclareMathOperator{\Aut}{Aut}
\DeclareMathOperator{\Image}{Image}
\DeclareMathOperator{\ur}{ur}
\DeclareMathOperator{\Id}{Id}
\newcommand{\QQ}{\mathbb Q}
\newcommand{\ZZ}{\mathbb Z}
\newcommand{\PP}{\mathbb P}
\newcommand{\Fp}{\mathbb{F}_p}
\newcommand{\Qp}{\mathbb{Q}_p}
\newcommand{\cF}{\mathcal F}
\newcommand{\cO}{\mathcal O}
\newcommand{\cCL}{\mathcal{C}_L}
\newcommand{\cCF}{\mathcal{C}_F}
\newcommand{\fa}{\mathfrak a}
\newcommand{\fN}{\mathfrak N}
\newcommand{\fm}{\mathfrak m}
\newcommand{\fn}{\mathfrak n}
\newcommand{\fp}{\mathfrak p}
\newcommand{\fq}{\mathfrak q}
\newcommand{\fP}{\mathfrak P}
\newcommand{\fQ}{\mathfrak Q}
\newcommand{\cH}{\mathcal{H}}
\newcommand{\cG}{\mathcal{G}}
\newtheorem*{cor*}{Corollary}
\newtheorem*{theorem*}{Theorem}
\newtheorem*{conj*}{Conjecture}
\newtheorem*{Ques*}{Question}
\newtheorem{Th}{Theorem}[section]
\newtheorem{Lemma}[Th]{Lemma}
\newtheorem{prop}[Th]{Proposition}
\newtheorem{cor}[Th]{Corollary}
\newtheorem{lthm}{Theorem}
\definecolor{Green}{rgb}{0.0, 0.5, 0.0}
\theoremstyle{definition}
\newtheorem{definition}[Th]{Definition}
\newtheorem{rem}[Th]{Remark}
\newtheorem*{Remark*}{Remark}
\theoremstyle{plain} 
\newtheorem*{intr@thm}{\intr@thmname}
\newtheorem*{c@njecture}{\conjn@name}
\newcommand{\myl@bel}[2]{%
 \protected@write \@auxout {}{\string \newlabel {#1}{{#2}{\thepage}{#2}{#1}{}} }%
 \hypertarget{#1}{}
   } 
    {
        \def\conjn@name{#2}
        \begin{c@njecture}[{#1}]\myl@bel{#3}{#2}
    }
    {
        \end{c@njecture}
    }
\begin{document}
\title[]{On the $p$-ranks of class groups of certain Galois extensions}

\author[U.~Asarhasa]{Ufuoma Asarhasa}
\address[Asarhasa]{Penn State University, State College, PA}
\email{uva5039@psu.edu}

\author[R.~Gambheera]{Rusiru Gambheera}
\address[Gambheera]{University of California, Santa Barbara, CA}
\email{rusiru@ucsb.edu}

\author[D.~Kundu]{Debanjana Kundu}
\address[Kundu]{University of Texas Rio Grande Valley, Edinburg, TX}
\email{dkundu@math.toronto.edu}

\author[E.~Nunez Lon-wo]{Enrique Nunez Lon-wo}
\address[Nunez Lon-wo]{University of Toronto, Toronto, ON, Canada}
\email{enrique.nunezlon.wo@mail.utoronto.ca}

\author[A.~Sheth]{Arshay Sheth}
\address[Sheth]{
University of Warwick, Coventry, UK} 
\email{arshay.sheth@warwick.ac.uk}

\date{\today}

\keywords{$p$-ranks, class groups, class numbers, Galois cohomology}
\subjclass[2020]{Primary 11R29, 11R34}

\begin{abstract}
Let $p$ be an odd prime, let $N$ be a prime with $N \equiv 1 \pmod{p}$, and let $\zeta_p$ be a primitive $p$-th root of unity.
We study the $p$-rank of the class group of $\QQ(\zeta_p, N^{1/p})$ using Galois cohomological methods and obtain an exact formula for the $p$-rank in terms of the dimensions of certain Selmer groups.
Using our formula, we provide a numerical criterion to establish upper and lower bounds for the $p$-rank, analogous to the numerical criteria provided by F.~Calegari--M.~Emerton and K.~Schaefer--E.~Stubley for the $p$-ranks of the class group of $\QQ(N^{1/p})$.
In the case $p=3$, we use Redei matrices to provide a numerical criterion to exactly calculate the $3$-rank, and also study the distribution of the $3$-ranks as $N$ varies through primes which are $4,7 \pmod{9}$.
\end{abstract}

\maketitle
\setcounter{tocdepth}{2}
\tableofcontents

\section{Introduction}

\subsubsection*{\sc{Background and motivation}}
Let $p$ be an odd prime.
The study of the $p$-torsion of the class groups of cyclotomic fields has a rich history, dating back to the work of E.~Kummer in the middle of the nineteenth century.
He showed that  $p$ divides the class number of $\QQ(\zeta_p)$ if and only if there exists an even positive integer $k$ with $2 \leq k \leq p-3$ such that $p$ divides the numerator of the $k$-th Bernoulli number $B_k$.
The primes that satisfy the above condition are called irregular, and primes which are not irregular are called regular.
The celebrated Herbrand--Ribet theorem, which we now proceed to briefly recall, provides a refinement to Kummer's criterion.
Let $\Delta=\Gal(\QQ(\zeta_p)/\QQ)$ and note that $\Delta$ acts on the class group $A:=\Cl(\QQ(\zeta_p))$.
Let $C=A/A^p$; then $C$ is an $\mathbb F_p$-vector space and is also equipped with an induced action of $\Delta$.
Let $\chi$ denote the mod $p$ cyclotomic character
\[
\chi: \Delta \xrightarrow{ \simeq } \mathbb F_p^{\times}
\]
defined via $\sigma(\zeta_p)= \zeta_p^{\chi(\sigma)}$. We consider the decomposition
\[
C= \bigoplus_{i=1}^{p-1} C(\chi^{i}),
\]
where $C( \chi^{i}  )= \{c \in C: \sigma \cdot c= \chi^{i}(\sigma) \cdot c \text{ for all } \sigma \in \Delta \}$.
The Herbrand--Ribet theorem (see \cite[Theorem~1.1]{ribet1976modular}) states that if $k$ is a positive even integer with $2 \leq k \leq p-3$, then $C(\chi^{1-k}) \neq 0$  if and only if $p$ divides the numerator of $B_k$. 

The dimension of $C$ as an $\Fp$-vector space is called the $p$-rank of $\Cl(\QQ(\zeta_p))$.
Thus, both Kummer's criterion and the Herbrand--Ribet theorem can be regarded as assertions about the $p$-rank of $\Cl(\QQ(\zeta_p))$.
Let $N>1$ be a positive integer; it is natural to consider the more general problem of investigating the $p$-ranks of the class groups of the number field $F=\QQ(N^{1/p})$; the case $N=1$ being the subject of the above results.
In this direction, progress has been made in the case that $N$ is prime and $N \equiv 1 \pmod{p}$; we restrict to this case in the discussion below and denote the $p$-rank of the class group of a number field $K$ by $\rk_p(\Cl(K))$. 
An argument via genus theory can be used to show that we always have $\rk_p(\Cl(F)) \geq 1$. 
In \cite{Calegari-Emerton}, using techniques from the deformation theory of Galois representations, F.~Calegari--M.~Emerton proved that if $p \geq 5$ and if $M = \prod_{k=1}^{ \frac{N-1}{2}} k^k$  is a $p$-th power in $\mathbb F_N^{\times}$, then $\rk_p(\Cl(F))) \geq 2$.
E.~Lecouturier noticed that the converse of the Calegari--Emerton result fails when $p = 7, N = 337$; see \cite{lecouturier2018galois}. 
Recently, P.~Wake--C.~Wang-Erikson  \cite{wake2020rank} gave a new proof of the theorem of Calegari--Emerton by showing that $M$ being a $p$-th power in $\mathbb F_N^{\times}$ is equivalent to the vanishing of a certain cup product in Galois cohomology.
Building on the techniques of Wake--Wang-Erikson, the $p$-rank distribution of the class group of $\mathbb Q(N^{1/p})$ was subsequently studied in great detail by K.~Schaefer--E.~Stubley in \cite{SS19}.
To explain their results, for odd $i$ in the range $1 \leq i \leq p-4$, let 
\[
M_i= \prod_{k=1}^{N-1} \prod_{a=1}^{k-1} k^{a^i}.
\]
The tuple $(p, -i)$ is called a regular pair if the $\chi^{-i}$ eigenspace of $\Cl(\mathbb Q(\zeta_p))$ is trivial. 
Schaefer--Stubley prove an upper bound
\[
\rk_p( \Cl(F) ) \leq \rk_p(  \Cl(\mathbb Q(\zeta_p)))+p-2-2\mu, 
\]
where $\mu$ is the number of odd $i$ in the range $1 \leq i \leq p-4$ such that $(p, -i)$ is a regular pair and $M_i$ is not a $p$-th power in $\mathbb F_N^{\times}$.
Using this upper bound, they show that if $p$ is regular and $\rk_p(F) \geq 2$,  then at least one of the $M_i$ is a $p$-th power in $\mathbb F_N^{\times}$.  
Since $M$  is a $p$-th power in $\mathbb F_N^{\times}$ if and only if $M_1$ is, their result can be regarded as a partial converse to theorem of Calegari--Emerton.

\subsubsection*{\sc{Our results and proof techniques}}
Motivated by the above works, we investigate the $p$-ranks of the class group of the Galois closure of $\mathbb Q(N^{1/p})$. 
Specifically, we study the $p$-rank of the class group of the number field $L = \QQ(\zeta_p, N^{1/p})$ where $p$ is an odd prime and $N$ is a prime with $N \equiv 1\pmod{p}$. 
For a regular prime $p$, we prove the following inequalities
in Theorems~\ref{lower bound theorem} and \ref{cft ub theorem},
\begin{equation}
\label{bounds}
\frac{p-1}{2} \leq \rk_p(\Cl(L)) \leq (p-1)(p-2).
\end{equation}
The lower bound is independent of the regularity hypothesis. 
Our theorem(s) can handle the upper bound even when $p$ is irregular, but the bounds are weaker.

A priori, it is not clear whether all the values in the above range can actually be attained.
Even if they are attained, do they occur infinitely often?
What is the $p$-rank distribution?

In this paper, we make a modest attempt at addressing these questions using two different approaches, namely, class field theory (and Redei matrices) and Galois cohomology.
First, we focus on the results and predictions obtained via class field theory.
In the second half, we shift our focus towards cohomological methods to refine \eqref{bounds}.

\subsubsection*{\underline{\emph{Prediction}}}
In the simplest case that $p=3$, results in \cite[Section~3]{SS19} assert that the $3$-rank of the class group of $F=\QQ(N^{1/3})$ is \emph{always} 1.
By \eqref{bounds}, the $3$-rank of $\Cl(L)$ is either 1 or 2.
Computer experiments done via SAGE suggested the following $3$-rank distribution of $\Cl(L)$ when $N$ is varied over all primes of the form $1\pmod{3}$,
\[
\PP\left(\rk_3(\Cl(L)) = 1\right) = \frac{2}{3} \textrm{ and } \PP\left(\rk_3(\Cl(L)) = 2\right) = \frac{1}{3}.
\]
Here, we write $\PP\left(\rk_3(\Cl(L)) = r\right)$ to mean
\[
\lim_{x\rightarrow \infty} \frac{\# \{N \leq x \ : \ N \equiv 1\pmod{3} \textrm{ is a prime and } \rk_3(\Cl(L))=r\}}{\# \{N \leq x \ : \ N \equiv 1\pmod{3} \textrm{ is a prime}\}}.
\]
Moreover, the distribution remains the same when restricted to the cases $N \equiv 1\pmod{9}$ and $N\equiv 4,7\pmod{9}$.

\subsubsection*{Results obtained via class field theory and Redei matrices}

\subsubsection*{\underline{\emph{Progress towards the prediction}}}
We provide a characterization in terms of divisibility conditions to ascertain when the $3$-rank of $\Cl(L)$ is 1 or 2. 
When $N\equiv 1\pmod{3}$, it is possible to write 
\[
4N = A^2+27B^2
\]
for some integers $A,B$ (unique up to sign).
We then prove the following result.
\begin{lthm}[Theorem~\ref{criterion}]
\label{thm A}
With notation as above
\begin{enumerate}[\textup{(}i\textup{)}]
\item if $N\not\equiv 1 \pmod{9}$, then $\rk_3\left(\Cl(L)\right)=2$ if and only if $3\mid B$.
\item if $N \equiv 1 \pmod{9}$, then $\rk_3\left(\Cl(L)\right)=2$ if and only if $A$ is a 9th power modulo $N$.
\end{enumerate}
\end{lthm}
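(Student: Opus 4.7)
The plan is to combine Chevalley's ambiguous class number formula for the cyclic Kummer extension $L/K$, where $K = \QQ(\zeta_3)$, with a Redei-matrix computation involving cubic residue symbols in $\ZZ[\zeta_3]$. Let $G = \Gal(L/K) = \langle \sigma \rangle$. From \eqref{bounds} specialized to $p = 3$ we already know $\rk_3 \Cl(L) \in \{1, 2\}$, so the task reduces to characterizing the equality case $\rk_3 \Cl(L) = 2$.

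First I would analyze the ramification in $L/K$. Since $N \equiv 1 \pmod 3$, $N$ splits in $K$ as $N = \pi\bar\pi$ with $\pi = (A + 3B\sqrt{-3})/2$, and both $\pi, \bar\pi$ are tamely ramified in $L = K(N^{1/3})$. The prime $(1-\zeta_3)$ above $3$ is (wildly) ramified in $L/K$ if and only if $N \not\equiv 1 \pmod 9$, so the number $t$ of primes of $K$ that ramify in $L$ equals $3$ in case (i) and $2$ in case (ii). Applying Chevalley's ambiguous class number formula
\[
|\Cl(L)^G| = \frac{h_K \prod_v e_v}{[L:K] \cdot [\cO_K^\times : \cO_K^\times \cap \Norm_{L/K}(L^\times)]} = \frac{3^t}{3u},
\]
with $h_K = 1$ and $\cO_K^\times = \mu_6$, a direct local computation of the cubic Hilbert symbol $(\zeta_3, N)_{(1-\zeta_3)}$ gives $u = 3$ in case (i) (as $\zeta_3$ fails to be a local norm at $(1-\zeta_3)$ when $N \not\equiv 1 \pmod 9$) and $u = 1$ in case (ii), yielding $|\Cl(L)^G| = 3$ in both cases.

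Since $\bF_3[G]$ is a local ring with maximal ideal $(\sigma - 1)$, every indecomposable $\bF_3[G]$-module has one-dimensional $G$-fixed part. The computation $\dim_{\bF_3} \Cl(L)[3]^G = 1$ therefore forces $\Cl(L)[3]$ to be $\bF_3[G]$-indecomposable; combined with the upper bound $\rk_3 \Cl(L) \leq 2$ from Theorem~\ref{cft ub theorem}, it is either $\Cl(L)[3] \cong \bF_3$ (giving $\rk_3 \Cl(L) = 1$) or $\Cl(L)[3] \cong \bF_3[\sigma]/((\sigma - 1)^2)$ (giving $\rk_3 \Cl(L) = 2$).

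To distinguish these two possibilities, I would set up a Redei-type matrix whose entries are cubic power residue symbols among the ramified primes $\{\pi, \bar\pi, (1-\zeta_3)\}$ (or $\{\pi, \bar\pi\}$ in case (ii)) of $K$, so that its rank-deficiency is equivalent to $\rk_3 \Cl(L) = 2$. Using cubic reciprocity in $\ZZ[\zeta_3]$ together with the explicit form $\pi = (A + 3B\sqrt{-3})/2$: in case (i), degeneracy reduces to $\pi$ being congruent to a rational integer modulo $9$ in $\ZZ[\zeta_3]$, equivalently $3 \mid B$; in case (ii), degeneracy reduces to $A$ being a ninth-power residue modulo $N$. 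The main obstacle is precisely this last step---the translation of Redei-matrix degeneracy into the arithmetic conditions $3 \mid B$ and $A$ a ninth-power modulo $N$. In case (i) this requires careful local analysis at the wildly ramified prime $(1-\zeta_3)$ via Eisenstein's cubic reciprocity in $\ZZ[\zeta_3]$, while in case (ii) one needs a higher-reciprocity argument to bring in the ninth-power residue symbol.
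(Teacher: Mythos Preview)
Your treatment of part~(i) is essentially the paper's proof. The paper invokes Gerth's Redei-matrix theorem \cite[Theorem~5.3]{Ger76} as a black box: since $N\not\equiv 1\pmod 9$, all ambiguous classes are strongly ambiguous, the matrix has three columns indexed by $\fn_1,\fn_2,(1-\zeta_3)$, and $\rk_3\Cl(L)=2-s$ where $s$ is its rank. Lemma~\ref{Hilbert-pi} shows the first two entries vanish, and Lemma~\ref{Hilbert-lambda} computes the wild entry $\left(\frac{\fn_1^2\fn_2,\pi}{(\pi)}\right)_3$ via the $3$-adic logarithm and a trace computation, reducing to $3\mid B$. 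Your ``careful local analysis at the wildly ramified prime via Eisenstein's cubic reciprocity'' is exactly this lemma, and your preliminary Chevalley/indecomposability argument is correct but redundant given \eqref{bounds}.

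Part~(ii) has a genuine gap. Your proposed Redei matrix with columns indexed only by $\{\pi,\bar\pi\}$ is incomplete: when $N\equiv 1\pmod 9$ the ambiguous classes of $L/K$ need \emph{not} be strongly ambiguous, and in Gerth's setup this forces an extra column whose entry is the Hilbert symbol $\left(\frac{x_1,\upsilon}{(\upsilon)}\right)_3$ for some auxiliary prime $\upsilon$ coming from a non-strong ambiguous class. The paper explains in Section~\ref{sec: 1 mod 9} that this entry ``is rather abstract and cannot be made precise''; indeed, when the two notions do coincide the first two entries already vanish (Lemma~\ref{coincidence impies rank 2}) and one always gets $\rk_3\Cl(L)=2$, so the entire content of (ii) lives in the non-coincidence case where your matrix is missing its decisive column. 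Your ``higher-reciprocity argument to bring in the ninth-power residue symbol'' does not address this.

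The paper bypasses the Redei machinery entirely for (ii). It uses Jacobi's congruence $A\cdot\bigl((\tfrac{N-1}{3})!\bigr)^3\equiv 1\pmod N$ (from \cite[Corollary~7.6]{Lem_RecLaw}) to see that $A$ is a ninth power mod $N$ if and only if $(\tfrac{N-1}{3})!$ is a cube mod $N$, and then invokes \cite[Theorem~1.3]{Calegari-Emerton}, which already characterizes $\rk_3\Cl(L)=2$ by the latter condition. So the proof of (ii) is a two-line reduction to a deep external result, not an internal Redei computation.
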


In Section~\ref{sec: N 4,7 mod 9}, using Theorem~\ref{thm A} and tools from class field theory, we prove the following result.
\begin{lthm}
\label{Theorem B}
Set $L = \QQ(\zeta_3, N^{1/3})$.
As $N$ varies over all primes of the form $4,7\pmod{9}$,
\[
\PP\left(\rk_3(\Cl(L)) = 1\right) = \frac{2}{3} \textrm{ and } \PP\left(\rk_3(\Cl(L)) = 2\right) = \frac{1}{3}.
\]
\end{lthm}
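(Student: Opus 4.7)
The plan is to reduce the density statement to a Chebotarev density computation. By Theorem~\ref{thm A}(i), for any prime $N \equiv 4, 7 \pmod 9$, the rank $\rk_3(\Cl(L))$ (which by \eqref{bounds} lies in $\{1,2\}$) equals $2$ if and only if $3 \mid B$ in the decomposition $4N = A^2 + 27B^2$. So it suffices to show that
\[
\PP\bigl(3 \mid B \,\bigm|\, N \equiv 4, 7 \pmod 9\bigr) = \tfrac{1}{3}.
\]

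First I would reinterpret the arithmetic condition $3 \mid B$ as a splitting condition. Factoring $N = \pi \bar\pi$ in $\ZZ[\zeta_3]$ gives $\pi = \tfrac{A+3B}{2} + 3B\zeta_3$, so $3 \mid B$ is equivalent to $\pi$ being congruent to a rational integer modulo $9$ in $\ZZ[\zeta_3]$. By Gauss's classical cubic residue criterion, this is in turn equivalent to $3$ being a cubic residue mod $N$, i.e., to $N$ splitting completely in the cubic Kummer extension $F' := \QQ(\zeta_3, 3^{1/3})$ of $\QQ(\zeta_3)$.

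Next I would set up the Chebotarev computation on the compositum $M := \QQ(\zeta_9, 3^{1/3})$. The cubic cyclic extensions $F'/\QQ(\zeta_3)$ and $\QQ(\zeta_9)/\QQ(\zeta_3)$ are linearly disjoint, since $3$ and $\zeta_3$ are independent in $\QQ(\zeta_3)^\times / \QQ(\zeta_3)^{\times 3}$ (ruling out equality needs only a short norm argument: $\Norm(\beta)^3 = 9$ has no solution $\beta \in \QQ(\zeta_3)^\times$). Consequently $[M : \QQ(\zeta_3)] = 9$ and $\Gal(M/\QQ(\zeta_3)) \cong (\ZZ/3)^2$, with the two coordinates $(a, b)$ recording the splitting behaviour of primes in $\QQ(\zeta_9)$ and in $F'$ respectively. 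Applying Chebotarev to $M/\QQ$: among primes $N \equiv 1 \pmod 3$, the Frobenius at $N$ is well-defined in $(\ZZ/3)^2$ up to the action of complex conjugation, which (since $3^{1/3}$ is real and $c(\zeta_9) = \zeta_9^{-1}$) acts as $(a, b) \mapsto (a, -b)$. The conditions $N \equiv 4, 7 \pmod 9$ and $3 \mid B$ correspond respectively to $a \neq 0$ (6 elements) and $b = 0$ (3 elements), both preserved by this involution. Of the $6$ elements with $a \neq 0$, exactly $2$ satisfy $b = 0$, giving the conditional density $2/6 = 1/3$.

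I expect the main obstacle to be the first step: translating the arithmetic condition $3 \mid B$ into the splitting of $N$ in $F'$. This is a classical fact going back to Gauss, whose proof rests on computing the cubic residue symbol $(3/\pi)_3$ via the supplementary laws of cubic reciprocity; one must also verify that the sign ambiguity in $A, B$ does not affect the criterion.
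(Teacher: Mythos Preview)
Your proposal is correct and lands on the same Chebotarev computation as the paper, but the packaging differs in an instructive way.

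The paper works abstractly with the ray class group $\Cl(K,(9))$ of order $9$ and its ray class field $K'$: it shows directly that the condition ``$N\not\equiv 1\pmod 9$ and $3\mid B$'' is equivalent to the class of $\fn$ being $[(2)]$ or $[(4)]$ in $\Cl(K,(9))$, then counts $2/18$ via Chebotarev on $\Gal(K'/\QQ)$. You instead make $K'$ explicit as $M=\QQ(\zeta_9,3^{1/3})$ and translate $3\mid B$ into the splitting of $N$ in $\QQ(\zeta_3,3^{1/3})$ via the classical supplementary law for cubic reciprocity (the criterion that $3$ is a cube mod $N$ iff $9\mid b$ in the primary decomposition $\pi=a+b\zeta_3$, equivalently $3\mid B$). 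Since $(2)$ splits in $K(3^{1/3})/K$ but not in $\QQ(\zeta_9)/K$, your two elements $(a,b)$ with $a\neq 0$, $b=0$ are exactly the Artin images of $[(2)]$ and $[(4)]$, so the two descriptions coincide.

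What each buys: the paper's ray-class approach avoids invoking cubic reciprocity and keeps the argument self-contained within the framework already set up; your approach is more transparent because it names the field $M$ explicitly and reduces the key step to a well-known classical fact. Your computation of the $c$-action $(a,b)\mapsto(a,-b)$ and the final count $2/6$ are both fine; the only place to be careful in a write-up is to state precisely which form of the supplementary law you use (normalising $\pi$ to be primary) so that the equivalence $3\mid B \Leftrightarrow \chi_\pi(3)=1$ is unambiguous.
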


On the other hand, when $N$ varies over primes of the form $1 \pmod{9}$, the analysis is more difficult.
In Section~\ref{sec: 1 mod 9} we provide heuristic reasons for the above proportions to hold, but are unable to provide a rigorous proof.
The key difficulty lies in the fact that when $N\equiv 1 \pmod{9}$ the ambiguous classes (see Definition~\ref{amb class and str amb class}) are not always strictly ambiguous.
This can be reinterpreted in term of whether $\zeta_3$ is a unit norm or not, but calculating statistics for this description was not possible.
Contrary to our initial expectation, when $N\equiv 1 \pmod{9}$ it is more frequent that the ambiguous classes are \textit{not} strongly ambiguous.
When the coincidence occurs, it follows that $\rk_3(\Cl(L))$ is always 2; see Lemma~\ref{coincidence impies rank 2}.
But when there is non-coincidence of the two classes, both possibilities arise.
One of our main tools is the machinery developed by F.~Gerth to calculate the $3$-rank of $\Cl(L)$ using the rank of certain (Redei) matrices whose entries are determined by cubic Hilbert symbols.
However, when there is non-coincidence of the two classes calculating some of the entries of the matrix is rather abstract and can not be made precise. 

\subsubsection*{Results obtained via Galois cohomology}

The initial goal of the project was to completely determine the $p$-rank distribution of the class group of $L$.
This is a rather difficult problem; building on the work of Schaefer--Stubley \cite{SS19} we prove results in this direction which is explained below.

\subsubsection*{\underline{\emph{Some abstract but precise results}}}

The first main result is Theorem~\ref{main theorem Rusiru note 3}.
The exact statement is technical and would require us to introduce a lot of notation, which we avoid here.
In words, the theorem provides a concrete description of how many unramified Galois extensions with a specified Galois group the number field $L$ can have.
For a precise count of such extensions see Corollary~\ref{define rij}.
This count depends on the dimension of certain well-defined Selmer groups (Galois cohomology groups) with coefficients in abstract Galois modules.

Next, we prove an exact formula for the $p$-rank of the class group of $L$ as sums of these cohomology dimensions.
This is done by studying a more intricate problem which is the structure of $\Cl(L) \otimes \Fp$ as a $\Gal(L/\QQ)$-module.
First, break $\Cl(L) \otimes \Fp$ into its constituent pieces (as $\Gal(L/\QQ)$-modules).
More precisely, denote the $G_\QQ$-representation $\Sym^{i}(V)\otimes \Fp(j)$ by $A^{i,j}$ where $V\simeq \Fp^2$ with a prescribed Galois-action and write
\[
\Cl(L) \otimes \Fp \cong \bigoplus_{i,j} (A^{i,j})^{m_{ij}}
\]
We relate the number of the constituent pieces (with multiplicity) to the dimensions of the Selmer groups above.
In fact, we can write down a precise count for the multiplicities as well, namely

\begin{lthm}[Corollary~\ref{mij in terms of the rij's}]
Let $r_{ij}$ be defined in terms of the $\Fp$-dimensions of $\Lambda$-Selmer groups $H^1_{\Lambda}(G_\QQ, A^{i,j})$; see Corollary~\ref{define rij} for precise definitions.
For all $i,j$,
\[
m_{ij} = r_{ij} + r_{i, j+1} - r_{i-1, j+1} - r_{i+1, j}.
\]
\end{lthm}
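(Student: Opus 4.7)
The plan is to derive the formula by comparing the long exact $\Lambda$-Selmer sequences attached to two natural short exact sequences involving the $G_\QQ$-module $A^{i,j}\otimes V$. The first comes from Clebsch--Gordan for $\SL_2$-type modules: the natural multiplication $V \otimes \Sym^i V \to \Sym^{i+1} V$ has kernel $\Sym^{i-1} V \otimes \det V$, and since $\det V \cong \Fp(1)$, twisting by $\Fp(j)$ yields
\begin{equation*}
0 \to A^{i-1,j+1} \to A^{i,j}\otimes V \to A^{i+1,j} \to 0.
\end{equation*}
The second comes from the upper-triangular (Borel) structure of $V$ arising from the Galois action on $L$, namely the filtration $0 \to \Fp(1) \to V \to \Fp \to 0$ whose extension class is the Kummer class $c_N \in H^1(G_\QQ, \Fp(1))$ of $N$; tensoring with $A^{i,j}$ gives
\begin{equation*}
0 \to A^{i,j+1} \to A^{i,j}\otimes V \to A^{i,j} \to 0.
\end{equation*}

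Next I would pass to $\Lambda$-Selmer cohomology for each sequence. For the first, I would argue that the boundary maps $H^0 \to H^1_\Lambda$ and $H^1_\Lambda \to H^2_\Lambda$ vanish (using regularity of $p$ together with the vanishing of Galois invariants and coinvariants of the $A^{k,l}$, and compatibility of the local conditions $\Lambda$ with the tensor decomposition), giving
\begin{equation*}
\dim_{\Fp} H^1_\Lambda(G_\QQ, A^{i,j}\otimes V) = r_{i-1,j+1} + r_{i+1,j}.
\end{equation*}
For the second sequence, the vanishing $H^0(G_\QQ, A^{i,j}) = 0$ kills the left boundary, and additivity of dimensions yields
\begin{equation*}
\dim_{\Fp} H^1_\Lambda(G_\QQ, A^{i,j}\otimes V) = r_{ij} + r_{i,j+1} - \dim_{\Fp}(\Image\,\delta),
\end{equation*}
where $\delta : H^1_\Lambda(A^{i,j}) \to H^2_\Lambda(A^{i,j+1})$ is cup product with $c_N$. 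Equating the two expressions gives
\begin{equation*}
\dim_{\Fp}(\Image\,\delta) = r_{ij} + r_{i,j+1} - r_{i-1,j+1} - r_{i+1,j}.
\end{equation*}

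The final step, which I expect to be the main obstacle, is to identify $\dim_{\Fp}(\Image\,\delta)$ with $m_{ij}$. The idea is to use Poitou--Tate global duality together with class field theory: $\Image\,\delta$ should be Pontryagin-dual to the space of $G_\QQ$-equivariant unramified homomorphisms $\Cl(L)\otimes\Fp \to A^{i,j}$, and inserting the decomposition $\Cl(L)\otimes\Fp \cong \bigoplus (A^{k,l})^{m_{kl}}$ together with Schur/Krull--Schmidt for the indecomposable $G_\QQ$-modules $A^{k,l}$ (whose endomorphism rings reduce to $\Fp$ and which have no cross-homomorphisms in the regular-$p$ regime) produces dimension exactly $m_{ij}$. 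The delicate bookkeeping lies both in justifying the boundary vanishings for the first long exact sequence and in carrying out this final duality identification, and both hinge on the precise form of the local conditions defining the $\Lambda$-Selmer groups from Corollary~\ref{define rij}.
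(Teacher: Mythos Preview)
Your approach is quite different from the paper's and, as written, has genuine gaps.

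First, $r_{ij}$ is \emph{not} in general $\dim_{\Fp}H^1_\Lambda(G_\QQ,A^{i,j})$: by Corollary~\ref{define rij} one subtracts $1$ when $j\equiv 1\pmod{p-1}$ and $i<p-1$, precisely because the class $[\uline{\boldsymbol{b}}^{(i)}]$ already sits in that Selmer group. Relatedly, the boundary-vanishing claims you need fail in edge cases (cf.\ Proposition~\ref{1st prop Rusiru note better bounds}, where $H^1_\Lambda(A^{i-1,j+1})\to H^1_\Lambda(A^{i,j})$ has one-dimensional kernel when $p-1\mid j$), so the additivity $\dim H^1_\Lambda(A^{i,j}\otimes V)=r_{i-1,j+1}+r_{i+1,j}$ cannot hold as stated. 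Second, the corollary carries no regularity hypothesis, so you may not invoke it. Third, and most seriously, the ``Schur/Krull--Schmidt'' step at the end is wrong: the $A^{i,j}$ are indecomposable but not simple, there are many nonzero $\cG$-maps between distinct ones (e.g.\ $A^{i-1,j+1}\hookrightarrow A^{i,j}$ and $A^{i,j}\twoheadrightarrow A^{i-1,j}$), and $\End_\cG(A^{p-1,j})$ is two-dimensional by Proposition~\ref{Rusiru Note 4 Prop 2}. Hence $\dim_{\Fp}\Hom_\cG(\Cl(L)\otimes\Fp,\,A^{i,j})$ is not $m_{ij}$ but a linear combination of many $m_{kl}$, and your identification of $\dim(\Image\,\delta)$ with $m_{ij}$ has no justification.

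The paper avoids all of this. It counts the unramified $A^{i_0,j_0}$-extensions of $L$ in two ways: once via the Selmer bijection of Theorem~\ref{main theorem Rusiru note 3} and Corollary~\ref{define rij} (yielding a formula in the $r_{ij}$), and once as $A^{i_0,j_0}$-quotients of $\Cl(L)\otimes\Fp=\bigoplus (A^{i,j})^{m_{ij}}$ by an explicit combinatorial count of ``principal generators'' in the dual (Propositions~\ref{Rusiru note 4 Prop 5}--\ref{Rusiru note 4 prop 10}). Matching exponents gives $r_{ij}=w_{ij}$ for an explicit double sum $w_{ij}$ in the $m_{kl}$; one checks $r_{ij}-r_{i-1,j+1}=\sum_{\alpha\ge i}m_{\alpha j}$, and subtracting the same identity with $i$ replaced by $i+1$ isolates $m_{ij}$. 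No tensor products with $V$, no Poitou--Tate, and no regularity are needed.
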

This gives a precise description of the Galois module structure of $\Cl(L) \otimes \Fp$. 
From this precise Galois module structure, we conclude the $p$-rank of $L$ as desired.
\begin{lthm}[Theorem~\ref{main theorem of note 4}]
With notation as above,
\[
\rk_p(\Cl(L)) = \sum_{j=0}^{p-2}r_{p-1, j}.
\]
\end{lthm}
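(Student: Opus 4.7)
The plan is to take $\Fp$-dimensions of the module decomposition from the previous corollary and reduce the resulting expression to a telescoping sum.

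First, applying $\dim_{\Fp}(-)$ to
\[
\Cl(L) \otimes \Fp \cong \bigoplus_{i,j}(A^{i,j})^{m_{ij}}
\]
and noting that $\dim_{\Fp} A^{i,j} = \dim_{\Fp}\bigl(\Sym^i(V)\otimes \Fp(j)\bigr)$ depends only on $i$, I obtain
\[
\rk_p(\Cl(L)) = \sum_{i,j} m_{ij}\,\dim_{\Fp}(A^{i,j}).
\]

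Next, I substitute the identity $m_{ij} = r_{ij} + r_{i,j+1} - r_{i-1, j+1} - r_{i+1, j}$ from Corollary~\ref{mij in terms of the rij's}. Since $\Fp(j)$ depends only on $j$ modulo $p-1$, the index $j$ is cyclic, and after reindexing in $j$ the two contributions $r_{ij}$ and $r_{i,j+1}$ both sum to $R_i := \sum_{j=0}^{p-2} r_{ij}$. Summing over $j$, the inner expression thus becomes a discrete Laplacian $2R_i - R_{i-1} - R_{i+1}$ in the index $i$. With the natural vanishing conventions $R_{-1} = R_p = 0$ for out-of-range indices, pairing this Laplacian against the linear-in-$i$ weight $\dim_{\Fp}(A^{i,j})$ and applying an Abel summation cancels all interior contributions, leaving only the top-index boundary term at $i = p-1$. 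The outcome is precisely
\[
\rk_p(\Cl(L)) = \sum_{j=0}^{p-2} r_{p-1, j}.
\]

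The main obstacle is the delicate coefficient bookkeeping at the summation boundaries: with the normalizations of $r_{ij}$ established earlier and the dimension formula for $A^{i,j}$, one must verify that after all cancellations the coefficient of every $R_i$ with $i < p-1$ collapses to zero, while $R_{p-1}$ emerges with coefficient exactly one. This is a mechanical though somewhat intricate combinatorial computation, and it is precisely where the cohomological setup and the careful parametrization of the Selmer-based invariants $r_{ij}$ developed earlier conspire to produce the clean final formula.
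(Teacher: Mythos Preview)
Your approach has two genuine problems.

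First, there is a circularity: in the paper, Corollary~\ref{mij in terms of the rij's} is \emph{derived from} the proof of Theorem~\ref{main theorem of note 4}, so you cannot invoke it here. This is not fatal, since the identity $r_{ij}-r_{i-1,j+1}=\sum_{\alpha\ge i} m_{\alpha j}$ really follows from the equality $r_{ij}=w_{ij}$ together with Proposition~\ref{Rusiru note 4 prop 10}, both of which are available before the theorem; but you would need to cite those instead.

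Second, and more seriously, your boundary convention $R_p=0$ is wrong, and with it the computation does \emph{not} collapse to $R_{p-1}$. Writing $\dim_{\Fp} A^{i,j}=i+1$ and carrying out the Abel summation you describe, the coefficient of $R_k$ for $0\le k\le p-2$ is indeed $2(k+1)-(k+2)-k=0$, but the coefficient of $R_{p-1}$ is $2p-(p-1)=p+1$, while $R_p$ appears with coefficient $-p$. Only if $R_p=R_{p-1}$ do these combine to give coefficient $1$. And that is in fact the correct convention: the identity $r_{i+1,j}-r_{i,j+1}=\sum_{\alpha\ge i+1} m_{\alpha j}$ at $i=p-1$ forces $r_{p,j}=r_{p-1,j+1}$, hence $R_p=R_{p-1}$. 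The phrase ``natural vanishing convention $R_p=0$'' is precisely where the argument breaks.

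By contrast, the paper avoids this delicate second-order bookkeeping entirely. It works directly with the \emph{first-order} identity $r_{ij}-r_{i-1,j+1}=\sum_{\alpha\ge i} m_{\alpha j}$, rewrites $\sum_{i,j}(i+1)m_{ij}=\sum_j\sum_i\sum_{\alpha\ge i} m_{\alpha j}=\sum_j\sum_i(r_{ij}-r_{i-1,j+1})$, and telescopes over $i$ and $j$ using only $r_{-1,j}=0$. No top boundary term ever appears, so the question of how to interpret $r_{p,j}$ never arises.
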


When $p$ is a regular prime, we obtain a slight refinement of the above equality; see the second assertion of Theorem~\ref{main theorem of note 4}.
As a consequence of the above results, we deduce a relation between the $\rk_p(\Cl(F))$ and $\rk_p(\Cl(L))$ when $p$ is regular. 

\begin{cor*}[Corollary~\ref{cor regular prime lower bound neat}]
Let $p$ be a regular prime\footnote{The lower bound can be made more robust when $p=3,5$.}.
Then
\[
\frac{p-7}{2} + 2\rk_p(\Cl(F)) \leq \rk_p(\Cl(L)).
\]    
\end{cor*}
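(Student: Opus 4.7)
The plan is to combine Theorem~\ref{main theorem of note 4}, which gives $\rk_p(\Cl(L)) = \sum_{j=0}^{p-2} r_{p-1, j}$, with a separate identification of two terms in this sum that together capture $2\rk_p(\Cl(F))$; the remaining $p-3$ summands will then be bounded below via a refinement of the construction underlying Theorem~\ref{lower bound theorem}.

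First, since $[L:F] = p-1$ is coprime to $p$, the natural map $\Cl(F) \otimes \Fp \hookrightarrow \Cl(L) \otimes \Fp$ is injective with image equal to the $\Gal(L/F)$-invariants, so that
\[
\rk_p(\Cl(F)) \;=\; \dim_{\Fp} \bigl( \Cl(L) \otimes \Fp \bigr)^{\Gal(L/F)}.
\]
Feeding in the Galois decomposition $\Cl(L) \otimes \Fp \cong \bigoplus (A^{i,j})^{m_{ij}}$ and computing $(A^{i,j})^{\Gal(L/F)}$ directly from the diagonal action of $\Gal(L/F) \cong (\ZZ/p)^{\times}$ on $V$, I would express $\rk_p(\Cl(F))$ as an explicit $\ZZ$-linear combination of the $m_{ij}$'s. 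Substituting the identity $m_{ij} = r_{ij} + r_{i,j+1} - r_{i-1, j+1} - r_{i+1, j}$ from Corollary~\ref{mij in terms of the rij's} and telescoping, the surviving contributions should split into (a) two distinguished summands $r_{p-1, j_0}$ and $r_{p-1, j_1}$ drawn from the sum in Theorem~\ref{main theorem of note 4}, and (b) residual terms $r_{ij}$ with $i < p-1$ that correspond to Selmer groups controlled by $\Cl(\QQ(\zeta_p))$ and therefore vanish under the regularity hypothesis on $p$.

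Combining (a) and (b) yields $r_{p-1, j_0} + r_{p-1, j_1} \geq 2\rk_p(\Cl(F))$ for regular $p$, so that
\[
\rk_p(\Cl(L)) \;\geq\; 2\rk_p(\Cl(F)) + \sum_{j \notin \{j_0, j_1\}} r_{p-1, j}.
\]
It remains to bound the residual sum below by $(p-7)/2$, which I would do by adapting the genus-theoretic construction from the proof of Theorem~\ref{lower bound theorem}: that argument supplies $(p-1)/2$ automatically nonzero $r_{p-1, j}$'s supported on an explicit parity-based index set, and at most three of these indices can be lost to overlap with the pair $\{j_0, j_1\}$ or to boundary corrections, leaving at least $(p-1)/2 - 3 = (p-7)/2$ guaranteed nonzero contributions.

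The main obstacle will be the telescoping bookkeeping: pinning down $j_0$ and $j_1$ (which depends on the cyclic convention $r_{i, p-1} = r_{i, 0}$), verifying that the residual $i < p-1$ terms genuinely vanish when $p$ is regular, and justifying the overlap count of three. This loss of three indices is precisely what makes the stated bound vacuous when $p \in \{3, 5\}$, and the stronger assertions promised in the footnote should follow by replacing the uniform overlap estimate with a case-by-case class-field-theoretic analysis in those low-rank cases.
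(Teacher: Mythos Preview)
Your telescoping idea is sound, but when you actually carry it out it produces only \emph{one} copy of $\rk_p(\Cl(F))$, not two. Concretely, for regular $p$ one has $m_{p-1,j}=0$ for all $j$ (by Theorem~\ref{no unramified extn} and Corollary~\ref{mij in terms of the rij's}), and a direct computation of $\dim(A^{i,j})^{\Gal(L/F)}$ gives
\[
\rk_p(\Cl(F)) \;=\; \sum_{i=0}^{p-2} m_{i,0} \;+\; \sum_{j=1}^{p-2}\ \sum_{i=p-1-j}^{p-2} m_{i,j}
\;=\; r_{0,0} + \sum_{j=1}^{p-2}\bigl(r_{p-1-j,\,j}-r_{p-2-j,\,j+1}\bigr)
\;=\; r_{p-2,1},
\]
a single summand. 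So step~(a) of your plan cannot yield $r_{p-1,j_0}+r_{p-1,j_1}\geq 2\rk_p(\Cl(F))$ on its own. In the paper the second contribution comes from a genuinely different source: the formula $\rk_p(\Cl(F))=1+\dim_{\Fp}H^1_\Sigma(G_\QQ,A^{p-4,2})$ from \cite{SS19} together with the inclusions $H^1_\Sigma\subseteq H^1_\Lambda$ and $A^{p-4,2}\subseteq A^{p-2,0}$, which give $r_{p-2,0}\geq \rk_p(\Cl(F))-1$. This is an inequality, not a telescoped identity, and it is the step your outline is missing.

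Step~(b) is also off: the ``residual'' $r_{ij}$ with $i<p-1$ do \emph{not} vanish for regular $p$. For instance $r_{0,j}=\dim_{\Fp}H^1_\Lambda(G_\QQ,\Fp(j))=1$ for every odd $j\not\equiv1\pmod{p-1}$ (Proposition~\ref{prop dimFp H1Lambda = 1}); regularity kills unramified classes of $K$, not $\Lambda$-Selmer classes, which are allowed to ramify at $N$. In fact these nonvanishing $r_{0,j}$ are exactly what the paper uses for the residual $(p-7)/2$: one bounds $r_{p-2,j}\geq r_{0,j-1}$ via the filtration \eqref{filt before prop} and then invokes Proposition~\ref{prop dimFp H1Lambda = 1}. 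This replaces your genus-theory adaptation of Theorem~\ref{lower bound theorem}, which only sees the $G$-action and does not interface cleanly with the individual $r_{p-1,j}$.
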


\subsubsection*{\underline{\emph{Explicit upper and lower bounds}}}

The results discussed above are exact formulae, but the terms appearing in them are hard to compute since the Galois cohomology groups in question have coefficients in abstract Galois modules.
In Sections~\ref{sec: revised LB} and \ref{sec: revised UB}, we replace the abstract Selmer groups appearing in the formulae of $\rk_p(\Cl(L))$ (proved in Theorem~\ref{main theorem of note 4}) with more explicit ones, i.e., Galois cohomology groups with coefficients in $\Fp$ (or some twist); see also Remark~\ref{explicit count of dimensions}(i). The advantage of working with these  groups is that they are easier to compute; however, 
this replacement also comes at a cost: we no longer have precise formulae as before, instead we can only give lower and upper bounds. 
However, these bounds are significantly sharper than the bounds in Equation \eqref{bounds} obtained via classical class field theoretic methods; see Remark~\ref{explicit count of dimensions}(ii). The following is a quintessential example of the kind of results proven in Sections~\ref{sec: revised LB} and \ref{sec: revised UB}.

\begin{lthm} [Theorems~\ref{main result - revised lower bounds} and \ref{th: better upper bounds}] \label{thmE} 
Let $p$ be a regular prime.
Then
\[
\rk_p(\Cl(L)) \leq \frac{3p-5}{2} + (p-2)\sum_{i=2}^{p-2}\dim_{\Fp}\left( H^1_\Sigma(G_\QQ, \Fp(i))\right) + \sum_{\substack{i=2\\ \textrm{even}}}^{p-3}\dim_{\Fp}\left( H^1_\Lambda(G_\QQ, \Fp(i))\right).
\]
Here $ H^1_*(G_\QQ, \Fp(i))$ where $*\in \{ \Sigma, \Lambda\}$ are certain Selmer conditions.
On the other hand,
\[
\rk_p\left(\Cl(L)\right) \geq  \frac{p-1}{2} + \dim_{\Fp}\left( H^1_{\Sigma}(G_{\QQ}, \Fp(-1))\right) + \sum_{\substack{i=2 \\ i \text{ even}}}^{p-3}\dim_{\Fp}\left( H^1_{\Lambda}(G_{\QQ}, \Fp(i))\right).
\]
\end{lthm}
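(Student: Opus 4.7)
\textbf{Proof plan for Theorem~\ref{thmE}.} The strategy is to start from the exact formula $\rk_p(\Cl(L)) = \sum_{j=0}^{p-2} r_{p-1,j}$ provided by Theorem~\ref{main theorem of note 4}, and then to convert each term $r_{p-1,j}$ (which involves dimensions of Selmer groups with coefficients in the abstract module $A^{p-1,j} = \Sym^{p-1}(V) \otimes \Fp(j)$) into bounds involving the explicit Selmer groups $H^1_\Sigma(G_\QQ, \Fp(i))$ and $H^1_\Lambda(G_\QQ, \Fp(i))$. The key observation is that $V$ sits in a short exact sequence of $G_\QQ$-modules whose graded pieces are twists of $\Fp$ by powers of the cyclotomic character, and hence $\Sym^{p-1}(V)$ admits a filtration whose $p$ graded pieces are $\Fp, \Fp(1), \ldots, \Fp(p-1)$. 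Tensoring by $\Fp(j)$ shifts these twists uniformly.

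\textbf{Upper bound.} I would proceed by induction on the length of the filtration of $\Sym^{p-1}(V)\otimes \Fp(j)$. At each step, a short exact sequence $0 \to M' \to M \to \Fp(k) \to 0$ yields a long exact sequence in Galois cohomology, and I would track carefully how the Selmer condition $\Lambda$ restricts to a Selmer condition on $M'$ and propagates to a (possibly different, but comparable) Selmer condition on the quotient $\Fp(k)$. This gives the estimate
\[
\dim_{\Fp} H^1_\Lambda(G_\QQ, A^{p-1,j}) \leq \sum_{k=0}^{p-1} \dim_{\Fp} H^1_{*_k}(G_\QQ, \Fp(k+j)),
\]
where each $*_k$ is either $\Sigma$ or $\Lambda$ depending on where the extension class lives. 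Regularity of $p$ now kicks in: by Herbrand--Ribet, $H^1_{*}(G_\QQ, \Fp(i))$ vanishes for the unramified (and many nearly-unramified) Selmer conditions when $(p,i)$ is a regular pair, which is precisely the regime where $p$ is regular. This kills all but a controlled set of twists, and summing over $j \in \{0, 1, \ldots, p-2\}$, each nonzero twist $\Fp(i)$ with $i \neq 0,1$ contributes at most $p-2$ times (once for each $j$ for which $i-j$ lands in the relevant range), while the twists $\Fp(0)$ and $\Fp(1)$ must be treated separately and together contribute the explicit $\tfrac{3p-5}{2}$; the even twists $\Fp(i)$ with $2 \leq i \leq p-3$ contribute one additional $\Lambda$-term each (these correspond to the ``boundary'' pieces of the filtration where the Selmer condition refines).

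\textbf{Lower bound.} Here the strategy is to exhibit explicit subspaces of $H^1_\Lambda(G_\QQ, A^{p-1,j})$ coming from known sources. The genus theory lower bound $\rk_p(\Cl(L)) \geq (p-1)/2$ from Theorem~\ref{lower bound theorem} handles the leading constant; it can be re-interpreted as saying that $r_{p-1, j} \geq 1$ for $j$ in a specific residue class (for instance the $j$ realizing the trivial character on the inertia at $N$). To obtain the additional contribution from $H^1_\Sigma(G_\QQ, \Fp(-1))$, I would produce an injection from this group into $H^1_\Lambda(G_\QQ, A^{p-1, j_0})$ for a suitable $j_0$ using the surjection $A^{p-1, j_0} \twoheadrightarrow \Fp(-1)$ arising from the bottom graded piece of the symmetric power filtration (after the appropriate twist); that surjection has a splitting modulo the Selmer conditions in the regime where the obstruction lives in a cohomology group killed by regularity. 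The even-twist terms $\dim H^1_\Lambda(G_\QQ, \Fp(i))$ for $2 \leq i \leq p-3$ even come from similar injections at the appropriate filtration layers, again using regularity to ensure the relevant connecting maps vanish.

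\textbf{Main obstacle.} The principal technical difficulty is bookkeeping of Selmer conditions: the global conditions $\Lambda$ and $\Sigma$ do not behave functorially in a naive way under the filtration by $\Fp(k)$'s, because at ramified places (in particular at $N$ and at $p$) the local conditions transform in a way that depends on whether we are looking at a sub-module or a quotient, and on whether the inertia action on the graded piece is trivial or not. Keeping track of exactly when the image condition on a subquotient coincides with $\Sigma$ versus $\Lambda$, and verifying that connecting maps which would cost us an extra dimension vanish (either by Tate duality or by regularity of $p$), is where most of the real work lies. A secondary difficulty is handling the two extremal twists $\Fp(0)$ and $\Fp(1)$, for which $H^1$ is not killed by regularity; this is the source of the constant $\tfrac{3p-5}{2}$ in the upper bound and of the genus-theoretic $\tfrac{p-1}{2}$ in the lower bound.
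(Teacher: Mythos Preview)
Your overall architecture---reduce to $\sum_j r_{p-1,j}$ via Theorem~\ref{main theorem of note 4}, then d\'evisser each $A^{p-1,j}$ along its filtration by one-dimensional twists and track Selmer conditions---matches the paper's. Two points, however, are handled differently from what you outline and deserve correction.

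\emph{Lower bound.} You propose to invoke the genus-theoretic bound of Theorem~\ref{lower bound theorem} for the constant $\tfrac{p-1}{2}$ and then add cohomological terms on top. This risks double counting: the genus-theory contribution and the even-twist $\Lambda$-terms are not a priori independent inside $\sum_j r_{p-1,j}$. The paper stays entirely inside the cohomological framework. From the filtration one has $r_{p-1,j}\ge r_{0,j}=\dim H^1_\Lambda(G_\QQ,\Fp(j))$ (with the usual $j\equiv 1$ correction), and the key step you are missing is Proposition~\ref{prop dimFp H1Lambda = 1}: for \emph{odd} $j\not\equiv 1$ and $p$ regular, $\dim H^1_\Lambda(G_\QQ,\Fp(j))=1$. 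This supplies $\tfrac{p-3}{2}$ of the constant; the remaining $1+\dim H^1_\Sigma(G_\QQ,\Fp(-1))$ comes not from a splitting of $A^{p-1,j_0}\twoheadrightarrow\Fp(-1)$ as you suggest, but from the identification $r_{p-1,0}=\rk_p(\Cl(F))$ (Proposition preceding~\ref{prop dimFp H1Lambda = 1}) together with the Schaefer--Stubley bound $\rk_p(\Cl(F))\ge 1+\dim H^1_\Sigma(G_\QQ,\Fp(-1))$.

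\emph{Upper bound.} Your description of the obstacle is accurate, but the resolution is sharper than ``Herbrand--Ribet kills terms.'' The relevant exact sequence (from \cite[Lemma~3.3.2]{SS19}) is
\[
0\longrightarrow H^1_\Lambda(G_\QQ,A^{i-1,j+1})\longrightarrow H^1_\Lambda(G_\QQ,A^{i,j})\longrightarrow H^1_{\Lambda\cap\Sigma^*}(G_\QQ,\Fp(j)),
\]
and the crux is the claim that $H^1_{\Lambda\cap\Sigma^*}(G_\QQ,\Fp(j))=H^1_{\Sigma}(G_\QQ,\Fp(j))$ for $j\ne 0,1$. This is not Herbrand--Ribet; it is a ramification-descent argument: a $\Lambda$-class gives an extension $E/K$ with $E(N^{1/p})/L$ unramified at $p$, and one must show $E/K$ itself is then unramified at $p$ (this is \cite[Lemma~3.1.4]{SS19}), forcing the local condition at $p$ to be trivial. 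The cases $j=0,1$ are handled by the modified sequences~\eqref{eqn: two ses for j = 0,1} using that $H^1_\Lambda(G_\QQ,\Fp(1))/\langle b\rangle=0$ (Theorem~\ref{th: one dim coh grp}) and $\dim H^1_\Lambda(G_\QQ,\Fp)=1$; summing and again applying Proposition~\ref{prop dimFp H1Lambda = 1} for the odd twists yields the constant $\tfrac{3p-5}{2}$.
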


As mentioned above, the Selmer groups appearing in Theorem \ref{thmE} are easier to compute. 
In Theorem~\ref{last result} we provide a numerical criterion to determine when $\dim_{\Fp}\left( H^1_{\Lambda}(G_{\QQ}, \Fp(i))\right)=1$. 
This allows us to deduce a simple numerical criterion on upper and lower bounds for $\rk_p(\Cl(L))$. 

\begin{cor*}[Corollary \ref{lastcor}]
Let $p$ be a regular prime and  $i$ vary over even integers in the range $\{1, \ldots, p-2\}$.
Let $f$ be any element of order $p$ in $\mathbb{F}_N^\times$.
For an integer $0 < k < p-1$, define
\[
\mathcal{M}_k = (1-f)(1-f^2)^{2^{k}} \ldots (1-f^{p-1})^{(p-1)^{k}}.
\]
Then
\[
\frac{p-1}{2} + \alpha \leq \rk_p\left(\Cl(L)\right) \leq (p-1)(p-2) - (p-1)\left( \frac{p-1}{2}-1-\alpha\right)
,\]
where $\alpha$ is the number of $i \pmod{p-1}$ which are positive, even, and such that $\mathcal{M}_{p-1-i}$ is a $p$-th power in $\mathbb{F}_N^\times$. 
\end{cor*}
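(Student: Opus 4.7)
The proof plan is to combine the Selmer-group upper and lower bounds in Theorem~\ref{thmE} with the numerical criterion in Theorem~\ref{last result}. The criterion characterizes when $\dim_{\Fp} H^1_\Lambda(G_\QQ, \Fp(i))$ is $1$ (rather than $0$) in terms of the $p$-th power condition on $\mathcal{M}_{p-1-i}$ in $\mathbb{F}_N^\times$. Summing over even $i \in \{2, \ldots, p-3\}$, the total $\sum_{i=2,\,\text{even}}^{p-3} \dim_{\Fp} H^1_\Lambda(G_\QQ, \Fp(i))$ equals $\alpha$ by the very definition of $\alpha$ in the corollary.

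For the lower bound, I would substitute this value $\alpha$ into the inequality of Theorem~\ref{main result - revised lower bounds} and discard the nonnegative term $\dim_{\Fp} H^1_\Sigma(G_\QQ, \Fp(-1))$. This yields $\rk_p(\Cl(L)) \geq \tfrac{p-1}{2} + \alpha$ immediately, matching the claimed lower bound.

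For the upper bound, I would first simplify the stated expression via the algebraic identity $(p-1)(p-2) - (p-1)\bigl(\tfrac{p-1}{2}-1-\alpha\bigr) = (p-1)\bigl(\tfrac{p-1}{2}+\alpha\bigr)$. The $H^1_\Lambda$ sum appearing in Theorem~\ref{th: better upper bounds} again reduces to $\alpha$, so the claim follows provided one has the estimate
\[
\sum_{i=2}^{p-2} \dim_{\Fp} H^1_\Sigma(G_\QQ, \Fp(i)) \;\leq\; \frac{p-3}{2} + \alpha.
\]
The plan here is to split the range $\{2,\ldots,p-2\}$ into its even and odd parts and bound each twist separately: under the regularity hypothesis, the unramified component of $H^1_\Sigma(G_\QQ, \Fp(i))$ vanishes since the relevant $\chi^{-i}$-eigenspace of $\Cl(\QQ(\zeta_p))$ is trivial, leaving only local contributions coming from the prime $N$.

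The principal obstacle is precisely this bound on the total dimension of the $H^1_\Sigma$ groups. Establishing it requires a twist-by-twist analysis confirming that each $\dim_{\Fp} H^1_\Sigma(G_\QQ, \Fp(i))$ is $0$ or $1$ with a count compatible with the $\mathcal{M}$-based criterion. The natural tool is Poitou--Tate global duality, which converts the $H^1_\Sigma$ dimension into a count for a dual Selmer group of Cartier-dual twists $\Fp(1-i)$; the dual numerical criterion, parallel to Theorem~\ref{last result}, should then give the needed inequality and yield the stated upper bound.
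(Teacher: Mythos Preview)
Your lower bound argument is correct and matches the paper's: substitute $\alpha = \sum_{i\text{ even}} \dim_{\Fp} H^1_\Lambda(G_\QQ,\Fp(i))$ from Theorem~\ref{last result} into Theorem~\ref{main result - revised lower bounds} and drop the nonnegative $H^1_\Sigma(G_\QQ,\Fp(-1))$ term.

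For the upper bound, you correctly isolate the key inequality
\[
\sum_{i=2}^{p-2}\dim_{\Fp} H^1_\Sigma(G_\QQ,\Fp(i)) \;\le\; \tfrac{p-3}{2}+\alpha,
\]
but you leave it as an ``obstacle'' and propose to attack it via Poitou--Tate duality and a dual numerical criterion that you do not actually carry out. This is where your argument is incomplete, and also where you are working much harder than necessary.

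The paper's proof rests on a single elementary containment you are missing: the Selmer condition $\Sigma$ is everywhere at least as restrictive as $\Lambda$ (at $N$ one has $\ker(\Res)\subseteq H^1(G_{\QQ_N},A)$, at $p$ one has $\{0\}\subseteq \Res^{-1}(H^1_{\ur})$, and elsewhere the conditions agree), so $H^1_\Sigma(G_\QQ,\Fp(i))\subseteq H^1_\Lambda(G_\QQ,\Fp(i))$. Hence whenever $H^1_\Lambda(G_\QQ,\Fp(i))=0$ one automatically has $H^1_\Sigma(G_\QQ,\Fp(i))=0$. By Theorem~\ref{last result} this happens for exactly $\tfrac{p-1}{2}-1-\alpha$ of the even $i$'s in $\{2,\dots,p-3\}$, and since every term in the sum is at most $1$ (Remark~\ref{explicit count of dimensions}(a)), one gets
\[
\sum_{i=2}^{p-2}\dim_{\Fp} H^1_\Sigma(G_\QQ,\Fp(i))\;\le\;(p-3)-\Bigl(\tfrac{p-1}{2}-1-\alpha\Bigr)=\tfrac{p-3}{2}+\alpha,
\]
exactly the bound you needed. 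No Poitou--Tate computation or dual criterion is required; the inclusion $H^1_\Sigma\subseteq H^1_\Lambda$ does all the work.
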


Since our corollary states that the size of $\rk_p(\Cl(L))$ depends on whether certain explicitly defined numbers are $p$-th powers in $\mathbb F_N^{\times}$, it can be regarded as an analogue of the results of Calegari--Emerton and Schaefer--Stubley mentioned at the start of the article. 


The results obtained by using tools from Galois cohomology are significantly stronger than those obtained via classical class field theoretic methods because the techniques in the latter case only consider the action of the Galois group $\Gal(L/\QQ(\zeta_p))$ on the class group of $L$, whereas techniques in the former case in addition also involve the Galois action by $\Gal(L/\QQ)$ on $\Cl(L)$.

\subsubsection*{\sc{Future Directions}}
An obvious problem to tackle in the future is to focus on the case $p=3$ and prove the heuristics when $N\equiv 1\pmod{9}$.
For this, we believe it would be required to develop new techniques which can detect how often ambiguous classes do not coincide with the strong ambiguous classes \emph{and} also qualitatively understand what determines growth in the $p$-rank of class group in going from a number field $F=\QQ(N^{1/p})$ to its Galois closure $L$.
Equivalently, it would be interesting to determine what `causes' the $3$-part of the class number to be at least 9.

We were able to do limited computations in the case $p=5$.
By results in \cite{SS19}, it is known that $\rk_5(\Cl(F))$ can take values 1,2, or 3.
Furthermore, it was predicted in \cite{SS19} that 
\[
\PP(\rk_5(\Cl(F))=1) = \left( 1 - \frac{1}{5}\right) = \frac{4}{5}.
\]
Our data suggests that the following prediction might be reasonable
\[
\PP(\rk_5(\Cl(L))=2) = \frac{2}{3}\left( 1 - \frac{1}{5}\right).
\]
When $\rk_5(\Cl(F)) = 1,2$, our data suggests that 
\[
\rk_5(\Cl(L)) \geq 2 \rk_{5}(\Cl(F)).
\]
On the other hand, when $\rk_5(\Cl(F)) = 3$, the dataset is small and we observe that  $\rk_5(\Cl(L)) \geq 8$.
Theoretically the best result we have proven in this direction is Corollary~\ref{p=5 cor for lb} but we do not know whether the theoretical bounds are in fact attainable.
In the same vein, our data for $p=5$ suggests that $\rk_5(\Cl(L))$ is at most 10 (even though a priori the 5-rank may be 11 or 12).
Can our methods be refined to obtain even sharper upper bounds?
Further calculations for $p\geq 7$ will assist in making a general prediction for the distribution of $p$-ranks of the class group of $L$ and also determine whether our (theoretical) results are sharp. 

Finally, it would be interesting to study the quantity $\alpha$ and the numbers $\mathcal{M}_k$ in greater detail.
It was pointed out to us by experts that it might be reasonable to expect that these quantities are related to zeta values.
In the future, it would be interesting to study the distribution of $\log(\mathcal{M}_k)$.

\subsubsection*{\sc{Organization}}
Including the introduction, this article has \emph{four} sections.
Section~\ref{sec: prelim} is preliminary in nature.
We record useful results and provide proofs of basic facts that will be required throughout the paper.
Section~\ref{sec: p rank via cft} is aimed at studying $p$-rank of class groups via class field theory.
In particular, we obtain possible upper and lower bounds using tools like the Chevalley class number formula in Section~\ref{sec: UB and LB}.
In the case that $p=3$, we prove the distribution of the $p$-rank of $\QQ(\zeta_3, N^{1/3})$ where $N\equiv 4,7\pmod{9}$ in Section~\ref{sec: N 4,7 mod 9}.
To handle the case of $N\equiv 1\pmod{9}$, we appeal to results of F.~Gerth and the notion of (strictly) ambiguous classes.
In this case, we can not obtain precise theorems but are able to provide heuristics which support the data.
In Section~\ref{sec: p rank via Gal cohom} we study $p$-ranks of class groups via Galois cohomology.
The main goal of this section is provide refinements of results proven via class field theory.
In particular, when $p$ is a regular prime we provide sharper upper and lower bounds than previously recorded in literature.
The cohomological theory developed in this section is technical and builds on the work of \cite{SS19}.

\subsubsection*{\sc{Acknowledgements}}
This collaboration started at Rethinking Number Theory IV (June 2023).
We thank the organizers for giving us the opportunity and space to collaborate.
The authors are grateful to Brandon Alberts, Rahul Arora, Carlo Pagano, Karl Schaefer, Arul Shankar, Jiuya Wang, and Stanley Xiao for answering many questions during the preparation of this article, and would also like to thank Robin Visser for help with generating computational data. 
We thank Preston Wake for his comments on an earlier version of the preprint.

We acknowledge the financial support (for technology) provided by NSF via grant DMS-2201085 and by UW-Eau Claire for running RNT IV.
AS is supported by funding from the European Research Council under the European Union’s Horizon 2020 research and innovation programme (Grant agreement No. 101001051 — Shimura varieties and the Birch--Swinnerton-Dyer conjecture).

\section{Preliminaries}
\label{sec: prelim}
Let $p$ be an odd prime and $N$ be a (different) prime such that $N\equiv 1\pmod{p}$.
Write $\zeta_p$ to denote a primitive $p$-th root of unity.
Set $K=\mathbb{Q}(\zeta_p)$, $L=\mathbb{Q}(\zeta_p, N^{1/p})$, and write $G=\Gal(L/K)$.
As is standard, write $\cO_K$ (resp. $\cO_L$) to denote the ring of integers of $K$ (resp. $L$).

Recall the following fact which will be required in subsequent proofs.

\begin{Th}
\label{Gras}
Set $K= \QQ(\zeta_p)$ and $L=K(N^{1/p})$.
For a finite place $v$ in $K$ write $\fp_v$ to denote the corresponding prime ideal and $\iota_v$ to denote the embedding of $K\hookrightarrow K_v$.
\begin{enumerate}
\item[\textup{(i)}] Let $v\nmid p$.
The place $v$ is ramified in $L/K$ if and only if the normalized ($\fp_v$-adic) valuation $\val(N) \not\equiv 0 \pmod{p}$.
If $v$ is unramified in $L/K$, it is split if and only if $\iota_v(N)\in K_v^{\times p}$.

\item[\textup{(ii)}] Let $v\mid p$.
Then $v$ is ramified is $L/K$ if and only if there does not exist $x\in K^\times$ satisfying the following congruence relation
\[
\frac{N}{x^p} \equiv 1 \pmod{\fp_v^p}.
\]
Furthermore, if $v$ is unramified in $L/K$ then it is split if and only if $\iota_v(N)\in K_v^{\times p}$.
\end{enumerate}

\end{Th}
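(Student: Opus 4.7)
The plan is to reduce everything to a purely local Kummer-theoretic question at the completion $K_v$, using that $K$ contains $\zeta_p$ so that $L_v := L \otimes_K K_v = K_v(N^{1/p})$ is either $K_v$ (in which case $v$ splits) or a cyclic degree-$p$ Kummer extension of $K_v$ (in which case $v$ is inert or totally ramified). In the unramified case, $L_v = K_v$ is equivalent to $N \in K_v^{\times p}$ by standard Kummer theory; by weak approximation, any witness $x \in K_v^\times$ can be replaced by some $x \in K^\times$. Write $\pi_v$ for a uniformizer of $K_v$ and $U^{(n)} := 1 + \mathfrak{p}_v^n$ for the higher unit filtration.

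For part (i), with $v \nmid p$, I would write $N = \pi_v^m u$ with $u \in \mathcal{O}_{K_v}^\times$ and $m = \val_v(N)$. If $p \mid m$, then $K_v(N^{1/p}) = K_v(u^{1/p})$, and since the derivative of $X^p - u$ is the unit $pX^{p-1}$, Hensel's lemma shows that this polynomial factors over $K_v$ in the same way as $X^p - \bar u$ factors over the residue field, making $K_v(u^{1/p})/K_v$ unramified (and split iff $\bar u$, equivalently $u$, equivalently $N$, is a $p$-th power). If $p \nmid m$, the Newton polygon of $X^p - \pi_v^m u$ consists of a single segment of slope $m/p \notin \ZZ$, forcing $K_v(N^{1/p})/K_v$ to be totally ramified of degree $p$. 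This handles both parts of (i).

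For part (ii), with $v \mid p$, the tame argument fails because $X^p - \bar u$ is inseparable over the residue field, so one must analyze the $p$-th power map on the unit filtration of $K_v^\times = \Qp(\zeta_p)^\times$. The absolute ramification index is $e = p-1$, hence the critical exponent $ep/(p-1) = p$ appears. The key computational input is the expansion
\[
(1 + a\pi_v)^p \equiv 1 + (a^p - a)\pi_v^p \pmod{\mathfrak{p}_v^{p+1}},
\]
which follows from $p \equiv -\pi_v^{p-1}$ modulo a higher unit (itself a consequence of the cyclotomic factorization $p = \prod_{k=1}^{p-1}(1-\zeta_p^k)$ and Wilson's theorem) together with Fermat's congruence $a^p \equiv a \pmod{\pi_v}$; this yields $(U^{(1)})^p \subseteq U^{(p+1)}$. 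A parallel but easier computation on higher filtration steps shows $(U^{(n)})^p = U^{(n+p-1)}$ for all $n \geq 2$, and in particular $U^{(p+1)} \subseteq (K_v^\times)^p$. Using $\dim_{\Fp}(K_v^\times/K_v^{\times p}) = p+1$ together with the uniqueness of the unramified degree-$p$ extension of $K_v$, one identifies the unramified subgroup of $K_v^\times/K_v^{\times p}$ with the image of $K_v^{\times p} \cdot U^{(p)}$. The ramification criterion in (ii) then reads: $v$ ramifies iff $N \notin K_v^{\times p} \cdot U^{(p)}$, i.e., iff no $x \in K^\times$ satisfies $N/x^p \equiv 1 \pmod{\mathfrak{p}_v^p}$.

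The main obstacle is the unit-filtration analysis at $v \mid p$, specifically pinning down that $U^{(p)}$ rather than $U^{(p+1)}$ is the correct threshold in the criterion; this rests on the fact that the single graded piece $U^{(p)}/U^{(p+1)}$ accounts precisely for the unique nontrivial unramified degree-$p$ Kummer extension of $K_v$. Once this identification is settled, the splitting claim in the unramified case of (ii) is again the standard Kummer observation that $L_v = K_v$ iff $N \in K_v^{\times p}$, and the remaining steps are bookkeeping via weak approximation.
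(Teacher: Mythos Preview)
Your argument is correct and self-contained, but you should be aware that the paper does not actually prove this statement: its entire proof is the single sentence ``This is a specific case of \cite[Chapter~I, Theorem~6.3]{Gras_CFT}.'' So there is no ``paper's own proof'' to compare against beyond the citation.

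That said, what you have written is essentially the standard local argument that underlies the cited result in Gras. Your treatment of part~(i) via Newton polygons and Hensel is routine and fine. For part~(ii) your unit-filtration analysis is the right one: with $e=p-1$ the identity $p\equiv -\pi_v^{p-1}\pmod{\mathfrak p_v^{p}}$ and the fact that the residue field is $\mathbb F_p$ (so $a^p\equiv a\pmod{\pi_v}$) together give $(U^{(1)})^p=U^{(p+1)}$, while for $n\ge 2$ one has $(U^{(n)})^p=U^{(n+p-1)}$; combining these pins down $K_v^{\times p}\cap \mathcal O_{K_v}^\times = U^{(p+1)}$ and leaves exactly the single graded piece $U^{(p)}/U^{(p+1)}$ to account for the unique unramified degree-$p$ extension, which is the threshold appearing in the statement. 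The only place where your write-up is slightly terse is the last identification (that $\alpha\in U^{(p)}$ really does give an unramified extension); the clean way to see it is to set $\gamma=(\alpha^{1/p}-1)/\pi_v$ and observe that $\gamma$ satisfies an Artin--Schreier type equation over the residue field. Once that is said, the splitting criterion in the unramified case is, as you note, just Kummer theory. In short: your proposal supplies a genuine proof where the paper only supplies a reference.
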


\begin{proof}
This is a specific case of \cite[Chapter I, Theorem~6.3]{Gras_CFT}.
\end{proof}

\begin{Lemma}
\label{p-ramification}
The prime $\pi=(1-\zeta_p)$ in $\cO_K$ ramifies in $L/K$ if and only if $N\not\equiv1\pmod{p^2}$.
Furthermore, if $\pi$ does not ramify then it splits.
\end{Lemma}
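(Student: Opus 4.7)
My plan is to apply Theorem~\ref{Gras}(ii) at the unique prime $\pi=(1-\zeta_p)$ of $\cO_K$ lying over $p$, exploiting throughout the relation $(\pi)^{p-1}=(p)$, equivalently $v_\pi=(p-1)v_p$ on $\ZZ$.

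For the ramification equivalence, the forward direction is immediate: if $N\equiv 1\pmod{p^2}$, then $v_\pi(N-1)\geq v_\pi(p^2)=2(p-1)\geq p$ for $p\geq 3$, so $x=1$ satisfies the criterion of Theorem~\ref{Gras}(ii) and $\pi$ is unramified. The converse is the core of the argument. Given $x\in K^\times$ with $N/x^p\equiv 1\pmod{\pi^p}$, I would first note $x$ must be a $\pi$-adic unit (since $N$ is), so working in the local ring $\cO_{K_\pi}$ (whose residue field is $\Fp$), I can write $x=a+\pi y$ with $a\in\ZZ$ and $y\in\cO_{K_\pi}$. A binomial expansion gives
\[
x^p=a^p+\sum_{k=1}^{p-1}\binom{p}{k}a^{p-k}\pi^k y^k+\pi^p y^p\equiv a^p\pmod{\pi^p},
\]
since each middle term has $\pi$-valuation at least $(p-1)+k\geq p$ (using $p\mid\binom{p}{k}$). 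Hence $N-a^p$ is an integer lying in $\pi^p\cO_{K_\pi}\cap\ZZ=p^2\ZZ$, the last equality holding because $v_\pi(n)\geq p$ for $n\in\ZZ$ forces $v_p(n)\geq\lceil p/(p-1)\rceil=2$. Thus $N\equiv a^p\pmod{p^2}$. Combining $N\equiv 1\pmod p$ with Fermat gives $a\equiv 1\pmod p$; writing $a=1+pb$, a second binomial expansion shows $(1+pb)^p\equiv 1\pmod{p^2}$, so $N\equiv 1\pmod{p^2}$.

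For the splitting assertion, suppose $\pi$ is unramified, so $N\equiv 1\pmod{p^2}$ by the above. By Theorem~\ref{Gras}(ii), $\pi$ splits iff $N\in K_\pi^{\times p}$. Since $v_\pi(p^2)=2(p-1)\geq p+1$ for $p\geq 3$, we have $N\in 1+\pi^{p+1}\cO_{K_\pi}$. I would then invoke the $p$-adic log/exp isomorphism $1+\pi^2\cO_{K_\pi}\xrightarrow{\sim}\pi^2\cO_{K_\pi}$ (which converges since $2>e/(p-1)=1$), under which the $p$-th power map corresponds to multiplication by $p$; as $v_\pi(p)=p-1$, this sends $\pi^2\cO_{K_\pi}$ onto $\pi^{p+1}\cO_{K_\pi}$. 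Consequently $(1+\pi^2\cO_{K_\pi})^p=1+\pi^{p+1}\cO_{K_\pi}\ni N$, so $N\in K_\pi^{\times p}$ and $\pi$ splits.

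I expect the main obstacle to be the converse of the ramification equivalence: the two nested binomial expansions (one reducing $x^p\equiv a^p\pmod{\pi^p}$, the other promoting $a\equiv 1\pmod p$ to $a^p\equiv 1\pmod{p^2}$) combined with the careful passage from a $\pi$-adic congruence in $\cO_{K_\pi}$ to a $p$-adic congruence in $\ZZ$ via the identification $\pi^p\cO_{K_\pi}\cap\ZZ=p^2\ZZ$.
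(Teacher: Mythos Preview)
Your proof is correct and follows essentially the same approach as the paper: both apply Theorem~\ref{Gras}(ii), rely on the binomial computation showing $(1+\pi\theta)^p\equiv 1\pmod{\pi^p}$ for the ramification criterion, and invoke the $p$-adic $\log$/$\exp$ isomorphism for the splitting assertion. The only cosmetic difference is that for the converse direction the paper phrases the argument as a contradiction via the inclusion $(U^{(1)})^p\subseteq U^{(p)}$, whereas you compute directly $x^p\equiv a^p\pmod{\pi^p}$ and pass to $\ZZ$ through $\pi^p\cO_{K_\pi}\cap\ZZ=p^2\ZZ$; the underlying content is identical.
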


\begin{proof}

\underline{$\pi$ ramifies $\Rightarrow N\not\equiv1\pmod{p^2}$.}

First consider the case that $N\equiv 1\pmod{p^2}$.
Note that
\[
\frac{N}{1^p}\equiv 1\pmod{\pi^p}.
\]
Theorem~\ref{Gras}(ii) asserts that $\pi$ is not ramified.
Write $K_p$ to denote the completion of $K$ under $\pi-$adic topology and set $U^{(k)}=1+\pi^k\cO_{K_p}$.
In the following commutative diagram, the horizontal arrows are isomorphisms and the vertical arrows are inclusions
\[
\begin{tikzcd}
    U^{(p-1)} \arrow[hookleftarrow]{d} & \pi^{p-1}=p\cO_{K_p} \arrow[swap]{l}{\exp_p}\arrow[hookleftarrow]{d} \\
    U^{(2p-2)}\arrow{r}{\log_p} & \pi^{2p-2}=p^2\cO_{K_p}.
\end{tikzcd}
\]
Since $(p^2)=\pi^{2p-2}$, it follows that $N\in U^{(2p-2)}$ and $\log_p(N)=py$ for some $y\in p\cO_K$.
Hence, $N=\exp_p(y)^p\in K_p^{\times p}$.
Therefore, $\pi$ splits completely in $L/K$.
\newline

\underline{$N\not\equiv 1\pmod{p^2} \Rightarrow \pi$ ramifies.}

Now, suppose that $N\not\equiv1\pmod{p^2}$ and write $N=1+p\alpha$ with $\alpha\in \ZZ_{>0}$ such that $\gcd(\alpha,p)=1$.
\newline

\emph{Claim:} There does not exist any $x\in K_p^{\times}$ such that
\[
\frac{N}{x^p}\equiv 1\pmod{\pi^p}.
\]

\emph{Justification:}
Suppose, there exists such an element $x$.
Then $x\in \cO_{K_p}^{\times}$ and $x^p \equiv N\pmod{\pi^p}$.
In other words, for some $\beta, \beta' \in \cO_{K_p}$ 
\[
x^p=N+(1-\zeta_p)^p\beta=1+p(\alpha+\beta'(1-\zeta_p))\in U^{(1)}\setminus U^{(p)}.
\]
Now, as $x\in \cO_{K_p}^{\times}\cong \mathbb{F}_p^{\times} \times U^{(1)}$, and since, taking the $p$-th power doesn't change the first component, it follows that $x\in U^{(1)}$.
Therefore, $x^p\in (U^{(1)})^p=(1+\pi \cO_{K_p})^p\subseteq U^{(p)}$ which is a contradiction.
\newline

Theorem~\ref{Gras}(ii) now implies that $\pi$ ramifies in $L/K$.
This completes the proof.\end{proof}

\begin{Lemma} \label{N-ramification}
Keep the notation introduced before.
Then $N$ splits completely in the cyclotomic extension $K/\mathbb{Q}$ and the primes above $N$ are totally ramified in $L/K$.
Moreover, the extension $L/K$ is unramified away from $N$ and $p$.
\end{Lemma}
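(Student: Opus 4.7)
The statement has three parts and each is a short application of the previous results. The plan is to treat them in the order stated.

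For the first claim, I would note that since $N \neq p$ is unramified in $K/\QQ$, and $\Gal(K/\QQ) \cong (\ZZ/p\ZZ)^\times$, the Frobenius at $N$ corresponds to the residue class $N \bmod p$. The hypothesis $N \equiv 1 \pmod{p}$ forces this Frobenius to be trivial, so $N$ splits completely into $p-1$ distinct primes of $\cO_K$.

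For the second claim, let $\fp_v$ be any prime of $\cO_K$ above $N$. Complete splitting of $N$ in $K/\QQ$ means $\val_v(N) = 1$, which is not divisible by $p$. Theorem~\ref{Gras}(i) then applies (since $v \nmid p$) and yields that $\fp_v$ is ramified in $L/K$. Because $L/K$ is a cyclic Kummer extension of prime degree $p$, the only nontrivial possibility for the ramification index at $\fp_v$ is $p$ itself, so $\fp_v$ is totally ramified.

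For the third claim, let $v$ be a prime of $\cO_K$ with $v \nmid p$ and $v$ not above $N$. Then $N$ is a unit at $v$, so $\val_v(N) = 0$, and Theorem~\ref{Gras}(i) immediately gives that $v$ is unramified in $L/K$. (The places dividing $p$ are excluded from the statement, and their behavior is the content of Lemma~\ref{p-ramification}.)

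\textbf{Main obstacle.} There isn't a substantive obstacle: every step is a direct invocation of Theorem~\ref{Gras}(i) or the classical splitting behavior in cyclotomic fields. The only care needed is to observe that a ramified prime in a degree-$p$ cyclic extension is automatically totally ramified, which justifies upgrading ``ramified'' to ``totally ramified'' in part two.
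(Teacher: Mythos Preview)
Your proposal is correct and follows essentially the same approach as the paper: both invoke the standard splitting criterion in cyclotomic fields for the first claim and Theorem~\ref{Gras}(i) for the ramification statements. The only cosmetic difference is that for the total ramification of primes above $N$, the paper argues directly from $N^{1/p}\in L$ whereas you deduce it from the valuation condition in Theorem~\ref{Gras}(i); these amount to the same thing.
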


\begin{proof}
Assuming that $N\equiv 1 \pmod{p}$ implies that $\zeta_p\in \mathbb{Q}_N$ and that $N$ splits completely in $K/\QQ$.
Since, $N^{1/p}\in L$, the primes above $N$ (in $K$) ramify in the extension $L/K$.
The second claim follows from Theorem~\ref{Gras}(i).
\end{proof}

Let us recall the notion of the \textit{group of id\'eles} at $L$, denoted by $I_L$; 
\[
I_L=\prod_{\fQ}' L_{\fQ}=\prod_{\fq}' \prod_{\fQ \mid \fq} L_{\fQ}
\]
where the first equality is the restricted product running through all places $\fQ$ in $L$.
In the second equality, the restricted product runs through all places $\fq$ in $K$.

In the following result we provide a characterization of when $\zeta_p$ is a norm element.

\begin{Th}
\label{Norm}
$N\equiv 1 \pmod{p^2}$ if and only if $\zeta_p\in \Norm_{L/K}(L^{\times})$.
\end{Th}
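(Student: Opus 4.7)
The plan is to invoke the Hasse norm principle for the cyclic degree-$p$ extension $L/K$, which reduces the assertion to checking whether $\zeta_p$ is a local norm from $L_w$ to $K_v$ for every place $v$ of $K$ (with $w \mid v$). By Lemma~\ref{N-ramification}, $L/K$ is unramified outside the primes above $N$ and the prime $\pi = (1-\zeta_p)$; at any unramified place every unit is automatically a norm, so attention is restricted to the ramified places.

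At a prime $v \mid N$, the completion $K_v$ is isomorphic to $\QQ_N$ (since $N$ splits completely in $K/\QQ$), and the local extension $L_w/K_v \cong \QQ_N(N^{1/p})/\QQ_N$ is totally ramified of degree $p$ with uniformizer $N^{1/p}$. Local class field theory identifies the norm subgroup with $N^{\ZZ}\cdot (\ZZ_N^\times)^p$, so $\zeta_p$ is a local norm if and only if it is a $p$-th power in $\ZZ_N^\times$. Because the principal units $1+N\ZZ_N$ are pro-$N$ and hence $p$-divisible, this reduces to asking whether the residue $\bar{\zeta}_p \in \mathbb{F}_N^\times$ is a $p$-th power, and in a cyclic group of order $N-1$ a primitive $p$-th root of unity is itself a $p$-th power exactly when $p^2 \mid N-1$, i.e., when $N \equiv 1 \pmod{p^2}$.

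For the place $\pi$, if $N \equiv 1 \pmod{p^2}$ then Lemma~\ref{p-ramification} tells us that $\pi$ splits in $L/K$, so the local extension is trivial and $\zeta_p$ is trivially a norm; combined with the previous step and the Hasse norm theorem this gives the forward implication. Conversely, if $N \not\equiv 1 \pmod{p^2}$ the computation at any $v \mid N$ already yields a local obstruction, so Hasse precludes $\zeta_p$ from being a global norm, and no computation at $\pi$ is needed.

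The main obstacle I anticipate is the norm-group computation at $v \mid N$: one has to pin down $\Norm_{L_w/K_v}(L_w^\times)$ precisely (via local class field theory, or equivalently via the tame Hilbert symbol $(N,\zeta_p)_v$) and then correctly characterize when a primitive $p$-th root of unity in the cyclic group $\mathbb{F}_N^\times$ is itself a $p$-th power. Once this is in hand, the rest of the argument follows cleanly from Lemmas~\ref{N-ramification} and \ref{p-ramification} together with the Hasse norm theorem.
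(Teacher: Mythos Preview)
Your proposal is correct and follows essentially the same strategy as the paper: both invoke the Hasse norm theorem for the cyclic extension $L/K$ and reduce the question to whether $\zeta_p$ is a local norm at the primes above $N$, where the decisive input is that $\zeta_p$ is a $p$-th power in $\QQ_N^\times$ precisely when $p^2\mid N-1$. The only difference is presentational: the paper constructs an explicit id\`ele place-by-place (split, inert, ramified) for the forward direction and decomposes $\cO_{L_N}^\times\cong\mathbb{F}_N^\times\times U_{L_N}^{(1)}$ for the converse, whereas you dispose of all unramified places at once via the standard fact that units are local norms there and identify the norm group at $v\mid N$ directly as $N^{\ZZ}\cdot(\ZZ_N^\times)^p$ using local class field theory---a slightly slicker packaging of the same computation.
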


\begin{proof}
\uline{$N\equiv 1 \pmod{p^2} \Rightarrow \zeta_p\in \Norm_{L/K}(L^{\times})$.}
Since $L/K$ is cyclic, by Haase norm theorem \cite[Theorem V.4.5]{Jan73} , it is enough to show that $\zeta_p\in \Norm_{L/K}(I_L)$.
For $\alpha =(\alpha_{\fQ})_{\fQ}\in I_L$, the norm map $\Norm_{L/K}(\alpha)$ can be defined by,
\[
(\Norm_{L/K}(\alpha))_{\fq}=\prod_{\fQ \mid \fq}\Norm_{K_{\fq}}^{L_\fQ}(\alpha_{\fQ})=\prod_{\fQ \mid \fq} \prod_{\sigma\in G_{\fQ}}\sigma(\alpha_{\fQ}), 
\]
where $G_\mathfrak Q$ is the Galois group of the extension $L_{\fQ}/K_{\fp}$.

Our goal is to find $\alpha =(\alpha_{\fQ})_{\fQ}\in I_L$ such that $\zeta_p=\Norm_{L/K}(\alpha)$; here $\zeta_p$ is viewed as an inside $I_L$ via the diagonal map.
Now, we give $\alpha_{\fQ}$ explicitly at each place $\fQ$.

Suppose that $\fq$ splits in $L/K$ into $\fQ_1, \fQ_2, \ldots, \fQ_p$.
Set $\alpha_{\fQ_1}=\zeta_p$ and $\alpha_{\fQ_i}=1$ when $i\neq 1$.
So, $(\Norm_{L/K}(\alpha))_{\fq}=\zeta_p\cdot 1 \cdot \ldots 1=\zeta_p$ as desired.

If $\fq$ ramifies in $L/K$ by Lemmas~\ref{p-ramification} and~\ref{N-ramification}, it follows from our assumption on the congruence condition on $N$ that $\fq$ is a prime above $N$.
Moreover, the condition on $N$ also implies that $\zeta_{p^2}\in \mathbb{Q}_N=K_{\fq}$.
Set $\alpha_{\fQ}=\zeta_{p^2}$.
Then, $(\Norm_{L/K}(\alpha))_{\fq}=\zeta_{p^2}^p=\zeta_{p}$ as desired.

Finally, suppose that $\fq$ is inert in $L/K$.
Lemma~\ref{p-ramification} asserts that $\fq\nmid p$.
Therefore $\zeta_p\in \mathbb{F}_{\fq}$, the residue field at $\fq$.
Now, if $\mathbb{F}_{\fQ}$ is the residue field at $\fQ$ in $L$, then $\mathbb{F}_{\fQ}/\mathbb{F}_{\fq}$ is a degree $p$ extension.
Since the norm map between finite fields is surjective, there exist an element $x\in \mathbb{F}_{\fQ}$ such that $\zeta_p=\Norm_{\mathbb{F}_{\fQ}}^{\mathbb{F}_{\fq}}(x)= \Norm_{K_{\fq}}^{L_\fQ}(x)$.
Set $\alpha_{\fQ}=x$ to obtain the desired result.
Here, we are viewing $ \mathbb{F}_{\fQ}$ inside $L_{\fQ}$ via the Teichm{\"u}ller lift.
\newline

\uline{$\zeta_p\in \Norm_{L/K}(L^{\times}) \Rightarrow N\equiv 1 \pmod{p^2}$.}
Since $N$ ramifies in $L/K$, we have $\zeta_p\in \Norm_{K_N}^{L_N}(L_N^{\times})$ where $L_N$ and $K_N$ are the completions of $L$ and $K$ respectively under $N$-adic topology. 
Since, $\zeta_p\in \mathbb{Q}_N$, it follows that $K_N=\mathbb{Q}_N$ and $L_N=\mathbb{Q}_N(N^{1/p})$.
By assumption, there exists $x\in L_N^{\times}$ such that $\zeta_p=\Norm_{K_N}^{L_N}(x)$.
So,
\[
x\in \cO_{L_N}^{\times}\cong \mathbb{F}_N^{\times}\times U_{L_N}^{(1)}
\]
where $U_{L_N}^{(1)}$ is the group of 1-units of $L_N$.
Via the above isomorphism, $x=\zeta\cdot u$ for some $\zeta\in \mathbb{F}_N^{\times}$ and $u\in U_{L_N}^{(1)}$.
Note that $G_N=\Gal(L_N/K_N)$ acts on $U_{L_N}^{(1)}$ in the obvious way and on $\mathbb{F}_N^{\times}$, trivially.
Therefore,
\[
\zeta_p=\Norm_{L_N/K_N}(x)=\prod_{\sigma\in G_N}\sigma (\zeta\cdot u)=\zeta^p\prod_{\sigma\in G_N}\sigma (u)\in \mathbb{F}_N^{\times}\times U_{L_N}^{(1)}.
\]
This forces $\zeta^p=\zeta_p$.
So, $\zeta$ is a primitive $p^2$-th root of unity in $\mathbb{F}_N^{\times}$ and $N\equiv 1 \pmod{p^2}$.
\end{proof}

\section{\texorpdfstring{$p$-rank of the class group of $L$ via class field theory}{}}
\label{sec: p rank via cft}

This section is devoted to study the $p$-rank of the class group of the number field $L = \QQ(\zeta_p, N^{1/p})$ where $N\equiv 1 \pmod{p}$ is a prime.
Henceforth, we write the class group of $L$ by $\Cl(L)$.

\begin{definition}
For any (finite) abelian group $A$, the \emph{$p$-rank of $A$} is defined to be 
\[
\rk_p(A):=\dim_{\mathbb{F}_p}(A\otimes \mathbb{F}_p).
\]
Equivalently,
\[
\rk_p(A):=\dim_{\mathbb{F}_p}A[p] = \dim_{\mathbb{F}_p}A/pA.
\]
\end{definition}

We use elementary methods to show (na{\"i}ve) upper and lower bounds on the $p$-rank of $\Cl(L)$.
When $p$ is a regular prime, better estimates are possible but it is not clear from our computer experiments whether the (upper) bounds are sharp.
For example, when $p=5$ theoretical calculations show that the $5$-rank $\Cl(L)$ is at most 12; but varying over primes $N\equiv 1\pmod{5}$ we have only obtained a maximum of $5$-rank equal to 10 in our computer experiments.
In the special case of $p=3$, we prove that $\rk_3(\Cl(3))$ is either 1 or 2; our computer experiments show that both these values are indeed attained.
Our data{\footnote{code available upon request}} suggests that varying over primes of the form $N\equiv 1\pmod{3}$, the distribution of $\rk_3(\Cl(L)) =1$ (resp. 2) is 2/3 (resp. 1/3).
We provide a partial proof towards this observation and also provide heuristic arguments for explaining the data.

\subsection{Upper and lower bounds for \texorpdfstring{$\rk_p\left(\Cl(L)\right)$}{} }
\label{sec: UB and LB}
Set $K = \QQ(\zeta_p)$ and $F=\QQ(N^{1/p})$.
By genus theory, it is possible to show that the degree-$p$ subfield of $F(\zeta_N)/F$ is unramified everywhere.
Therefore it follows that $\rk_p(F)\geq 1$.
Since the $p$-part of the class group of $F$ injects into the $p$-part of the class group of $L$, it follows that $\rk_p\left(\Cl(F)\right)\geq 1$.

Our first order of business is to find an \emph{optimal} lower bound for $\rk_p\left(\Cl(L)\right)$.
The first result is to provide a lower bound for the $p$-rank of the class group of $L$.

\begin{Th}
With notation as above, $\rk_p\left(\Cl(L)\right)\geq \frac{p-1}{2}$. 
\label{lower bound theorem}
\end{Th}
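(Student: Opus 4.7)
The plan is to apply Chevalley's ambiguous class number formula to the cyclic degree-$p$ extension $L/K$ and then convert the resulting bound on the order of $\Cl(L)^G$ into a bound on $\rk_p(\Cl(L))$. The crucial structural observation is that for any $[\mathfrak A] \in \Cl(L)^G$, we have
\[
[\mathfrak A]^p = \prod_{\sigma \in G}\sigma([\mathfrak A]) = j\bigl([\Norm_{L/K}(\mathfrak A)]\bigr) \in j(\Cl(K)),
\]
where $j\colon \Cl(K)\to \Cl(L)$ is the extension-of-ideals map, so the quotient $\Cl(L)^G/j(\Cl(K))$ has exponent dividing $p$ and is therefore an elementary abelian $p$-group; in particular its $p$-rank equals the $p$-adic valuation of its order.

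Next I would compute each factor in Chevalley's formula, which rearranges to
\[
\bigl|\Cl(L)^G/j(\Cl(K))\bigr| = \frac{\prod_v e_v \cdot |\ker j|}{p \cdot \bigl[E_K : E_K \cap \Norm_{L/K}(L^\times)\bigr]},
\]
where $E_K = \cO_K^{\times}$. By Lemma~\ref{N-ramification}, the $p-1$ primes of $K$ above $N$ are all totally ramified in $L/K$, and by Lemma~\ref{p-ramification}, the prime $(1-\zeta_p)$ is additionally ramified (with $e=p$) exactly when $N \not\equiv 1 \pmod{p^2}$. Archimedean places contribute trivially since $K$ is totally complex, so $v_p(\prod_v e_v) \in \{p-1,\,p\}$ according as $N \equiv 1$ or $N \not\equiv 1$ modulo $p^2$.

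For the unit index, Dirichlet's theorem gives $E_K \cong \mu_{2p} \times \mathbb Z^{(p-3)/2}$, so $\dim_{\mathbb F_p}(E_K/E_K^p) = (p-1)/2$; since $E_K^p \subseteq E_K \cap \Norm_{L/K}(L^\times)$, this index always divides $p^{(p-1)/2}$. In the case $N \equiv 1 \pmod{p^2}$, Theorem~\ref{Norm} shows that $\zeta_p$ is a global norm from $L^\times$, and since $\zeta_p$ is nontrivial in $E_K/E_K^p$ (as $\zeta_{p^2} \notin K$), this forces the index to divide $p^{(p-3)/2}$. A short case-by-case arithmetic (using only $|\ker j|\geq 1$) then yields $v_p(|\Cl(L)^G/j(\Cl(K))|) \geq (p-1)/2$ in both congruence classes of $N$ modulo $p^2$.

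The main subtle point is the passage from a lower bound on the $p$-adic valuation of the order of $\Cl(L)^G/j(\Cl(K))$ to a lower bound on the $p$-rank of $\Cl(L)$, which is resolved by the elementary abelian structure established at the start: lifting $(p-1)/2$ independent classes from $\Cl(L)^G/j(\Cl(K))$ produces a subgroup of $\Cl(L)$ that surjects onto $(\mathbb Z/p)^{(p-1)/2}$, and hence has $p$-rank at least $(p-1)/2$. Since $\rk_p(\Cl(L))$ dominates the $p$-rank of any subgroup, the claimed bound follows.
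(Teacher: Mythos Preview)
Your proof is correct and follows essentially the same route as the paper's. The paper quotes Gras's inequality \cite[Chapter~IV, Corollary~4.5.1]{Gras_CFT}, namely $\rk_p(\Cl(L))\geq t+r_1^c-1-\rk_p\bigl(\cO_K^\times/(\cO_K^\times\cap \Norm_{L/K}(I_L))\bigr)$, and then runs the identical two-case analysis on $N\pmod{p^2}$ using Lemmas~\ref{p-ramification}, \ref{N-ramification} and Theorem~\ref{Norm}; you instead derive that inequality by hand from Chevalley's formula together with the observation that $\Cl(L)^G/j(\Cl(K))$ is killed by $p$, which is precisely what underlies Gras's corollary.
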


\begin{proof}
Let $t$ denote the number of finite primes that ramify in $L/K$ and $r_1^c$ be the number of real places of $K$ that get complexified in $L/K$.
\cite[Chapter IV, Corollary 4.5.1]{Gras_CFT} asserts that
\begin{equation}
\label{gras lower bound}
\rk_p\left(\Cl(L)\right)\geq t+r_1^c-1-\rk_p(\cO_K^{\times}/\cO_K^{\times}\cap \Norm_{L/K}(I_L) )
\end{equation}
Note that $K$ is totally complex, so $r_1^c=0$ in our case.
\newline

\underline{Case 1: when $N\equiv 1\pmod{p^2}$.}  \newline

Lemmas~\ref{p-ramification} and \ref{N-ramification} imply that $t=p-1$.
By Dirichlet unit theorem $\cO_K^{\times}\cong \langle \zeta_{2p} \rangle \times \ZZ^r$ where $r=\frac{p-1}{2}-1$.
Also $\cO_K^{\times p}\subseteq \Norm_{L/K}(I_L)$. 
Together with Theorem \ref{Norm}
\[
\langle \zeta_p, \cO_K^{\times p} \rangle\subseteq \cO_K^{\times}\cap \Norm_{L/K}(I_L).
\]
Via the isomorphism coming from the Dirichlet unit theorem, we have $\langle \zeta_p, \cO_K^{\times p} \rangle \cong \langle \zeta_{2p} \rangle \times (p\ZZ)^r$.
Moreover,
\[
(\ZZ/p\ZZ)^r\xrightarrow[]{\sim} \cO_K^{\times}/ \langle \zeta_{p}, \cO_K^{\times p} \rangle \twoheadrightarrow{} \cO_K^{\times}/\cO_K^{\times}\cap \Norm_{L/K}(I_L).
\]
Therefore, $\rk_p(\cO_K^{\times}/\cO_K^{\times}\cap \Norm_{L/K}(I_L))\leq r$.
Hence, \eqref{gras lower bound} simplifies in this case to yield
\[
\rk_p\left(\Cl(L)\right)\geq (p-1)+0-1-\left(\frac{p-1}{2}-1\right)=\frac{p-1}{2}.
\]

\underline{Case 2: when $N\not\equiv 1\pmod{p^2}$.} \newline

Applying Lemmas~\ref{p-ramification} and ~\ref{N-ramification} shows that $t=(p-1)+1=p$.
It is trivially true that
\[
\rk_p(\cO_K^{\times}/\cO_K^{\times}\cap \Norm_{L/K}(I_L))\leq \rk_p(\cO_K^{\times}) = r+1=\frac{p-1}{2}.
\]
Therefore, \eqref{gras lower bound} simplifies in this case to yield
\[
\rk_p\left(\Cl(L)\right)\geq p+0-1-\frac{p-1}{2}=\frac{p-1}{2}. \qedhere
\] 
\end{proof}

We now work towards proving an upper bound of the $p$-rank of $\Cl(L)$ in terms of the $p$-rank of $\Cl(K)$.
We prove some lemmas on the $p$-rank of the $G$-invariance of the class group of $L$.

\begin{Lemma}
\label{G-invariance upper bound}
As before, write $G=\Gal(L/K)$.
Then,
\[
\rk_p(\Cl(L)^G)\leq \rk_p(\Cl(K))+\frac{3}{2}(p-1).
\]
\end{Lemma}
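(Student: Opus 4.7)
The plan is to interpolate between $\iota(\Cl(K))$ and $\Cl(L)^G$ using the subgroup $\mathcal A \subseteq \Cl(L)^G$ of \emph{strictly ambiguous classes}, i.e., those represented by $G$-invariant fractional ideals of $L$. Since ideals pulled back from $K$ are automatically $G$-invariant, one has a filtration $\iota(\Cl(K)) \subseteq \mathcal A \subseteq \Cl(L)^G$, and subadditivity of the $p$-rank in the two associated short exact sequences yields
\[
\rk_p(\Cl(L)^G) \leq \rk_p(\Cl(K)) + \rk_p\bigl(\mathcal A/\iota(\Cl(K))\bigr) + \rk_p\bigl(\Cl(L)^G/\mathcal A\bigr).
\]

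Denote by $J_L$ and $J_K$ the groups of fractional ideals of $L$ and $K$ respectively, and let $t$ be the number of primes of $K$ ramified in $L/K$. Every $G$-invariant fractional ideal of $L$ is, modulo $J_K$, a product of the ramified primes $\fP_1,\ldots,\fP_t$; since $\fP_j^p = \fp_j\cO_L \in J_K$, we get $J_L^G/J_K \cong (\ZZ/p\ZZ)^t$, whence $\rk_p(\mathcal A/\iota(\Cl(K))) \leq t$. For the last term, the cohomology long exact sequence attached to $1 \to P_L \to J_L \to \Cl(L) \to 1$ combined with $H^1(G, J_L) = 0$---which follows from decomposing $J_L$ into $G$-modules of the form $\ZZ[G]$ (for split primes) and trivial $\ZZ$ (for inert and ramified primes), each with vanishing $H^1$---gives $\Cl(L)^G/\mathcal A \cong H^1(G, P_L)$. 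The cohomology long exact sequence of $1 \to \cO_L^\times \to L^\times \to P_L \to 1$, Hilbert 90, and the periodicity of cyclic group cohomology then identify $H^1(G, P_L)$ with the subgroup $(\cO_K^\times \cap \Norm_{L/K}(L^\times))/\Norm_{L/K}(\cO_L^\times)$ of $H^2(G, \cO_L^\times) = \cO_K^\times/\Norm_{L/K}(\cO_L^\times)$. Since $\cO_K^\times \cong \langle \zeta_{2p}\rangle \times \ZZ^{(p-3)/2}$ by Dirichlet, the $\Fp$-vector space $\cO_K^\times/\cO_K^{\times p}$ has dimension $(p-1)/2$, yielding the unconditional bound $\rk_p(H^1(G, P_L)) \leq (p-1)/2$.

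By Lemmas~\ref{p-ramification} and~\ref{N-ramification}, $t = p-1$ if $N\equiv 1\pmod{p^2}$ and $t = p$ otherwise, so in the former case the three bounds combine to $\rk_p(\Cl(L)^G) \leq \rk_p(\Cl(K)) + (p-1) + (p-1)/2 = \rk_p(\Cl(K)) + \tfrac{3}{2}(p-1)$, as desired. The latter case needs the sharpening $\rk_p(H^1(G, P_L)) \leq (p-3)/2$, and this is where Theorem~\ref{Norm} is essential: it guarantees $\zeta_p \notin \Norm_{L/K}(L^\times)$, and since every $v\in\cO_K^\times$ satisfies $v^p = \Norm_{L/K}(v) \in \Norm_{L/K}(L^\times)$, this forces $\zeta_p \notin (\cO_K^\times \cap \Norm_{L/K}(L^\times))\cdot\cO_K^{\times p}$. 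Therefore the image of $\cO_K^\times \cap \Norm_{L/K}(L^\times)$ in $\cO_K^\times/\cO_K^{\times p}$ is a proper $\Fp$-subspace, of dimension at most $(p-3)/2$; using $\cO_K^{\times p} \subseteq \Norm_{L/K}(\cO_L^\times)$ one sees that $H^1(G, P_L)$ is itself a quotient of this image, completing the bound. The main obstacle is precisely this coordinated case split: the extra ramified prime in the non-$p^2$ case has to be offset by exactly one saved dimension in the unit-norm quotient, a gain that Theorem~\ref{Norm} is essentially designed to supply.
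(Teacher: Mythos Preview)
Your proof is correct and follows essentially the same route as the paper. The only difference is that where the paper invokes \cite[Proposition~2.4]{Gras_22_paper} to obtain $\rk_p(\Cl(L)^G)\leq \rk_p(\Cl(K))+t+s$ with $s=\rk_p\bigl((\cO_K^\times\cap\Norm_{L/K}(L^\times))/\Norm_{L/K}(\cO_L^\times)\bigr)$, you reprove this inequality from scratch via the filtration $\iota(\Cl(K))\subseteq\mathcal A\subseteq\Cl(L)^G$ and the cohomological identifications; the subsequent case split on $N\pmod{p^2}$ and the use of Theorem~\ref{Norm} to save one dimension when $t=p$ are identical to the paper's argument.
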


\begin{proof}
Set $s$ to denote the quantity $\rk_p(\cO_K^{\times}\cap \Norm_{L/K}(L^{\times})/\Norm_{L/K}(\cO_L^{\times}))$ and write $t$ to denote the number of finite places that ramify in $L/K$.
\newline

\emph{Claim:} With notation as above, $s+t\leq\frac{3}{2}(p-1)$.
\newline

\emph{Justification:}
We prove the claim case-by case. \newline

\underline{Case 1: When $N\not\equiv 1 \pmod{p^2}$.} \newline

In this case, $t=(p-1)+1=p$.
Recall from Theorem~\ref{Norm} that $\zeta_p\not\in \Norm_{L/K}(L^{\times})$; so, writing $F(\cO_K^{\times})$ to denote the free part of $\cO_K^{\times}$,
\[
F(\cO_K^{\times})\twoheadrightarrow F(\cO_K^{\times})/(\cO_K^{\times})^p\supseteq \cO_K^{\times}\cap \Norm_{L/K}(L^{\times})/(\cO_K^{\times})^p \twoheadrightarrow \cO_K^{\times}\cap \Norm_{L/K}(L^{\times})/\Norm_{L/K}(\cO_L^{\times}).
\]
It follows that, $s\leq \rk_{\ZZ}(F(\cO_K^{\times}))=\frac{p-1}{2}-1$.
Therefore,
\[
s+t\leq p+\frac{p-1}{2}-1=\frac{3}{2}(p-1).
\]

\underline{\emph{Case 2: When $N\equiv 1 \pmod{p^2}$}.} \newline

In this case, note that $t=p-1$. 
Also,
\[
s\leq \rk_{p}(\cO_K^{\times})=1+\frac{p-1}{2}-1=\frac{p-1}{2}.
\]
Hence, $s+t\leq \frac{3}{2}(p-1)$.
This completes the proof of the claim.
\newline

An application of \cite[Proposition~2.4]{Gras_22_paper} implies that
\[
\rk_p(\Cl(L)^G)\leq \rk_p(\Cl(K))+t+s\leq \rk_p(\Cl(K))+\frac{3}{2}(p-1). \qedhere
\]
\end{proof}

\subsection{The case of regular prime}
When $p$ is a regular prime, better estimates can be obtained.

\begin{definition}
A prime $p$ is called \emph{regular} if $p$ does not divide the size of the class group of $\QQ(\zeta_p)$. 
\end{definition}

\begin{Lemma}
\label{G-invariance upper bound regular}
If $p$ is a regular prime, then \[
\rk_p(\Cl(L)^G)\leq p-2.
\]
\end{Lemma}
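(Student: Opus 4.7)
The plan is to sharpen the analysis of Lemma~\ref{G-invariance upper bound} by applying Chevalley's ambiguous class number formula as an \emph{equality} rather than passing through the coarser rank inequality \cite[Proposition~2.4]{Gras_22_paper}. For our cyclic extension $L/K$ of prime degree $p$ with Galois group $G$, this formula gives
\[
|\Cl(L)^G| = \frac{h(K)\,\prod_{v} e_v}{[L:K]\cdot [\cO_K^{\times} : \cO_K^{\times} \cap \Norm_{L/K}(L^{\times})]},
\]
where $v$ runs over all places of $K$. Since $K$ (and hence $L$) is totally complex, archimedean places contribute trivially, so only finite ramified places appear in the numerator. Taking $p$-adic valuations and invoking the elementary inequality $\rk_p(A) \leq v_p(|A|)$ for any finite abelian $p$-group $A$ will reduce the lemma to bounding $v_p$ of the right-hand side.

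First I would use regularity: by assumption $v_p(h(K)) = 0$, eliminating the $h(K)$ term entirely. Then I would count ramified primes with Lemmas~\ref{p-ramification} and~\ref{N-ramification}, splitting into the same two cases as in the proof of Theorem~\ref{lower bound theorem}. When $N \equiv 1 \pmod{p^2}$, only the $p-1$ primes above $N$ ramify, each with ramification index $p$, so $v_p\bigl(\prod_v e_v\bigr) = p-1$; combining this with the $v_p([L:K]) = 1$ in the denominator already forces $v_p(|\Cl(L)^G|) \leq p-2$, independent of the unit contribution.

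When $N \not\equiv 1 \pmod{p^2}$, the prime $\pi = (1-\zeta_p)$ additionally ramifies, so $v_p\bigl(\prod_v e_v\bigr) = p$, and I need to extract one more factor of $p$ from the unit index. This is exactly where Theorem~\ref{Norm} intervenes: under this congruence condition, $\zeta_p \notin \Norm_{L/K}(L^{\times})$. Since $\cO_K^{\times p} \subseteq \cO_K^{\times}\cap \Norm_{L/K}(L^{\times})$, the quotient $\cO_K^{\times}/(\cO_K^{\times}\cap \Norm_{L/K}(L^{\times}))$ is an $\mathbb{F}_p$-vector space in which the class of $\zeta_p$ is nontrivial, giving $v_p\bigl([\cO_K^{\times} : \cO_K^{\times}\cap \Norm_{L/K}(L^{\times})]\bigr) \geq 1$. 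Substituting into the formula yields $v_p(|\Cl(L)^G|) \leq p - 1 - 1 = p-2$, matching the first case.

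There is no serious obstacle in this argument; the point is simply that Lemma~\ref{G-invariance upper bound}, which bounded the unit term from above, throws away information, whereas the Chevalley formula uses it as a denominator, where the same upper bounds become lower bounds on the correction. The only minor bookkeeping is ensuring the archimedean factors really are trivial (immediate since $L$ is totally complex) and that the lower bound $v_p(\text{unit index}) \geq 1$ in the second case is genuinely forced by $\zeta_p$ alone, not merely by a combination of units whose image happens to be nonzero.
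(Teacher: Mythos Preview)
Your proposal is correct and follows essentially the same approach as the paper: both apply Chevalley's ambiguous class number formula as an equality, use regularity to kill the $h(K)$ contribution, split into the two cases $N\equiv 1\pmod{p^2}$ and $N\not\equiv 1\pmod{p^2}$ according to Lemmas~\ref{p-ramification} and~\ref{N-ramification}, and in the second case invoke Theorem~\ref{Norm} to extract the extra factor of $p$ from the unit index. The only cosmetic difference is that the paper cites the formula already in the condensed form $p^{t-1}/(p\text{-part of the unit index})$, whereas you write out the full $\prod_v e_v/[L:K]$ version and take $v_p$; since $[L:K]=p$ and each ramified prime has $e_v=p$, these are identical.
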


\begin{proof}
We prove the result by showing that 
\[
\abs{\Cl(L)^G\otimes\Fp}\leq p^{p-2}.
\]
As in the previous lemma, set $t$ to denote the number of finite primes that ramify in $L/K$.
Recall Chevalley's ambiguous class number formula (see for example \cite[Chapter~II, \S6.2.3]{Gras_CFT}) which asserts that when $p$ is a regular prime,
\[
\abs{\Cl(L)^G\otimes\Fp}=\frac{p^{t-1}}{p\text{-part of }[\cO_K^{\times}:\cO_K^{\times}\cap \Norm_{L/K}(L^{\times})]}.
\]

\underline{Case 1: When $N\not\equiv 1 \pmod{p^2}$.} \newline

As before $t=(p-1)+1=p$.
Since $\zeta_p\in \cO_K^{\times}\setminus \Norm_{L/K}(L^{\times})$, it follows from Theorem \ref{Norm} that $p\mid [\cO_K^{\times}:\cO_K^{\times}\cap \Norm_{L/K}(L^{\times})]$.
Hence,
\[
\abs{\Cl(L)^G\otimes\Fp}\leq \frac{p^{p-1}}{p}=p^{p-2}.
\]

\underline{Case 2: When $N\equiv 1 \pmod{p^2}$.} \newline

In this situation $t=p-1$ and 
\[
\abs{\Cl(L)^G\otimes\Fp}\leq p^{t-1}=p^{p-2}. \qedhere
\]
\end{proof}

We can now state and prove the main result of this section.

\begin{Th}
\label{cft ub theorem}
With notation as above,
\[
\rk_p\left(\Cl(L)\right)\leq p \rk_p(\Cl(K))+\frac{3}{2}(p-1)^2.
\]
Furthermore, if $p$ is a regular prime, then 
\[
\rk_p\left(\Cl(L)\right)\leq (p-1)(p-2).
\]
\end{Th}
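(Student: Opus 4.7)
The plan is to first control $\rk_p(\Cl(L))$ in terms of $\rk_p(\Cl(L)^G)$ by exploiting the $\Fp[G]$-module structure on $\Cl(L)[p]$, and then invoke the $G$-invariant bounds from Lemmas~\ref{G-invariance upper bound} and~\ref{G-invariance upper bound regular}. Write $\sigma$ for a generator of the cyclic group $G=\Gal(L/K)$ of order $p$, and view $M:=\Cl(L)[p]$ as a module over $\Fp[G] \cong \Fp[\sigma]/(\sigma-1)^p$. Since this group ring is a local principal ideal ring, the indecomposable $\Fp[G]$-modules are precisely the cyclic modules $V_k := \Fp[G]/(\sigma-1)^k$ for $k=1,\dots,p$; each satisfies $\dim_{\Fp} V_k = k$ and $\dim_{\Fp} V_k^G = 1$. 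Writing $M \cong \bigoplus_{k=1}^{p} V_k^{\oplus m_k}$, we obtain $\rk_p(\Cl(L)) = \sum_k k\, m_k$ and $\rk_p(\Cl(L)^G) = \dim_{\Fp} M^G = \sum_k m_k$.

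The second ingredient is an analysis of the norm element $N_G = 1 + \sigma + \dots + \sigma^{p-1}$. Expanding $\sigma^i = (1+(\sigma-1))^i$ and applying the hockey-stick identity $\sum_{i=j}^{p-1}\binom{i}{j} = \binom{p}{j+1}$, one finds $N_G \equiv (\sigma-1)^{p-1} \pmod{p}$ in $\ZZ[G]$. Hence $N_G$ annihilates every $V_k$ with $k < p$ and acts non-trivially on $V_p$, so $\dim_{\Fp} N_G(M) = m_p$. Combining the dimension counts yields the key inequality
\[
\rk_p(\Cl(L)) = \sum_{k=1}^{p} k\, m_k \leq (p-1)\sum_{k=1}^{p} m_k + m_p = (p-1)\,\rk_p(\Cl(L)^G) + m_p.
\]

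To bound $m_p$, I would use the factorization $N_G = j^* \circ N_{L/K}$, where $j^* : \Cl(K) \to \Cl(L)$ denotes extension of ideals and $N_{L/K} : \Cl(L) \to \Cl(K)$ is the ideal-theoretic norm. Since $N_{L/K}$ carries $\Cl(L)[p]$ into $\Cl(K)[p]$, this gives $N_G(M) \subseteq j^*(\Cl(K)[p])$, whence $m_p \leq \rk_p(\Cl(K))$. Substituting the bound from Lemma~\ref{G-invariance upper bound},
\[
\rk_p(\Cl(L)) \leq (p-1)\left(\rk_p(\Cl(K)) + \tfrac{3}{2}(p-1)\right) + \rk_p(\Cl(K)) = p\,\rk_p(\Cl(K)) + \tfrac{3}{2}(p-1)^2,
\]
as claimed. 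For the regular prime case, $\rk_p(\Cl(K)) = 0$ forces $m_p = 0$, so Lemma~\ref{G-invariance upper bound regular} yields $\rk_p(\Cl(L)) \leq (p-1)\,\rk_p(\Cl(L)^G) \leq (p-1)(p-2)$. The main subtlety is the identity $N_G \equiv (\sigma-1)^{p-1} \pmod{p}$ together with its consequence that $\Image(N_G)$ detects exactly the free $\Fp[G]$-summands of $M$; once this is in place, the remainder of the argument is essentially bookkeeping across the indecomposable module types.
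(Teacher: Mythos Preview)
Your proof is correct and follows essentially the same route as the paper's: both establish $\rk_p(\Cl(L)) \leq \rk_p(\Cl(K)) + (p-1)\rk_p(\Cl(L)^G)$ and then feed in Lemmas~\ref{G-invariance upper bound} and~\ref{G-invariance upper bound regular}. The only difference is that the paper cites this inequality as a black box from \cite[(1) and Corollary~2.3]{Gras_22_paper}, whereas you unpack it directly via the $\Fp[G]$-module decomposition of $\Cl(L)[p]$ and the identification of $m_p$ with $\dim_{\Fp} N_G(M)$; your argument is exactly the standard proof of the cited result, so the two are the same in substance.
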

\begin{proof}
It follows from \cite[(1) and Corollary~2.3]{Gras_22_paper} that
\[
\rk_p\left(\Cl(L)\right)\leq \rk_p(\Cl(K))+(p-1)\rk_p(\Cl(L)^G).
\]
By Lemma~\ref{G-invariance upper bound} it follows that
\[
\rk_p\left(\Cl(L)\right)\leq \rk_p(\Cl(K))+(p-1)\left(\rk_p(\Cl(K))+\frac{3}{2}(p-1)\right)=p\rk_p(\Cl(K))+\frac{3}{2}(p-1)^2.
\]

Furthermore, if $p$ is regular then $\rk_p(\Cl(K))=0$.
Lemma~\ref{G-invariance upper bound regular} then implies
\[
\rk_p\left(\Cl(L)\right)\leq (p-1)(p-2).\qedhere
\]
\end{proof}

\subsection{The case when \texorpdfstring{$p=3$}{}}
\label{sec: p=3 cft}

Set $\pi=1-\zeta_3$.
Any non-zero element of $\ZZ[\zeta_3]$ can be written as $\pm \zeta_3^{i_1}\pi^{i_2}u$, where $i_1, i_2$ are non-negative integers and $u\equiv 1 \pmod{3}$.
Since $N\equiv 1 \pmod{3}$, it splits as $N= \fn \overline{\fn}$ in $\ZZ[\zeta_3]$; we often write $\fn = \fn_1$ and $\overline{\fn} = \fn_2$.

\begin{Lemma}
\label{4n as a sum}
When $N\equiv 1 \pmod{3}$, for some of choice of integers $A,B$ (unique up to sign), 
\[4N= A^2+27B^2.\]
\end{Lemma}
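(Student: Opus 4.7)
The plan is to work in the Eisenstein integers $\ZZ[\zeta_3]$, which form a PID (indeed, a Euclidean domain). Since $N \equiv 1 \pmod{3}$, the rational prime $N$ splits in $\ZZ[\zeta_3]$ as $N = \fn \overline{\fn}$. Choosing a generator of $\fn$, we may write $\fn = (\pi)$ with $\pi = a + b\zeta_3$ for some $a,b \in \ZZ$, so that $N = \Norm(\pi) = a^2 - ab + b^2$.

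Using the identification $\zeta_3 = \tfrac{-1+\sqrt{-3}}{2}$, we rewrite $\pi = \tfrac{(2a-b) + b\sqrt{-3}}{2}$, and hence
\[
4N \;=\; (2a-b)^2 + 3b^2.
\]
To obtain the desired form $4N = A^2 + 27B^2$, it remains to arrange that $3 \mid b$. For this, we use the freedom to multiply $\pi$ by a unit of $\ZZ[\zeta_3]$. A direct computation gives
\[
\zeta_3 \cdot \pi = -b + (a-b)\zeta_3, \qquad \zeta_3^2 \cdot \pi = (b-a) - a\,\zeta_3,
\]
so the $\zeta_3$-coefficients of the three associates $\pi,\ \zeta_3\pi,\ \zeta_3^2\pi$ are $b,\ a-b,\ -a$, whose sum is $-(a+b) + (a-b) = \ldots$ — more to the point, a case-by-case check on the residues of $(a,b) \pmod 3$, constrained by $a^2 - ab + b^2 \equiv 1 \pmod 3$, shows that exactly one of $\{b,\ a-b,\ -a\}$ is divisible by $3$. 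Replacing $\pi$ by the corresponding associate, we may assume $3 \mid b$; then $A := 2a-b$ and $B := b/3$ give $4N = A^2 + 27B^2$.

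For uniqueness up to sign, we track the remaining ambiguity. The choice of generator of $\fn$ is unique only up to multiplication by a unit $\pm \zeta_3^k$, and one can swap $\fn$ with $\overline{\fn}$. The unit constraint $3\mid b$ pins down $\pi$ up to multiplication by $\pm 1$ among the six associates of one factor. A short calculation shows $\pi \mapsto -\pi$ sends $(A,B) \mapsto (-A,-B)$, while $\pi \mapsto \bar\pi$ sends $(A,B) \mapsto (A,-B)$. Hence $(A,B)$ is determined up to independent sign changes, as claimed.

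The only step requiring care is the mod-$3$ case analysis that produces an associate with $3 \mid b$; everything else is routine manipulation in $\ZZ[\zeta_3]$. I do not anticipate any serious obstacle, since the argument is essentially a classical descent from $4N = A^2 + 3B^2$ to $4N = A^2 + 27B^2$ enabled precisely by the hypothesis $N \equiv 1 \pmod 3$.
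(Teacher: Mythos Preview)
Your proof is correct and follows essentially the same route as the paper: write $N=\fn\overline{\fn}$ in $\ZZ[\zeta_3]$ with $\fn=(a+b\zeta_3)$, rewrite $4N=(2a-b)^2+3b^2$, and use the unit action together with the constraint $a^2-ab+b^2\equiv 1\pmod 3$ to arrange $3\mid b$, then set $A=2a-b$, $B=b/3$. Your uniqueness discussion (tracking the residual $\pm1$ and the swap $\fn\leftrightarrow\overline{\fn}$) is in fact a bit more explicit than the paper's.
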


\begin{proof}
First factor the above equation in $\ZZ[\zeta_3]$ into
\[
\frac{A^2+27B^2}{4}=\left(\frac{A+3B\sqrt{-3}}{2}\right) \left(\frac{A-3B\sqrt{-3}}{2}\right).
\]
Writing $N=\fn\overline{\fn}$, suppose that $\fn= a + b\zeta_3$.
Then we may rewrite $\fn$ as follows
\[
\fn=\frac{2a-b+b\sqrt{-3}}{2}.
\]
This equation is unique up to multiplication of $\zeta_3$ and $\zeta_3^2$.
In other words,
\begin{align*}
\fn &=\frac{-(a+b)+(a-b)\sqrt{-3}}{2}\\
\fn &=\frac{2b-3a+a\sqrt{-3}}{2}
\end{align*}
We want $a$, $b$, or $a-b\equiv0\pmod{3}$. 
Factorization of $N$ implies that
\[
N=(a+b\zeta_3)(a+b\zeta_3^2)=a^2-ab+b^2 \equiv 1 \pmod{3}.
\]
Therefore, either $a$ or $b\equiv0\pmod3$ or $a\equiv b\pmod 3$.
Note that the `or's are exclusive, which gives us the uniqueness up to sign.  

In particular, we choose $A = (2a-b)$ and $B = \frac{b}{3}$.
\end{proof}

Results in the previous section assert that $1\leq \rk_3 \Cl(L)\leq 2$.
The main result of this section is to provide a precise characterization of $\rk_3\Cl(L)$.

\begin{rem}
Note that the absolute discriminant of $L/\QQ$ is $-3^7 N^4$ when $N\not\equiv 1 \pmod{9}$, whereas it is $-3^3 N^4$ when $N\equiv 1 \pmod{9}$.
In the former case, $3$ is totally ramified in $L$ whereas in the latter situation a prime above $3$ has ramification index 2; see Theorem~\ref{p-ramification}. 
\end{rem}

\subsubsection{\textbf{Hilbert Symbol calculations and applications}}
We first prove the theorems regarding the cubic Hilbert symbols attached to the extension $L/K$.
For the definition and basic properties see \cite[Chapter V, Proposition~3.2]{Neu99}.

\begin{Lemma}
\label{Hilbert-pi}
If $x=\fn_1$ or $\fn_1^2\fn_2$ and $j=1$ or $2$, then $\left(\frac{x,N}{(\fn_j)}\right)_3=1$. 
\end{Lemma}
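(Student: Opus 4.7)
The plan is to compute $\left(\frac{x, N}{(\fn_j)}\right)_3$ via the tame-symbol formula (valid since $N \neq 3$ implies $(\fn_j)$ has residue characteristic coprime to $3$) and to reduce all four cases to a single cubic-residue statement. Using bilinearity of the Hilbert symbol together with the identity $\left(\frac{a,a}{\fp}\right)_3 = \left(\frac{a,-1}{\fp}\right)_3 = 1$ (the latter since $-1 = (-1)^3$ is a global cube in $K^{\times}$), we get $\left(\frac{\fn_1, \fn_1}{(\fn_j)}\right)_3 = 1$. Combined with $N = \fn_1 \fn_2$, bilinearity yields
\[
\left(\frac{\fn_1, N}{(\fn_j)}\right)_3 = \left(\frac{\fn_1, \fn_2}{(\fn_j)}\right)_3.
\]
The case $x = \fn_1^2 \fn_2 = \fn_1 \cdot N$ reduces to the case $x = \fn_1$ analogously, using $\left(\frac{N, N}{(\fn_j)}\right)_3 = 1$.

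Next, apply the tame-symbol formula. For $j = 1$, with $v_{\fn_1}(\fn_1) = 1$ and $v_{\fn_1}(\fn_2) = 0$, one obtains $\left(\frac{\fn_1, \fn_2}{(\fn_1)}\right)_3 \equiv \fn_2^{-(N-1)/3} \pmod{\fn_1}$; symmetrically, for $j = 2$ one obtains $\fn_1^{(N-1)/3} \pmod{\fn_2}$. Under the residue-field isomorphism $\cO_K/\fn_j \cong \mathbb{F}_N$, the observation $\fn_2 \equiv A \pmod{\fn_1}$ and $\fn_1 \equiv A \pmod{\fn_2}$ — both immediate from $\fn_1 + \fn_2 = A \in \ZZ$ (cf.~Lemma~\ref{4n as a sum}) — reduces the lemma to the single assertion that $A$ is a cubic residue modulo $N$.

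To establish this, I invoke Eisenstein's cubic reciprocity. Since the preamble of Section~\ref{sec: p=3 cft} fixes $\fn_1, \fn_2$ as primary elements of $\ZZ[\zeta_3]$, reciprocity gives $\chi_{\fn_1}(\fn_2) = \chi_{\fn_2}(\fn_1)$, where $\chi_\fp(\alpha) \equiv \alpha^{(N\fp-1)/3} \pmod{\fp}$ denotes the cubic residue character valued in $\mu_3$. Because $\fn_2 = \overline{\fn_1}$, applying complex conjugation to the defining congruence yields $\chi_{\fn_2}(\fn_1) = \overline{\chi_{\fn_1}(\fn_2)}$. Hence $\chi_{\fn_1}(\fn_2)$ is a real cube root of unity, forcing $\chi_{\fn_1}(\fn_2) = 1$. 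This shows $\fn_2$, and therefore $A$, is a cube modulo $N$, completing the proof.

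The main delicate point I anticipate is the dependence of the Hilbert symbol on the specific generator chosen (not merely the prime ideal), since cubic reciprocity only holds with primary generators. The paper's normalization conveniently supplies primary $\fn_1, \fn_2$ from the outset, so the bookkeeping is minimal; otherwise, one would have to explicitly track how a unit adjustment $\fn_1 \mapsto \zeta_3 \fn_1$ alters the tame symbol by a power of $\zeta_3^{(N-1)/3}$ and verify that these contributions cancel.
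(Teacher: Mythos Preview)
Your proof is correct and follows the same initial reduction as the paper: both argue via bilinearity and the identity $(a,a)_3=(a,-1)_3=1$ to reduce all four symbols to $\left(\frac{\fn_1,\fn_2}{(\fn_j)}\right)_3$, and then (using $\fn_1+\fn_2=A$) to the single claim that $A$ is a cube modulo $N$.

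The genuine difference lies in how that last claim is established. The paper quotes an explicit congruence from Lemmermeyer's book, namely $A\cdot\bigl(\tfrac{N-1}{3}!\bigr)^3\equiv 1\pmod N$, so $A$ is visibly a cube with cube root $(m!)^{-1}$. You instead combine cubic reciprocity $\chi_{\fn_1}(\fn_2)=\chi_{\fn_2}(\fn_1)$ with the observation (from $\fn_2=\overline{\fn_1}$) that $\chi_{\fn_2}(\fn_1)=\overline{\chi_{\fn_1}(\fn_2)}$, forcing $\chi_{\fn_1}(\fn_2)\in\mu_3\cap\mathbb R=\{1\}$. Your route is slick and self-contained; the paper's route has the advantage that the explicit cube root $\bigl(\tfrac{N-1}{3}!\bigr)^{-1}$ is reused in the proof of Theorem~\ref{criterion}(ii), where the criterion for $\rk_3(\Cl(L))=2$ in the case $N\equiv 1\pmod 9$ is phrased in terms of $A$ being a ninth power. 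With your argument in place one would need to supply that congruence separately at that later point.

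One small caveat on terminology: the paper normalizes $\fn_i\equiv 1\pmod 3$, whereas the standard ``primary'' condition for Eisenstein reciprocity is $\equiv -1\pmod 3$. This is harmless since $-1$ is a cube, so $\chi_{\fn_1}(\fn_2)=\chi_{-\fn_1}(-\fn_2)$ and reciprocity transfers unchanged; but in your write-up you may want to replace ``primary'' by ``congruent to $1\pmod 3$'' together with a one-line remark that the sign is absorbed by $\chi_\fp(-1)=1$.
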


\begin{proof}
By \cite[Chapter V, Proposition~3.4]{Neu99}, 
\[
\left(\frac{\fn_1,\fn_1}{(\fn_2)}\right)_3=\left(\frac{\fn_2,\fn_2}{(\fn_1)}\right)_3=1.
\]
Same proposition implies that 
\[
\left(\frac{\fn_1,\fn_1}{(\fn_1)}\right)_3 = \left(\frac{\fn_2,\fn_2}{(\fn_2)}\right)_3 = (-1)^{\frac{N-1}{3}}=1
\]
since $N$ is an odd prime.
On the other hand, \cite[Chapter V, Proposition~3.5]{Neu99} asserts that 
\[
\left(\frac{\fn_1,\fn_2}{(\fn_1)}\right)_3=1 \Longleftrightarrow \fn_2 \textrm{ is a cube}\pmod{\fn_1}.
\]
Since $\fn_2=(2a-b)-\fn_1$,
\[
\left(\frac{\fn_1,\fn_2}{(\fn_1)}\right)_3 = 1 \Longleftrightarrow 2a-b \textrm{ is a cube}\pmod{\fn_1},
\]
Equivalently, $A=2a-b$ is a cube $\pmod{N}$.
By \cite[Corollary~7.6]{Lem_RecLaw} and Wilson's theorem if $N=3m+1$, we have $A\cdot (m!)^3 \equiv 1\pmod{N}$.
So $A$ is always a cube modulo $N$.
Hence,
\[
\left(\frac{\fn_1,\fn_2}{(\fn_1)}\right)_3 = 1.
\]
A similar result holds for $\left(\frac{\fn_1,\fn_2}{(\fn_2)}\right)_3$.
For each $x$ and $j$, the claim follows from basic properties of Hilbert symbols.
\end{proof}

\begin{Lemma}
\label{Hilbert-lambda}
Suppose that $x=\fn_1^2\fn_2\equiv 1\pmod{3\pi}$.
Then, 
\[
\left(\frac{x,\pi}{(\pi)}\right)_3=1 \Longleftrightarrow 3\mid B.
\]
\end{Lemma}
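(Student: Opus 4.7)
The plan is to pass to the completion $K_\pi \cong \QQ_3(\zeta_3)$ of $K$ at $\pi$, and to compute both sides of the equivalence there in the explicit $\ZZ_3$-basis $\{1, \pi\}$ of $\cO_{K_\pi}$. The analytic heart of the argument is the claim that, for any $\alpha \in U^{(3)} := 1 + \pi^3 \cO_{K_\pi}$,
\[
\left(\frac{\alpha, \pi}{(\pi)}\right)_3 = 1 \iff \alpha \in U^{(4)} := 1 + \pi^4 \cO_{K_\pi}.
\]
Via local class field theory, the kernel of the cubic Hilbert symbol at $\pi$ coincides with $\Norm_{L_\pi/K_\pi}(L_\pi^\times)$ for $L_\pi := K_\pi(\pi^{1/3})$. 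The inclusion $U^{(4)} \subseteq \Norm_{L_\pi/K_\pi}(L_\pi^\times)$ follows because $e/(p-1) = 1$, so the $\log/\exp$ isomorphism identifies cubing with multiplication by $3$ on $U^{(n)}$ for $n \geq 2$, yielding $(U^{(2)})^3 = U^{(4)}$ and hence $U^{(4)} \subseteq (K_\pi^\times)^3$. The opposite non-inclusion $U^{(3)} \not\subseteq \Norm_{L_\pi/K_\pi}(L_\pi^\times)$ amounts to the claim that the conductor of $L_\pi/K_\pi$ equals $\pi^4$, which I would check from the Eisenstein model $T^3 - \pi$: its derivative $3\alpha^2$ has $L_\pi$-valuation $8$, giving a discriminant of $K_\pi$-valuation $8$, and the conductor-discriminant formula for a cyclic cubic extension then pins the conductor exponent to exactly $4$.

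Granting this reduction, I would compute $x \pmod{\pi^4}$ explicitly. Using the identity $\sqrt{-3} = \zeta_3 - \zeta_3^2 = 3 - 2\pi$ in $\cO_{K_\pi}$, one rewrites
\[
\fn_1 = \frac{A + 3B\sqrt{-3}}{2} = M - 3B\pi, \qquad M := \frac{A + 9B}{2} \in \ZZ,
\]
where $M$ is an integer because $A \equiv B \pmod 2$, as forced by $4N = A^2 + 27B^2 \equiv 0 \pmod 4$. Combined with $\fn_1 \fn_2 = N$, this yields
\[
x = \fn_1^2 \fn_2 = N\fn_1 = NM - 3NB\pi.
\]
Since $\pi^2 = 3\pi - 3$ gives $\pi^4 \cO_{K_\pi} = 9\cO_{K_\pi}$, reduction modulo $\pi^4$ is reduction of each coordinate (in the basis $\{1, \pi\}$) modulo $9$, while reduction modulo $\pi^3$ reduces the constant coordinate modulo $9$ and the $\pi$-coordinate modulo $3$. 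Because $3\pi$ and $\pi^3$ differ by a unit of $\cO_{K_\pi}$, the hypothesis $x \equiv 1 \pmod{3\pi}$ is equivalent to $x \equiv 1 \pmod{\pi^3}$ and therefore forces $NM \equiv 1 \pmod 9$, the congruence $-3NB \equiv 0 \pmod 3$ being automatic. By the first paragraph, $\left(\frac{x, \pi}{(\pi)}\right)_3 = 1$ is then equivalent to the additional condition $-3NB \equiv 0 \pmod 9$; since $\gcd(N, 3) = 1$, this is precisely $3 \mid B$.

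The main obstacle is the conductor/norm-group calculation of the first paragraph. An alternative route would be to invoke the Artin--Hasse--Iwasawa explicit reciprocity law at the wild prime $\pi$ of $\QQ_3(\zeta_3)$, which directly yields a formula $\left(\frac{\alpha, \pi}{(\pi)}\right)_3 = \zeta_3^{c(\alpha)}$ for $\alpha \in U^{(3)}$ with $c(\alpha) \in \mathbb{F}_3$ read off from the $\pi^3$-coefficient of $\alpha - 1$ modulo $\pi^4$; either approach reduces the problem to the same explicit computation with $A, B, N$ carried out in the second paragraph.
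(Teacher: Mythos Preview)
Your argument is correct and complete. The conductor computation is sound: from $T^3-\pi$ you get $v_{L_\pi}(\mathfrak d)=v_{L_\pi}(3\alpha^2)=6+2=8$, hence $v_{K_\pi}(\disc)=8$, and for a cyclic cubic this forces conductor exponent $4$; together with $[U^{(3)}:U^{(4)}]=3$ this pins down $U^{(3)}\cap\Norm=U^{(4)}$ exactly as you say. The coordinate computation of $x=NM-3NB\pi$ and the description of $(\pi^3)$, $(\pi^4)$ in the basis $\{1,\pi\}$ are also right.

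The paper takes the route you flag at the end as an alternative: it invokes the explicit Artin--Hasse formula for the wild symbol at $\pi$ (Neukirch, p.~340, (3)), which expresses $\left(\frac{x,\pi}{(\pi)}\right)_3$ through $\tau\!\left(\frac{\zeta_3}{\pi}\log x\right)$ with $\tau=\Tr_{K_\pi/\QQ_3}$. Using $\log x\equiv x-1\pmod{9\pi}$ and the parametrisation $\fn_1=a+b\zeta_3$, the trace evaluates to $1-Na\pmod 9$, so the symbol is trivial iff $Na\equiv 1\pmod 9$; a short manipulation with $N=a^2-ab+b^2$ and $3\mid b$ turns this into $9\mid b$, i.e.\ $3\mid B$. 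Your approach trades the explicit reciprocity law for a discriminant/conductor calculation and a direct $\pi$-adic expansion; it is more self-contained in that it avoids quoting the Artin--Hasse formula, while the paper's computation is shorter once that formula is on the table. The two arguments meet at the same congruence, since your $M=a+b$ and $NM-1\equiv 0\pmod 9$ combined with $3\mid b$ is equivalent to the paper's $Na\equiv 1\pmod 9$.
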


\begin{proof}
Our assumption $x\equiv 1\pmod{3\pi}$ implies that $\log(x) \equiv x-1\pmod{9\pi}$. 
Here, we are using $3$-adic logarithm.
Consider the trace map 
\[
\tau: \mathbb{Q}_p(\zeta_3)\longrightarrow \mathbb{Q}_p;
\]
note that $\frac{\zeta_3}{\pi}(x-1)$ maps to $1-Na$ under $\tau$.
This together with the previous congruence implies, 
\[
\tau\left(\frac{\zeta_3}{\pi}\log(x)\right)\equiv 1-Na \pmod{9}.
\]
Now, in view of \cite[(3) on p.~340]{Neu99} and the above congruence,
\[
\left(\frac{x,\pi}{(\pi)}\right)_3 = 1 \Longleftrightarrow Na\equiv 1 \pmod{9}.
\]
Recall (from Lemma~\ref{4n as a sum}) that $N=\fn_1\fn_2 = a^2-ab+b^2 \equiv 1 \pmod{3}$. 
Since, $\fn_i\equiv 1 \pmod{3}$ we have $a\equiv 1,4 \text{ or } 7 \pmod{9}$ and $3\mid b$. Therefore, $Na-1\equiv a^2(a-b)-1\equiv -a^2b\pmod{9}$.
Hence,
\[
Na\equiv 1 \pmod{9} \Longleftrightarrow 9\mid b
\]
and this completes the proof. 
\end{proof}

In the following theorem we provide a characterization of when $\rk_3\left(\Cl(L)\right)=1$ or $2$.
We begin by recording a result of F.~Gerth in our particular setting; see \cite[Theorem~5.3]{Ger76}.

\begin{theorem*}[Gerth]
Let $K=\QQ(\zeta_3) = \QQ(\sqrt{-3})$ and $L = K(\sqrt[3]N)$ where $N = \fn_1 \fn_2 \equiv 1 \pmod{3}$ and each\footnote{In Gerth's notation $g=2$ for our setting.} $\fn_i \equiv 1\pmod{\pi^3 \cO_K}$.
Let $M_1 = L(\sqrt[3]{x_1}) = L(\sqrt[3]{\fn_1 \fn_2^2}) = L(\sqrt[3]{\fn_2 \fn_1^2})$ denote the genus field\footnote{In Gerth's notation $t=1$ for our setting by \cite[Theorem~5.1]{Ger76} and the choice of $x_1$ follows from \cite[Theorem~5.2]{Ger76}.}.
If there exist ambiguous ideal classes of $L/K$ which are not strong ambiguous, let $\fP$ be a prime ideal of $L$ contained in one such class which is relatively prime to $x_1$.
Let $\upsilon$ be a prime element of $F$ such that $(\upsilon) = \Norm_{L/K}(\fP)$.
Let $s$ denote the rank of the matrix $(\beta_{1j})$ where $1\leq j \leq u$ and each $\beta_{1j}\in \mathbb{F}_3$.
Here, $u$ and $\beta_{1j}$ are defined as follows:

\begin{align*}
u & = \begin{cases}
  2 & \textrm{ if } (\pi) \textrm{ does not ramify in }L/K \textrm{ and all ambiguous classes are strong ambiguous}\\
  4 & \textrm{ if } (\pi) \textrm{ ramifies in }L/K \textrm{ and there exist ambiguous classes which are not strong ambiguous}\\
  3 & \textrm{ otherwise.}
\end{cases} \\
\zeta_3^{\beta_{1j}} & = \begin{cases}
\left( \frac{x_1, N}{(\fn_j)}\right)_3 & \text{ 
if } 1\leq j \leq 2 \\
\left( \frac{x_1, \pi}{(\pi)}\right)_3 & \text{ 
if } j =3 \text{ and }(\pi) \text{ ramifies in }L/K \\
\left( \frac{x_1, \upsilon}{(\upsilon)}\right)_3 & \text{ 
if } j=u \text{ and if there exist ambiguous ideal classes which are not strong ambiguous}
\end{cases}.
\end{align*}
Then $\rk_3\left(\Cl(L)\right) = 2-s$.
\end{theorem*}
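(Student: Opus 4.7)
This statement is a direct specialization of \cite[Theorem~5.3]{Ger76} to $K = \QQ(\sqrt{-3})$ and $L = K(\sqrt[3]{N})$, so my plan is to verify that our setup matches Gerth's hypotheses and to sketch the main ideas rather than reproduce the argument in full. In Gerth's notation, $g$ denotes the number of prime ideals of $K$ lying above the rational primes dividing $N$ and coprime to $\pi$; since $N = \fn_1 \fn_2$ splits in $K$, we have $g = 2$. The value $t = 1$ follows from \cite[Theorem~5.1]{Ger76}, and the explicit generator $x_1 = \fn_1 \fn_2^2$ comes from \cite[Theorem~5.2]{Ger76}. The hypothesis $\fn_i \equiv 1 \pmod{\pi^3 \cO_K}$ can be arranged since each $\fn_i$ can be normalized to be primary in $\ZZ[\zeta_3]$.

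The argument itself rests on three pillars. The first is Chevalley's ambiguous class number formula for $|\Cl(L)^G|$, which already appeared in the proof of Lemma~\ref{G-invariance upper bound regular}. The second is the comparison between ambiguous and strongly ambiguous classes, which is controlled by whether $\zeta_3$ lies in $\Norm_{L/K}(L^{\times})$; by Theorem~\ref{Norm}, this is exactly the $N \pmod{9}$ dichotomy. The third is the packaging of the residual $3$-rank as the rank $s$ of an $\mathbb{F}_3$-valued matrix of cubic Hilbert symbols, via Hasse's norm theorem applied to the cyclic cubic extension $K(\sqrt[3]{x_1})/K$: each column corresponds to a place where $L/K$ ramifies (or, in the non-strongly-ambiguous case, to the prime $\upsilon$), and the entry $\beta_{1j}$ vanishes exactly when $x_1$ is a local cube at the corresponding place. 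The number of columns $u \in \{2, 3, 4\}$ reflects which local conditions are active, dictated by whether $\pi$ ramifies (equivalently whether $N \not\equiv 1 \pmod{9}$, by Lemma~\ref{p-ramification}) and whether non-strongly-ambiguous classes exist; counting the cokernel of the resulting local-global map yields the formula $\rk_3 \Cl(L) = 2 - s$.

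The main subtlety --- and the step I would expect to require the most care --- is the non-strongly-ambiguous case. There, one must choose a prime $\fP$ in such an ambiguous class, set $(\upsilon) = \Norm_{L/K}(\fP)$, and verify that the Hilbert symbol $\left(\tfrac{x_1, \upsilon}{(\upsilon)}\right)_3$, and hence the rank $s$, is independent of the choice of $\fP$. Establishing this well-definedness, together with the compatibility of the matrix with the genus-theoretic exact sequences that relate $\Cl(L)^G$, its strongly ambiguous subgroup, and the $3$-torsion of $\Cl(L)$, is the crux of Gerth's proof. For our downstream application in Theorem~\ref{criterion}, the Hilbert symbols that actually arise have already been evaluated in Lemmas~\ref{Hilbert-pi} and \ref{Hilbert-lambda}, so this subtle point does not affect our computations.
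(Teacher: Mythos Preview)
Your proposal is correct and in fact matches the paper exactly: the paper does not prove this statement at all but simply records it as a specialization of \cite[Theorem~5.3]{Ger76}, with the footnotes identifying $g=2$, $t=1$, and the choice of $x_1$ via \cite[Theorems~5.1 and 5.2]{Ger76}. Your verification that the hypotheses match and your sketch of the underlying mechanism (Chevalley's formula, the ambiguous/strongly ambiguous dichotomy, and the Hilbert-symbol matrix) go well beyond what the paper itself provides, but they are accurate and consistent with how the result is used downstream.
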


\begin{Th}
Keep the notation introduced above.
\begin{enumerate}[\textup{(}i\textup{)}]
\item If $N\not\equiv 1 \pmod{9}$, then $\rk_3\left(\Cl(L)\right)=2$ if and only if $3\mid B$.
\item If $N \equiv 1 \pmod{9}$, then $\rk_3\left(\Cl(L)\right)=2$ if and only if $A$ is a 9th power modulo $N$.
\end{enumerate}
\label{criterion}
\end{Th}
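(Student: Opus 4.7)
The plan is to invoke the preceding theorem of Gerth, which expresses $\rk_3(\Cl(L)) = 2 - s$ where $s \in \{0,1\}$ is the $\mathbb{F}_3$-rank of the $1 \times u$ matrix $(\beta_{1j})$ whose entries are determined by cubic Hilbert symbols. Thus $\rk_3(\Cl(L)) = 2$ if and only if every listed Hilbert symbol is trivial, and the task reduces to (a) determining which symbols appear, depending on the ramification of $\pi = 1 - \zeta_3$ in $L/K$ and on whether the ambiguous and strong ambiguous classes coincide, and (b) evaluating each one. By Lemma~\ref{p-ramification}, the dichotomy between parts (i) and (ii) of the theorem is exactly the dichotomy between $\pi$ ramifying in $L/K$ and $\pi$ splitting.

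For part (i), Theorem~\ref{Norm} gives $\zeta_3 \notin \Norm_{L/K}(L^\times)$; since $\cO_K^\times = \langle -\zeta_3 \rangle$ and $(\cO_K^\times)^3 = \{\pm 1\} \subseteq \Norm_{L/K}(\cO_L^\times)$, a short check yields
\[
\cO_K^\times \cap \Norm_{L/K}(L^\times) = \{\pm 1\} = \cO_K^\times \cap \Norm_{L/K}(\cO_L^\times),
\]
so all ambiguous classes are strong ambiguous and Gerth's theorem applies with $u = 3$. The three Hilbert symbols appearing are $\left(\frac{x_1, N}{(\fn_j)}\right)_3$ for $j = 1, 2$ and $\left(\frac{x_1, \pi}{(\pi)}\right)_3$ with $x_1 = \fn_1^2 \fn_2$. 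The first two vanish unconditionally by Lemma~\ref{Hilbert-pi}. Since $(3\pi) = (\pi^3)$ and Gerth's normalization arranges $\fn_i \equiv 1 \pmod{\pi^3}$, the element $x_1$ satisfies the hypothesis of Lemma~\ref{Hilbert-lambda}, and that lemma yields $\left(\frac{x_1, \pi}{(\pi)}\right)_3 = 1$ if and only if $3 \mid B$, proving (i).

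For part (ii), $\pi$ splits in $L/K$, so Gerth's theorem forces $u \in \{2,3\}$. When $u = 2$ (the \emph{coincidence} case, in which all ambiguous classes are strong ambiguous), both remaining symbols $\left(\frac{x_1, N}{(\fn_j)}\right)_3$ are trivial by Lemma~\ref{Hilbert-pi} and $\rk_3(\Cl(L)) = 2$ holds automatically; compare Lemma~\ref{coincidence impies rank 2}. When $u = 3$, an additional entry $\left(\frac{x_1, \upsilon}{(\upsilon)}\right)_3$ appears, with $(\upsilon) = \Norm_{L/K}(\fP)$ for $\fP$ a prime in a non-strong ambiguous class. The main obstacle, and the real content of (ii), is to show that both the coincidence condition and the vanishing of this last symbol are equivalent to the single arithmetic statement that $A$ is a ninth power in $\mathbb{F}_N^\times$. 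I would first use Theorem~\ref{Norm} and a local-norm analysis at the prime of $L$ above $N$ to re-express the coincidence condition $\zeta_3 \in \Norm_{L/K}(\cO_L^\times)$ as a congruence in the Teichm\"uller decomposition used in that theorem's proof, and match it against $A^{(N-1)/9} \equiv 1 \pmod N$; next, in the non-coincidence case, I would compute $\left(\frac{x_1, \upsilon}{(\upsilon)}\right)_3$ by reducing it to a power residue symbol in the residue field at $(\upsilon)$, using the identity $A = \Tr_{K/\QQ}(\fn_1)$ (so that $A \equiv \fn_2 \pmod{\fn_1}$) to match the resulting symbol to the ninth-power condition on $A$. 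Controlling the abstractly-defined element $\upsilon$ throughout this residue-symbol reduction is the hardest technical step.
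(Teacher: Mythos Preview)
Your proof of part~(i) follows the paper's approach and is essentially correct, with one slip: you assert that ``Gerth's normalization arranges $\fn_i \equiv 1 \pmod{\pi^3}$'' to verify the hypothesis of Lemma~\ref{Hilbert-lambda}, but $(\pi^3)\cap\ZZ = 9\ZZ$, so that congruence would force $N\equiv 1\pmod 9$, contradicting the hypothesis of~(i). The paper sidesteps this by citing the Remark on p.~98 of \cite{Ger76} for the strong-ambiguous claim and simply invoking Lemma~\ref{Hilbert-lambda}; in practice one only needs $\fn_i\equiv 1\pmod 3$ and an adjustment of $x_1$ by a suitable unit.

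Part~(ii) is where you and the paper genuinely diverge. The paper's proof is two lines: from Lemma~\ref{Hilbert-pi} one has $A\cdot\bigl(\tfrac{N-1}{3}\bigr)!^{\,3}\equiv 1\pmod N$, so $A$ is a ninth power modulo $N$ if and only if $\bigl(\tfrac{N-1}{3}\bigr)!$ is a cube modulo $N$, and then \cite[Theorem~1.3]{Calegari-Emerton} supplies the equivalence with $\rk_3(\Cl(L))=2$ directly. No Gerth matrix is computed at all for this case.

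Your route via Gerth's matrix is not merely longer; the paper itself, in Section~\ref{sec: 1 mod 9}, explains that in the non-coincidence case the entry $\bigl(\tfrac{x_1,\upsilon}{(\upsilon)}\bigr)_3$ involving the abstractly defined $\upsilon$ ``can not be made precise,'' and the authors are unable to prove the predicted distribution for $N\equiv 1\pmod 9$ for exactly this reason. Your plan to reduce that symbol to a power-residue condition on $A$ via $A\equiv\fn_2\pmod{\fn_1}$ is plausible in spirit but is only a sketch; moreover, your first step (``re-express the coincidence condition $\zeta_3\in\Norm_{L/K}(\cO_L^\times)$ as a congruence and match it against $A^{(N-1)/9}\equiv 1$'') is not an equivalence: coincidence implies $\rk_3(\Cl(L))=2$, hence implies the ninth-power condition, but the converse fails (there are non-coincidence cases with $\rk_3=2$). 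So the logical structure of your plan for~(ii) would need reworking even before the hard symbol computation.
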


\begin{proof}

\begin{enumerate}[\textup{(}i\textup{)}]
\item 
Since $N\not\equiv 1 \pmod{9}$ by assumption, we have $\fn_i\equiv4 \text{ or }7 \pmod{3\pi}$.
By \cite[Remark on p.~98]{Ger76}, there are no ambiguous ideal classes which are not strongly ambiguous in $L/K$.
So, $u=3$.
In our setting $t=1$ and that we can choose $x_1=\fn_1^2\fn_2$.

Gerth's theorem (above) asserts that 
\[
s=0 \Longleftrightarrow \left(\frac{\fn_1^2\fn_2, N}{\fn_i}\right)=\left(\frac{\fn_1^2\fn_2,\pi}{(\pi)}\right)=1.
\]
By Lemmas~\ref{Hilbert-pi} and~\ref{Hilbert-lambda}, this condition is satisfied precisely when $3\mid B$.
The proof follows from the observation that $\rk_3\left(\Cl(L)\right)=2-s$.
\item As mentioned in the proof of Lemma~\ref{Hilbert-pi}, we have $A\cdot ((\frac{N-1}{3})!)^3 \equiv 1 \pmod{N}$. This together with \cite[Theorem 1.3]{Calegari-Emerton} implies the claim. \qedhere
\end{enumerate}
\end{proof}

In the next two sections, we prove distribution results.
We show `how often' $\rk_3\left(\Cl(L)\right)$ takes the values $1$, $2$ as $N$ varies over all primes $N\equiv 1\pmod{3}$.

\subsubsection{\textbf{The case when \texorpdfstring{$N\not\equiv 1\pmod{9}$}{}}}
\label{sec: N 4,7 mod 9}

The main goal of this section is to analyze the case when $N\equiv 4,7 \pmod{9}$.
More precisely, we show that
\[
\PP\left(\rk_3\left(\Cl(L)\right)=2 \textrm{ \& } N\equiv 4,7 \pmod{9}\right) = \frac{1}{3} \textrm{ and } \PP\left(\rk_3\left(\Cl(L)\right)=1 \textrm{ \& } N\equiv 4,7 \pmod{9}\right) = \frac{2}{3}.
\]

\subsubsection*{\underline{Recollections: Ray Class Groups}}
For this section, we write $K$ to denote any (generic) number field.
Let $\fm$ be an ideal of $\cO_{K}$ and $I(K)$ the set of ideals in $\cO_{K}$.
Define 
\[
I_\fm(K)=\left\{\fa\in I(K):\fa+\fm=\cO_{K}\right\}
\]
to be the set of ideals in $\cO_{K}$ coprime to $\fm$.
Let $P(K)$ be the principal ideals of $\cO_{K}$.
Next, define 
\[
P_{\fm,1}(K)=\left\{(\alpha)\in P(K) : 
\alpha\equiv1\pmod{\fm}\right\}.
\]
For $I$ and $J\in I_\fm(K)$ we say $I\sim J$ if there exist ideals $(\alpha)$ and $(\beta)\in P_{\fm,1}(K)$ such that $I(\alpha)=J(\beta)$.
This equivalence relation defines the \textit{ray class group of conductor $\fm$} which is denoted by
\[
\Cl(K,\fm)=I_\fm(K)/\sim.
\]

When $K=\QQ(\zeta_3)$, it is well-known that $\cO_{K}$ is a PID.
Moreover, note that $(\alpha)\sim(\beta)$ if and only if $\alpha\equiv\pm\zeta_3^n\beta\pmod\fm$ for some $n\in \ZZ_{\ge 0}$.

\begin{rem}
As will become clear later on, in the context of our problem we want to count the primes $N$ such that $(\fp)\sim(2)$ or $(4)$.   
\end{rem}

\subsubsection*{\underline{Recollections: Class Field Theory}}
Let $K'/K$ be a finite Galois extension of a number field $K$ with Galois group $\cH=\Gal(K'/K)$.
For a prime ideal $\fp$ in $K$ write $\fP \mid \fp$ to denote a prime ideal in $K'$.
Write $D_{\fP}$ to denote the decomposition group of $\fP$, which is defined as the following set
\[
D_{\fP} = \left\{ \sigma \in \cH \ : \ \sigma(\fP) = \fP\right\}.
\]
For $\sigma\in D_{\fP}$,
define $\overline{\sigma}\in\Gal((\cO_{K'}/\fP)/(\cO_{K}/\fp))$ such that
\[
\overline{\sigma}(x + \fP) = \sigma(x) + \fP.
\]
There exists a homomorphism
\[
\phi : D_\fP \longrightarrow\Gal((\cO_{K'}/\fP)/(\cO_{K}/\fp)) \textrm{ given by } \sigma \mapsto \overline{\sigma}.
\]
The kernel of this homomorphism is the \textit{inertia group} of $\fP$, which is denoted by $I_{\fP}$.

We now record well-known facts; see \cite[Chapter~3, Section~1]{Jan73} for proofs.

\begin{Th}
\label{FACTS}
With notation as set above, the following assertions hold.
\begin{enumerate}
  \item[\textup{(i)}] $\phi$ is surjective.
  \item[\textup{(ii)}] When $\fp$ is unramified in $K'$, the inertia group $I_\fP$ is trivial.
  \item[\textup{(iii)}] For $\alpha\in \cH$, the image $\alpha(\fP)$ is a prime ideal lying over $\fp$.
  \item[\textup{(iv)}] \label{fact 2.4} For $\alpha\in \cH$, the conjugate $\alpha D_\fP\alpha^{-1}=D_{\alpha(\fP)}$.
\end{enumerate}
\end{Th}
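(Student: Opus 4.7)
The four assertions are foundational facts of algebraic number theory, and I would prove them in the order (iii)$\to$(iv)$\to$(i)$\to$(ii), since (i) is the only genuinely non-trivial claim and the others essentially unwind the definitions.

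For (iii), I would observe that $\alpha \in \cH$ is a ring automorphism of $\cO_{K'}$ that fixes $\cO_K$ pointwise. Thus $\alpha(\fP)$ is again a prime ideal, and
\[
\alpha(\fP)\cap \cO_K \;=\; \alpha\bigl(\fP\cap \cO_K\bigr) \;=\; \alpha(\fp) \;=\; \fp.
\]
For (iv), given $\sigma \in D_\fP$, the conjugate $\alpha\sigma\alpha^{-1}$ sends $\alpha(\fP)$ to $\alpha\sigma(\fP)=\alpha(\fP)$, giving $\alpha D_\fP\alpha^{-1}\subseteq D_{\alpha(\fP)}$. Applying the same argument to $\alpha^{-1}$ and $D_{\alpha(\fP)}$ yields the opposite inclusion.

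For the surjectivity in (i) — the main obstacle and the only step that requires real work — I would argue as follows. The residue extension $(\cO_{K'}/\fP)/(\cO_K/\fp)$ is a finite normal (hence Galois if separable, which it is in characteristic zero) extension of finite fields, so I need only hit a generator of the cyclic Galois group $\Gal((\cO_{K'}/\fP)/(\cO_K/\fp))$ by some $\sigma \in D_\fP$. Pick a primitive element $\bar\theta$ for the residue extension, choose a lift $\theta \in \cO_{K'}$, and use the Chinese Remainder Theorem over the conjugate primes $\{\alpha(\fP)\}_{\alpha \in \cH/D_\fP}$ to adjust $\theta$ so that $\theta \equiv 0 \pmod{\alpha(\fP)}$ for every $\alpha \notin D_\fP$ while retaining $\theta \bmod \fP = \bar\theta$. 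The minimal polynomial of $\theta$ over $\cO_K$ reduces mod $\fp$ to a polynomial annihilating $\bar\theta$ whose non-trivial roots are exactly the residue-Galois conjugates of $\bar\theta$; an element of $\cH$ sending $\theta$ to any chosen Galois conjugate lifts then lies in $D_\fP$ and realizes the desired residue automorphism. The main subtlety will be getting the CRT setup right so that the non-zero roots of the reduced minimal polynomial are precisely the residue conjugates of $\bar\theta$.

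Finally, (ii) follows formally. From (i), $\phi$ is surjective onto a group of order $f = f(\fP/\fp)$, and the orbit-stabilizer formula for the transitive $\cH$-action in (iii) together with (iv) gives $|D_\fP| = |\cH|/g = ef$ where $g$ is the number of primes above $\fp$. Hence $|I_\fP| = \ker\phi$ has order $e = e(\fP/\fp)$, and when $\fp$ is unramified this is $1$.
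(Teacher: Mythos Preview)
The paper does not actually prove Theorem~\ref{FACTS}; it simply records these as well-known facts and refers the reader to \cite[Chapter~3, Section~1]{Jan73} for proofs. Your proposal therefore goes well beyond what the paper does, supplying the standard textbook argument that one finds in Janusz (or Lang, or Neukirch). The overall strategy and the key step for (i) via the CRT-adjusted primitive element are correct and are exactly the classical proof.

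Two small points are worth tightening. First, your parenthetical ``which it is in characteristic zero'' is misplaced: the residue fields $\cO_{K'}/\fP$ and $\cO_K/\fp$ are finite fields of \emph{positive} characteristic; separability of the residue extension holds because finite fields are perfect, not because anything is in characteristic zero. Second, in your argument for (ii) you invoke ``the transitive $\cH$-action in (iii)'', but (iii) as stated only asserts that each $\alpha(\fP)$ lies over $\fp$, not that every prime over $\fp$ arises this way. Transitivity is a separate (standard) fact and is what gives $|D_\fP|=[K':K]/g=ef$; you should either state and prove it, or at least flag that you are using it. Neither issue affects the validity of the argument once corrected.
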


Suppose $\fp$ is unramified in $K'$.
Denote the element in $D_{\fP}$ that is mapped to the Frobenius map in $\Gal((\cO_{K'}/\fP)/(\cO_{K}/\fp))$ by $\sigma_{\fP/\fp}$. 
For the rest of the section assume that $\cH$ is abelian.

\begin{rem}
We can define $D_\fp=D_\fP$ which is independent of our choice of $\fP \mid \fp$ by Theorem~\ref{FACTS}(iv).
Likewise, we can also define $\sigma_\fp=\sigma_{\fP/\fp}$.
\end{rem}

Let $\fm$ be an ideal in $K$ such that all primes which ramify in $K'/K$ divide $\fm$.
Let $I \in I_\fm(K)$ with prime ideal decomposition $I=\prod_{i=1}^n\fp_i^{a_i}$.
Then there is a homomorphism $\Phi: I_\fm(K)\rightarrow \Gal(K'/K)$ that sends $I=\prod_{i=1}^n\fp_i^{a_i} \mapsto \prod_{i=1}^n \sigma_{\fp_i}^{a_i}$

\begin{Th}
Suppose $\fm$ is an ideal of $K$.
There exists a unique abelian extension $K' = K(\fm)$ such that $\Gal(K(\fm)/ K) \cong \Cl(K, \fm)$.
\end{Th}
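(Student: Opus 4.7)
The plan is to invoke the two central theorems of class field theory---the Artin reciprocity law and the Existence theorem---applied to the modulus $\fm$. The extension $K(\fm)$ in the statement is the \emph{ray class field} of conductor dividing $\fm$, and the Artin reciprocity map supplies the canonical isomorphism between $\Cl(K,\fm)$ and $\Gal(K(\fm)/K)$.

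First, I would extend the construction of the homomorphism $\Phi$ recalled just before the statement. For any finite abelian extension $K'/K$ unramified outside primes dividing $\fm$, the Frobenius assignment $\fp \mapsto \sigma_\fp$ extends multiplicatively to $\Phi_{K'/K}\colon I_\fm(K) \to \Gal(K'/K)$. Artin reciprocity asserts that whenever the conductor of $K'/K$ divides $\fm$, the map $\Phi_{K'/K}$ is surjective and its kernel contains $P_{\fm,1}(K)$, thereby factoring through a surjection $\overline{\Phi}_{K'/K}\colon \Cl(K,\fm) \twoheadrightarrow \Gal(K'/K)$.

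Next, I would invoke the Existence theorem, which provides the converse: every subgroup $H$ with $P_{\fm,1}(K) \subseteq H \subseteq I_\fm(K)$ occurs as the kernel of the Artin map of a unique finite abelian extension of $K$ unramified outside $\fm$. Applying this to the extremal choice $H = P_{\fm,1}(K)$ produces the abelian extension $K(\fm)/K$ for which $\overline{\Phi}_{K(\fm)/K}$ is an isomorphism, settling both existence and the identification of Galois groups. Uniqueness is then immediate: any two abelian extensions with the stated property correspond to the same congruence subgroup $P_{\fm,1}(K)$, and by the injectivity built into the Existence theorem they must coincide.

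The main obstacle is the Existence theorem itself, whose proof is the technical heart of class field theory and requires either the cohomological approach (through the First and Second Inequalities for norm indices) or Lubin--Tate theory together with global patching. Since this theorem is used here only as a foundational tool for the class field theoretic arguments in Section~\ref{sec: N 4,7 mod 9}, I would invoke it as a black box, referring to a standard treatment such as \cite[Chapter~VI]{Neu99} rather than reproducing the proof.
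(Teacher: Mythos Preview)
Your proposal is correct: this is precisely the existence of the ray class field, and invoking Artin reciprocity together with the Existence theorem (with a reference to a standard source such as \cite{Neu99} or \cite{Jan73}) is the appropriate approach. The paper itself does not prove this statement---it is listed under ``Recollections: Class Field Theory'' and is simply quoted as a standard black-box input for the Chebotarev argument that follows---so your sketch in fact goes further than the paper does.
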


\subsubsection*{\underline{Proving Theorem~\ref{Theorem B} using Chebotarev Density}}
Set $K=\QQ(\zeta_3)$ and $L=\QQ(\zeta_3,\sqrt[3]{N})$.
The goal is to count integral primes $N\not\equiv1\pmod9$ such that $\rk_3\left(\Cl(L)\right)=2$.

Recall that $N = \fn \overline{\fn}$ in $K$ and let $\fn=(\alpha)$ where $\alpha\in \cO_K$.\newline

\emph{Claim:} The condition in Theorem~\ref{criterion}(i) is equivalent to, 
\begin{equation}
\label{star}
\tag{$\star$} N\not\equiv 1\pmod9 \text{ and } \alpha\equiv\pm\zeta_3^v2^w \pmod{9}\text{ where } v\in\mathbb{Z} \text{ and } w\in\{1,2\}.
\end{equation}

\vspace{7pt}

\emph{Justification:} 
Up to units, 
\[
\alpha=\frac{A+3B\sqrt{-3}}{2}=\frac{A+3B}{2}+3B\zeta_3.
\]
If $3\mid B$, then $\alpha\equiv \frac{A}{2}\pmod{9}$.
Since $4N=A^2+27B^2$ and $N\not\equiv 1\pmod{9}$, one can check that 
\[
\frac{A}{2}\equiv \pm 2 \text{ or }\pm 4\pmod{9}.
\]
This implies \eqref{star}. \\
Conversely, assume \eqref{star}.
Then,  
\[
\frac{A}{2}\equiv \pm \zeta_3^v2^w\pmod{3} \textrm{ for some } v\in\mathbb{Z} \textrm{ and } w\in\{1,2\}.
\]
This forces $3 \mid v$, which implies that $\frac{A+3B}{2}\pm 2^w+3B\zeta_3\equiv 0\pmod{9}$.
Hence $3\mid B$ as desired.
\newline

We know from class field theory that
\[
\Cl(K,\fm)=(\cO_{K}/9\cO_{K})^\times/\langle -\zeta_3 \rangle.
\]
Therefore, it follows that $\abs{\Cl(K,\fm)}=9$.

Suppose that $\fm=(9)$.
By class field theory there exists a field $K'$ such that
\[
\Gal(K'/K)\cong \Cl(K,\fm)
\]
where the isomorphism (called the \emph{Artin map}) is given by sending $(\fn)$ to the Frobenius element $\sigma_{\fn}$.
The field $K'$ is called the ray class field.

The following fact is well-known and goes back to B.~Wyman; see for example \cite{wyman1973hilbert, cornell1988note}.
We provide a proof for the convenience of the reader.\newline

\emph{Claim:} $K'$ is Galois over $\QQ$ and $\Gal(K'/\QQ)=\Gal(K'/K) \rtimes \Gal(K/\QQ)$. \newline

\emph{Justification:}
Since $K'$ is closed under complex conjugates and the extension $K'/K$ is Galois, it follows that $K'$ is Galois over $\QQ$.
By the Schur--Zassenhaus theorem, the Galois group $\Gal(K'/\QQ)$ is the given semi-direct product. \newline 

Let $\phi$ be the natural map given by the fundamental theorem of Galois theory that sends
\[
\phi:\Gal(K'/\QQ) \longrightarrow \Gal(K/\QQ).
\]
Suppose that $\fn$ is a prime in $\cO_{K}$ lying over $N$ and $\fN\mid \fn$ is a prime ideal in $\cO_{K'}$.
Observe that $\sigma_{\fN/N}=(\sigma_{\fN/\fn}, \sigma_{\fn/N})$. 
If $\Phi$ is the Artin map described above, then $N$ has the property \eqref{star} if $\sigma_{\fN/N} = (\Phi((2)), e)$ or $(\Phi((4)), e)$, where $e$ is the identity in $\Gal(K/\QQ)$.
These two elements form a set fixed under conjugation since the non-trivial element of $\Gal(K/\QQ)$ is complex conjugation.

By the Chebotarev density theorem, the density of primes $N$ with $\sigma_{\fN/N}$ in a given conjugacy class.
Thus, the density of primes $N$ with property \eqref{star} is 
\[
\frac{\#\left\{(\Phi((2)), e),(\Phi((4)), e)\right\}}{\#\Gal(K'/\QQ)}=\frac{2}{18}=\frac{1}{9}.
\]

It follows that by restricting our counting to only primes of the form $1\pmod{3}$, we have
\begin{align*}
\PP\left(\rk_3\left(\Cl(L)\right)=2 \textrm{ \& } N\equiv 4,7 \pmod{9}\right) &= \frac{1}{3} \textrm{ and }\\ \PP\left(\rk_3\left(\Cl(L)\right)=1 \textrm{ \& } N\equiv 4,7\pmod{9}\right) &= \frac{2}{3}.
\end{align*}

\subsubsection{\textbf{The case when \texorpdfstring{$N\equiv 1\pmod{9}$}{}}}
\label{sec: 1 mod 9} 
In this section, we only provide partial results.
We begin by explaining some heuristics.

\subsubsection*{\underline{Heuristics}}
Set $N = 9z+1$ for some integer $z>0$.
Recall from Theorem~\ref{criterion}(ii) that $\rk_3 \left(\Cl(L)\right) = 2$ if and only if $( \frac{N-1}{3})!$ is a cubic residue modulo $N$.
In other words, $\rk_3 \left(\Cl(L)\right) = 2$ if and only if
\[
\left(\left( \frac{N-1}{3}\right)!\right)^{\frac{N-1}{3}} \equiv 1 \pmod{N}.
\]

To calculate $\rk_3\left(\Cl(L)\right)$ we compute the following the cubic Hilbert symbol modulo $N$, i.e.
\[
\left(\frac{(3z)!}{N}\right)_3 = \left(\frac{1}{N}\right)_3 \left(\frac{2}{N}\right)_3 \left(\frac{3}{N}\right)_3 \ldots \left(\frac{3z}{N}\right)_3.
\]
For a fixed integer $N \equiv 1 \pmod{9}$, exactly one-third of the residue classes are cubes modulo $N$, namely
\[
\left\{ 1, g^3 , g^6, \ldots, g^{N-4}\right\}
\]
where $\langle g \rangle = \left( \ZZ/N\ZZ\right)^\times \simeq \ZZ/(N-1)\ZZ$.
Therefore, expecting equi-distribution the probability that the cubic residue symbol $\left(\frac{(3z)!}{N}\right)_3 =1$ (resp. the cubic residue symbol is not $1$) is $\frac{1}{3}$ (resp. $\frac{2}{3}$).
As $N$ varies over all primes of the form $1\pmod{9}$ it is therefore reasonable to predict that 
\[
\PP(\rk_3\left(\Cl(L)\right)=2) = \frac{1}{3} \textrm{ and } \PP(\rk_3\left(\Cl(L)\right)=1) = \frac{2}{3}.
\]

Set $F = \QQ(\sqrt[3]{N})$ and write $\cCF = \Cl(F)[3^\infty]$ and $\cCL = \Cl(L)[3^\infty]$.
We first record an observation.

\begin{Lemma}
With notation introduced above,
\[
 \cCL \simeq \ZZ/3\ZZ \Longleftrightarrow \rk_3 \Cl(L)=1 \Longleftrightarrow \cCF \simeq \ZZ/3\ZZ.
\]
\end{Lemma}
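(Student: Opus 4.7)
The plan is to exploit the quadratic subextension $L/F$---with $\Gal(L/F)=\langle c\rangle$ of order $2$, where $c$ denotes complex conjugation---to compare $\cCL$ with $\cCF$, and then combine this with the classical fact that $\rk_3 \Cl(F)=1$.

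First, I would set up the Galois-module decomposition. Since $[L:F]=2$ is a unit modulo $3$, the natural maps $\iota\colon \Cl(F)\to \Cl(L)$ and $\Norm_{L/F}\colon \Cl(L)\to \Cl(F)$ satisfy $\Norm_{L/F}\circ \iota = 2$ and $\iota\circ \Norm_{L/F}=1+c$. Passing to $3$-Sylow subgroups, the idempotents $(1\pm c)/2$ are defined, yielding a canonical direct-sum decomposition
\[
\cCL \;=\; \cCL^+ \oplus \cCL^-,
\]
together with an isomorphism $\cCF \xrightarrow{\sim} \cCL^+$ induced by $\iota$ (with inverse $\Norm_{L/F}/2$).

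Next, I would read off $3$-ranks:
\[
\rk_3 \Cl(L) \;=\; \rk_3 \cCF + \rk_3 \cCL^-.
\]
The classical fact $\rk_3 \Cl(F)=1$---which follows from the genus-theoretic lower bound together with the specialization of the Schaefer--Stubley upper bound to $p=3$---then forces
\[
\rk_3 \Cl(L)=1 \;\Longleftrightarrow\; \cCL^-=0 \;\Longleftrightarrow\; \cCL \simeq \cCF.
\]
In particular, under the rank-one hypothesis the equivalence $\cCL\simeq \ZZ/3\ZZ \Longleftrightarrow \cCF\simeq \ZZ/3\ZZ$ is immediate, and the implication $\cCL\simeq \ZZ/3\ZZ \Longrightarrow \rk_3 \Cl(L)=1$ is trivial.

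The substantive step remaining is to verify that, when $\rk_3 \Cl(L)=1$, the cyclic $3$-group $\cCL \simeq \cCF$ has order exactly $3$ rather than $3^k$ for some $k\ge 2$. My plan is to invoke refined structural results on the $3$-part of $\Cl(F)$ for pure cubic fields $F=\QQ(\sqrt[3]{N})$ with $N$ a prime $\equiv 1\pmod 3$---in particular, results of Gerth cited just above---whose ambiguous-class analysis controls the full $3$-Sylow (not merely its rank) and yields $|\cCF|=3$ in this regime. The main obstacle is precisely this last step: promoting a rank bound to an exponent bound. Should a direct citation not suffice, my fallback is to combine Chevalley's ambiguous class number formula for the cyclic degree-$3$ extension $L/K$ (using $h_K=1$ and the ramification data from Lemmas~\ref{p-ramification} and~\ref{N-ramification}) with a $\ZZ_3[G]$-module analysis of the cyclic $3$-group $\cCL$ to exclude $k\ge 2$.
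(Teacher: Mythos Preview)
Your $\pm$-decomposition via complex conjugation is a clean and genuinely different route from the paper's, which simply cites Gerth~'75 for $\rk_3\cCL=1\Rightarrow\cCL\simeq\ZZ/3\ZZ$ and then Calegari--Emerton together with Gerth~'05 for the biconditional $\rk_3\cCL=1\Leftrightarrow 3\,\|\,|\cCF|$. Your fallback for the ``substantive step'' is essentially Gerth's argument and is correct: since $\Cl(K)=1$, the norm annihilates $\cCL$, so $\cCL$ is a module over the DVR $R=\ZZ_3[\sigma]/(1+\sigma+\sigma^2)$ with uniformizer $\pi=\sigma-1$; one checks $\rk_3(R/\pi^k)=\min(k,2)$, whence $\rk_3\cCL=1$ forces $\cCL\cong R/\pi\cong\ZZ/3\ZZ$. (Your primary plan here is off, however: there is no general result giving $|\cCF|=3$ for all such $N$; indeed $\cCF$ can be $\ZZ/9\ZZ$ or larger.)

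The real gap is that you never address the implication $\cCF\simeq\ZZ/3\ZZ\Rightarrow\rk_3\cCL=1$, and your toolkit cannot supply it. From $\cCF\simeq\ZZ/3\ZZ$ you only learn $\cCL^+\simeq\ZZ/3\ZZ$; nothing in the $\pm$-decomposition or the $R$-module structure alone forces $\cCL^-=0$. Concretely, take $\cCL=R/\pi^2\cong(\ZZ/3)^2$ with basis $\{e,\pi e\}$ and let $c$ act by $e\mapsto e$, $\pi e\mapsto -\pi e$; this satisfies $c\sigma c^{-1}=\sigma^{-1}$, yet $\cCL^+=\langle e\rangle\cong\ZZ/3\ZZ$ while $\cCL^-=\langle\pi e\rangle\neq 0$. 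So purely module-theoretic reasoning cannot exclude this, and the implication is a genuine arithmetic fact. This is precisely why the paper invokes Calegari--Emerton and Gerth~'05 at that point; you will need comparable arithmetic input to close the loop.
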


\begin{proof}
Recall that if $\rk_3 \Cl(L)=1$ then $\cCL \simeq \ZZ/3\ZZ$; see \cite[p.~55]{Ger75-crelle}.
Also, we know that $\cCF \hookrightarrow \cCL$.
Combining the results of \cite{Calegari-Emerton, Ger05-BAusMS} we deduce that $\rk_3(\cCL) =1$ precisely when $3$ is the exact divisor of $\abs{\cCF}$.
The claim follows.
\end{proof}

As will become clearer in the remainder of this section, the key difficulty in proving the heuristics when $N\equiv 1 \pmod{9}$ lies in the fact that ambiguous classes do not always coincide with the strong ambiguous classes.
Moreover, the $p$-rank of class groups do not behave in a systematic way when this coincidence does not occur.

\subsubsection{\textbf{Ambiguous Class and Strong Ambiguous Class}}
\begin{definition}
\label{amb class and str amb class}
Let $\langle \sigma \rangle = \Gal(L/K) \simeq \ZZ/p\ZZ$.
Let $\mathcal{C}^{(\sigma)}$ denote the \textit{ambiguous ideal class} of $\mathcal{C}$ defined as
\[
\mathcal{C}^{(\sigma)} = \left\{ [\mathfrak{a}]\in \mathcal{C} \ : \ [\mathfrak{a}]^\sigma = [\mathfrak{a}]\right\}.
\]
The \textit{strong ambiguous ideal class} of $\mathcal{C}$, denoted by $\mathcal{C}_{\textrm{st}}^{(\sigma)}$ is defined as
\[
\mathcal{C}_{\textrm{st}}^{(\sigma)} = \left\{ [\mathfrak{a}]\in \mathcal{C} \ : \ \mathfrak{a}^{\sigma - 1} = (1)\right\}.
\]
\end{definition}
It is known (see for example, \cite[p.~161]{Ger87_crelle}) that
\[
\cCL^{(\sigma)} \simeq \cCL/\cCL^{1-\sigma}.
\]  
By \cite[Proposition~5.1]{Ger76}
\[
\rk_3(\cCL/\cCL^{1-\sigma}) = \#(\textrm{primes ramified in }L/K) - 1 = 2-1 = 1.
\]
In fact, we also know that $\cCL/\cCL^{1-\sigma}$ is an elementary abelian 3-group so
\[
\cCL^{(\sigma)} \simeq \cCL/\cCL^{1-\sigma} \simeq \ZZ/3\ZZ.
\]
The main difficulty in calculating $\rk_3\left(\Cl(L)\right)$ when $N\equiv 1 \pmod{9}$ is that the ambiguous classes \textbf{need not always} coincide with the strong ambiguous classes.
A criterion for this coincidence is precisely when $\zeta_3 \in \Norm_{L/K}(E_L)$; see for example \cite[Remark on p.~94]{Ger76}.
However, it has not been possible for us to give a quantitative answer to `how often' this coincidence occurs. 
Recall from Theorem~\ref{Norm} that $\zeta_3 \in \Norm_{L/K}(L^\times)$.
When the strong ambiguous class (denoted by $\mathcal{C}_{L,\textrm{st}}^{(\sigma)}$) \textit{does not coincide} with the ambiguous class
\[
\cCL^{(\sigma)} \simeq \mathcal{C}_{L,\textrm{st}}^{(\sigma)} \times \ZZ/3\ZZ. 
\]
In our case, $\rk_3 \cCL^{(\sigma)} =1$ which forces that $\mathcal{C}_{L,\textrm{st}}^{(\sigma)}$ must be trivial (in this exceptional case of non-coincidence) and (as we have observed before) that $\cCL^{(\sigma)} \simeq \ZZ/3\ZZ$.

\subsubsection*{\underline{When ambiguous classes are strong ambiguous}}

We analyze this situation of $N \equiv 1\pmod{9}$ using \cite[Theorem~5.3]{Ger76} which we have recorded before.

\begin{Lemma}
\label{coincidence impies rank 2}
If ambiguous ideal classes are strong-ambiguous then $\rk_3\left(\Cl(L)\right)=2$.
\end{Lemma}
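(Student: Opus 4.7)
The plan is to apply Gerth's theorem (recorded immediately before Theorem~\ref{criterion}) in the regime $N \equiv 1 \pmod{9}$, and exploit the fact that under the coincidence hypothesis we land in the most favorable of the three cases in the definition of the integer $u$.

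First I would observe that by Lemma~\ref{p-ramification}, the hypothesis $N \equiv 1 \pmod{9}$ forces $(\pi)$ to split in $L/K$ and in particular to be unramified. Combined with the assumption that every ambiguous ideal class is strong ambiguous, we therefore fall into the first bullet of Gerth's definition, so that $u = 2$. Consequently the matrix $(\beta_{1j})$ in Gerth's theorem is the $1 \times 2$ row whose entries are determined by
\[
\zeta_3^{\beta_{1j}} \;=\; \left(\frac{x_1, N}{(\fn_j)}\right)_3, \qquad j = 1,2,
\]
with $x_1 = \fn_1^2 \fn_2$ (or $\fn_1 \fn_2^2$) as prescribed in Gerth's setup.

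Next I would invoke Lemma~\ref{Hilbert-pi} directly: it records that for $x \in \{\fn_1, \fn_1^2 \fn_2\}$ and $j = 1,2$ the cubic Hilbert symbol $\left(\frac{x, N}{(\fn_j)}\right)_3$ equals $1$. Thus $\beta_{11} = \beta_{12} = 0$, the matrix $(\beta_{1j})$ is the zero row, and hence has rank $s = 0$.

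Finally, Gerth's formula gives $\rk_3(\Cl(L)) = 2 - s = 2$, which is the desired conclusion. I do not anticipate any genuine obstacle here, since once the coincidence hypothesis is translated into $u = 2$ via Lemma~\ref{p-ramification}, the computation reduces entirely to Hilbert symbol evaluations already carried out in Lemma~\ref{Hilbert-pi}; the only delicate point to double-check is that the congruence $N \equiv 1 \pmod 9$ indeed implies $\fn_i \equiv 1 \pmod{\pi^3}$ required as input for Gerth's theorem, but this is standard and follows from the same $3$-adic analysis used in the proof of Lemma~\ref{p-ramification}.
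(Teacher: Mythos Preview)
Your proposal is correct and follows essentially the same approach as the paper's proof: both invoke Gerth's theorem with the matrix reduced to the two entries $\beta_{11},\beta_{12}$ (since $(\pi)$ is unramified and all ambiguous classes are strong ambiguous), then cite Lemma~\ref{Hilbert-pi} to conclude both Hilbert symbols are trivial, whence $s=0$ and $\rk_3(\Cl(L))=2$. You are simply a bit more explicit than the paper in justifying why $u=2$ via Lemma~\ref{p-ramification}; the normalization $\fn_i\equiv 1\pmod{\pi^3\cO_K}$ you flag at the end is a standing choice of generators in Gerth's setup rather than a genuine additional verification.
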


\begin{proof}
When the hypothesis is satisfied, the matrix entries are determined by cubic Hilbert symbols, i.e.,
\[
\zeta_3^{\beta_{1j}} = \left( \frac{\fn_1 \overline{\fn_1}^2, N}{( \fn_j )}\right)_3 \textrm{ where }j=1,2.
\]
Our calculations in Lemma~\ref{Hilbert-pi} show that the cubic Hilbert symbol always takes the value 1.
In other words $\beta_{11} = \beta_{12} =0$.
Equivalently, the rank $s$ of this matrix is 0.
Gerth's theorem implies that whenever all the ambiguous classes coincide with strong-ambiguous classes, $\rk_3\left(\Cl(L)\right)=2$.
\end{proof}

\begin{rem}
If $\cCF \simeq \ZZ/3\ZZ$, then $\cCL^{(\sigma)} \neq \mathcal{C}_{L,\textrm{st}}^{(\sigma)}$.
\end{rem}

\subsubsection*{\underline{When ambiguous classes are not strong ambiguous}}
In this scenario, we see that $\rk_3\left(\Cl(L)\right)$ can take either the value 1 or 2.
We prove partial results which help analyze this situation.
Unfortunately, we are unable to obtain precise proportions which we had set out to prove.

For any extension of number fields $L/K$ we have the following exact sequence
\[
1 \longrightarrow \Cl(L/K) \longrightarrow \Cl(L) \xrightarrow{\Norm_{L/K}} \Cl(K).
\]
When $K = \QQ(\zeta_3) = \QQ(\sqrt{-3})$, the sequence is short exact since $\abs{ \Cl(K)} =1$; and $\Cl(L/K) \simeq \Cl(L)$.

By \cite[equation (1)]{Ger05-BAusMS}
\[
\rk_3\left(\Cl(L)\right) = \rk_3(\Cl(L/K)) = \rk_3(\cCL/\cCL^{1-\sigma}) + \rk_3(\cCL^{1-\sigma}/\cCL^{(1-\sigma)^2}).
\]
On the other hand, \cite[Proposition~5.1]{Ger76} asserts that the $3$-rank of the ambiguous ideal class of $L$ which is precisely $\rk_3(\cCL/\cCL^{1-\sigma})$ is given by
\[
\#(\textrm{primes ramified in }L/K) - 1 = 2-1 = 1.
\]

For ease of writing, henceforth write $\rk_3(\cCL^{1-\sigma}/\cCL^{(1-\sigma)^2}) = R$.
\cite[equation (2.14)]{Ger87_crelle} asserts
\begin{equation}
\label{R formula Gerth}
R = (\# \text{ramified primes in } L/K) - 1 - \rk M'_L - \epsilon = 1- \rk M'_L - \epsilon.
\end{equation}
Note $\pi = 1-\zeta_3$ is the unique prime above 3 in $K$ and $\fn_1, \fn_2 \equiv 1 \pmod{\pi^3 \cO_K}$, so $\epsilon = 0 \textrm{ 
or }1$.
More explicitly, $\epsilon = 0$ if the ambiguous ideal classes are strong-ambiguous.
However, if the ambiguous ideal classes are not strong-ambiguous then $\epsilon$ may take the value either $0$ or $1$.

We explain the construction of this matrix $M'_L$ following \cite{Ger87_crelle}.
The matrix $M'_L$ is a $1\times 3$ matrix with entries in $\mathbb{F}_3$ determined by the cubic Hilbert symbol.
Let $M'_L = [m'_{ij}]$ where $m'_{ij}\in \mathbb{F}_3$ with $1 \leq i \leq (\# \text{ramified primes in } L/K) - 1$, and $0 \leq j \leq (\# \text{ramified primes in } L/K)$.
The observation above implies that $i=1$ and $0\leq j \leq 2$.
The rank of the matrix $M'_L$ is 0 or 1.
Moreover,
\[
\zeta_3^{m'_{ij}} = \begin{cases}
\left( \frac{\zeta_3, N}{(\fn_1)}\right)_3 & \textrm{when }j=0 \\
\left( \frac{\fn_j, N}{(\fn_1)}\right)_3 & \textrm{when } 1 \leq j \leq 2. \end{cases}
\]
In view of calculations done in \cite[p.~92]{Ger76} note that 
\[
\left( \frac{\zeta_3, N}{(\fn_1)}\right)_3 = 1.
\]
Equivalently, $m'_{10}=0$.
Here is a way to check the calculations independently: by \cite[p.~165]{Ger87_crelle}
\[
\left( \frac{\zeta_3, N}{(\fn_1)}\right)_3 = \left( \frac{\zeta_3}{(\fn_1)}\right)_3^{-1},
\]
where the notation on the right side of the equality is the cubic residue symbol.
By definition
\[
\left( \frac{\zeta_3}{(\fn_1)}\right)_3 = \zeta_3^{n} \equiv \zeta_3^{\frac{\Norm(\fn_1) - 1}{3}} \pmod{(\fn_1)} \text{ for unique }n\in \{0,1,2\}.
\]
Note that in our case $-n = m'_{10}$.
Moreover, since the absolute norm $\Norm(\fn_1) = N \equiv 1 \pmod{9}$, 
\[
\left( \frac{\zeta_3}{(\fn_1)}\right)_3 \equiv \zeta_3^{\frac{N - 1}{3}} \equiv \zeta_3^{3k} \equiv 1 \pmod{(\fn_1)} \text{ for }n\in \{0,1,2\}.
\]
It follows that 
\[
M'_L = [0 \ m'_{11} \ m'_{12}].
\]

Finally, Lemma~\ref{Hilbert-pi} implies that $\left( \frac{\fn_1, N}{(\fn_1)}\right)_3 = 1$ so $m'_{11} = 0$.
This means 
\[
M'_L = [0 \ 0 \ m'_{12}].
\]

Recall that
\[
\left( \frac{\fn_2, N}{(\fn_1)}\right)_3 = \left( \frac{\fn_2, \fn_1}{(\fn_1)}\right)_3 \left( \frac{\fn_2, \fn_2}{(\fn_1)}\right)_3 = \left( \frac{\fn_1, \fn_2}{(\fn_1)}\right)_3^{-1} \left( \frac{\fn_2, \fn_2}{(\fn_1)}\right)_3 = 1.
\]
The penultimate equality follows from \cite[Chapter~V, Proposition~3.2(iv)]{Neu99}.
For the last equality, use the values of Hilbert symbols as calculated in Lemma~\ref{Hilbert-pi}.
This implies $m'_{12} = 0$, as well.
In particular, the matrix of interest $M'_L = [0 \ 0 \ 0]$.

\begin{rem}
We concluded that $\rk M'_L$ is always 0, irrespective of the value of $\epsilon$.
Rewrite \eqref{R formula Gerth} as
\[
R = 1-\epsilon.
\]
\end{rem}

When $N\equiv 1\pmod{9}$,
\[
\PP(\rk_3\left(\Cl(L)\right) =1) = \PP(R = 0 ) = \PP(\epsilon = 1) \leq \PP(\zeta_3 \not\in \Norm_{L/K}(E_L)).
\]
Equivalently,
\begin{align*}
\PP(\rk_3\left(\Cl(L)\right) =2) = \PP(R = 1) = \PP(\epsilon = 0) &= \PP(\zeta_3 \in \Norm_{L/K}(E_L)) + \PP(\epsilon = 0 \textrm{ \textbf{and} } \zeta_3 \not\in \Norm_{L/K}(E_L))\\
&= \PP(\cCL^{(\sigma)} = \mathcal{C}_{L,\textrm{st}}^{(\sigma)}) + \PP(\epsilon = 0 \textrm{ \textbf{and} } \cCL^{(\sigma)} \neq \mathcal{C}_{L,\textrm{st}}^{(\sigma)})
\end{align*}

\begin{rem}
Our computations predict that as $N$ varies over primes of the form $1\pmod{9}$, it is much more frequent to encounter the situation that the ambiguous classes are not strong ambiguous. 
\end{rem}


In what follows, assume that $3^n \Vert \abs{\cCF}$ where $n\geq 2$.

\begin{Lemma}
When the ambiguous ideal classes are \textit{not} strong-ambiguous and $3^n \Vert \abs{\cCF}$ with $n\geq 2$,
\[
\cCL \simeq \ZZ/3^{n-1}\ZZ \times \cCF \simeq \ZZ/3^{n-1}\ZZ \times \ZZ/3^n\ZZ.
\]
\end{Lemma}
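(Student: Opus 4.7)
The plan is to identify $\cCL$ with the cyclic $\cO$-module $\cO/\pi^{2n-1}$, where $\cO := \ZZ_3[\zeta_3] \cong \ZZ_3[\sigma]/(1+\sigma+\sigma^2)$ and $\pi := 1-\zeta_3$ is a uniformizer. Since $\pi^2 = -3\zeta_3$, a Smith normal form calculation yields $\cO/\pi^{2n-1} \cong \ZZ/3^n\ZZ \oplus \ZZ/3^{n-1}\ZZ$, giving the claimed structure. To set up the $\cO$-module structure, $\Cl(K) = 1$ forces the norm element $1 + \sigma + \sigma^2$ to annihilate $\cCL$ (as the composition $\cCL \to \Cl(K) \to \cCL$ vanishes). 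Under the hypothesis $N \equiv 1 \pmod 9$, Theorem~\ref{Norm} gives $\zeta_3 \in N_{L/K}(L^\times)$, so the Chevalley ambiguous class number formula yields $|\cCL^G| = 3$. Identifying $\cCL^G = \cCL/\pi\cCL$, Nakayama's lemma forces $\cCL \cong \cO/\pi^k$ for some unique $k \geq 1$.

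Let $c$ denote complex conjugation, i.e., the non-trivial element of $\Gal(K/\QQ)$ extended to $L$. Since $[L:F] = 2$ is coprime to $3$, the $c$-action splits $\cCL = \cCL^+ \oplus \cCL^-$ into $\pm 1$-eigenspaces, and the natural injection $\cCF \hookrightarrow \cCL$ identifies $\cCF$ with $\cCL^+$ (as the composition with $N_{L/F}$ is multiplication by $2$). The induced $c$-action on $\cO/\pi^k$ is $\cO$-semilinear, satisfying $c(\zeta_3 x) = \zeta_3^{-1} c(x)$, and by Hilbert 90 is unique up to isomorphism; normalized, $c$ becomes ordinary complex conjugation on $\cO$, from which one computes $(\cO/\pi^k)^c \cong \ZZ/3^{\lceil k/2\rceil}\ZZ$. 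Matching with $\cCF \cong \ZZ/3^n\ZZ$ forces $\lceil k/2\rceil = n$, so $k \in \{2n-1, 2n\}$.

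To pin down the parity, I analyze the $c$-action on the one-dimensional $\mathbb{F}_3$-space $\cCL^G = \pi^{k-1}\cO/\pi^k\cO$. From $(1-\zeta_3)(1-\zeta_3^2) = 3$ one finds $c(\pi) = -\zeta_3^2 \pi$; since $-\zeta_3^2 \equiv -1 \pmod \pi$, $c$ acts on $\cCL^G$ as multiplication by $(-1)^{k-1}$. Thus $k$ is odd if and only if $c$ acts trivially on $\cCL^G$. Under the non-coincidence hypothesis $\mathcal{C}_{L,\mathrm{st}}^{(\sigma)} = 0$, the genus-theoretic injection $\cCL^G \hookrightarrow E_K \cap N_{L/K}(L^\times)/N_{L/K}(E_L)$ sends $[\fa] \mapsto [N_{L/K}(\gamma)]$ with $\fa^{\sigma-1} = (\gamma)$. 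Using $c\sigma c^{-1} = \sigma^{-1}$ and the ring identity $\sigma - 1 = -\sigma(\sigma^{-1}-1)$, one computes $(c \fa)^{\sigma-1} = ((\sigma c \gamma)^{-1})$ up to units; taking norms gives $N_{L/K}((\sigma c \gamma)^{-1}) = (c \cdot N_{L/K}(\gamma))^{-1}$, and since $(c \zeta_3^j)^{-1} = \zeta_3^j$ for any $j$, the classes $[\fa]$ and $c[\fa]$ coincide in this quotient. Hence $c$ acts trivially on $\cCL^G$.

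Combining, $k = 2n - 1$, yielding the desired isomorphism. The main obstacle is this last $c$-equivariance computation: in the coincidence case $\cCL^G$ is generated by the strong ambiguous class $[\mathfrak{N}_1]$ with $c[\mathfrak{N}_1] = [\mathfrak{N}_2] = -[\mathfrak{N}_1]$, a $c = -1$ action forcing $k = 2n$; it is precisely the non-coincidence hypothesis which ensures $\cCL^G$ is instead represented by a non-strong-ambiguous class, for which the subtle norm cancellation $(c \zeta_3^j)^{-1} = \zeta_3^j$ reverses the conclusion and forces $k$ odd.
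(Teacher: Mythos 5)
Your route is entirely different from the paper's (which simply quotes Gerth's results: non-coincidence forces the ramified primes of $L$ to be principal, whence $\abs{\cCL}=\abs{\cCF}^2/3$, and then invokes Gerth's structure theorem). Most of your structural setup is sound: the norm element does kill $\cCL$ since $\Cl(K)$ is trivial, Chevalley does give $\abs{\cCL^{(\sigma)}}=3$ when $N\equiv 1\pmod 9$, so $\cCL\cong\cO/\pi^k$ as a $\ZZ_3[\zeta_3]$-module, and the identification $\cCF\cong\cCL^+$ is standard. The genus-theoretic computation that $c$ acts trivially on the socle is also essentially correct (the inversion on $\mu_3$ cancels against the inversion coming from $\sigma\mapsto\sigma^{-1}$).

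The genuine gap is the normalization of the semilinear involution. It is \emph{not} true that a $c$-semilinear involution of $\cO/\pi^k$ is unique up to $\cO$-linear conjugation: writing $c(x)=u\bar{x}$ with $u\bar{u}=1$, conjugation by a unit $v$ replaces $u$ by $u\bar{v}/v$, and while the pro-$3$ part of $u$ can be absorbed this way (this is the part of ``Hilbert 90'' that holds), the sign cannot, since $\bar{v}/v$ always lies in $1+\pi\cO/\pi^k$. So there are exactly two conjugacy classes, $x\mapsto\bar{x}$ and $x\mapsto-\bar{x}$, and they have different fixed subgroups. This matters: for the involution $x\mapsto-\bar{x}$ on $\cO/\pi^{2n}$ one checks that the fixed subgroup is cyclic of order $3^n$ \emph{and} the action on the socle $\pi^{2n-1}\cO/\pi^{2n}\cO$ is trivial (the sign $-1$ cancels against $(-1)^{2n-1}$). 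Hence the pair of constraints you actually establish --- $\abs{\cCL^+}=3^n$ cyclic, and trivial $c$-action on $\cCL^{(\sigma)}$ --- is also satisfied by $(\epsilon,k)=(-1,2n)$, which gives $\cCL\cong(\ZZ/3^n\ZZ)^2$, contradicting the claimed conclusion. To close the gap you must pin down the sign, i.e.\ the action of $c$ on $\cCL/\pi\cCL\cong\Gal(M_1/L)$ where $M_1=L(\sqrt[3]{\fn_1\fn_2^2})$ is the genus field; there a Kummer-theoretic computation ($c$ inverts both $\mu_3$ and the class of $\fn_1\fn_2^2$ modulo cubes) shows the action is trivial, forcing $\epsilon=+1$ and hence $k=2n-1$ as desired. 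As written, however, the proof does not establish the lemma.
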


\begin{proof}
Suppose that $\fP$ and $\overline{\fP}$ are the prime ideals of $L$ above $(\fn_1)$ and $(\overline{\fn_1})$.
As explained in \cite[p.~475]{Ger05-BAusMS}, when the ambiguous classes are \emph{not} strong-ambiguous the ideals $\fP$  and $\overline{\fP}$ are principal. 
This forces the existence of a positive \textit{odd} integer $j\geq 3$ such that $\cCL^{(\sigma)} \subseteq \cCL^{(1-\tau)^{j-1}}$ but not in $\cCL^{(1-\tau)^{j-1}}$.
It follows (see \cite[p.~474]{Ger05-BAusMS}) that
\[
\abs{\cCL} = \frac{1 \times \abs{\cCF}^2}{3}.
\]
This combined with \cite[Theorem~2]{Gerth75-MathComp} proves the claim.
\end{proof}

\section{\texorpdfstring{$p$}{}-Rank of the class group of \texorpdfstring{$L$}{} via Galois cohomology computations}
\label{sec: p rank via Gal cohom}

\emph{Basic Notation.}
Let $K=\QQ(\zeta_p)$ and $L = K( N^{1/p})$; set $\cG=\Gal(L/\QQ)$ and $G=\Gal(L/K)$.
Denote the quotient $\cG/G = \Delta = \Gal(K/\QQ)$.
Define the set $S=\{N, p, \infty\}$, write $\QQ_S$ to denote the maximal extension of $\QQ$ unramified outside $S$ and set $G_{\QQ, S} = \Gal(\QQ_S/\QQ)$ to denote the corresponding Galois group.
The absolute Galois group $\Gal(\overline{\QQ}/\QQ)$ is denoted by $G_{\QQ}$.

The main goal of this section is to provide sharper upper and lower bounds for the $p$-rank of $L$.
The first main result towards attaining the goal is proving Theorem~\ref{main theorem Rusiru note 3} where we characterize unramified extensions of $L$ with specified Galois group structure and then using combinatorics, provide an explicit count of such extensions in Corollary~\ref{define rij}.
What the corollary says is that the number of extensions can be written explicitly in terms of the dimension of certain Galois cohomology groups. 
Next, we prove an abstract formula for the $p$-rank of $\Cl(L)$ in Theorem~\ref{main theorem of note 4} involving the sum of dimensions of the Galois cohomology groups.
These individual terms are difficult to compute explicitly.
However, building on \cite{SS19}, we are able replace the above cohomology groups by more manageable ones, i.e. cohomology groups with coefficients in (twists of) $\Fp$ whose dimensions are relatively easy to compute.
This comes at the cost that we are no longer able to prove an exact formula, but instead have inequalities; see Theorems~\ref{main result - revised lower bounds} and \ref{th: better upper bounds}.

\subsection{Unramified Extensions of \texorpdfstring{$L$}{}}
In this section we provide an explicit count of how many unramified Galois extensions with specified Galois groups the number field $L$ can have.

Consider the mod $p$ cyclotomic character
\[
\chi: G_{\QQ, S} \longrightarrow \mathbb F_p^{\times} 
\]
and let $b$ denote the map
\[
b: G_{\QQ, S} \longrightarrow \langle \zeta_p \rangle  \textrm{ 
given by } \sigma \mapsto \frac{ \sigma(N^{1/p})}{N^{1/p} }.
\]
We often consider $\langle \zeta_p \rangle \simeq \Fp(1)$ and view the image of $b$ in this additive group.
Note that $b$ and $\chi$ are trivial on the absolute Galois group $G_{L} = \Gal(\overline{\QQ}/L)$ and so can be thought of as defined on $\cG$.
Observe that $\chi|_G$ is trivial.

Set $\Fp(i)$ to denote the module $\Fp$ on which $G_{\QQ,S}$ acts via $\chi^i$.
Next define $V\simeq \Fp^2$ to be the vector space on which $G_{\QQ,S}$ acts via the representation
\begin{equation}
\begin{split}
\label{rep}
G_{\QQ,S} &\longrightarrow \GL_2(\Fp)\\
\sigma & \mapsto \begin{pmatrix}
    \chi(\sigma) & b(\sigma) \\
    0 & 1\\
  \end{pmatrix}.
\end{split}
\end{equation}
For ease of notation, henceforth denote the $G_\QQ$-representation $\Sym^{i}(V)\otimes \Fp(j)$ by $A^{i,j}$.
Pick a basis for $A^{i,j}$ such that the $G_{\QQ,S}$-representation
\[
\rho_{ij}: G_{\QQ,S} \longrightarrow \GL_{i+1}(\Fp)
\]
is given by the matrix
\[
[\rho_{ij}] := \begin{pmatrix}
\chi^{i+j} & \chi^{i+j-1}b & \chi^{i+j-2}\frac{b^2}{2} & \ldots & \chi^{j}\frac{b^i}{i!}\\
 & \chi^{i+j-1} & \chi^{i+j-2}b & \ldots & \vdots \\
 & & \chi^{i+j-2} & \ldots & \vdots \\
 & & \ddots & \vdots & \vdots\\
 & & & & \chi^j
\end{pmatrix}
\]
Observe that $\rho_{ij}$ is trivial on $G_L$ and so there is an induced action of $\mathcal G$ on $A^{i, j}$.  
With this setup, there is an injection of $G_\QQ$-representations
\[
A^{i-1, j+1} \hookrightarrow A^{i, j}.
\]
Thus, we obtain a filtration
\begin{equation}
\label{filtration}
A^{0, j+i} \subseteq A^{1, j+i-1} \subseteq \cdots \subseteq A^{i-1, j+1} \subseteq A^{i, j} \subseteq A^{i+1, j-1} \cdots 
\end{equation}


In the following proposition we describe the centralizer of the set of matrices $[\rho_{ij}](g)$ for all $g\in G_{\QQ}$, which is henceforth denoted by $C([\rho_{ij}], G_{\QQ})$.
As is be clarified in the proof below, the centralizer has a different description based on whether $i<p-1$ or $i=p-1$.

\begin{prop}
\label{Rusiru Note 4 Prop 2}
With notation as introduced above and writing $\Id$ to denote the identity matrix,
\[
C([\rho_{ij}], G_{\QQ}) = \begin{cases}
  \Fp^\times \Id_{i+1} \textrm{ when } i< p-1\\
  \left\{ \begin{pmatrix}
    \lambda & 0 & \ldots & c\\
    0 & \lambda & \ddots & 0\\
    \vdots & \vdots &\ddots & \vdots\\
    0 & 0 & \ldots & \lambda
  \end{pmatrix} \textrm{ such that }\lambda\in \Fp^\times, c\in \Fp\right\} \textrm{ when } i = p-1.
\end{cases}
\]
\end{prop}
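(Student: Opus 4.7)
The plan is to reduce the computation to finding the simultaneous centralizer of just two explicit matrices. Since $\rho_{ij}$ factors through $\cG = \Gal(L/\QQ)$, one has $C([\rho_{ij}], G_\QQ) = C([\rho_{ij}], \cG)$, and $\cG \simeq G \rtimes \Delta$ is generated by any pair $(g_0, g_1)$ mapping to generators of $\Delta$ and $G$ respectively. I would choose $g_0 \in G_{\QQ, S}$ with $\chi(g_0)$ a generator $c$ of $\Fp^\times$ and $b(g_0) = 0$ (such a $g_0$ exists, e.g.\ taken in $\Gal(L/\QQ(N^{1/p}))$), and $g_1 \in G_{\QQ, S}$ with $\chi(g_1) = 1$ and $b(g_1) = 1$ (taken in $G = \Gal(L/K)$). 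It then suffices to intersect the centralizers of $[\rho_{ij}](g_0)$ and $[\rho_{ij}](g_1)$ in $\GL_{i+1}(\Fp)$.

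Next I would analyze $[\rho_{ij}](g_0)$, which is diagonal with entries $c^{i+j}, c^{i+j-1}, \ldots, c^j$. When $i < p-1$, these $i+1$ exponents are pairwise distinct modulo $p-1$, so the centralizer is the subalgebra of diagonal matrices. When $i = p-1$, only the first and last entries coincide (both equal $c^j$), while the intermediate $p-2$ entries are pairwise distinct and distinct from $c^j$; consequently the centralizer consists of matrices that are diagonal on the middle $p-2$ positions with arbitrary entries at the four corner positions $(0,0)$, $(0,p-1)$, $(p-1,0)$, $(p-1,p-1)$.

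For $[\rho_{ij}](g_1)$, I would observe that this is the unipotent matrix $U$ with $(m,k)$-entry equal to $1/(k-m)!$ for $m \le k$, which is precisely the truncated exponential $\exp(N')$ of the principal nilpotent $N'$ (ones on the first super-diagonal, zeros elsewhere). Since $i \le p-1$, all factorials $1!, \ldots, i!$ are units modulo $p$, so the truncated logarithm recovers $N'$ as a polynomial in $U - I$; hence $\Fp[U] = \Fp[N']$, and the centralizer of $U$ coincides with that of $N'$, which is the classical subalgebra of upper-triangular Toeplitz matrices (entries depending only on $k - m$).

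Finally I would intersect the two centralizers inside $\GL_{i+1}(\Fp)$. For $i < p-1$, diagonal and Toeplitz together force scalar matrices, yielding $\Fp^\times \Id_{i+1}$. For $i = p-1$, the Toeplitz condition forces a common diagonal value $\lambda$ and a vanishing sub-diagonal corner at $(p-1, 0)$, while the super-diagonal corner $(0, p-1)$ remains a free parameter $c$; invertibility then requires $\lambda \in \Fp^\times$, recovering exactly the stated description. The main subtle point, concentrated in the $i = p-1$ case, is the identification $\Fp[U] = \Fp[N']$: one must invoke Wilson's theorem to ensure $1/(p-1)!$ remains a unit modulo $p$ so that the principal-nilpotent structure survives at the boundary. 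Everything else is routine linear algebra.
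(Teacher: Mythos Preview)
Your proof is correct and follows the same overall strategy as the paper: both reduce to intersecting the centralizer of a diagonal element (coming from $g$ with $b(g)=0$) with that of an element having $b(g)\neq 0$. The difference lies in how the second centralizer is handled. The paper takes a generic $g \in G_{\QQ} \setminus G_F$ and argues by direct entry comparison, extracting $y=0$ and the equality of all diagonal parameters one relation at a time. Your approach is more structural: by specializing to $g_1$ with $\chi(g_1)=1$ and $b(g_1)=1$, you recognize $[\rho_{ij}](g_1)$ as the truncated exponential $\exp(N')$ of the principal nilpotent $N'$, deduce $\Fp[U]=\Fp[N']$, and thereby identify its centralizer as the upper-triangular Toeplitz algebra in one stroke. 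This buys a cleaner and more uniform treatment, especially in the boundary case $i=p-1$ where the paper's entry comparison is somewhat ad hoc; the small cost is the need to justify the log/exp inversion modulo $p$, which you correctly secure via the invertibility of $(p-1)!$ from Wilson's theorem.
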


\noindent \textbf{Notation:} The matrix in the case $i=p-1$ will be denoted by $\mathcal{M}(\lambda, c)$ for ease of notation.

\begin{proof}
\uline{Case 1: When $i < p-1$.} \newline

Let $h$ be an element in $G_{F}$ where $F= \QQ(N^{1/p})$.
Then $b(h) = 0$. 
This implies that $\rho(h)$ is a diagonal matrix with distinct entries.
The only matrices that commute with such matrices are diagonal matrices.
Therefore,
\[
C([\rho_{ij}], G_F) \subseteq \{\textrm{Diag}(\lambda_1 , \; \ldots , \; \lambda_{i+1}) \ \mid \ \lambda_t\in \Fp^{\times} \textrm{ for all }t\}.
\]
On the other hand if $g\in G_{\QQ}\setminus G_F$, then $\rho_{ij}(g)$ is an upper triangular matrix with non-zero entries above the diagonal.
The only non-zero diagonal matrices that commute with such a $\rho_{ij}(g)$ are (non-zero) scalar multiples of $\Id_{i+1}$.
This completes the proof when $i<p-1$. \newline

\uline{Case 2: When $i = p-1$.} \newline

When $i=p-1$ then notice that for $h\in G_F$, the matrix $\rho_{ij}(h)$ is a diagonal matrix but not all entries are distinct.
Indeed, the (1,1)-entry is $\chi^{p-1+j}(h) = \chi^j(h)$ and the ($i+1$, $i+1$)-entry is $\chi^{j}(h)$.
In this case the set of matrices that commute with $\rho_{ij}(g)$ are given by
\[
\left\{ \begin{pmatrix}
\lambda_1 & 0 & \ldots & x\\
    0 & \lambda_2 & \ddots & 0\\
    \vdots & \vdots &\ddots & \vdots\\
    y & 0 & \ldots & \lambda_p  
\end{pmatrix} \textrm{ where } \lambda_t\in \Fp^\times, \ x,y\in \Fp
\right\} =: M((\lambda_t), x, y).
\]
If $g\in G_{\QQ}\setminus G_F$, and if we further require that
\begin{equation}
\label{centralizer eqn}
 \rho(g) \cdot M((\lambda_t), x, y)=M((\lambda_t), x, y) \cdot \rho(g)
\end{equation}
then upon comparing the (1,1)-entry, we note that
\[
\chi^{j}(g)\lambda_1 + \frac{b^{p-1}(g)}{(p-1)!}\chi^j(g)y = \chi^{j}(g)\lambda_1.
\]
This implies $y=0$.
Comparing the upper and lower $(p-1)\times (p-1)$-blocks of both sides of \eqref{centralizer eqn},
\[
\lambda_1 = \lambda_2 = \ldots = \lambda_p = \lambda \textrm{ (say)}.
\]
The claim of the proposition now follows immediately.
\end{proof}

\begin{rem}
The centralizer elements parameterize the different bases one can choose to obtain the same matrices $\rho_{ij}(g)$.
In other words, if $\{\beta_0, \ldots, \beta_i \}$ is a basis such that $A^{i,j}$ is given by $\rho_{ij}(g)$ then $\{\gamma\beta_0, \ldots, \gamma\beta_i \}$ is also such a basis where $\gamma$ is a centralizer element.
\end{rem}

\begin{prop}
In view of \eqref{filtration}, $A^{i, j}$ contains exactly one copy of $A^{i-\alpha, j+\alpha}$ for all $0 \leq \alpha \leq i$.
\end{prop}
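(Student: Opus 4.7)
The filtration \eqref{filtration} already exhibits at least one copy of $A^{i-\alpha, j+\alpha}$ inside $A^{i,j}$ for each $0 \leq \alpha \leq i$, so the task is to prove \emph{uniqueness}. Since $\Fp(j)$ is one-dimensional, the $G_\QQ$-subrepresentations of $A^{i,j} = \Sym^i(V) \otimes \Fp(j)$ correspond bijectively (via the twist $U \leftrightarrow U \otimes \Fp(j)$) to those of $\Sym^i(V)$. The plan is thus to classify all $G_\QQ$-stable subspaces of $\Sym^i(V)$ by an explicit weight analysis.

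Pick a basis $\{x, y\}$ of $V$ so that $\rho(g)x = \chi(g)x$ and $\rho(g)y = y + b(g)x$. Then $\Sym^i(V)$ has basis $\{x^k y^{i-k}\}_{k=0}^{i}$, on which $G_F = \Gal(\overline\QQ/F)$ acts diagonally via $\chi^k$. The natural candidate subrepresentations are
\[
W_\alpha := \mathrm{Span}\bigl(x^i, x^{i-1}y, \ldots, x^\alpha y^{i-\alpha}\bigr), \qquad 0 \leq \alpha \leq i+1,
\]
and one checks directly that $W_\alpha \otimes \Fp(j) \cong A^{i-\alpha, j+\alpha}$ as $G_\QQ$-representations. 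I would prove the proposition by showing these are the \emph{only} $G_\QQ$-stable subspaces of $\Sym^i(V)$; since they have pairwise distinct dimensions $i-\alpha+1$, each $A^{i-\alpha, j+\alpha}$ would then occur exactly once.

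For $i \leq p-2$, the characters $\chi^k|_{G_F}$ for $k = 0, \ldots, i$ are pairwise distinct, so any $G_\QQ$-stable $W$ decomposes as a sum of $G_F$-eigenspaces: $W = \bigoplus_{k \in S} \Fp \cdot x^k y^{i-k}$ for some $S \subseteq \{0, 1, \ldots, i\}$. For any $g \in G_\QQ$ with $b(g) \neq 0$ (such $g$ exist since $L \neq K$),
\[
\rho(g)(x^k y^{i-k}) = \chi(g)^{k}\sum_{l=0}^{i-k} \binom{i-k}{l} b(g)^l \, x^{k+l} y^{i-k-l}
\]
lies in $W$, and each summand, being in a distinct weight space, must separately lie in $W$. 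Choosing such a $g$ shows $k \in S \Rightarrow k+1, \ldots, i \in S$, so $S$ is upward-closed and $W = W_\alpha$ with $\alpha = \min S$.

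The main obstacle is the boundary case $i = p-1$, where $\chi^{p-1} = \chi^0$ on $G_F$ makes the trivial-weight space two-dimensional (spanned by $x^{p-1}$ and $y^{p-1}$), and $G_F$-invariance no longer pins down $W$ as a sum of weight spaces. To handle this, I would exploit the identity $\binom{p-1}{l} \equiv (-1)^l \pmod p$ to expand
\[
\rho(g)(y^{p-1}) = \sum_{l=0}^{p-1}(-1)^l b(g)^l \, x^l y^{p-1-l},
\]
which for $g$ with $b(g) \neq 0$ has nonzero components in every weight $\chi^l$. A short chase of the $G_\QQ$-action then shows that any $G_\QQ$-stable $W$ containing a vector with nonzero $y^{p-1}$-coefficient must equal all of $\Sym^{p-1}(V)$. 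Otherwise $W \subseteq W_1$, where the remaining weights $\chi^1, \chi^2, \ldots, \chi^{p-1}$ are pairwise distinct on $G_F$, so the weight-space argument of the previous paragraph applies inside $W_1$ and forces $W = W_\alpha$ for some $\alpha \geq 1$.
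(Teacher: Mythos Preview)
Your proof is correct and actually establishes a slightly stronger statement than the proposition: you classify \emph{all} $G_\QQ$-stable subspaces of $A^{i,j}$, not merely those isomorphic to a given $A^{i-\alpha,j+\alpha}$. The case split at $i=p-1$ is handled properly once one notes (as you implicitly do) that for $v = a x^{p-1} + y^{p-1}$ one has $\rho(g)v - v \in W$ with nonzero components in every weight $\chi^1,\dots,\chi^{p-2}$, and then the trivial-weight component of $\rho(g)v$ differs from that of $v$ by $b(g)^{p-1}x^{p-1} = x^{p-1}$, forcing $x^{p-1},y^{p-1}\in W$ separately.

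The paper's approach is quite different: it runs an induction on $i$ using the functor of $G$-invariants (where $G=\Gal(L/K)$). The key observations are that $(A^{i,j})^G$ is exactly the one-dimensional piece $A^{0,i+j}$ and that $A^{i,j}/(A^{i,j})^G \cong A^{i-1,j}$; so if $X\subseteq A^{i,j}$ with $X\cong A^{i-\alpha,j+\alpha}$, then $X^G$ is forced to equal $(A^{i,j})^G$, and $X/X^G \subseteq A^{i-1,j}$ is a copy of $A^{i-1-\alpha,j+\alpha}$, unique by induction. This argument is shorter and avoids any case analysis at $i=p-1$, but it uses the specific structure of the $G$-fixed points rather than the $G_F$-weight decomposition. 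Your explicit approach has the advantage of being self-contained and of yielding the full lattice of subrepresentations, which is in fact what gets used later (e.g.\ in the principal-generator counts of Propositions~\ref{Rusiru note 4 Prop 5}--\ref{Rusiru note 4 Prop 6}).
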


\begin{proof}
This statement will be proven by induction on $i$.
We begin with the observation that in view of the above filtration $(A^{i,j})^G = A^{0,j+i}$.
\newline

\underline{Base case: $i=1$}
We need to verify that $A^{1,j}$ contains exactly one copy of $A^{0,j+1}$.
Suppose that $X \subseteq A^{1,j}$ and $X\cong A^{0,j}$.
Note that 
\[
X^G \subseteq (A^{1,j})^G = A^{0, j+1}.
\]
But, $X^G = X$ and $A^{0, j+1}$ are both 1-dimensional.
Thus, $X^G = A^{0, j+1}$ as desired.
\newline

\underline{Induction hypothesis:} Assume that the result holds for all $0 \leq r <i$.
\newline

\underline{Induction step:}
Suppose that $X \subseteq A^{i,j}$ and $X \cong A^{i-\alpha, j+\alpha}$.
Then using the fact that $X^G = (A^{i-\alpha,j+\alpha})^G = A^{0, i+j}$ we obtain that
\[
\frac{X}{X^G} \subseteq \frac{A^{i,j}}{(A^{i,j})^G} \cong A^{i-1, j},
\]
where the isomorphism is obtained by forgetting the first coordinate.
Similarly,
\[
\frac{X}{X^G} \cong A^{i-1 - \alpha, j+\alpha}.
\]
By the induction hypothesis, there is only one possibility for $\frac{X}{X^G}$ and hence also for $X$.
\end{proof}

Next we include the Selmer condition $\Lambda$ which will play a crucial role throughout our paper.
This is the same definition as in \cite[Section~3.2]{SS19}

\begin{definition}
\label{Lambda Selmer condn}
Let $A$ be a $G_{\QQ}$-module.
Define $\Lambda = \{L_v\}$ to be the \textit{Selmer condition} given by 
\begin{itemize}
  \item$L_{\ell} =H^1_{\ur}(G_{\QQ_{\ell}} ,A) = H^1(G_{\QQ_{\ell}}/I_{\ell}, A^{I_\ell})$ for $\ell \neq N,p$ where $I_\ell$ is the inertia group.
  \item $L_N =H^1(G_{\QQ_N} ,A)$
  \item $L_p = {\Res}^{-1}\left(H^1_{\ur}(G_{L_p} ,A)\right)$ where $\Res$ is the restriction map $H^1(G_{\QQ_p} ,A) \rightarrow H^1(G_{L_p} ,A)$.
\end{itemize}
The \textit{Selmer group} associated to the Selmer condition $\Lambda$ is defined as
\[
H^1_{\Lambda}(G_{\QQ}, A) = \ker\left(H^1(G_{\QQ},A) \longrightarrow \prod_{v} \frac{H^1(G_{\QQ_v}, A)}{L_v}\right).
\]
\end{definition}

\begin{rem}
Throughout this section, $p$ is assumed to be odd.
Hence $H^1(G_{\mathbb{R}},A^{i, j}) = 0$ for all $i$ and $j$. 
Thus, we need not specify a local condition at the infinite place.
\end{rem}

\begin{prop}
\label{res is inj}
Consider the restriction map 
\[
\Res: H^1_{\Lambda}(G_{\QQ}, A^{i, j}) \longrightarrow H^1(G_L, A^{i, j}).
\]
\begin{enumerate}
\item[\textup{(i)}] If $j \not\equiv 1 \pmod{p-1}$ or $i=p-1$, then $ \Res$ is injective.
\item[\textup{(ii)}] If $j \equiv 1 \pmod{p-1}$ and $i< p-1$, then $\ker( \Res)$ is one-dimensional.
\end{enumerate}
\end{prop}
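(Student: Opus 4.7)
The plan is to use inflation-restriction along $G_L \trianglelefteq G_\QQ$ to identify $\ker(\Res: H^1(G_\QQ, A^{i,j}) \to H^1(G_L, A^{i,j}))$ with $H^1(\mathcal{G}, A^{i,j})$, using that $G_L$ acts trivially on $A^{i,j}$. I would then verify that every class in $H^1(\mathcal{G}, A^{i,j})$ (inflated to $G_\QQ$) automatically satisfies the $\Lambda$-Selmer conditions: at primes $\ell \neq N, p$ the extension $L/\QQ$ is unramified by Lemma~\ref{N-ramification}, so any inflated class is unramified there; at $N$ there is no local condition; and at $p$ the class restricts to zero on $G_{L_p} \subset G_L$, hence is unramified on $G_{L_p}$. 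Thus $\ker\!\left(\Res|_{H^1_\Lambda(G_\QQ, A^{i,j})}\right) = H^1(\mathcal{G}, A^{i,j})$, which reduces the problem to computing this cohomology group.

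Next, I would apply inflation-restriction along $G \trianglelefteq \mathcal{G}$. Since $|\Delta| = p-1$ is coprime to $p$, $H^*(\Delta, -)$ vanishes in positive degrees on any $\Fp$-module, so $H^1(\mathcal{G}, A^{i,j}) \cong H^1(G, A^{i,j})^\Delta$. For $G \cong \ZZ/p\ZZ$ generated by $u$, $H^1(G, A^{i,j}) = \ker(\nu)/\mathrm{im}(T)$, where $T = \rho_{ij}(u) - I$ and $\nu = 1 + u + \cdots + u^{p-1}$; the standard identity $\nu = T^{p-1}$ holds in $\Fp[G]$. An inspection of $[\rho_{ij}]$ using $\chi(u) = 1$ shows that $T$ is strictly upper triangular with $\mathrm{im}(T) = A^{i-1, j+1}$.

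When $i = p-1$, the matrix $T$ is regular nilpotent of size $p \times p$, and $T^{p-1}$ has a single nonzero entry $b(u)^{p-1} = 1$ in the $(1,p)$-position (by Fermat's little theorem). Therefore $\ker(\nu) = A^{p-2, j+1} = \mathrm{im}(T)$, so $H^1(G, A^{p-1, j}) = 0$ and assertion (i) holds in this case. When $i < p-1$ we have $T^{p-1} = 0$, hence $\nu = 0$ and $H^1(G, A^{i,j}) = A^{i,j}/A^{i-1, j+1}$ is one-dimensional, with $\mathcal{G}$ tautologically acting via $\chi^j$ on the quotient.

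The main obstacle is to correctly identify the $\Delta$-action on $H^1(G, A^{i,j})$ under the cocycle identification $c \mapsto c(u)$ with $\coker(T)$, since this identification carries a nontrivial twist. Computing conjugation inside $\mathcal{G} \cong \smat{\chi & b \\ 0 & 1}$ gives $\delta u \delta^{-1} = u^{\chi(\delta)}$, and the cocycle relation yields $c(u^k) \equiv k \cdot c(u) \pmod{\mathrm{im}(T)}$. Combining these, $(\delta \cdot c)(u) = \rho_{ij}(\delta)\, c(\delta^{-1} u \delta) \equiv \chi(\delta)^{-1}\, \rho_{ij}(\delta)\, c(u) \pmod{\mathrm{im}(T)}$, so $H^1(G, A^{i,j}) \cong \Fp(j-1)$ as a $\Delta$-module. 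Taking $\Delta$-invariants gives $\Fp$ when $j \equiv 1 \pmod{p-1}$ and $0$ otherwise, establishing both the remaining injectivity in (i) (for $i < p-1$, $j \not\equiv 1$) and the one-dimensionality in (ii). It is precisely this $\chi^{-1}$ twist that shifts the congruence condition from the naive $j \equiv 0 \pmod{p-1}$ to the correct $j \equiv 1 \pmod{p-1}$.
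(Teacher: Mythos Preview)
Your argument is correct and follows essentially the same route as the paper: two applications of inflation--restriction reduce the question to computing $H^1(G,A^{i,j})^{\Delta}$, and you then evaluate this via the cyclic description $\ker(\nu)/\mathrm{im}(T)$ together with the conjugation action of $\Delta$. Your treatment of the case $i=p-1$ via $T^{p-1}=(b(u)N+\cdots)^{p-1}=b(u)^{p-1}N^{p-1}$ matches the paper's Lemma~\ref{next lemma}, and your handling of $i<p-1$ is the same computation repackaged as $A^{i,j}/A^{i-1,j+1}$.

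Two minor remarks. First, your explicit check that every class inflated from $H^1(\cG,A^{i,j})$ lies in $H^1_\Lambda(G_\QQ,A^{i,j})$ (using that $L/\QQ$ is unramified outside $\{N,p\}$, that there is no condition at $N$, and that restriction to $G_{L_p}\subset G_L$ is zero) is genuinely needed for part~(ii) and is a point the paper leaves implicit; it is good that you spelled it out. Second, you obtain $H^1(G,A^{i,j})\cong\Fp(j-1)$ whereas the paper's Lemma~\ref{next lemma} records $\Fp(1-j)$; this reflects a convention choice for the $\Delta$-action on $H^1$ of a normal subgroup, and since $\Fp(k)^{\Delta}\neq 0$ if and only if $p-1\mid k$, both identifications yield the same invariants and hence the same conclusion.
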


\begin{proof}
By the inflation-restriction sequence
\[
0 \longrightarrow H^1(\cG, A^{i, j}) \longrightarrow H^1(G_\QQ, A^{i, j}) \longrightarrow H^1(G_L, A^{i, j})^{\cG}.
\]
We also have
\[
0 \longrightarrow H^1(\Delta, (A^{i, j})^G) \longrightarrow H^1(\cG, A^{i, j} ) \longrightarrow H^1(G, A^{i, j})^{\Delta} \longrightarrow H^2(\Delta, (A^{i, j})^G).
\]
Since $\Delta = \Gal(\QQ(\zeta_p)/\QQ)$ has order prime to $p$, the first and the last terms vanish.
Hence, 
\begin{equation}
\label{Rusiru note 3 equation 1}
H^1(\cG, A^{i, j}) \cong H^1(G, A^{i, j})^{\Delta}.
\end{equation}
For the first statement of the proposition, we use the next lemma (see Lemma~\ref{next lemma}) to conclude,
\[
H^1(G, A^{i, j})^{\Delta}=0.
\]
Thus, $H^1(\cG, A^{i, j})$ vanishes as desired.

Now if $j \equiv 1 \pmod{p-1}$ and $i<p-1$, note that 
\[
H^1(G, A^{i,j})^{\Delta} \simeq \Fp
\]
and the cohomology group is generated by a 1-cocycle $c : g \mapsto \uline{\boldsymbol{b}}^{(i)}{(g)}$; see below for notation.
But via the isomorphism induced by restriction in \eqref{Rusiru note 3 equation 1}, a 1-cocycle $c' : g' \mapsto \uline{\boldsymbol{b}}^{(i)}{(g')}$ in $H^1(\cG, A^{i,j})$ maps to a 1-cocycle $c$.
\end{proof}

\begin{Lemma}
\label{next lemma}
Suppose that $i<p-1$.
There is an isomorphism $H^1(G,A^{i, j}) \cong \mathbb F_p(1-j)$ and the cohomology group is generated by the 1-cocycle
\[
c : g \mapsto \begin{pmatrix}
      \frac{b^{i+1}}{ (i+1)! }(g)  \\
      \vdots \\
      b(g) 
     \end{pmatrix} .
\]
When $i=p-1$, the cohomology group is trivial.
\end{Lemma}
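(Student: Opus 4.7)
The plan is to reduce to the case $j=0$ and then apply standard cyclic group cohomology. By definition $A^{i,j} = A^{i,0} \otimes_{\Fp} \Fp(j)$ as $\cG$-modules, and since $\chi|_G$ is trivial, $G$ acts trivially on $\Fp(j)$. Consequently
\[
H^1(G, A^{i,j}) \cong H^1(G, A^{i,0}) \otimes \Fp(j)
\]
as $\Delta$-modules, so it suffices to compute $H^1(G, A^{i,0})$.

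Next I would analyze the $\Fp[G]$-module structure of $A^{i,0}$. Since $\chi|_G = 1$, a generator $\sigma \in G$ acts on $A^{i,0}$ by the unipotent matrix whose $k$-th superdiagonal has entry $b(\sigma)^k/k!$. In particular $\sigma - 1$ is nilpotent with a single Jordan block of size $i+1$, so $A^{i,0}|_G$ is the unique indecomposable $\Fp[G]$-module of that length. Using the standard cyclic cohomology formula $H^1(G, M) = \ker(N_G)/(\sigma - 1)M$ together with the identity $N_G = (\sigma-1)^{p-1}$ inside $\Fp[G]$, one concludes: if $i < p-1$ then $N_G$ vanishes on $A^{i,0}$, so $H^1(G, A^{i,0}) \cong A^{i,0}/(\sigma-1)A^{i,0}$ is one-dimensional; if $i = p-1$ then $\ker N_G = (\sigma-1)A^{i,0}$, so $H^1(G, A^{i,0}) = 0$, which finishes the $i = p-1$ case.

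For $i < p-1$, the quotient $A^{i,0}/(\sigma-1)A^{i,0}$ is the bottom weight line $A^{i,0}/A^{i-1,1}$ of the filtration \eqref{filtration}. To identify the $\Delta$-character, note that the map $[c] \mapsto c(\sigma)$ is not $\Delta$-equivariant, because $\Delta$ acts nontrivially on $G$ by conjugation; explicitly $\tilde\delta^{-1}\sigma\tilde\delta = \sigma^{\chi(\delta)^{-1}}$ for any lift $\tilde\delta \in \cG$ of $\delta \in \Delta$. Writing $c(\sigma^k) = (1+\rho(\sigma)+\cdots+\rho(\sigma)^{k-1})c(\sigma)$ and reducing modulo $(\sigma-1)A^{i,0}$ produces an additional twist by a power of $\chi$, and combining this with the tensor factor $\Fp(j)$ yields the claimed $\Fp(1-j)$. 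Finally, I would verify that the explicit $c$ is a cocycle by expanding $b(gh)^k = (b(g)+b(h))^k$ via the binomial theorem (using that $\chi|_G = 1$, so $b$ is additive on $G$), and show it is not a coboundary by observing that its top coordinate $b^{i+1}(g)/(i+1)!$ has degree $i+1$ in $b(g)$, while the top coordinate of any coboundary $(\rho_{ij}(g)-1)v$ is a polynomial in $b(g)$ of degree at most $i$.

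The main obstacle is the $\Delta$-character identification: the convenient description $H^1(G, M) \cong M/(\sigma-1)M$ depends on a choice of generator $\sigma$ that is not preserved by $\Delta$, so one must carefully track the twist that arises from the change-of-generator computation together with the $\chi^j$ coming from the tensor factor. Once this bookkeeping is handled, the fact that the displayed cocycle generates follows immediately from the degree-in-$b$ argument above, since $[c]$ maps to the bottom weight line in the explicit identification.
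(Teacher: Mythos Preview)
Your approach is essentially the same as the paper's: both compute $H^1(G,A^{i,j})$ via the cyclic-cohomology identification $H^1(G,M)\cong \ker(N_G)/(\sigma-1)M$, observe that $A^{i,j}|_G$ is a single unipotent Jordan block of length $i+1$, and read off the dimension ($1$ if $i<p-1$, $0$ if $i=p-1$). The paper works directly with $A^{i,j}$ and checks by hand that $N_G=0$ for $i<p-1$, while you factor out the twist $A^{i,j}=A^{i,0}\otimes\Fp(j)$ first and use the slicker identity $N_G=(\sigma-1)^{p-1}$ in $\Fp[G]$; these are cosmetic differences. Your non-coboundary argument via the degree of the top coordinate in $b(g)$ is a pleasant alternative to the paper's observation that $\underline{\boldsymbol b}^{(i+1)}(g_0)\notin(\mathrm{Id}-g_0)A^{i,j}$, though looking at the \emph{bottom} coordinate is even quicker (it equals $b(g)\neq 0$, while the bottom coordinate of every coboundary is $(\chi^j(g)-1)v_{i+1}=0$).

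The one place to be careful is exactly the point you flag as ``the main obstacle.'' If you carry out your bookkeeping with the standard convention $(\delta\cdot c)(g)=\tilde\delta\cdot c(\tilde\delta^{-1}g\tilde\delta)$, then on the bottom line of $A^{i,0}$ the value-action is trivial and the conjugation twist contributes $\chi^{-1}$, so after tensoring with $\Fp(j)$ you land on $\Fp(j-1)$ rather than $\Fp(1-j)$. The paper's explicit computation uses the opposite convention and obtains $\Fp(1-j)$. This discrepancy is harmless for the only application (Proposition~\ref{res is inj}), since $\Fp(j-1)^\Delta$ and $\Fp(1-j)^\Delta$ vanish under exactly the same condition $j\not\equiv 1\pmod{p-1}$; but you should be aware that ``yields the claimed $\Fp(1-j)$'' is not automatic from your setup and depends on the convention chosen.
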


\noindent \textbf{Notation:} Write $\uline{\boldsymbol{b}}^{(i)}$ to denote the vector $\begin{pmatrix}
  \frac{b^i}{i!}\\
  \frac{b^{i-1}}{(i-1)!} \\
  \vdots\\
  b
\end{pmatrix}$ and $\uline{\boldsymbol{b}}^{(i)}(g)$ to denote $\begin{pmatrix}
  \frac{b^i}{i!}(g)\\
  \frac{b^{i-1}}{(i-1)!}(g) \\
  \vdots\\
  b(g)
\end{pmatrix}$.

\begin{proof}
We first show that the 1-cocycle $c : g \mapsto \uline{\boldsymbol{b}}^{(i)}{(g)}$ generates $H^1(G, A^{i,j})$.
As $G$ is cyclic, suppose that it is generated by some element $g_0$.
We know that $A^{i, j}$ is finite; now by \cite[Chapter 2, Example 1.20]{milneCFT}, there exists an isomorphism 
\begin{align}\label{Milne Isomorphism}
H^1(G, A^{i, j}) &\cong \frac{\ker(\Norm(G) )}{ (\Id -{g_0}) A^{i, j} }\\
[\sigma] &\mapsto [\sigma(g_0)]
\end{align}
where $\Norm(G)$ is the norm map defined by
\[
\Norm(G) := \Id + g_0 + \cdots +g_0^{p-1}
\] 
and $g_0$ is being viewed as a matrix of size $(i+1) \times (i+1)$.
When $i=p-1$, note that the right side of the above isomorphism is 0-dimensional because the kernel of the norm is $i$-dimensional (in this case) and $(\Id -{g_0}) A^{i, j} $ is always $i$-dimensional.
This proves the last assertion of the lemma.

In the following, we focus on the case that $i<p-1$.
A direct computation shows that $\Norm(G) = \Id + g_0+ \cdots +g_0^{p-1}= 0$ when $i<p-1$.
Hence, $\ker(\Norm(G))=\Fp^{i+1}$ and $\rank(\Id-g_0)=i$.
Furthermore, $\uline{\boldsymbol{b}}^{(i)}{(g_0)} \not\in (\Id - g_0)A^{i,j}$.
Hence, $H^1(G, A^{i, j})$ is one-dimensional.

We now prove the first assertion of the lemma, namely the isomorphism.
Observe that the action of $\cG/G$ on $H^1(G, A^{i,j})$ can be understood by how $x \in \cG/G$ acts on $b(g_0)$, which is the last entry of the representative $\uline{\boldsymbol{b}}^{(i)}{(g_0)}$ of $\frac{\ker(\Norm(G) )}{ (\Id -{g_0}) A^{i, j} }$.
Note that
\[
x \cdot b(g_0) = \chi^j(x^{-1})b(\widetilde{x} g_0 \widetilde{x}^{-1}).
\]
Here $\widetilde{x}\in \cG$ is a lift of $x$.
Since $G$ is abelian, the conjugate $\widetilde{x} g_0 \widetilde{x}^{-1}$ is independent of the choice of the lift.
The above action is understood via the isomorphism in \eqref{Milne Isomorphism}.
Set $\epsilon:=b(g_0) = \frac{g_0(N^{1/p})}{N^{1/p}} $, i.e., equivalently write
\[
g_0(N^{1/p}) = \epsilon N^{1/p}.
\]
Also, suppose that $\widetilde{x}^{-1}(N^{1/p}) = \zeta N^{1/p}$; here, $\zeta$ is some $p$-th root of unity and we remind the reader that $\widetilde{x}$ is as an automorphism.
Then $N^{1/p} = \widetilde{x}(\zeta N^{1/p})$.
Moreover,
\begin{align*}
b(\widetilde{x} g_0 \widetilde{x}^{-1}) &:= \frac{(\widetilde{x} g_0 \widetilde{x}^{-1})(N^{1/p})}{N^{1/p}} \\
&= \frac{(\widetilde{x} g_0)(\zeta N^{1/p}) }{N^{1/p}} = \frac{\widetilde{x}(\zeta g_0(N^{1/p}))}{N^{1/p}} = \frac{x(\zeta) \times (\widetilde{x} g_0)(N^{1/p})}{N^{1/p}} = x(\zeta) \times \frac{\widetilde{x}(\epsilon N^{1/p})}{N^{1/p}}\\
& = \frac{x(\epsilon) \times \widetilde{x}(\zeta N^{1/p})}{N^{1/p}} = x(\epsilon) = \chi(x) \epsilon\\
& = \chi(x) b(g_0).
\end{align*}
Thus, $x \cdot b(g_0) = \chi^{1-j}(x) b(g_0)$.
This gives the desired isomorphism.
\end{proof}

Next, we state and prove the main result of this section; but first we present a lemma and introduce two definitions. 

\begin{Lemma}
\label{Semidirect-product}
Let $M$ be an elementary abelian unramified extension of $L$ which is Galois over $\QQ$.
Then the following short exact sequence splits, 
\[
1\longrightarrow \Gal(M/L)\longrightarrow \Gal(M/\QQ)\longrightarrow \cG\longrightarrow 1.
\]
Hence, $\Gal(M/\QQ)\cong \Gal(M/L)\rtimes \cG$.

\begin{proof}
The proof is similar to that of \cite[Lemma~3.1.3]{SS19}.
\end{proof}
    
\end{Lemma}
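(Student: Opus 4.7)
The plan is to show that the extension class in $H^2(\cG, H)$ is trivial, where $H = \Gal(M/L)$ is viewed as an $\Fp[\cG]$-module via conjugation (well-defined since $H$ is abelian and normal in $\Gal(M/\QQ)$). My approach would be to first establish a splitting after restriction to $G$, and then use a Hochschild--Serre argument to lift the vanishing up to $\cG$.

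First, I would focus on the intermediate extension
\[
1 \longrightarrow H \longrightarrow \Gal(M/K) \longrightarrow G \longrightarrow 1.
\]
By Lemma~\ref{N-ramification}, every prime $\fq$ of $K$ above $N$ is totally ramified in $L/K$. Picking such a $\fq$ and a prime $\fN$ of $M$ lying above it, the unramifiedness of $M/L$ forces the ramification index $e(\fN/\fq)$ to equal $p$. Hence the inertia subgroup $I_{\fN} \subseteq \Gal(M/K)$ has order $p$ and projects isomorphically onto $G$. This $I_{\fN}$ gives a complement to $H$ in $\Gal(M/K)$, so the above extension splits and its class in $H^2(G, H)$ is trivial.

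Next, I would apply the Hochschild--Serre spectral sequence to the normal subgroup $G \subseteq \cG$ with quotient $\Delta$:
\[
E_2^{i, j} = H^i(\Delta, H^j(G, H)) \;\Longrightarrow\; H^{i+j}(\cG, H).
\]
Since $|\Delta| = p - 1$ is coprime to the $p$-power order of $H^j(G, H)$, all terms with $i > 0$ vanish, so the spectral sequence degenerates and yields an isomorphism $H^n(\cG, H) \cong H^n(G, H)^\Delta$ for every $n \geq 0$. In particular, the restriction map $H^2(\cG, H) \to H^2(G, H)$ is injective, and it sends the class of $\Gal(M/\QQ) \twoheadrightarrow \cG$ to the class of its pullback along $G \hookrightarrow \cG$, which is precisely the split extension $\Gal(M/K) \twoheadrightarrow G$ from the previous paragraph. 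Triviality of the latter class therefore forces triviality of the former, giving $\Gal(M/\QQ) \cong H \rtimes \cG$ as required.

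The conceptual heart of the argument is the inertia calculation in the first step, and the only obstacle I anticipate is verifying rigorously that the pullback of $\Gal(M/\QQ) \twoheadrightarrow \cG$ along $G \hookrightarrow \cG$ is indeed the sequence $\Gal(M/K) \twoheadrightarrow G$ studied in the first step --- this is essentially tautological once one unwinds definitions. The spectral sequence portion is standard once one invokes $\gcd(|\Delta|, p) = 1$.
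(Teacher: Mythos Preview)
Your proof is correct and follows essentially the route the paper has in mind by citing \cite[Lemma~3.1.3]{SS19}: the inertia group at a prime above $N$ furnishes a complement to $H$ inside $\Gal(M/K)$, and the passage from $G$ to $\cG$ is handled by the coprimality of $|\Delta|=p-1$ with $|H|$. Your spectral-sequence formulation of the second step is clean; an equivalent phrasing is that $\mathrm{cores}\circ\mathrm{res} = (p-1)\cdot\mathrm{id}$ on the $p$-group $H^2(\cG,H)$, so restriction to $H^2(G,H)$ is injective.
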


\begin{definition}\leavevmode{}
\begin{enumerate}
    \item[(a)] An \emph{ $A^{i,j}$ extension} of $L$ is a Galois extension $M/L$ satisfying  $\Gal(M/L) =: \uline{A}$ such that $\underline{A} \simeq A^{i,j}$ as a $\cG$-module.
    \item[(b)] Given a vector space $V$ over a field $F$, the projective space $\PP^1(V)$ is the set of equivalence classes of $V \setminus \{0\}$ under the equivalence relation $\sim$ defined by $x \sim y$ if there is a nonzero element $\lambda$ of $F$ such that $x = \lambda y$.
\end{enumerate}
\end{definition}

\begin{Th}
\label{main theorem Rusiru note 3}
When $1 \leq i< p-1$ and $j\neq 1$, there is the following one-to-one correspondence 
\[
\{\textup{unramified $A^{i, j}$ extensions of } L \} \xleftrightarrow{1:1} \mathbb P^1 H^1_\Lambda(G_{\QQ},A^{i, j})\setminus \mathbb P^1 H^1_\Lambda(G_{\QQ}, A^{i-1, j+1}).
\]
\end{Th}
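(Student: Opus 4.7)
The plan is to establish the bijection in both directions using Lemma~\ref{Semidirect-product}, the inflation-restriction sequence, and the filtration~\eqref{filtration}, then to use Proposition~\ref{Rusiru Note 4 Prop 2} and Proposition~\ref{res is inj} to explain both the passage to $\mathbb{P}^1$ and the exclusion of the image of $H^1_\Lambda(G_\QQ, A^{i-1, j+1})$.

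First I would set up the forward map. Given an unramified $A^{i,j}$ extension $M/L$, Lemma~\ref{Semidirect-product} provides an isomorphism $\Gal(M/\QQ)\cong A^{i,j}\rtimes\cG$ compatible with the $\cG$-module structure on $A^{i,j}$. Choosing an isomorphism $\Gal(M/L)\xrightarrow{\sim} A^{i,j}$ and composing with the surjection $G_\QQ\twoheadrightarrow \Gal(M/\QQ)$ produces a continuous $1$-cocycle $c_M:G_\QQ\to A^{i,j}$, whose class I claim lies in $H^1_\Lambda(G_\QQ, A^{i,j})$. At a place $\ell\notin\{N,p\}$ this follows from unramifiedness of $M/L$ (the inertia $I_\ell$ must land in $\ker(c_M)$); at $N$ there is no local condition; and at $p$ the condition $\Res^{-1}(H^1_{\ur}(G_{L_p}, A^{i,j}))$ is satisfied precisely because $M/L$ is unramified at primes above $p$. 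Two different choices of isomorphism $\Gal(M/L)\xrightarrow{\sim} A^{i,j}$ differ by an element of the centralizer $C([\rho_{ij}], G_\QQ)$, which by Proposition~\ref{Rusiru Note 4 Prop 2} equals $\mathbb{F}_p^\times\Id_{i+1}$ when $i<p-1$; hence $[c_M]$ is a well-defined element of $\mathbb{P}^1 H^1_\Lambda(G_\QQ, A^{i,j})$.

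For the reverse direction, I would take a class $[c]\in\mathbb{P}^1 H^1_\Lambda(G_\QQ, A^{i,j})$ and invoke Proposition~\ref{res is inj}(i), whose hypothesis $j\ne 1$ gives injectivity of restriction, to regard $[c]$ as a nonzero element of $H^1(G_L, A^{i,j})^\cG = \Hom_\cG(G_L, A^{i,j})$. For a representative homomorphism $\phi$, the image $\phi(G_L)$ is a $\cG$-submodule of $A^{i,j}$; by the filtration~\eqref{filtration} together with the uniqueness of each subcopy $A^{i-\alpha, j+\alpha}\subset A^{i,j}$ established just before Lemma~\ref{Semidirect-product}, one has $\phi(G_L)=A^{i-\alpha, j+\alpha}$ for a unique $0\le\alpha\le i$. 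The fixed field $M:=\overline{\QQ}^{\ker\phi}$ is then an $A^{i-\alpha,j+\alpha}$ extension of $L$, and the Selmer condition $\Lambda$ forces $M/L$ to be unramified everywhere. The extension is genuinely of type $A^{i,j}$ (i.e., $\alpha=0$) if and only if $\phi$ does not factor through the inclusion $A^{i-1, j+1}\hookrightarrow A^{i,j}$, which by another application of inflation-restriction is equivalent to $[c]\notin \mathbb{P}^1 H^1_\Lambda(G_\QQ, A^{i-1, j+1})$. Finally, the composite construction on either side is the identity because, by Proposition~\ref{res is inj}, a class is determined by its restriction to $G_L$, and the cocycle $c_M$ attached in the forward direction restricts to the tautological homomorphism $G_L\twoheadrightarrow\Gal(M/L)\cong A^{i,j}$.

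The main obstacle I anticipate is the careful local analysis at $N$ and at $p$, where the Selmer condition must be translated into unramifiedness of $M/L$ at the primes above. At $N$ in particular, the local condition is the trivial one $L_N=H^1(G_{\QQ_N}, A^{i,j})$, so I must verify that a class with arbitrary behavior at $N$ over $\QQ$ still restricts to an unramified class on each $G_{L_{\mathfrak{N}}}$; this relies on the total ramification of $N$ in $L/K$ (Lemma~\ref{N-ramification}) being absorbed by the fact that the inertia at $N$ already maps onto $G$ inside $\cG$, so the resulting map from $G_L$-inertia to $A^{i,j}$ has no room to be nontrivial. A parallel but simpler analysis is needed at $p$ using the explicit form of the local condition $\Res^{-1}(H^1_{\ur}(G_{L_p}, A^{i,j}))$.
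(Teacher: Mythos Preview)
Your proposal is correct and follows essentially the same route as the paper: forward map via Lemma~\ref{Semidirect-product} and cocycle extraction, well-definedness in $\mathbb P^1$ via the centralizer computation of Proposition~\ref{Rusiru Note 4 Prop 2}, reverse map via restriction to $G_L$ and the filtration, and the exclusion of $\mathbb P^1 H^1_\Lambda(G_\QQ,A^{i-1,j+1})$ via injectivity of $\Res$ from Proposition~\ref{res is inj}. The one point you flag as an obstacle---unramifiedness at $N$---is handled in the paper by citing \cite[Proposition~2.2.2]{SS19}, whose content is exactly your heuristic (tame inertia at $N$ surjects onto $G$, so the cocycle on $I_{L,\fN}$ is the $G$-norm applied to $c(\tau)$, which vanishes on $A^{i,j}$ for $i<p-1$).
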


\begin{proof}
Let $M/L$ be an unramified $A^{i,j}$ extension where $\uline{A}:=\Gal(M/L)$.
Pick a basis in such a way that the action is given by the matrix $[\rho_{ij}]$; this basis is unique up to multiplication by $\mathbb F_p^{\times}$.
We have the Galois restriction map
\[
\rho: G_{\QQ, S} \longrightarrow \Gal(M/\QQ) \cong \begin{pmatrix}
\cG & \underline{A} \\
0 & 1 \\
\end{pmatrix}, 
\]
where Lemma \ref{Semidirect-product} is used to interpret $\Gal(M/\mathbb Q)$ as the given block-matrix group.
With the fixed basis on $\uline{A}$,
\[
\cG \hookrightarrow \Aut(\underline{A}) \cong \GL_{i+1}(\mathbb F_p).
\]
So, we get a representation 
\[
\rho: G_{\QQ, S} \longrightarrow \begin{pmatrix}
\cG & \underline{A} \\
0 & 1 \\
\end{pmatrix} \subseteq \GL_{i+2}(\mathbb F_p).
\]
Consider the 1-cocycle
\[
a_{\underline A}: G_{\QQ, S} \longrightarrow A^{i, j}
\text{ given by } g \mapsto  \underline{\boldsymbol{a}}^{(i)}(g)=\begin{pmatrix}
      a_1(g)  \\
      \vdots \\
      a_{i+1}(g) 
     \end{pmatrix}  
\]
where $\underline{\boldsymbol{a}}^{(i)}(g)$ is the upper right column vector of the matrix $\rho(g)$.
This determines a class $[a_{\underline A} ] \in H^1(G_\QQ, A^{i, j})$.
Since the extension $M/L$ is unramified, one can check the Selmer conditions to conclude that $[a_{\underline A}] \in H^1_{\Lambda}(G_{\QQ}, A^{i, j})$.\newline

\emph{Claim:} $[a_{\underline A}]\in H^1_\Lambda(G_{\QQ}, A^{i, j})\setminus H^1_\Lambda(G_{\QQ}, A^{i-1, j+1})$.\newline

\emph{Justification:}
We have the following diagram
\[
\begin{tikzcd}
H^1_\Lambda(G_{\QQ}, A^{i-1, j+1}) \arrow{r}{\psi} \arrow[swap]{d}{\Res} & H^1_\Lambda(G_{\QQ}, A^{i, j}) \arrow{d}{\Res} \\%
H^1(G_{L}, A^{i-1, j+1})^{\cG} \arrow{r}{ }& H^1(G_{L}, A^{i, j})^{\cG}
\end{tikzcd}.
\]
Since the Galois group $G_L$ acts trivially on $A^{i, j}$,
\[
H^1(G_{L, S}, A^{i, j}) \cong \Hom(G_{L, S}, A^{i, j}).
\]
Therefore, the (surjective) Galois restriction map
\[
G_{L, S} \longrightarrow \Gal(M/L) = \underline{A} \cong A^{i, j}
\]
induces the (surjective) map 
\[
\Res [a_{\underline A}]: G_{L,S} \longrightarrow A^{i, j}.
\]
Note that $ \Res[a_{\underline A}] \not \in \Image(\psi)$; indeed otherwise $\Image( \Res[a_{\underline A}]) \subseteq A^{i-1, j+1} \subseteq A^{i, j}$ which contradicts surjectivity.
Hence, 
\[
[a_{\underline{A}}] \in H^1(G_{\QQ}, A^{i, j}) \setminus H^1_\Lambda(G_{\QQ}, A^{i-1, j+1}).
\]
We have defined a multi-valued map 
\begin{equation}
\label{multi-valued map}
M_{\uline{A}} \mapsto [a_{\uline{A}}].
\end{equation}
Taking the projective space will make the ambiguity of basis of $\uline{A}$ irrelevant thereby defining an element in $\mathbb P^1 H^1_\Lambda(G_{\QQ},A^{i, j})\setminus \mathbb P^1 H^1_\Lambda(G_{\QQ}, A^{i-1, j+1})$.
\newline

We now proceed to define the inverse map.
Pick $y \in \mathbb P^1 H^1_\Lambda(G_{\QQ}, A^{i, j}) \setminus \mathbb P^1 H^1_\Lambda(G_{\QQ}, A^{i-1, j+1})$ and a representative $[x] \in H^1_\Lambda(G_{\QQ}, A^{i, j}) \setminus H^1_\Lambda(G_{\QQ}, A^{i-1, j+1})$.
Then we have a $\cG$-equivariant morphism
\[
\Res[x]: G_{L, S} \longrightarrow A^{i, j}.
\]
Next, define
\[
M_y :=\QQ_S^{\ker(\Res[x])}.
\]
Note that this definition of $M_y$ does not depend on the choice of the representative, so it is well-defined.
In view of the natural inclusion map,
\[
\Gal(M_y/L) \cong \Image(\Res[x]) \cong A^{i-\alpha, j+\alpha} \subseteq A^{i, j} \text{ for some }\alpha \geq 0.
\]

\emph{Claim:} The extension $M_y/ L$ is unramified.
\newline

\emph{Justification:}
We check this locally at each prime $\ell$.
Set $I_{S, \ell}$ and $I_{L, S, \ell}$ to denote the ramification group in the extension $\QQ_S/\QQ$ and $\QQ_S/L$, respectively.
We have the following commutative diagram
\[ 
\begin{tikzcd}
H^1(G_{\QQ,S}, A^{i,j}) \arrow{r}{\Res} \arrow[swap]{d}{\psi_{\ell}} & H^1(G_{L,S}, A^{i,j}) \arrow{d}{\psi_{L,\ell}} \\%
H^1(I_{S,\ell}, A^{i,j}) \arrow{r}{\Res_{\ell}}& H^1(I_{L,S,\ell}, A^{i,j}).
\end{tikzcd}
\]

Note that $[x] \mapsto \Res[x]$ under the top horizontal (restriction) map.
When $\ell\neq p, N$, we have that $I_{S, \ell}$ is trivial and 
\[
\psi_{L,\ell}(\Res[x])=\Res_\ell(\psi_\ell[x])=\Res_\ell(0)=0.
\]
On the other hand when $\ell=p$, it follows by definition of the local conditions at $p$ in $\Lambda$ that $\psi_{L,\ell}(\Res[x])=0$.
Finally, in the case that $\ell=N$, it follows from \cite[Proposition~2.2.2]{SS19} that $\Image(\Res_{\ell})=0$.
Hence, $M_y/L$ is unramified as desired.
\newline 

We now show that $M_y/L$ is an $A^{i, j}$ extension i.e., $\text{Res}[x]$ is surjective.\newline

\emph{Claim:} $\text{Res}[x]$ is surjective.
\newline

\emph{Justification:}
Suppose that $\Res[x]$ is not surjective.
As noted above $\Image(\text{Res}[x]) \cong A^{i-\alpha, j+\alpha}$ where $\alpha >0$.
Define the map \eqref{multi-valued map} such that
\[
M_y \mapsto [a_{M_y}] \in H^1_\Lambda(G_{\QQ}, A^{i-\alpha, j+\alpha})
\]
Consider the following diagram
\[ 
\begin{tikzcd}
H^1_{\Lambda}(G_{\QQ}, A^{i-\alpha,j+\alpha}) \arrow{r}{\psi_1} \arrow[swap]{d}{\Res'} & H^1_{\Lambda}(G_{\QQ}, A^{i-1,j+1}) \arrow{r}{\psi_2} & H^1_{\Lambda}(G_{\QQ}, A^{i,j}) \arrow{ld}{\Res}\\%
H^1(G_{L}, A^{i-\alpha,j+\alpha}) \arrow{r}{\psi_3} & H^1(G_{L}, A^{i,j}) & {} 
\end{tikzcd}
\]
An easy diagram chase shows that
\[
\Res(\psi_2 (\psi_1([a_{M_y}]))) = \psi_3(\Res'([a_{M_y}])) = \Res[x].
\]
Since $j \not \equiv 1 \pmod{p-1}$, the restriction map $\Res$ is injective; see Proposition~\ref{res is inj}.
It follows that $[x]\in \Image(\psi_2)$ but this is a contradiction.
\newline

It is now easy to check that the natural inverse of \eqref{multi-valued map} is 
\[
y \mapsto M_y.
\qedhere
\]
\end{proof}

\begin{rem}\leavevmode
\begin{enumerate}
\item[(a)] When $i=0$, note that $A^{0,j} = \Fp(j)$.
Since the Galois group $G$ acts trivially on $\uline{A}$, there is a natural map
\[
\uline{A} \rtimes \cG \longrightarrow \uline{A} \rtimes (\cG/G)
\]
where $\cG/G$ acts as $\chi^j$ on $\uline{A}$.
By fixing an isomorphism $\uline{A} \cong \Fp(j)$, we have a representation
\[
\overline{\rho}: G_{\QQ, S} \longrightarrow \uline{A} \rtimes \cG \longrightarrow \uline{A} \rtimes (\cG/G) \cong \begin{pmatrix}
\cG/G & \underline{A} \\
0 & 1 \\
\end{pmatrix} \subseteq \GL_2(\Fp)
\]
given by the map
\[
g \mapsto \begin{pmatrix}
\chi^j & a(g) \\
0 & 1 \\
\end{pmatrix}.
\]
Consider the 1-cocycle $a_{\uline{A}} : G_{\QQ, S} \rightarrow A^{0,j}$ given by $g \mapsto a(g)$.
This produces a class $[a_{\uline{A}}]\in H^1(G_{\QQ}, A^{0,j})$ and the rest of the proof goes through verbatim.
\item[(b)] When $j\equiv 1\pmod{p-1}$ and $i<p-1$ we replace $H^1_\Lambda(G_{\QQ},A^{i, 1})$ by $H^1_\Lambda(G_{\QQ},A^{i, 1})/\langle \uline{\boldsymbol{b}}^{(i)} \rangle$.
This induces the injection map
\[
\overline{\Res}: H^1_\Lambda(G_{\QQ},A^{i, 1})/\langle \uline{\boldsymbol{b}}^{(i)} \rangle \longrightarrow H^1(G_L, A^{i,1})
\]
and the rest of the proof goes through verbatim.
\end{enumerate}
\end{rem}

In the proof of Theorem~\ref{main theorem Rusiru note 3} we constructed a multi-valued map $M_{\uline{A}} \mapsto [a_{\uline{A}}]$ where $[a_{\uline{A}}]$ is an element in $H^1_\Lambda(G_{\QQ}, A^{i, j})\setminus H^1_\Lambda(G_{\QQ}, A^{i-1, j+1})$.
When $i < p-1$, the change of basis is possible precisely by scalar multiplication.
This is why the map can be converted into a function by taking the projective spaces.

Now suppose that $i=p-1$.
For a random choice of basis $\uline{\beta}$ suppose that the class
\[
[a_{\uline{A}}] = [g \mapsto \underline{\boldsymbol{a}}^{(i)}{(g)}]
\]
where as before $[a_{\uline{A}}]\in H^1_\Lambda(G_{\QQ}, A^{i, j})\setminus H^1_\Lambda(G_{\QQ}, A^{i, j+1})$.
A different basis $\uline{\beta'}$ gives the class 
\[
[a'_{\uline{A}}] = [g \mapsto \mathcal{M} \underline{\boldsymbol{a}}^{(i)}{(g)}]
\]
where $\mathcal{M} = \mathcal{M}(\lambda, c)\in C([\rho_{ij}], G_{\QQ})$.
Note that $a_{i+1}$ is not the zero map.
Consider the restriction map
\[
\Res: H^1_\Lambda(G_{\QQ}, A^{i, j}) \longrightarrow H^1_\Lambda(G_{L}, A^{i, j})^{\cG}
\]
Then it follows from a simple calculation that
\[
\Res(a_{\uline{A}} - a'_{\uline{A}}) = \Res\left[g \mapsto \begin{pmatrix}
  \left( (1-\lambda)a_1 + ca_{i+1} \right)(g)\\
  ((1-\lambda)a_2)(g)\\
  \vdots \\
  ((1-\lambda)a_{i+1})(g)
\end{pmatrix} \right] \neq 0
\]
unless $\mathcal{M} = \Id$.
What this means is that distinct choices of $\mathcal{M}$ give distinct classes and the map $\uline{A} \mapsto [a_{\uline{A}}]$ is a $1:p(p-1)$-map which is surjective.

We can now record a corollary of the above theorem.

\begin{cor} \label{cor1}
\label{define rij}
For $i\geq 0$, define 
\[
r_{ij} = \begin{cases}
\dim_{\Fp}\left( H^1_\Lambda(G_{\QQ}, A^{i,j})\right) & \textrm{ when } j\not\equiv 1 \pmod{p-1} \textrm{ or } i = p-1 \\
\dim_{\Fp}\left( H^1_\Lambda(G_{\QQ}, A^{i,j})\right) - 1 & \textrm{ when } j\equiv 1 \pmod{p-1} \textrm{ and } i < p-1.
\end{cases}.
\]
If $i<0$, set $r_{ij}=0$.
The number of unramified $A^{i,j}$ extensions of $L$ is given by
\[
\begin{cases}
\frac{p^{r_{ij}} - p^{r_{i-1, j+1}}}{p-1} & \textrm{ when } i < p-1\\
\frac{p^{r_{p-1, j}} - p^{r_{p-2, j+1}}}{p(p-1)} & \textrm{ when } i = p-1.
\end{cases}
\]
\end{cor}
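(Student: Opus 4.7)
The plan is to apply Theorem~\ref{main theorem Rusiru note 3} in each admissible range of the pair $(i,j)$ and convert the bijection it furnishes into a cardinality count, handling the edge cases by invoking the two remarks immediately following that theorem, together with the discussion just preceding the corollary in the case $i = p-1$.

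For the generic case $1 \le i < p-1$ with $j \not\equiv 1 \pmod{p-1}$, the theorem yields a bijection between the unramified $A^{i,j}$ extensions of $L$ and $\PP^1 H^1_\Lambda(G_\QQ, A^{i,j}) \setminus \PP^1 H^1_\Lambda(G_\QQ, A^{i-1,j+1})$. First I would verify that the natural map $H^1_\Lambda(G_\QQ, A^{i-1,j+1}) \to H^1_\Lambda(G_\QQ, A^{i,j})$ arising from the inclusion in \eqref{filtration} is injective, so that the smaller projective space genuinely sits inside the larger; this should follow from a restriction-map chase along the lines of Proposition~\ref{res is inj}. Using $|\PP^1(V)| = (p^{\dim V}-1)/(p-1)$ for any finite $\Fp$-vector space $V$, a subtraction then gives
\[
\frac{p^{r_{ij}}-1}{p-1} - \frac{p^{r_{i-1,j+1}}-1}{p-1} = \frac{p^{r_{ij}} - p^{r_{i-1,j+1}}}{p-1}.
\]
The case $i=0$ reduces to the same formula after invoking Remark (a) to extend the bijection, together with the convention $r_{-1,j+1} = 0$. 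When $j \equiv 1 \pmod{p-1}$ and $i < p-1$, Remark (b) instructs us to replace $H^1_\Lambda(G_\QQ, A^{i,j})$ by its quotient by $\langle \uline{\boldsymbol{b}}^{(i)}\rangle$, whose $\Fp$-dimension is exactly $r_{ij}$ by the modified definition; since $j+1 \not\equiv 1 \pmod{p-1}$ for odd $p$, the smaller Selmer group is unaffected, and the identical subtraction yields the claimed formula.

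For $i = p-1$, Proposition~\ref{Rusiru Note 4 Prop 2} shows the centralizer $C([\rho_{p-1,j}], G_\QQ)$ is a group of order $p(p-1)$ rather than the scalars $\Fp^\times$. As explained just after the proof of Theorem~\ref{main theorem Rusiru note 3}, the assignment $\uline{A} \mapsto [a_{\uline{A}}]$ consequently becomes a surjective $p(p-1)$-to-one map onto $H^1_\Lambda(G_\QQ, A^{p-1,j}) \setminus H^1_\Lambda(G_\QQ, A^{p-2,j+1})$, where one now works with the affine Selmer groups rather than their projectivisations. Dividing yields
\[
\frac{p^{r_{p-1,j}}-p^{r_{p-2,j+1}}}{p(p-1)},
\]
as claimed. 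The main obstacle to presenting the argument cleanly is the bookkeeping across the three exceptional situations ($i=0$, $j \equiv 1 \pmod{p-1}$, and $i = p-1$); the piecewise definition of $r_{ij}$ in the corollary has been precisely calibrated so that a single uniform formula captures every case, so the real task is to confirm that each modification is exactly the one required.
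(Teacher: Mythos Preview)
Your proposal is correct and follows exactly the approach the paper itself takes: the corollary is stated there without a separate proof, being an immediate numerical consequence of Theorem~\ref{main theorem Rusiru note 3}, the two parts of the Remark following it (handling $i=0$ and $j\equiv 1$), and the $i=p-1$ discussion that shows the multi-valued assignment has fibres of size $p(p-1)$. One small terminological slip: in the $i=p-1$ case the map $\uline{A}\mapsto [a_{\uline{A}}]$ is $1$-to-$p(p-1)$ (one extension, via the $p(p-1)$ basis choices, produces $p(p-1)$ distinct classes), not $p(p-1)$-to-one as you wrote; your division by $p(p-1)$ is nonetheless the correct operation and your formula is right.
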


\subsubsection*{\textbf{\emph{Some refined results when \texorpdfstring{$p$}{} is a regular prime}}}

The results we have proven so far do not require the assumption that $p$ is regular.
In the special case when $p$ is a regular prime, one can prove some stronger results.
The main result we prove next is the following.

\begin{Th}
\label{no unramified extn}
Let $p$ be a regular prime.
With notation introduced before, there are no unramified $A^{p-1, j}$ extensions of $L$.
\end{Th}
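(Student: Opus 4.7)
My plan is to argue by contradiction: suppose $M/L$ is an unramified $A^{p-1,j}$-extension. Since $A^{p-1,j}$ is elementary abelian of order $p^{p}$ as an abstract group, $M/L$ is an abelian everywhere unramified extension, so class field theory yields a surjection $\Cl(L) \twoheadrightarrow \Gal(M/L) \cong A^{p-1,j}$ that factors through $\Cl(L) \otimes \Fp$. Using Lemma \ref{Semidirect-product} to realize the $\cG$-action on $\Gal(M/L)$ via conjugation in $\Gal(M/\QQ)$, together with the functoriality of Artin reciprocity, I would promote this to a $\cG$-equivariant surjection $\phi \colon \Cl(L) \otimes \Fp \twoheadrightarrow A^{p-1,j}$.

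The heart of the argument is to apply the norm operator $\Norm_G := \sum_{g \in G} g$ on both sides of $\phi$ and compare. On the target, since $\chi|_G$ is trivial, a generator $g_0 \in G$ acts on $A^{p-1,j} \cong \Sym^{p-1}(V) \otimes \Fp(j)$ as $\Id + \eta$ for some nilpotent $\eta$; because $V|_G$ is a non-split self-extension of the trivial representation by itself, the operator $\rho_{p-1,j}(g_0)$ is a single Jordan block of size exactly $p$. A routine computation using $\binom{p}{k} \equiv 0 \pmod{p}$ for $1 \le k \le p-1$ then yields $\Norm_G = \eta^{p-1}$ as an endomorphism of $A^{p-1,j}$, which has rank one with image equal to $(A^{p-1,j})^G \ne 0$.

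On the source, the same operator factors through the class group of $K$: for $[\fa] \in \Cl(L)$ one computes $\Norm_G([\fa]) = [\prod_{\sigma \in G} \sigma\fa] = \iota(\Norm_{L/K}([\fa]))$, where $\iota \colon \Cl(K) \to \Cl(L)$ is extension of ideals. Regularity of $p$ forces $\Cl(K) \otimes \Fp = 0$, so $\Norm_G$ vanishes identically on $\Cl(L) \otimes \Fp$. Combining with the $\cG$-equivariance of $\phi$ gives $\Norm_G \circ \phi = \phi \circ \Norm_G = 0$; but surjectivity of $\phi$ forces the image of $\Norm_G \circ \phi$ to coincide with $\Norm_G(A^{p-1,j}) \ne 0$, a contradiction.

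The main point to verify carefully is the identification of $\rho_{p-1,j}|_G$ as a single Jordan block of size exactly $p$, which is precisely what makes the case $i = p-1$ special: indeed, Lemma \ref{next lemma} shows that the norm operator on $A^{i,j}$ is identically zero for $i < p-1$, so the present strategy carries information only when $i$ matches the order of $\chi$. The regularity hypothesis enters the argument in exactly one place, namely to guarantee $\Cl(K) \otimes \Fp = 0$; if one were willing to track a correction term coming from the image of $\iota \circ \Norm_{L/K}$ on $\Cl(L) \otimes \Fp$, a similar analysis might plausibly yield quantitative refinements in the irregular case.
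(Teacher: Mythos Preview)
Your proof is correct and takes a genuinely different route from the paper's. The paper argues by contradiction as well, but proceeds by a case split on $j$: it passes to the bottom filtration quotient $A^{p-1,j}\twoheadrightarrow\Fp(j)$, which (together with the lower $2\times 2$ corner of the associated representation) produces an $\Fp(j)$-extension of $K=\QQ(\zeta_p)$. For $j\neq 0,1$ this extension is shown to be unramified at $p$ via \cite[Lemma~3.1.4]{SS19}, hence by regularity it must be ramified at $N$, which is then ruled out by \cite[Lemma~3.1.10]{SS19}; the cases $j=0$ and $j=1$ are handled separately by the same lemma and by Theorem~\ref{th: one dim coh grp} respectively.

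Your argument is more uniform and more self-contained: it never leaves the class group side, needs no case split on $j$, and uses regularity in exactly one clean place (the vanishing of $\Cl(K)\otimes\Fp$). The Jordan-block computation you isolate is in fact implicit in the paper's Lemma~\ref{next lemma}, where it is recorded that $\Norm(G)$ has $(p-1)$-dimensional kernel on $A^{p-1,j}$ but vanishes identically on $A^{i,j}$ for $i<p-1$; you are simply reading that computation in the opposite direction. What the paper's approach buys is consistency with the cocycle/filtration machinery used throughout Section~4 (and it makes visible exactly which external inputs from \cite{SS19} are doing the work), whereas your approach makes the role of regularity and of the special index $i=p-1$ completely transparent. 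Your closing remark about tracking $\iota\circ\Norm_{L/K}$ in the irregular case is a reasonable observation: it would contribute at most $\rk_p(\Cl(K))$ to the count of possible $A^{p-1,j}$-quotients, which is compatible with the shape of the non-regular bounds elsewhere in the paper.
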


To prove this theorem we first prove a technical result which is interesting in its own right.
We emphasize that the following result \emph{does not} require the hypothesis that $p$ is a regular prime.

\begin{Th}
\label{th: one dim coh grp}
The cohomology group $H^1_\Lambda(G_{\QQ}, \Fp(1))$ is 1 -dimensional and generated by $b$.
\end{Th}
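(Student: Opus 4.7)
The plan is to exhibit $b$ as a nonzero Kummer class in $H^1_\Lambda(G_\QQ, \Fp(1))$ coming from $N$, and then to use Kummer theory together with a ramification analysis at $p$ to show that no further independent class can survive the Selmer condition.

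First, I would verify that $b$ represents a class in $H^1_\Lambda(G_\QQ, \Fp(1))$. The cocycle identity $b(\sigma\tau) = b(\sigma) + \chi(\sigma) b(\tau)$ is an immediate unwinding of $\sigma(N^{1/p}) = \zeta_p^{b(\sigma)} N^{1/p}$, so that $b$ is the Kummer cocycle attached to $N$. By Lemma~\ref{N-ramification}, $L/\QQ$ is unramified outside $\{N,p,\infty\}$, so $b$ is automatically $\ell$-unramified for every $\ell \neq N,p$; the condition $\Lambda$ imposes no constraint at $N$; and at $p$ the restriction $b|_{G_L}$ vanishes because $N^{1/p} \in L$, so $\Res(b) = 0 \in H^1_\ur(G_{L_p},\Fp(1))$. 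The class of $b$ is nonzero since any coboundary has the form $\sigma \mapsto (\chi(\sigma)-1)c$ with $c \in \Fp(1)$, hence dies on $G_K$, whereas $b|_{G_K}\colon G_K \twoheadrightarrow \Gal(L/K) \cong \Fp(1)$ is surjective.

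For the reverse inequality $\dim_\Fp H^1_\Lambda(G_\QQ, \Fp(1)) \leq 1$, I would invoke Kummer theory to identify $H^1(G_{\QQ,S},\Fp(1)) \cong \ZZ_S^\times/(\ZZ_S^\times)^p$ where $S = \{N,p,\infty\}$. For $p$ odd this group is two-dimensional over $\Fp$, spanned by the classes of $p$ and $N$; the class of $N$ is exactly that of $b$, so it remains to rule out the class of $p$, i.e.\ to show that $L_p(p^{1/p})/L_p$ is ramified. I would split according to the two cases of Lemma~\ref{p-ramification}. If $N \equiv 1 \pmod{p^2}$, then $\pi$ splits in $L/K$ so that $L_p = K_p = \QQ_p(\zeta_p)$, and $v_\pi(p) = p-1$ is coprime to $p$, which already forces $L_p(p^{1/p})/L_p$ to be ramified. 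If $N \not\equiv 1 \pmod{p^2}$, I would instead work with the compositum $M := K_p(N^{1/p}, p^{1/p})$ over $K_p$. Theorem~\ref{Gras}(ii) combined with Lemma~\ref{p-ramification} gives $N \notin (K_p^\times)^p$, and a valuation comparison ($v_\pi(N^a p^b) = b(p-1)$, which is divisible by $p$ only when $p \mid b$) shows that the classes of $N$ and $p$ are $\Fp$-linearly independent in $K_p^\times/(K_p^\times)^p$; hence $\Gal(M/K_p) \cong (\ZZ/p\ZZ)^2$. Both $K_p(N^{1/p})/K_p$ and $K_p(p^{1/p})/K_p$ are ramified (the latter since $v_\pi(p) = p-1$ is coprime to $p$), and any hypothetical unramified degree-$p$ subextension of $M/K_p$ must be of the form $K_p(y^{1/p})$ with $p \mid v_\pi(y)$, which forces $y \equiv N^a \pmod{(K_p^\times)^p}$ for some $a \not\equiv 0$ and therefore equals $K_p(N^{1/p})$ — contradicting ramification. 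Hence $M/K_p$ is totally ramified, and in particular $M/L_p$ is ramified.

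The main obstacle is precisely the case $N \not\equiv 1 \pmod{p^2}$, where $v_{L_p}(p) = p(p-1)$ is divisible by $p$ and so the valuation alone does not obstruct $p^{1/p}$ from generating an unramified extension of $L_p$. The compositum argument over $K_p$ bypasses this by reducing the question to the simpler task of ruling out unramified degree-$p$ subextensions of the $(\ZZ/p\ZZ)^2$-Kummer extension $M/K_p$, where one has access to the full $\Fp$-basis $\{[N],[p]\}$ of the relevant Kummer group.
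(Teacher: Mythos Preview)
Your proof is correct and follows the same overall architecture as the paper's: you check that $b$ lies in $H^1_\Lambda$, identify $H^1(G_{\QQ,S},\Fp(1))$ with $\ZZ_S^\times/(\ZZ_S^\times)^p = \langle [N],[p]\rangle$, and then show the Kummer class of $p$ violates the $\Lambda$-condition at $p$ by proving $L_p(p^{1/p})/L_p$ is ramified, splitting on whether $N\equiv 1\pmod{p^2}$. Case~1 is essentially identical in both arguments.

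The genuine difference is in Case~2. The paper passes to $\widetilde{L}_p=\QQ_p(\zeta_p,p^{1/p})$ and shows $L'_p/\widetilde{L}_p$ is ramified by invoking the general form of Gras's criterion and computing $v_\fp(N-y^p)=p(p-1)<p^2$ for $y\in U^{(1)}_{\widetilde{L}_p}$. You instead stay over $K_p$, realise $M=K_p(N^{1/p},p^{1/p})$ as a $(\ZZ/p\ZZ)^2$-extension, and rule out any unramified degree-$p$ subextension by the valuation constraint $p\mid v_\pi(y)$ on its Kummer generator, which forces $y\equiv N^a$ and hence the subextension to be $K_p(N^{1/p})$, already known to be ramified. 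Your route is more elementary and conceptual: it avoids the higher-ramification bookkeeping in $\widetilde{L}_p$ and the precise exponent in Gras's criterion, at the cost of the small extra step of checking that $[N]$ and $[p]$ are independent in $K_p^\times/(K_p^\times)^p$. The paper's computation, on the other hand, gives slightly more --- it pins down exactly how far $N$ is from being a $p$-th power in $\widetilde{L}_p$ --- but that extra precision is not needed for the theorem.
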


\begin{proof}
\cite[Theorem~2.3.3(2)]{SS19} asserts that $H^1(G_{\QQ,S}, \Fp(1))$ is 2-dimensional and is generated by $b$ and the cocycle
\begin{align*}
  c: G_{\QQ} &\longrightarrow \Fp(1)\\
  \sigma &\mapsto \frac{\sigma(p^{1/p})}{p^{1/p}}.
\end{align*}
Note that $b$ satisfies the Selmer condition $\Lambda$.
We now show that the cocycle $c$ does not satisfy the Selmer condition by proving that $c$ violates the Selmer condition at $p$.
Consider the map
\[
\Res: H^1(G_{\QQ}, \Fp(1)) \longrightarrow H^1(G_L, \Fp(1)) = \Hom(G_L, \Fp).
\]
Note that $\Res(c)$ determines the extension $L(p^{1/p})/L$, so it is enough to show that this extension is ramified at $p$.
Working locally at $p$, we have the following field diagram
\begin{center}
\begin{tikzpicture}
  \node (Q1) at (0,0) {$\QQ_p(\zeta_p)$};
  \node (Q2) at (2,2) {$L_p = \QQ_p(\zeta_p, N^{1/p})$};
  \node (Q3) at (0,4) {$L'_p = \QQ_p(\zeta_p, N^{1/p}, p^{1/p})$};
  \node (Q4) at (-2,2) {$\widetilde{L}_p = \QQ_p(\zeta_p, p^{1/p})$};

  \draw (Q1)--(Q2);
  \draw (Q1)--(Q4);
  \draw (Q3)--(Q4);
  \draw (Q2)--(Q3);
  \end{tikzpicture}
\end{center}

\underline{Case 1: when $N\equiv 1\pmod{p^2}$.}\newline

Lemma~\ref{p-ramification} implies that $L_p/\QQ_p(\zeta_p)$ is unramified.
But $\widetilde{L}_p/\QQ_p(\zeta_p)$ is a ramified extension which forces that $L'_p/\QQ_p(\zeta_p)$ is ramified.
In turn, $L'_p/L_p$ is ramified as desired.
\newline

\underline{Case 2: when $N\not\equiv 1\pmod{p^2}$.} \newline

Set $N= 1+kp$ such that $p\nmid k$.
Since ramification indices are multiplicative, it suffices to show that $L'_p / \widetilde{L}_p$ is a ramified extension.
If the extension is \emph{not} ramified, it is proven in \cite[Chapter I, Theorem~6.3(ii)]{Gras_CFT} (i.e., a general version of Theorem~\ref{Gras}) that there exists $y\in U^{(1)}_{\widetilde{L}_p}$ such that
$\fp^{p^2} \mid (N-y^p)$.
Write $y = 1 + \fp \theta$ and observe that
\begin{align*}
  N-y^p & = (1+kp) - (1+\fp\theta)^p\\
  & = kp - p\fp\theta - {p\choose{2}}\fp^2\theta^2 - \ldots.
\end{align*}
Since $(p) = \fp^{p(p-1)}$,
\[
\val_{\fp}(N-y^p) = p(p-1) < p^2.
\]
But this is a contradiction.
Hence, $L'_p/\widetilde{L}_p$ is a ramified extension.

This completes the proof of the theorem.
\end{proof}

In the following discussion, we impose the condition that $p$ is regular.

\begin{proof}[Proof of Theorem~\ref{no unramified extn}]
Suppose there exists such an unramified extension.
By our earlier discussion (see, for example proof of Theorem~\ref{main theorem Rusiru note 3}) we have a representation 
\[
\rho: G_{\QQ, S} \longrightarrow \begin{pmatrix}
  \cG & \underline{A}\\
  0 & 1
\end{pmatrix} \subseteq \GL_{p+1}(\Fp),
\]
defined in exactly the same way. By looking at the lower $2\times 2$ corner of the above matrix, we get a non-zero class $a_{p-1}\in H^1(G_{\QQ,S}, \Fp(j))$. \newline

\underline{Case 1: when $j=0$.} \newline

We arrive at a contradiction by using \cite[proof of Lemma~3.1.10]{SS19}.
\newline

\underline{Case 2: when $j\neq 0,1$.} \newline 

In this situation, $a_{p-1}$ cuts out an $\Fp(j)$-extension $E$ of $K=\QQ(\zeta_p)$.
The following field diagram will simplify the proof
\begin{center}
\begin{tikzpicture}
  \node (Q1) at (0,0) {$K$};
  \node (Q2) at (2,2) {$L = \QQ(\zeta_p, N^{1/p})$};
  \node (Q3) at (0,4) {$E(N^{1/p})$};
  \node (Q4) at (-2,2) {$E$};

  \draw (Q1)--(Q2) node [pos=0.8, below ,inner sep=0.5cm] {\small{$\Fp(1)$}};
  \draw (Q1)--(Q4) node [pos=0.8, below ,inner sep=0.5cm] {\small{$\Fp(j)$}};
  \draw (Q3)--(Q4);
  \draw (Q2)--(Q3);
  \end{tikzpicture}
\end{center}
Note that $E(N^{1/p})$ is inside the initial $A^{p-1,j}$-extension; hence, $E(N^{1/p})/L$ is unramified at $p$.
It follows from \cite[Lemma~3.1.4]{SS19} that $E/K$ is unramified at $p$.
Moreover, $a_{p-1}\in H^1(G_{\QQ,S}, \Fp(j))$ which means that $E/K$ is unramified outside $S$.
Since $p$ is a regular prime (by assumption), we can conclude from class field theory that $E/K$ can not be unramified everywhere. Hence, it is tamely ramified at $N$.
We now get a contradiction as in \cite[proof of Lemma~3.1.10]{SS19}.
\newline

\underline{Case 3: when $j=1$.}\newline

This means $A^{p-1, 1}$ has a quotient $\Fp(1)$, i.e., $L$ has an unramified $\Fp(1)$ extension which is impossible by Theorem~\ref{th: one dim coh grp} and Corollary \ref{define rij}.

This completes the proof of the theorem.
\end{proof}

\subsection{\texorpdfstring{An explicit description of the $p$-rank of $\Cl(L)$}{}}
The main theorem that we prove in this section is the following:

\begin{Th}
\label{main theorem of note 4}
With notation introduced in Corollary~\ref{define rij},
\[
\rk_p(\Cl(L)) = \sum_{j=0}^{p-2}r_{p-1, j}.
\]
Furthermore, when $p$ is a regular prime,
\[
\rk_p(\Cl(L)) = \sum_{j=0}^{p-2}r_{p-2, j}.
\]
\end{Th}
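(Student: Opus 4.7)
The plan is to decompose $M := \Cl(L) \otimes \Fp$ as an $\Fp[\cG]$-module, reinterpret $r_{p-1, j}$ as $\dim_\Fp \Hom_\cG(M, A^{p-1, j})$, and exploit the isomorphism $\bigoplus_j A^{p-1, j} \cong \Fp[\cG]$ to extract $\dim_\Fp M$.

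First I would identify $r_{ij}$ with $\dim_{\Fp}\Hom_\cG(M, A^{i,j})$ in general. By class field theory, $\cG$-equivariant quotients of $M$ isomorphic to $A^{i,j}$ correspond to unramified $A^{i,j}$-extensions of $L$. Since $A^{i-1, j+1}$ is the unique maximal proper $\cG$-submodule of $A^{i,j}$, the number of $\cG$-equivariant surjections $M \twoheadrightarrow A^{i,j}$ equals $p^{\dim \Hom_\cG(M, A^{i,j})} - p^{\dim \Hom_\cG(M, A^{i-1, j+1})}$. Comparing this with the explicit count in Corollary~\ref{define rij} forces $r_{ij} = \dim_{\Fp}\Hom_\cG(M, A^{i,j})$; the $-1$ correction built into the definition of $r_{ij}$ when $j \equiv 1 \pmod{p-1}$ and $i < p-1$ accounts for the extra Selmer class $\uline{\boldsymbol{b}}^{(i)}$ that does not correspond to an honest extension.

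The central step is to show
\[
\bigoplus_{j=0}^{p-2} A^{p-1, j} \cong \Fp[\cG]
\]
as left $\Fp[\cG]$-modules. Since $|\Delta| = p-1$ is coprime to $p$, the orthogonal idempotents $e_j \in \Fp[\Delta]$ attached to $\chi^j$ yield $\Fp[\cG] = \bigoplus_j \Fp[\cG]e_j$, a decomposition into projective indecomposables, with $\Fp[\cG]e_j \cong \mathrm{Ind}_\Delta^\cG \Fp(\chi^j)$ of $\Fp$-dimension $p$ and head $\Fp(\chi^j)$. On the other hand, $A^{p-1, j}$ is a $p$-dimensional indecomposable whose head is $\Fp(\chi^j)$ (read off from $[\rho_{p-1, j}]$), and it is projective because its restriction to $G$ is the free $\Fp[G]$-module and $|\Delta|$ is prime to $p$. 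Matching heads then identifies $A^{p-1, j}$ with $\Fp[\cG]e_j$.

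Combining these ingredients, since $\Fp[\cG]$ is a Frobenius algebra we have $\dim_\Fp \Hom_\cG(M, \Fp[\cG]) = \dim_\Fp M$, so
\[
\sum_{j=0}^{p-2} r_{p-1, j} = \dim_\Fp \Hom_\cG\!\left(M, \bigoplus_j A^{p-1, j}\right) = \dim_\Fp M = \rk_p(\Cl(L)),
\]
establishing the first formula. For the regular-prime refinement, Theorem~\ref{no unramified extn} asserts that no unramified $A^{p-1, j}$-extensions of $L$ exist; by Corollary~\ref{define rij} this forces $r_{p-1, j} = r_{p-2, j+1}$, and summing (together with re-indexing) yields $\sum_j r_{p-1, j} = \sum_j r_{p-2, j}$. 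The main obstacle will be establishing the isomorphism $\bigoplus_j A^{p-1, j} \cong \Fp[\cG]$, that is, verifying the projectivity of $A^{p-1, j}$ as an $\Fp[\cG]$-module and identifying it as the projective cover of $\Fp(\chi^j)$; once this is in place, the rest is routine homological algebra.
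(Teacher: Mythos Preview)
Your argument is correct and takes a genuinely different route from the paper's. The paper proceeds combinatorially: it writes $\Cl(L)\otimes\Fp\cong\bigoplus_{i,j}(A^{i,j})^{m_{ij}}$, counts $A^{i_0,j_0}$-quotients by counting \emph{principal generators} in the dual (Propositions~\ref{Rusiru note 4 Prop 5}--\ref{Rusiru note 4 prop 10}), obtains an explicit combinatorial quantity $w_{ij}$ in the $m_{\alpha\beta}$ which it matches to $r_{ij}$, and then reaches $\sum_j r_{p-1,j}$ via the telescoping identity $w_{ij}-w_{i-1,j+1}=\sum_{\alpha\geq i}m_{\alpha j}$ together with $\rk_p(\Cl(L))=\sum_{i,j}(i+1)m_{ij}$. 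Your approach bypasses all of this: you identify $r_{ij}$ directly as $\dim_{\Fp}\Hom_\cG(M,A^{i,j})$ (using that $A^{i-1,j+1}$ is the unique maximal submodule of $A^{i,j}$, so non-surjective maps are exactly those factoring through it), and then invoke the structural fact that $\bigoplus_j A^{p-1,j}\cong\Fp[\cG]$ together with the self-injectivity of group algebras to obtain $\sum_j r_{p-1,j}=\dim_{\Fp}\Hom_\cG(M,\Fp[\cG])=\dim_{\Fp} M$. Your projectivity argument for $A^{p-1,j}$ (free restriction to the Sylow $p$-subgroup $G$) and your identification of its head as $\Fp(j)$ are both sound.

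What each buys: your approach is shorter and more conceptual, and it explains \emph{why} the formula should hold (the $A^{p-1,j}$ assemble into the regular representation). The paper's approach, while more laborious, delivers as a byproduct the explicit multiplicity formula $m_{ij}=r_{ij}+r_{i,j+1}-r_{i-1,j+1}-r_{i+1,j}$ (Corollary~\ref{mij in terms of the rij's}), which gives the full $\cG$-module structure of $\Cl(L)\otimes\Fp$ and is used later in the paper. Your argument does not immediately yield this, though it could be recovered from your identification $r_{ij}=\dim\Hom_\cG(M,A^{i,j})$ by a separate computation. For the regular-prime refinement you and the paper argue identically.
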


To prove this result we first write
\begin{equation}
\label{introduce mij}
\Cl(L) \otimes \Fp \cong \bigoplus_{i,j} (A^{i,j})^{m_{ij}} \textrm{ as a } \cG\textrm{-module}.    
\end{equation}
Observe that, by \cite[Theorem 3.1.6]{SS19} any $\cG$-module can be written in this way. The key idea of this proof is to compute the number of $A^{i_0, j_0}$ quotients of $\Cl(L) \otimes \Fp$ in terms of $\{m_{ij}\}$.
Then we relate the count to Theorem~\ref{define rij} to get more information about $\{m_{ij}\}$.
The precise relationship between the multiplicities $m_{ij}$ and the ranks $r_{ij}$ is clarified in Corollary~\ref{mij in terms of the rij's}.

For a $\cG$-module $X$, write $X^\vee = \Hom(X, \Fp)$.

\begin{Lemma}
\label{Rusiru note 4 Lemma 4}
For $i\leq p-1$, 
\[
\left(\Sym^{i}(V) \otimes \Fp(j)\right)^{\vee} \cong \Sym^{i}(V) \otimes \Fp(-i-j).
\]
\end{Lemma}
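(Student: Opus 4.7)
The plan is to derive the isomorphism by chaining together three standard natural isomorphisms of $G_{\QQ,S}$-representations, arranging matters so that the hypothesis $i \leq p-1$ enters the argument in exactly one place.

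First, I would identify $V^\vee$ explicitly. From the form of $V$ given in \eqref{rep}, the determinant character is $\chi$, so $\det V \cong \Fp(1)$. For any 2-dimensional representation $W$ one has $W^\vee \cong W \otimes (\det W)^{-1}$, via the perfect pairing $W \otimes W \to \det W$ induced by the exterior product. Applied to $V$ this gives $V^\vee \cong V \otimes \Fp(-1)$.

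Second, I would establish $(\Sym^i V)^\vee \cong \Sym^i(V^\vee)$ in the range $i \leq p-1$. The natural pairing
\[
\Sym^i V \otimes \Sym^i V^\vee \longrightarrow \Fp, \qquad (v_1 \cdots v_i) \otimes (f_1 \cdots f_i) \longmapsto \frac{1}{i!}\sum_{\sigma \in S_i} \prod_{k=1}^i f_k(v_{\sigma(k)}),
\]
is well-defined and perfect exactly when $i!$ is invertible in $\Fp$, which is where the hypothesis $i \leq p-1$ is used. Third, I would invoke the identity $\Sym^i(W \otimes L) \cong \Sym^i W \otimes L^{\otimes i}$ for any 1-dimensional representation $L$, which is immediate on pure tensors.

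Combining these three isomorphisms together with $\Fp(j)^\vee \cong \Fp(-j)$ yields
\[
(\Sym^i V \otimes \Fp(j))^\vee \cong (\Sym^i V)^\vee \otimes \Fp(-j) \cong \Sym^i(V^\vee) \otimes \Fp(-j) \cong \Sym^i(V \otimes \Fp(-1)) \otimes \Fp(-j) \cong \Sym^i V \otimes \Fp(-i-j),
\]
which is the desired isomorphism. The main obstacle is Step 2: in characteristic $p$ the symmetric power fails to be naturally self-dual once $p \mid i!$, and one would otherwise be forced to replace one side with the divided-power functor $\Gamma^i$. The hypothesis $i \leq p-1$ ensures $i!$ is a unit in $\Fp$ (in particular for the boundary case $i=p-1$, Wilson's theorem gives $(p-1)! \equiv -1 \pmod p$), making the naive pairing above nondegenerate and allowing the chain of isomorphisms to go through in exactly the stated range.
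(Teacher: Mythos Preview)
Your proof is correct and takes a genuinely different route from the paper's. You argue by direct multilinear algebra: identify $V^\vee \cong V \otimes \Fp(-1)$ via $W^\vee \cong W \otimes (\det W)^{-1}$, establish $(\Sym^i V)^\vee \cong \Sym^i(V^\vee)$ using the explicit permanent pairing (this is exactly where $i \leq p-1$ is needed so that $i!$, and hence each $\binom{i}{a}$, is a unit in $\Fp$), and then pull the one-dimensional twist through $\Sym^i$. The paper instead argues structurally: it invokes the classification of indecomposable $\cG$-modules from \cite{SS19} to conclude that $(A^{i,j})^\vee$ must be some $A^{s,t}$, matches $s=i$ by dimension, and then determines $t$ by analyzing how the socle $\Fp(i+j) \subseteq A^{i,j}$ pairs against the cosocle $\Fp(t)$ of $A^{i,t}$ (using the uniqueness of the subrepresentation $A^{i-1,t+1}$ proved just before). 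Your argument is more elementary and self-contained, produces an explicit equivariant isomorphism, and makes transparent why the bound $i \leq p-1$ is sharp; the paper's argument is less explicit but fits naturally into its running narrative about the filtration and uniqueness properties of the $A^{i,j}$, at the cost of depending on the cited classification result.
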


\begin{proof}
This lemma generalizes \cite[Lemma~2.2.1]{SS19}.
Recall that $A^{i,j}$ are indecomposable $\cG$-representations; see \cite[Theorem~3.1.6]{SS19}.
The dual of an indecomposable representation is also indecomposable, so $(A^{i,j})^\vee = A^{s,t}$ for some $s,t$ by \cite[Theorem~3.1.6]{SS19}.
Considering dimensions, note that $i=s$.
Consider the perfect pairing
\[
A^{i,j} \times (A^{i,j})^\vee \longrightarrow \Fp.
\]
Now, $\Fp(i+j) \cong A^{0, i+j}\subseteq A^{i,j}$ and its annihilator in $A^{i, t}$ must be $i$-dimensional.
We proved earlier that the unique $i$-dimensional subrepresentation of $A^{i, t}$ is $A^{i-1, t+1}$.
We have the perfect pairing
\[
\Fp(i+j) \times \frac{A^{i,t}}{A^{i-1, t+1}} \cong A^{0,t} \longrightarrow \Fp.
\]
Recall that $\frac{A^{i,t}}{A^{i-1, t+1}} \cong A^{0,t} \cong \Fp(t)$.
The perfect pairing is possible only when $t=-(i+j)$.
\end{proof}

In view of the above lemma, the number of $A^{i_0, j_0}$ quotients of $\Cl(L)\otimes \Fp$ is the same as the number of $A^{i_0, -i_0-j_0}$-subrepresentations of $(\Cl(L)\otimes \Fp)^{\vee}$.
So, let $X \cong \bigoplus_{i,j} (A^{i,j})^{n_{ij}}$ be a $\cG$-representation and we now count the $A^{i_0, j_0}$-subrepresentations of $X$.

\begin{definition}
A \emph{principal generator} of $A^{i_0, j_0}$ is a vector $v$ such that $g\cdot v = \chi^{j_0}(g)v$ for all $g\in G' = \Gal(L/F)$ where $F =\QQ(N^{1/p})$ and $v$ generates $A^{i_0, j_0}$ as a $\cG$-module. 
\end{definition}

The purpose of the next two results is to count the number of principal generators of $A^{i_0, j_0}$.

\begin{prop}
\label{Rusiru note 4 Prop 5}
When $i_0 < p-1$, the $\cG$-module $A^{i_0, j_0}$ has $p-1$ principal generators.
\end{prop}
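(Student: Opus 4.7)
The plan is to analyze the action of $G' = \Gal(L/F)$ on $A^{i_0, j_0}$ in the prescribed basis and then verify the generation condition by a Vandermonde-type argument. First, since $[F : \QQ] = p$ and $[K : \QQ] = p - 1$ are coprime, $F$ and $K = \QQ(\zeta_p)$ are linearly disjoint over $\QQ$, so the natural restriction $G' \to \Delta = \Gal(K/\QQ)$ is an isomorphism. In particular $b(g) = 0$ for every $g \in G'$, while $\chi|_{G'} \colon G' \to \Fp^\times$ is surjective of order exactly $p-1$. Consequently $\rho_{i_0, j_0}(g)$ is the diagonal matrix with entries $\chi^{i_0+j_0}(g), \ldots, \chi^{j_0}(g)$ in the fixed basis $\{e_1, \ldots, e_{i_0+1}\}$ of $A^{i_0, j_0}$.

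I would then use this diagonalization to enumerate the vectors $v = \sum_{k=1}^{i_0+1} v_k e_k$ satisfying $g \cdot v = \chi^{j_0}(g) v$ for all $g \in G'$. Coordinate-wise the equation reduces to $\chi^{i_0 + 1 - k}(g) \, v_k = v_k$, which forces $v_k = 0$ unless the character $\chi^{i_0 + 1 - k}|_{G'}$ is trivial. Because $\chi|_{G'}$ has order $p-1$ and the hypothesis $i_0 < p-1$ places $i_0 + 1 - k \in \{1, \ldots, i_0\} \subseteq \{1, \ldots, p-2\}$ for every $k \leq i_0$, the only surviving coordinate is $k = i_0 + 1$. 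Hence the candidate principal generators are exactly the nonzero scalar multiples $v = c \cdot e_{i_0+1}$, yielding at most $p-1$ possibilities.

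The remaining step is to show that each such nonzero $v$ actually generates $A^{i_0, j_0}$ as a $\cG$-module. Reading off the last column of $[\rho_{i_0, j_0}]$ gives
\[
\rho_{i_0, j_0}(g) \, e_{i_0+1} \; = \; \chi^{j_0}(g) \cdot \bigl( \tfrac{b^{i_0}(g)}{i_0!}, \; \tfrac{b^{i_0-1}(g)}{(i_0-1)!}, \; \ldots, \; b(g), \; 1 \bigr)^{T}.
\]
Restricting $g$ to $G = \Gal(L/K)$ trivialises $\chi$, and Kummer theory identifies $b|_G \colon G \xrightarrow{\sim} \Fp$; so as $g$ varies over $G$, these vectors take the values $(t^{i_0}/i_0!, \ldots, t, 1)^{T}$ for every $t \in \Fp$. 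Since $i_0 + 1 \leq p - 1 < p$, one may pick $i_0 + 1$ distinct values of $t$, and the corresponding vectors form a Vandermonde matrix of full rank, so their $\Fp$-span is all of $A^{i_0, j_0}$. Thus every one of the $p - 1$ candidates is a genuine principal generator, and the count is complete. I do not expect a substantive obstacle here: the only subtlety is to use the diagonal action of $G'$ to kill all but one coordinate, and then to use the variation of $b$ on the complementary subgroup $G$ to force full generation — a trick that breaks precisely when $i_0 = p-1$, consistent with the separate treatment that case requires in the companion statement.
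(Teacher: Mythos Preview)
Your proof is correct. The identification of the $\chi^{j_0}$-eigenspace for $G'$ via the diagonal action is exactly the paper's argument. For the generation step you use a Vandermonde argument on the $G$-translates of $e_{i_0+1}$, whereas the paper proceeds by induction on $i_0$: applying $(\rho_{i_0,j_0}(g)-\Id)$ for $g\in G$ to any vector with last coordinate $1$ produces a vector in $A^{i_0-1,j_0+1}$ with nonzero last coordinate, so the inductive hypothesis gives all of $A^{i_0-1,j_0+1}$ and hence, together with the original vector, all of $A^{i_0,j_0}$. The paper's inductive claim is phrased slightly more generally (it covers every vector of the form $(\ast,\ldots,\ast,1)^{T}$, not just $e_{i_0+1}$) and is reused verbatim in the companion proposition for $i_0=p-1$; your Vandermonde argument is more direct and self-contained for the present case.
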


\begin{proof}
Pick a basis of $A^{i_0, j_0}$ such that the $\cG$-action is give by the matrix $[\rho_{i_0, j_0}](g)$ where $g\in G_{\QQ}$.
For all $g'\in G'$, the action of $g'$ is given by the diagonal matrix $D_{i_0, j_0} := \textrm{Diag}(\chi^{i_0 + j_0}(g'), \; \ldots, \; \chi^{j_0}(g'))$.
Note that the first condition to be a principal generator is satisfied exactly by (the set of) column vectors $\{(0, \; \ldots, 0, \; x)^{T} \ \mid \ x\in \Fp^\times\}$.
\newline

\emph{Claim:} For all $i,j$, the set of column vectors $\{(*, \; \ldots, *, \; 1)^{T}\}$ generates $A^{i,j}$ as a $\cG$-representation.
\newline

\emph{Justification:} The proof of the claim is by induction.
There is nothing to prove when $i=0$ and assume that the result is true for all $k< i$.
For each element $g\in G=\Gal(L/K)$,
\[
([\rho_{ij}](g)-\Id)\cdot\begin{pmatrix}
  *\\
  \vdots\\
  \alpha\\
  1
\end{pmatrix} = \begin{pmatrix}
  *\\
  \vdots\\
   b(g)\\
  0
\end{pmatrix} 
\]
Let $g$ be such that $ b(g)\neq 0$.
By induction hypothesis, the column vector $(*, \; \ldots, *, \; b(g), \; 0)^{T}$ generates $A^{i-1, j+1} \subseteq A^{i,j}$.
This together with the vector $(*, \; \ldots, *, \; 1)^{T}$ generates $A^{i,j}$.
\newline

We now conclude that $\{(0, \; \ldots, 0, \; x)^{T} \ \mid \ x\in \Fp^\times\}$ is precisely the set of principal generators and this set has $p-1$ many elements.
\end{proof}

\begin{prop}
\label{Rusiru note 4 Prop 6}
The $\cG$-module $A^{p-1, j_0}$ has $p(p-1)$ many principal generators.
\end{prop}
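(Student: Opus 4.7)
The plan is to compute the $\chi^{j_0}$-eigenspace for the $G'$-action on $A^{p-1, j_0}$ directly, and then determine which nonzero vectors in this eigenspace generate the whole representation as a $\cG$-module. The key feature that distinguishes this situation from Proposition~\ref{Rusiru note 4 Prop 5} is that when $i_0 = p-1$ the cyclotomic character satisfies $\chi^{p-1} = 1$, so the top and bottom diagonal entries of the matrix $[\rho_{p-1, j_0}]$ coincide; consequently the $\chi^{j_0}$-eigenspace for $G'$ will be two-dimensional rather than one-dimensional, and this is precisely what will produce the extra factor of $p$ in the count.

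First I would fix the standard basis $\{e_1, \ldots, e_p\}$ of $A^{p-1, j_0}$ so that each $g \in G_{\QQ,S}$ acts via $[\rho_{p-1,j_0}](g)$. For $g' \in G'$ we have $b(g') = 0$, so the action reduces to the diagonal matrix $\operatorname{Diag}(\chi^{j_0}(g'), \chi^{p-2+j_0}(g'), \ldots, \chi^{j_0+1}(g'), \chi^{j_0}(g'))$. Since the middle exponents $j_0 + k$ for $1 \le k \le p-2$ are all distinct from $j_0$ modulo $p-1$, the $\chi^{j_0}$-eigenspace of $G'$ is exactly $\operatorname{span}(e_1, e_p)$, so any principal generator must be of the form $v = a e_1 + b e_p$ with $(a,b) \neq (0,0)$.

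Next I would split into two cases based on whether $b = 0$ or not. If $b \neq 0$, then after rescaling, $v$ takes the shape $(\ast, 0, \ldots, 0, 1)^T$, and I would argue that the inductive argument from the proof of Proposition~\ref{Rusiru note 4 Prop 5} applies verbatim. That argument uses only the upper-triangular shape of the matrices $[\rho_{ij}](g)$ and the existence of some $g \in G$ with $b(g) \neq 0$ (which holds because $L/K$ is a nontrivial extension), neither of which is obstructed when $i = p-1$; hence $v$ generates all of $A^{p-1, j_0}$. If instead $b = 0$, then $v = a e_1$ with $a \neq 0$; the first column of $[\rho_{p-1, j_0}](g)$ is $(\chi^{j_0}(g), 0, \ldots, 0)^T$ for every $g \in G_{\QQ,S}$, so $\operatorname{span}(e_1)$ is a proper $\cG$-invariant subspace isomorphic to $\Fp(j_0)$ and $v$ fails to generate $A^{p-1, j_0}$.

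Combining these two cases, the set of principal generators is precisely $\{a e_1 + b e_p : a \in \Fp,\ b \in \Fp^\times\}$, which has cardinality $p(p-1)$. The main subtle point is verifying that the generation argument from Proposition~\ref{Rusiru note 4 Prop 5} genuinely extends to the boundary case $i = p-1$; once this is checked, the rest is direct enumeration. This result is also consistent with Proposition~\ref{Rusiru Note 4 Prop 2}: the principal generators form a torsor for the image of the centralizer $C([\rho_{p-1,j_0}], G_{\QQ})$ in its action on the generating vector $e_p$, and the matrices $\mathcal{M}(\lambda, c)$ from that proposition act freely and transitively on this set of $p(p-1)$ vectors.
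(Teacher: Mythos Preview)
Your proof is correct and follows essentially the same approach as the paper. The paper's argument is terser: it identifies the $\chi^{j_0}$-eigenspace for $G'$ as $\{(y,0,\ldots,0,x)^T : x,y\in\Fp\}$, invokes the claim from the proof of Proposition~\ref{Rusiru note 4 Prop 5} to see that those with $x\in\Fp^\times$ generate, and then simply counts $p(p-1)$. You additionally spell out why the $b=0$ vectors fail to generate (because $\operatorname{span}(e_1)$ is a proper invariant line) and append the consistency check with the centralizer $C([\rho_{p-1,j_0}],G_\QQ)$ from Proposition~\ref{Rusiru Note 4 Prop 2}, but the underlying idea is identical.
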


\begin{proof}
The proof is similar to that of Proposition~\ref{Rusiru note 4 Prop 5}.
More precisely, observe that the condition to be a principal generator is satisfied by (the set of) column vectors $\{(y, \; 0 \; \ldots, 0, \; x)^{T} \ \mid \ x,y\in \Fp\}$.
But in view of the claim (in the proof of the previous proposition) we can generate $A^{p-1, j_0}$ when $x\in \Fp^\times$.
Thus there are $p(p-1)$ many principal generators.
\end{proof}

\begin{rem}
\label{some remark}
\leavevmode{}
\begin{enumerate}
\item[(a)] \label{}The elements of the centralizer acts transitively on the set of principal generators.
\item[(b)] The set of column vector $\{(0, \ \ldots, \ x, \ 0, \ \ldots 0 )^{\textrm{T}} \mid x\in \Fp^\times \textrm{ and }x \textrm{ in }k\textrm{-th place} \}\subseteq A^{i,j}$ is the set of principal generators for $A^{k, i+j-k}\subseteq A^{i,j}$.
\end{enumerate}
\end{rem}

\begin{prop}
\label{Rusiru note 4 prop 7}
Fix $0 \leq \theta \leq p-2$.
Let 
\[
v = \sum \gamma_{ij}^{(r)} v_{ij}^{(r)} \in X \cong \bigoplus_{i,j} (A^{i,j})^{n_{ij}}, 
\]
where $\gamma_{ij}^{(r)}$ is in the centralizer (possibly also the zero matrix) described previously and $v_{ij}^{(r)}$ is in the $r$-th copy of $A^{i,j}$.
Moreover, $v_{ij}^{(r)}$ is a column vector with 1 in the $\delta_{ij}^{(r)}$-th position and 0 everywhere else and it is a principal generator of $A^{\delta_{ij}^{(r)}, \theta} \subseteq A^{i,j}$.
Let $\ell(v) = \ell := \max\{ \delta_{ij}^{(r)} \ : \ \gamma_{ij}^{(r)} \neq 0 \}$.
Then 
\[
\cG\cdot v \cong A^{\ell, \theta}.
\]
\end{prop}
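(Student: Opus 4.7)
The plan is to exhibit a $\cG$-equivariant isomorphism $\phi : A^{\ell, \theta} \xrightarrow{\sim} \cG \cdot v$, built by checking that $v$ satisfies the defining relations of a principal generator of $A^{\ell, \theta}$, together with a dimension count. Fix a generator $g_0$ of $G = \Gal(L/K)$; since $\chi(g_0) = 1$, the matrix $[\rho_{ij}](g_0) - \mathrm{Id}$ is strictly upper triangular, and its restriction to any submodule $A^{\delta, \theta} \subseteq A^{i,j}$ is nilpotent of index exactly $\delta + 1$.

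First I would verify two relations. \textbf{(i)} (Eigenvalue for $G'$) $g'v = \chi^\theta(g') v$ for every $g' \in G'$: by hypothesis each $v_{ij}^{(r)}$ is a principal generator of $A^{\delta_{ij}^{(r)}, \theta}$, so $g' v_{ij}^{(r)} = \chi^\theta(g') v_{ij}^{(r)}$, and since $\gamma_{ij}^{(r)}$ commutes with the $\cG$-action by Proposition~\ref{Rusiru Note 4 Prop 2}, the identity lifts to $v$. \textbf{(ii)} (Nilpotency) $(g_0 - 1)^{\ell + 1} v = 0$: the operator $(g_0 - 1)^{\delta_{ij}^{(r)}+1}$ annihilates each summand $\gamma_{ij}^{(r)} v_{ij}^{(r)}$, and $\delta_{ij}^{(r)} \leq \ell$ for all contributing terms.

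Next I would introduce the left ideal $I \subseteq \Fp[\cG]$ generated by $\{g' - \chi^\theta(g')\}_{g'\in G'}$ and $(g_0 - 1)^{\ell + 1}$. Relation (i) reduces any basis element $g_0^m g'$ to $\chi^\theta(g') g_0^m$ modulo $I$, and (ii) further reduces $g_0^m$ to an $\Fp$-linear combination of $\{(g_0 - 1)^k\}_{k=0}^{\ell}$; hence $\dim_{\Fp} \Fp[\cG]/I \leq \ell + 1$. Since a principal generator $w$ of $A^{\ell, \theta}$ also satisfies $I \cdot w = 0$ and generates an $(\ell+1)$-dimensional module, the induced surjection $\Fp[\cG]/I \twoheadrightarrow A^{\ell, \theta}$ must be an isomorphism. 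Applying the same construction to $v$ yields a well-defined surjective $\cG$-equivariant map
\[
\phi : A^{\ell, \theta} \cong \Fp[\cG]/I \longrightarrow \cG \cdot v, \qquad w \longmapsto v.
\]

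Finally I would prove injectivity of $\phi$ by showing $\dim_{\Fp}(\cG \cdot v) \geq \ell + 1$. By the definition of $\ell$, there exists a triple $(i_0, j_0, r_0)$ with $\delta_{i_0 j_0}^{(r_0)} = \ell$; projecting $(g_0 - 1)^k v$ onto the $r_0$-th copy of $A^{i_0, j_0}$ yields $\gamma_{i_0 j_0}^{(r_0)} (g_0 - 1)^k v_{i_0 j_0}^{(r_0)}$. The centralizer element $\gamma_{i_0 j_0}^{(r_0)}$ is invertible (even in the case $i_0 = p - 1$, where $\mathcal{M}(\lambda, c)$ has determinant $\lambda^p \neq 0$), and the vectors $\{(g_0 - 1)^k v_{i_0 j_0}^{(r_0)}\}_{k=0}^{\ell}$ form a basis of that copy of $A^{\ell, \theta}$, so they are linearly independent. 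Hence $\{(g_0 - 1)^k v\}_{k=0}^{\ell}$ are linearly independent in $X$, completing the count. The main subtlety is the bookkeeping in the $i = p - 1$ case of the centralizer, where $\gamma_{ij}^{(r)}$ is no longer scalar and can mix coordinates; but since it remains invertible and commutes with the $\cG$-action, each step above goes through unchanged.
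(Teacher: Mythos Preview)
Your proof is correct. The argument via the left ideal $I \subseteq \Fp[\cG]$ is clean: the two relations (eigenvalue for $G'$ and nilpotency of $g_0 - 1$) pin down $\Fp[\cG]/I$ as a cyclic $\cG$-module of dimension exactly $\ell+1$, and the projection to a component realizing $\delta = \ell$ gives the matching lower bound on $\dim_{\Fp}(\cG\cdot v)$. The bookkeeping with $\cG = G \rtimes G'$ and the invertibility of nonzero centralizer elements (including the $i=p-1$ case) is handled correctly.

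The paper takes a shorter route. It first invokes Remark~\ref{some remark}(a) --- that the centralizer $C([\rho_{ij}],G_\QQ)$ acts transitively on the set of principal generators --- to replace each nonzero $\gamma_{ij}^{(r)}$ by the identity matrix without loss of generality. After this reduction, $v$ is simply a sum of standard basis vectors $v_{ij}^{(r)}$, and the paper asserts that the assignment $v \mapsto (0,\ldots,0,1)^{T} \in A^{\ell,\theta}$ induces the desired isomorphism, leaving the verification implicit. Your approach avoids this reduction and instead carries the centralizer elements through the whole argument, trading brevity for a fully explicit construction via $\Fp[\cG]/I$. Both land on the same map (matching $v$ with a principal generator of $A^{\ell,\theta}$); the difference is that the paper normalizes first and asserts, while you build the universal cyclic module and verify.
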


\begin{proof}
In view of Remark~\ref{some remark}(a) note that without loss of generality, $\gamma_{ij}^{(r)} \in \{0,I_{i+1}\}$.
The isomorphism in the statement of the proposition is then induced by the map
\[
v \mapsto \begin{pmatrix}
  0\\
  \vdots\\
  0\\
  1
\end{pmatrix}. \qedhere
\]
\end{proof}

Any $v\in X$ satisfying $g'\cdot v = \chi^{\theta}(g)v$ for all $g'\in G'$ are in the above form.
Therefore, it follows that there is a map
\begin{align*}
\{v\in X \ : \ g'\cdot v = \chi^{j_0}(g')v \textrm{ for all } g'\in G' \textrm{ and }j_0=\ell(v)\} &\mapsto \{A^{i_0, j_0} \textrm{ subrepresentations of }X\}\\  
v & \mapsto \cG\cdot v 
\end{align*}
Moreover, the above map is surjective; it is of the form $p-1 : 1$ when $i_0 < p-1$ and of the form $p(p-1) : 1$ when $i_0 = p-1$.
To calculate the number of subrepresentations it suffices to count the number of vectors of the form arising in Proposition~\ref{Rusiru note 4 prop 7}.
We record this in the proposition below.

\begin{prop}
\label{Rusiru note 4 prop 8 and 9}
Define
\begin{align*}
  u_{i_0, j_0} & = \sum_{\alpha = i_0}^{p-1} n_{\alpha, j_0 + i_0 - \alpha} + \sum_{\alpha=0}^{i_0-1} \sum_{\beta = j_0 - \alpha}^{j_0} n_{\alpha\beta} + \sum_{\alpha=i_0}^{p-1} \sum_{\beta = j_0 - \alpha}^{i_0 + j_0 - \alpha - 1} n_{\alpha\beta} \\
  v_{i_0, j_0} & = \sum_{\alpha=0}^{i_0-1} \sum_{\beta = j_0 - \alpha}^{j_0} n_{\alpha\beta} + \sum_{\alpha=i_0}^{p-1} \sum_{\beta = j_0 - \alpha}^{i_0 + j_0 - \alpha - 1} n_{\alpha\beta}.
\end{align*}
Then the number of $A^{i_0, j_0}$ subrepresentations of $X$ is given by
\[
\begin{cases}
 \frac{p^{u_{i_0, j_0}} - p^{v_{i_0, j_0}}}{p-1}  & \textrm{ if } i_0 < p-1\\
 \frac{p^{u_{i_0, j_0}} - p^{v_{i_0, j_0}}}{p(p-1)}  & \textrm{ if } i_0 = p-1.\\
\end{cases}
\]
\end{prop}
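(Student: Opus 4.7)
The plan is to reduce counting $A^{i_0, j_0}$-subrepresentations to a dimension count via Proposition~\ref{Rusiru note 4 prop 7}. Setting $\theta = j_0$, the assignment $v \mapsto \cG \cdot v$ sends every non-zero vector $v \in X$ with $g' v = \chi^{j_0}(g') v$ for all $g' \in G'$ and $\ell(v) = i_0$ to an $A^{i_0, j_0}$-subrepresentation, and every such subrepresentation is hit since each of its principal generators is a valid input. By Propositions~\ref{Rusiru note 4 Prop 5} and~\ref{Rusiru note 4 Prop 6}, the subrepresentation has exactly $p-1$ principal generators if $i_0 < p-1$ and exactly $p(p-1)$ if $i_0 = p-1$ (the centralizer of Proposition~\ref{Rusiru Note 4 Prop 2} acts simply transitively on them by Remark~\ref{some remark}(a)), so this is the fiber size of the map.

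Next I would compute the $\Fp$-dimensions of the subspaces $S_{\leq k} := \{v \in X : g' v = \chi^{j_0}(g') v \text{ for all } g' \in G',\ \ell(v) \leq k\} \cup \{0\}$ and show that $\dim_{\Fp} S_{\leq i_0} = u_{i_0, j_0}$ and $\dim_{\Fp} S_{\leq i_0 - 1} = v_{i_0, j_0}$. Working copy-by-copy through $A^{\alpha, \beta}$, the positions carrying $G'$-weight $\chi^{j_0}$ are those $\delta = \alpha + \beta - j_0$ with $0 \leq \delta \leq \alpha$; when $\alpha = p-1$ with $\beta \equiv j_0 \pmod{p-1}$ both $\delta = 0$ and $\delta = p-1$ qualify. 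Imposing $\delta \leq i_0$ splits into $\alpha < i_0$ (where the bound is automatic, covering all $\beta \in [j_0 - \alpha, j_0]$ and producing the second sum in $u_{i_0, j_0}$) and $\alpha \geq i_0$ (where the bound forces $\beta \leq i_0 + j_0 - \alpha$, producing the first and third sums). The tighter condition $\delta \leq i_0 - 1$ just replaces the upper bound on $\beta$ by $i_0 + j_0 - \alpha - 1$, yielding $v_{i_0, j_0}$; the difference $u_{i_0, j_0} - v_{i_0, j_0} = \sum_{\alpha = i_0}^{p-1} n_{\alpha, j_0 + i_0 - \alpha}$ records exactly the new contributions at $\delta = i_0$.

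Since $S_{\leq i_0 - 1}$ is a linear subspace of $S_{\leq i_0}$, the number of valid $v$ with $\ell(v) = i_0$ equals $p^{u_{i_0, j_0}} - p^{v_{i_0, j_0}}$, and dividing by the fiber size $p-1$ or $p(p-1)$ gives the claimed formulas. The main technical subtlety is the case $\alpha = p-1$: the centralizer from Proposition~\ref{Rusiru Note 4 Prop 2} is two-dimensional because of the extra shift parameter $c$ in $\mathcal{M}(\lambda, c)$, and this additional freedom must be matched to the two-dimensional $\chi^{j_0}$-eigenspace of $G'$ on $A^{p-1, \beta}$ when $\beta \equiv j_0 \pmod{p-1}$. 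The correct bookkeeping is to interpret $n_{\alpha \beta}$ as depending only on $\beta \bmod (p-1)$, so that the wrap-around in the summation ranges automatically double-counts these critical copies; the extra factor of $p$ in the denominator $p(p-1)$ when $i_0 = p-1$ compensates for precisely this additional freedom on the subrepresentation side.
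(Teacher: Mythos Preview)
Your proposal is correct and follows exactly the route the paper sets up: the text immediately preceding the proposition establishes the surjective map $v \mapsto \cG \cdot v$ with fibers of size $p-1$ (resp.\ $p(p-1)$) via Propositions~\ref{Rusiru note 4 Prop 5}--\ref{Rusiru note 4 prop 7}, and the paper's own proof then reads in its entirety ``This is obtained by a careful counting.'' Your write-up simply supplies that careful counting, and the bookkeeping you describe (splitting on $\alpha < i_0$ versus $\alpha \ge i_0$, reading $\beta$ modulo $p-1$ to handle the wrap-around at $\alpha = p-1$) is the right way to recover the displayed expressions for $u_{i_0,j_0}$ and $v_{i_0,j_0}$.
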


\begin{proof}
This is obtained by a careful counting.
\end{proof}

In the following proposition we count the number of $A^{i_0, j_0}$ quotients of $\Cl(L) \otimes \Fp \simeq \bigoplus_{ij} (A^{i,j})^{m_{ij}}$.

\begin{prop}
\label{Rusiru note 4 prop 10}
Define
\[
w_{ij} = 
\begin{cases} \sum_{\alpha=0}^{i} \sum_{\beta = i + j - \alpha}^{i + j} m_{\alpha\beta} + \sum_{\alpha=i + 1}^{p-1} \sum_{\beta = j}^{i + j} m_{\alpha\beta} & \textrm{ when } i \geq 0 \\ 
0 & \textrm{ when } i <0.
\end{cases}
\]
Then the number of $A^{i_0, j_0}$ quotients of $\Cl(L) \otimes \Fp$ is given by
\[
\begin{cases}
 \frac{p^{w_{i_0, j_0}} - p^{w_{i_0 -1, j_0+1}}}{p-1}  & \textrm{ if } i_0 < p-1\\
 \frac{p^{w_{i_0, j_0}} - p^{w_{i_0-1, j_0+1}}}{p(p-1)}  & \textrm{ if } i_0 = p-1.\\
\end{cases}
\]
\end{prop}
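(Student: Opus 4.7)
The plan is to reduce the counting problem to a dual sub-representation count already handled by Proposition~\ref{Rusiru note 4 prop 8 and 9}, and then to verify by a change-of-summation argument that the two index formulas $u_{i_0,\bullet},v_{i_0,\bullet}$ match the formulas defining $w_{i_0,j_0}$ and $w_{i_0-1,j_0+1}$ after dualizing.

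First, since every $\cG$-module considered is finite-dimensional over $\Fp$, duality $X\mapsto X^{\vee}=\Hom(X,\Fp)$ gives an inclusion-reversing bijection between quotients of $X$ and sub-representations of $X^{\vee}$. Consequently, the number of $A^{i_0,j_0}$ quotients of $\Cl(L)\otimes\Fp$ equals the number of $(A^{i_0,j_0})^{\vee}$ sub-representations of $(\Cl(L)\otimes\Fp)^{\vee}$. By Lemma~\ref{Rusiru note 4 Lemma 4}, $(A^{i_0,j_0})^{\vee}\cong A^{i_0,\,-i_0-j_0}$. Writing
\[
(\Cl(L)\otimes\Fp)^{\vee}\cong\bigoplus_{i,j}\bigl(A^{i,-i-j}\bigr)^{m_{ij}}\cong\bigoplus_{i,j'}\bigl(A^{i,j'}\bigr)^{n_{i,j'}},\qquad n_{i,j'}:=m_{i,-i-j'},
\]
we can apply Proposition~\ref{Rusiru note 4 prop 8 and 9} directly, with the target $A^{i_0,j_0'}$ where $j_0':=-i_0-j_0$. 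This gives the counts $\frac{p^{u_{i_0,j_0'}}-p^{v_{i_0,j_0'}}}{p-1}$ (or with $p(p-1)$ in the denominator if $i_0=p-1$), with $u,v$ as in that proposition but written in terms of the $n$'s.

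The second and final step is the substitution $n_{\alpha,\beta}=m_{\alpha,-\alpha-\beta}$ in each of the three sums defining $u_{i_0,j_0'}$ and in the two sums defining $v_{i_0,j_0'}$, followed by the change of variable $\gamma=-\alpha-\beta$. In the first summand of $u_{i_0,j_0'}$, the term $n_{\alpha,j_0'+i_0-\alpha}$ becomes $m_{\alpha,j_0}$, contributing $\sum_{\alpha=i_0}^{p-1}m_{\alpha,j_0}$. In the remaining double sums, the substitution $j_0'=-i_0-j_0$ converts the ranges $\beta\in[j_0'-\alpha,j_0']$ and $\beta\in[j_0'-\alpha,i_0+j_0'-\alpha-1]$ into the ranges $\gamma\in[i_0+j_0-\alpha,i_0+j_0]$ and $\gamma\in[j_0+1,i_0+j_0]$, respectively. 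Merging the surviving $\alpha=i_0$ row from the third sum with the first summand reproduces exactly the block $\sum_{\alpha=i_0}^{p-1}\sum_{\beta=j_0}^{i_0+j_0}m_{\alpha,\beta}$, and together with the transformed second sum this matches the definition of $w_{i_0,j_0}$. An identical bookkeeping shows $v_{i_0,-i_0-j_0}=w_{i_0-1,j_0+1}$.

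There is essentially no conceptual difficulty: once Lemma~\ref{Rusiru note 4 Lemma 4} is invoked, the statement reduces to Proposition~\ref{Rusiru note 4 prop 8 and 9}, and the only real task is the index manipulation. The step most likely to cause a sign or off-by-one error is checking the third sum of $u$, where the upper bound $i_0+j_0'-\alpha-1$ interacts with the shift by $-\alpha$: care must be taken that after the substitution $\gamma=-\alpha-\beta$ the $\alpha=i_0$ row contributes only indices $\gamma\ge j_0+1$ (so that no $m_{\alpha,j_0}$ is double-counted with the first summand) and that the resulting ranges collate correctly with the partition $\alpha<i_0$ versus $\alpha\ge i_0$ in the definition of $w_{i_0,j_0}$.
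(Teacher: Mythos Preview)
Your approach is exactly the one the paper takes: the paper's proof is the single sentence ``The proof follows from Proposition~\ref{Rusiru note 4 prop 8 and 9}, Lemma~\ref{Rusiru note 4 Lemma 4}, and the discussion after the lemma,'' and you have spelled out the duality reduction and the index bookkeeping that this sentence elides. The substitutions $n_{\alpha,\beta}=m_{\alpha,-\alpha-\beta}$, $j_0'=-i_0-j_0$, $\gamma=-\alpha-\beta$ are correct, and one checks directly that $u_{i_0,-i_0-j_0}=w_{i_0,j_0}$ and $v_{i_0,-i_0-j_0}=w_{i_0-1,j_0+1}$.

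One small wording quibble: in your merging step you refer to ``the surviving $\alpha=i_0$ row from the third sum,'' but in fact the entire third sum (all rows $\alpha\ge i_0$, contributing columns $\gamma\in[j_0+1,i_0+j_0]$) merges with the entire first summand (all rows $\alpha\ge i_0$, contributing the column $\gamma=j_0$) to give $\sum_{\alpha=i_0}^{p-1}\sum_{\beta=j_0}^{i_0+j_0}m_{\alpha,\beta}$. The $\alpha=i_0$ row of this block then happens to agree with the $\alpha=i_0$ term of the first sum in the definition of $w_{i_0,j_0}$ (since $i_0+j_0-i_0=j_0$), which reconciles the two partitions $\alpha\le i_0-1$ vs.\ $\alpha\ge i_0$ and $\alpha\le i_0$ vs.\ $\alpha\ge i_0+1$. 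Your final identities are correct; only the verbal description of this regrouping is slightly off.
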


\begin{proof}
The proof follows from Proposition~\ref{Rusiru note 4 prop 8 and 9}, Lemma~\ref{Rusiru note 4 Lemma 4}, and the discussion after the lemma.
\end{proof}

We now present the proof of the main theorem of this section.

\begin{proof}[Proof of Theorem~\ref{main theorem of note 4}]
Recall the definition of $r_{ij}$ introduced in Corollary~\ref{define rij}.
For all $i,j$ note that $r_{ij} = w_{ij}$.
Also, for all $i,j$ it follows from our earlier discussions that
\[
w_{ij} - w_{i-1, j+1} = \sum_{\alpha = i}^{p-1} m_{\alpha j}.
\]
Putting all of this together
\begin{align*}
  \rk_p(\Cl(L)) & = \sum_{i,j} (i+1)m_{ij}\\
  & = \sum_{j=0}^{p-2} \sum_{i=0}^{p-1} \sum_{\alpha=i}^{p-1} m_{\alpha j}\\
  & = \sum_{j=0}^{p-2} \sum_{i=0}^{p-1} \left(w_{ij} - w_{i-1, j+1}\right)\\
  & = \sum_{j=0}^{p-2} w_{p-1, j}\\
  & = \sum_{j=0}^{p-2} r_{p-1, j}.
\end{align*}
With this the proof is now complete.

When $p$ is regular, the claim is immediate from Corollary \ref{cor1} and Theorem~\ref{no unramified extn}.
\end{proof}

From the proof of Theorem~\ref{main theorem of note 4} we now derive a precise formula for the $m_{ij}$ introduced in \eqref{introduce mij}.

\begin{cor}
\label{mij in terms of the rij's}
For all $i,j$, the multiplicities $m_{ij} = r_{ij} + r_{i, j+1} - r_{i-1, j+1} - r_{i+1, j}$.    
\end{cor}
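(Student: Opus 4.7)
The plan is to extract the corollary directly from the telescoping identity that already appears in the proof of Theorem~\ref{main theorem of note 4}. Recall that in that proof we established (using $r_{ij}=w_{ij}$ and the definition of $w_{ij}$ from Proposition~\ref{Rusiru note 4 prop 10}) the key identity
\begin{equation}
\label{key-identity}
r_{ij} - r_{i-1,\,j+1} \;=\; \sum_{\alpha=i}^{p-1} m_{\alpha,j}.
\end{equation}
This holds for every pair $(i,j)$, with the convention $r_{ij}=0$ when $i<0$.

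First I would record \eqref{key-identity} explicitly (it is immediate from the formula for $w_{ij}$ in Proposition~\ref{Rusiru note 4 prop 10}: the difference $w_{ij}-w_{i-1,j+1}$ collapses to the single column sum $\sum_{\alpha\ge i} m_{\alpha,j}$). Next I would apply the same identity with $i$ replaced by $i+1$, obtaining
\[
r_{i+1,\,j} - r_{i,\,j+1} \;=\; \sum_{\alpha=i+1}^{p-1} m_{\alpha,j}.
\]
Subtracting the second equation from the first isolates the single term $m_{ij}$:
\[
m_{ij} \;=\; \bigl(r_{ij}-r_{i-1,\,j+1}\bigr) - \bigl(r_{i+1,\,j}-r_{i,\,j+1}\bigr) \;=\; r_{ij} + r_{i,\,j+1} - r_{i-1,\,j+1} - r_{i+1,\,j},
\]
which is the desired formula.

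The only mild subtlety is to verify that the conventions at the boundary are compatible: when $i=0$ the term $r_{-1,j+1}$ is declared to be $0$ by Corollary~\ref{define rij}, and when $i=p-1$ the sum $\sum_{\alpha=i+1}^{p-1} m_{\alpha,j}$ is empty, consistent with $r_{p,j}=0$ (via the same convention, since no $A^{p,\cdot}$ summand occurs in the decomposition \eqref{introduce mij}). The statement therefore imposes no extra content beyond \eqref{key-identity}, and there is no real obstacle: the work was already done inside the proof of Theorem~\ref{main theorem of note 4}; this corollary merely solves that linear system for the individual multiplicity by a two-step difference.
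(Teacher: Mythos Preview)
Your proof is correct and is essentially identical to the paper's own proof: the paper likewise quotes the identity $r_{ij}-r_{i-1,j+1}=\sum_{\alpha=i}^{p-1} m_{\alpha j}$ from the proof of Theorem~\ref{main theorem of note 4}, writes down the same identity with $i$ replaced by $i+1$, and subtracts. Your additional remarks on the boundary conventions at $i=0$ and $i=p-1$ are accurate and make the argument slightly more complete.
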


\begin{proof}
It follows from the proof of Theorem~\ref{main theorem of note 4} that
\begin{align*}
r_{ij} - r_{i-1, j+1} & = \sum_{\alpha=i}^{p-1} m_{\alpha j}\\
r_{i+1, j} - r_{i, j+1} & = \sum_{\alpha=i+1}^{p-1} m_{\alpha j}
\end{align*}
Subtracting the two equations we get the desired result.
\end{proof}

When $p=3$, we prove a precise result regarding the $\cG$-module structure of $\Cl(L)\otimes \mathbb{F}_3$.

\begin{cor}
When $p=3$,    
\[
\Cl(L) \otimes \Fp \cong \begin{cases}
    \Fp \textrm{ when } \rk_3(L)=1\\
    V \textrm{ when } \rk_3(L)=2,
\end{cases}
\]
where $V\simeq \Fp^2$ with Galois action as defined in \eqref{rep}.
\end{cor}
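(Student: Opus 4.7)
The plan is to apply Corollary~\ref{mij in terms of the rij's} together with Theorems~\ref{no unramified extn} and \ref{th: one dim coh grp} to constrain the decomposition $\Cl(L) \otimes \mathbb{F}_3 \cong \bigoplus (A^{i,j})^{m_{ij}}$ (valid by the classification of indecomposable $\mathcal{G}$-modules) to involve only $A^{0,0} \cong \mathbb{F}_3$ and $A^{1,0} = V$, and then to use the $\Delta$-invariant subspace to determine the multiplicities in the rank $2$ case.

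Since $p=3$ is a regular prime, Theorem~\ref{no unramified extn} gives $r_{2,0} = r_{1,1}$ and $r_{2,1} = r_{1,0}$. Theorem~\ref{th: one dim coh grp} combined with the definition of $r_{0,1}$ in Corollary~\ref{define rij} (which subtracts one because $j \equiv 1 \pmod{p-1}$ and $i < p-1$) gives $r_{0,1} = 0$. Substituting these into the formula $m_{ij} = r_{ij} + r_{i, j+1} - r_{i-1, j+1} - r_{i+1, j}$, with $j$-indices taken modulo $p-1 = 2$, yields
\[
m_{0, 0} = r_{0,0} - r_{1,0}, \quad m_{0, 1} = r_{0,0} - r_{1,1}, \quad m_{1, 0} = r_{1,0}, \quad m_{1, 1} = r_{1,1} - r_{0,0},
\]
and $m_{2, j} = 0$. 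Non-negativity of $m_{0, 1}$ and $m_{1, 1}$ forces $r_{0, 0} = r_{1, 1}$, so $m_{0, 1} = m_{1, 1} = 0$. Hence $\Cl(L) \otimes \mathbb{F}_3 \cong \mathbb{F}_3^{m_{0, 0}} \oplus V^{m_{1, 0}}$.

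By the rank bounds of Theorems~\ref{lower bound theorem} and \ref{cft ub theorem}, $\rk_3(\Cl(L)) \in \{1, 2\}$. When the rank is $1$, non-negativity forces $(m_{0, 0}, m_{1, 0}) = (1, 0)$, giving $\Cl(L) \otimes \mathbb{F}_3 \cong \mathbb{F}_3$. When the rank is $2$, the only non-negative solutions of $m_{0, 0} + 2 m_{1, 0} = 2$ are $(2, 0)$ and $(0, 1)$, corresponding to $\mathbb{F}_3^2$ (with trivial $\mathcal{G}$-action) and $V$, respectively.

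To eliminate the decomposable possibility, I will show that $(\Cl(L) \otimes \mathbb{F}_3)^\Delta$ is one-dimensional, where $\Delta = \Gal(K/\mathbb{Q})$. Since $|\Delta| = 2$ is coprime to $3$, the $3$-part $\Cl(L)_{(3)}$ splits into $\Delta$-isotypic components; the relation $N_{L/F} \circ \iota = [L:F] = 2$ (which is an isomorphism on the $3$-part) shows that $\iota : \Cl(F) \hookrightarrow \Cl(L)$ identifies $\Cl(F)_{(3)}$ with $(\Cl(L)_{(3)})^\Delta$. Reducing mod $3$ gives $(\Cl(L) \otimes \mathbb{F}_3)^\Delta \cong \Cl(F) \otimes \mathbb{F}_3$, which is one-dimensional since $\rk_3 \Cl(F) = 1$ by \cite{SS19}. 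Since the decomposable option $\mathbb{F}_3^2$ (with trivial $\mathcal{G}$-action) would have two-dimensional $\Delta$-invariants, we are forced into $\Cl(L) \otimes \mathbb{F}_3 \cong V$. The main obstacle is the identification $(\Cl(L)_{(3)})^\Delta = \iota(\Cl(F)_{(3)})$, which is standard once one uses the $\tfrac{1}{2} N_{L/F}$ splitting on the $3$-part.
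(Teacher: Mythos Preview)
Your argument is correct, but it takes a different (slightly longer) route than the paper. The paper computes $r_{0,0}=\dim_{\Fp}H^1_\Lambda(G_\QQ,\Fp)=1$ directly from \cite[Remark~3.2.1]{SS19}, and then uses the exact sequence
\[
0 \longrightarrow H^1_\Lambda(G_\QQ,\Fp) \longrightarrow \frac{H^1_\Lambda(G_\QQ,V)}{\langle \underline{\boldsymbol{b}}^{(2)}\rangle} \longrightarrow \frac{H^1_\Lambda(G_\QQ,\Fp(1))}{\langle b\rangle}=0
\]
(the last vanishing by Theorem~\ref{th: one dim coh grp}) to conclude $r_{1,1}=1$. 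Once $r_{0,0}=1$ is known, Corollary~\ref{mij in terms of the rij's} immediately gives $(m_{0,0},m_{1,0})=(1-r_{1,0},\,r_{1,0})$ with $r_{1,0}\in\{0,1\}$, and there is no decomposable case to eliminate. By contrast, you only deduce $r_{0,0}=r_{1,1}$ from non-negativity of $m_{0,1}$ and $m_{1,1}$, without pinning down the common value; this leaves open the possibility $(m_{0,0},m_{1,0})=(2,0)$ in the rank-$2$ case, which you then exclude by the independent (and correct) observation that $(\Cl(L)\otimes\Fp)^{G'}\cong\Cl(F)\otimes\Fp$ is one-dimensional. One notational point: you write $\Delta=\Gal(K/\QQ)$, which in the paper is the \emph{quotient} $\cG/G$ and does not literally act on $\Cl(L)$; what you are actually using is the subgroup $G'=\Gal(L/F)\subset\cG$, which is isomorphic to $\Delta$ via the projection. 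Replacing $\Delta$ by $G'$ throughout your last paragraph makes the argument precise.
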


\begin{proof}
By Theorem~\ref{main theorem of note 4},
\[
\rk_p(\Cl(L)) = r_{1,0} + r_{1,1}.
\]
But, we also have the following exact sequence
\[
0 \longrightarrow H^1_\Lambda(G_\QQ, \Fp) \longrightarrow \frac{H^1_\Lambda(G_{\QQ}, V)}{\langle \underline{\textbf{b}}^{(2)} \rangle} \longrightarrow \frac{H^1_{\Lambda}(G_\QQ, \Fp(1))}{\langle b \rangle}.
\]
The first term is 1-dimensional by \cite[Remark~3.2.1]{SS19} and the last term is trivial by Theorem~\ref{th: one dim coh grp}.
So, $r_{1,1}=r_{0,0}=1$ and $r_{0,1}=0$.
On the other hand, $r_{1,0}$ is either 0 or 1, depending on $\rk_p(\Cl(L))$.
The result follows from Corollary~\ref{mij in terms of the rij's}.
\end{proof}

\subsection{Improved lower bounds}
\label{sec: revised LB}
The purpose of this section is to use tools from Galois cohomology that we have developed in this section to provide a refinement of Theorem~\ref{lower bound theorem} when $p$ is a regular prime.
We provide an elegant relationship between $\rk_p(\Cl(L))$ and $\rk_p(\Cl(F))$ in Corollary~\ref{cor regular prime lower bound neat}.

Before stating the main result, we introduce another Selmer condition from \cite{SS19}.
\begin{definition}
Let $A$ be a $G_{\QQ}$-module.
Define $\Sigma = \{L_v\}$ to be the \textit{Selmer condition} given by 
\begin{itemize}
  \item$L_{\ell} =H^1_{\ur}(G_{\QQ_{\ell}} ,A) = H^1(G_{\QQ_{\ell}}/I_{\ell}, A^{I_\ell})$ for $\ell \not\in S$.
  \item $L_N = \ker\left( \Res : H^1(G_{\QQ_N} ,A) \rightarrow H^1(G_{F_N} ,A)\right)$ where $F_N = \QQ_N(N^{1/p})$.
  \item $L_p = 0$.
\end{itemize}
The \textit{Selmer group} associated to the Selmer condition $\Sigma$ is defined as
\[
H^1_{\Sigma}(G_{\QQ}, A) = \ker\left(H^1(G_{\QQ},A) \longrightarrow \prod_{v} \frac{H^1(G_{\QQ_v}, A)}{L_v}\right).
\]
\end{definition}

More precisely, we prove the following theorem
\begin{Th}
\label{main result - revised lower bounds}
Let $p$ be a regular prime.
Then
\begin{align*}
\rk_p\left(\Cl(L)\right) & \geq \rk_p\left(\Cl(F)\right) + \frac{p-3}{2} + \sum_{\substack{j=2 \\ j \text{ even}}}^{p-3}\dim_{\Fp}\left( H^1_{\Lambda}(G_{\QQ}, \Fp(j))\right)\\
&\geq \frac{p-1}{2} + \dim_{\Fp}\left( H^1_{\Sigma}(G_{\QQ}, \Fp(-1))\right) +  \sum_{\substack{j=2 \\ j \text{ even}}}^{p-3}\dim_{\Fp}\left( H^1_{\Lambda}(G_{\QQ}, \Fp(j))\right).
\end{align*}
\end{Th}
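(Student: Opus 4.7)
The plan is to exploit Theorem~\ref{main theorem of note 4}, which for regular $p$ gives the exact formula $\rk_p(\Cl(L)) = \sum_{j=0}^{p-2} r_{p-2, j}$, and to bound each $r_{p-2, j}$ from below by peeling off the Jordan--Hölder factors of $A^{p-2, j}$. Concretely, the short exact sequence $0 \to A^{p-3, j+1} \to A^{p-2, j} \to \Fp(j) \to 0$ and its iterates provide a filtration whose graded pieces are $\Fp(j), \Fp(j+1), \ldots, \Fp(j+p-2)$. I would first verify that the local $\Lambda$-Selmer conditions are preserved under these short exact sequences (checking the local conditions at $N$ and $p$) so that the associated long exact sequences in Selmer cohomology exist. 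Combined with the multiplicity formula $m_{ij} = r_{ij} + r_{i, j+1} - r_{i-1, j+1} - r_{i+1, j}$ from Corollary~\ref{mij in terms of the rij's} and the non-negativity of $m_{ij}$, one obtains the chain of inequalities $r_{p-2, j} \geq r_{p-3, j+1} \geq \cdots \geq r_{0, j+p-2}$, whose base cases are computable: $r_{0, k} = \dim H^1_\Lambda(G_\QQ, \Fp(k))$ for $k \not\equiv 1 \pmod{p-1}$, and $r_{0, 1} = 0$ by Theorem~\ref{th: one dim coh grp}.

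The next step is to sum these inequalities over $j$ and reinterpret the resulting combinatorial expression. A portion of the lower bound is to be identified with $\rk_p(\Cl(F))$: using that $|\Gal(L/F)|$ is coprime to $p$, there is a canonical identification $\rk_p(\Cl(F)) = \dim_{\Fp} (\Cl(L) \otimes \Fp)^{G'}$, which in terms of the decomposition $\Cl(L) \otimes \Fp \cong \bigoplus (A^{i,j})^{m_{ij}}$ equals $\sum_{i,j} m_{i,j}\, \dim_{\Fp}(A^{i,j})^{G'}$. A direct computation shows that $\dim_{\Fp}(A^{i,j})^{G'} \in \{0,1\}$, being $1$ precisely when $j = 0$ or $i + j \geq p-1$. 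Matching this combinatorial expression against the lower bound on $\sum_j r_{p-2,j}$ obtained above should yield the $\rk_p(\Cl(F))$ contribution together with a residual that reorganises into the constant $\frac{p-3}{2}$ (from intermediate dimension counts in the filtration) and the sum $\sum_{\text{even } j \in [2, p-3]} \dim H^1_\Lambda(G_\QQ, \Fp(j))$, corresponding to the even-twist contributions not absorbed into $\rk_p(\Cl(F))$.

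For the second inequality, it suffices to combine the first with the lower bound $\rk_p(\Cl(F)) \geq 1 + \dim_{\Fp} H^1_\Sigma(G_\QQ, \Fp(-1))$. The constant $1$ is classical genus theory; the extra $\dim_{\Fp} H^1_\Sigma$ term parametrises additional unramified $\Fp(-1)$-extensions of $F$, via a direct construction analogous to Theorem~\ref{main theorem Rusiru note 3} but with $F$ replacing $L$, following the framework of \cite{SS19}. The main obstacle is the combinatorial bookkeeping in the middle step: matching the exact contribution of each $m_{ij}$ on both sides and showing that the odd-versus-even parity of the twists $k \pmod{p-1}$ separates cleanly, with the boundary cases $k \equiv 0, 1 \pmod{p-1}$ requiring individual treatment. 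A secondary difficulty is the Selmer-compatibility check at $N$ and $p$, which requires confirming that the local conditions defining $\Lambda$ are preserved under the filtration short exact sequences so that the long exact sequences of Selmer groups actually hold.
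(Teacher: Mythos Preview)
Your approach is correct and largely parallel to the paper's, with one substantive difference in how $\rk_p(\Cl(F))$ enters. The paper uses the filtration \eqref{filt before prop} to obtain $r_{p-1,j} \geq r_{0,j}$ for $j \neq 0$, and then invokes the unlabelled proposition immediately following \eqref{filt before prop} (proved via \cite[Theorem~3.2.2 and Lemmas~3.1.10--3.1.11]{SS19}) that the single term $r_{p-1,0} = r_{p-3,2}$ already equals $\rk_p(\Cl(F))$. Your route via $\rk_p(\Cl(F)) = \dim_{\Fp}(\Cl(L)\otimes\Fp)^{G'} = \sum_{i,j} m_{ij}\,\dim_{\Fp}(A^{i,j})^{G'}$ also works and is more self-contained: once you substitute the identity $\sum_{\alpha \geq i} m_{\alpha,j} = r_{i,j} - r_{i-1,j+1}$ from the proof of Theorem~\ref{main theorem of note 4}, the resulting expression telescopes to give exactly $\rk_p(\Cl(F)) = r_{p-2,1}$, so the anticipated ``combinatorial bookkeeping'' dissolves. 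Two minor points: the constant $\tfrac{p-3}{2}$ is not a byproduct of the filtration but comes from Proposition~\ref{prop dimFp H1Lambda = 1}, which shows $\dim_{\Fp} H^1_\Lambda(G_\QQ,\Fp(j)) = 1$ for odd $j \not\equiv 1 \pmod{p-1}$ when $p$ is regular, and you should make this explicit; and your Selmer-compatibility worry is unnecessary for the lower bound, since only the inequality $r_{i-1,j+1} \leq r_{i,j}$ is needed and this already follows from $r_{i,j} - r_{i-1,j+1} = \sum_{\alpha \geq i} m_{\alpha,j} \geq 0$ without any local analysis.
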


The proof will occupy the remainder of this section.

\begin{prop}
\label{1st prop Rusiru note better bounds}
Consider the map
\[
\iota: H^1_\Lambda\left( G_{\QQ}, A^{i-1, j+1}\right) \longrightarrow H^1_\Lambda\left( G_{\QQ}, A^{i, j}\right)
\]
induced by the inclusion $A^{i-1, j+1} \subseteq A^{i,j}$.
If $p-1 \nmid j$, then $\iota$ is injective; whereas if $p-1 \mid j$ then $\ker(\iota)$ is 1-dimensional.
\end{prop}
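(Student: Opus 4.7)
The natural approach is to apply the long exact sequence in cohomology to the short exact sequence of $G_\QQ$-modules
\[
0 \longrightarrow A^{i-1, j+1} \longrightarrow A^{i, j} \longrightarrow \Fp(j) \longrightarrow 0
\]
arising from the filtration \eqref{filtration}; the quotient is $\Fp(j)$ because the bottom-right diagonal entry of $[\rho_{ij}]$ is $\chi^{j}$. The resulting sequence begins
\[
0 \longrightarrow \Fp(j)^{G_\QQ} \xrightarrow{\;\delta\;} H^1(G_\QQ, A^{i-1, j+1}) \longrightarrow H^1(G_\QQ, A^{i, j}),
\]
so the kernel of the right-hand map has $\Fp$-dimension equal to $\dim_{\Fp} \Fp(j)^{G_\QQ}$, which is $0$ if $p-1 \nmid j$ and $1$ if $p-1 \mid j$. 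In the latter case $\delta \neq 0$ because $A^{i,j}$ is indecomposable as a $G_\QQ$-representation by \cite[Theorem~3.1.6]{SS19}, so the extension cannot split. Since $\iota$ is the restriction of this map to the Selmer subgroup, the case $p-1 \nmid j$ yields injectivity of $\iota$ immediately.

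For the case $p-1 \mid j$, I would identify the generator of the global kernel explicitly. Lifting $1 \in \Fp(j)$ to the last standard basis vector $e_i \in A^{i, j}$ and applying $[\rho_{ij}]$ gives
\[
\delta(1)(g) \;=\; g \cdot e_i - e_i \;=\; \sum_{k=0}^{i-1} \chi^{i+j-k}(g)\,\frac{b^{i-k}(g)}{(i-k)!}\, e_k,
\]
which lies in $A^{i-1, j+1}$ precisely because $\chi^j(g) = 1$. The crucial observation is that every summand on the right involves a strictly positive power of $b$, and $b$ vanishes on $G_L$ by construction. Hence $\delta(1)|_{G_L} = 0$, and from this single fact all three Selmer conditions defining $\Lambda$ follow at once: for $\ell \notin \{N, p, \infty\}$ the inertia $I_\ell$ lies inside $G_L$ (since $L/\QQ$ is unramified outside $\{N, p, \infty\}$), so $\delta(1)$ is unramified; at $p$ the restriction to $G_{L_p} \subseteq G_L$ is already zero and hence lies in $H^1_{\ur}(G_{L_p}, A^{i-1, j+1})$; and at $N$ there is no condition. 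Therefore $\delta(1) \in H^1_\Lambda(G_\QQ, A^{i-1, j+1})$ and it spans the one-dimensional $\ker(\iota)$.

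The only point that requires any real thought is the Selmer condition at $p$: the local condition in $\Lambda$ is somewhat subtle since $L/K$ is ramified at $p$ exactly when $N \not\equiv 1 \pmod{p^2}$ (Lemma~\ref{p-ramification}). The observation that $\delta(1)$ vanishes globally on $G_L$ sidesteps this case distinction uniformly; the rest is routine diagram-chasing with the long exact sequence.
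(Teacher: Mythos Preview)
Your argument is correct and follows the same route as the paper: both apply the long exact sequence attached to $0 \to A^{i-1,j+1} \to A^{i,j} \to \Fp(j) \to 0$, identify $\ker(\iota)$ with the image of the connecting map on $\Fp(j)^{G_\QQ}$, and recognise the generator as the cocycle $\underline{\boldsymbol{b}}^{(i)}$; your explicit check that this class satisfies the $\Lambda$-conditions (via $b|_{G_L}=0$) is in fact more careful than the paper, which simply asserts it. One harmless slip: in your displayed formula for $\delta(1)(g)$ the coefficient of $e_k$ should carry $\chi^{j}(g)$ rather than $\chi^{i+j-k}(g)$ (every entry of the last column of $[\rho_{ij}]$ has the same twist $\chi^j$), but since you are in the case $p-1\mid j$ this makes no difference.
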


\begin{proof}
Consider the short exact sequence
\[
0 \longrightarrow A^{i-1, j+1} \longrightarrow A^{i,j} \longrightarrow \Fp(j) \longrightarrow 0
\]
and take the $G_{\QQ,S}$ cohomology to obtain
\[
\Fp(j)^{G_{\QQ,S}} \xrightarrow{\theta} H^1(G_{\QQ,S}, A^{i-1, j+1}) \xrightarrow{\iota} H^1(G_{\QQ,S}, A^{i, j}).
\]

When $p-1 \nmid j$, observe that $\Fp(j)^{G_{\QQ,S}} =0$ which implies the first claim.

When $p-1\nmid j$, the generator of $\ker(\iota) = \Image(\theta)$.
A careful diagram chase shows that the kernel is generated by the cocycle $[\uline{\boldsymbol{b}}^{(i)}]$.
\end{proof}

\begin{prop}
\label{2nd prop Rusiru note better bounds}
The following map is injective
\[
\widetilde{\iota} : H^1_\Lambda(G_{\QQ}, A^{i-1,2}) \longrightarrow \frac{H^1_\Lambda(G_{\QQ}, A^{i-1,1})}{\langle \underline{\textbf{b}}^{(i+1)} \rangle}  
\]
\end{prop}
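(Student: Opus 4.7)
The plan is to factor $\widetilde{\iota}$ as the inclusion-induced map
\[
\iota: H^1_\Lambda(G_{\QQ}, A^{i-1,2}) \longrightarrow H^1_\Lambda(G_{\QQ}, A^{i,1})
\]
arising from the filtration step $A^{i-1,2} \hookrightarrow A^{i,1}$ in \eqref{filtration}, composed with the canonical projection onto the quotient by $\langle \uline{\boldsymbol{b}}^{(i+1)} \rangle$ (taking the target to be $H^1_\Lambda(G_{\QQ}, A^{i,1})/\langle \uline{\boldsymbol{b}}^{(i+1)} \rangle$, since this is the natural place for $\uline{\boldsymbol{b}}^{(i+1)}$ to live). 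It then suffices to verify two things: that $\iota$ itself is injective, and that $[\uline{\boldsymbol{b}}^{(i+1)}]$ does not lie in the image of $\iota$. These together force $\ker(\widetilde{\iota}) = \iota^{-1}\bigl(\langle \uline{\boldsymbol{b}}^{(i+1)} \rangle\bigr) = 0$.

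First I would apply Proposition \ref{1st prop Rusiru note better bounds} with $j = 1$; since $p$ is odd, $p-1 \nmid 1$, so $\iota$ is injective. Next, I consider the short exact sequence of $G_{\QQ}$-modules
\[
0 \longrightarrow A^{i-1,2} \longrightarrow A^{i,1} \longrightarrow \Fp(1) \longrightarrow 0,
\]
where the quotient map is projection onto the bottom coordinate (the same sequence used in the proof of Proposition \ref{1st prop Rusiru note better bounds}). Its long exact sequence in cohomology yields a map $\pi_*: H^1(G_{\QQ}, A^{i,1}) \to H^1(G_{\QQ}, \Fp(1))$ whose kernel contains $\Image(\iota)$ by exactness.

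Now I would read off the bottom entry of $\uline{\boldsymbol{b}}^{(i+1)}(g)$, which is simply $b(g)$, to obtain $\pi_*[\uline{\boldsymbol{b}}^{(i+1)}] = [b]$ in $H^1(G_{\QQ}, \Fp(1))$. By Theorem \ref{th: one dim coh grp}, the class $[b]$ generates the one-dimensional space $H^1_\Lambda(G_{\QQ}, \Fp(1))$ and is in particular nonzero. Hence $[\uline{\boldsymbol{b}}^{(i+1)}] \notin \Image(\iota)$, and since $\langle \uline{\boldsymbol{b}}^{(i+1)} \rangle$ is one-dimensional its intersection with $\Image(\iota)$ is necessarily the zero subspace, giving the desired injectivity of $\widetilde{\iota}$.

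The main obstacle I anticipate is confirming that the long exact sequence argument interacts correctly with the Selmer condition $\Lambda$, so that $\pi_*$ restricts to a morphism of $\Lambda$-Selmer groups whose kernel still captures $\Image(\iota)$ at the Selmer level. This reduces to a routine check of the local conditions at $N$, at $p$, and at primes outside $S$ for both $A^{i-1,2} \hookrightarrow A^{i,1}$ and the projection to $\Fp(1)$, but it is the one compatibility that should not be skipped.
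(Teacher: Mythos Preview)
Your argument is correct, but the paper takes a different and somewhat slicker route. Rather than separating the image of $\iota$ from $\langle \uline{\boldsymbol{b}}^{(i+1)} \rangle$ via the projection $\pi_*$ to $H^1(G_{\QQ},\Fp(1))$, the paper restricts everything to $G_L$: one writes down the commutative square
\[
\begin{tikzcd}
H^1_\Lambda(G_{\QQ}, A^{i-1,2}) \arrow{r}{\widetilde{\iota}} \arrow[swap]{d}{\Res} & \dfrac{H^1_\Lambda(G_{\QQ}, A^{i,1})}{\langle \uline{\boldsymbol{b}}^{(i+1)} \rangle} \arrow{d}{\Res} \\
\Hom(G_L, A^{i-1,2}) \arrow{r} & \Hom(G_L, A^{i,1})
\end{tikzcd}
\]
and observes that both vertical maps are injective by Proposition~\ref{res is inj} (on the right, $\langle \uline{\boldsymbol{b}}^{(i+1)} \rangle$ is exactly $\ker(\Res)$, so the induced map on the quotient is injective), while the bottom map is injective because it is induced by the honest inclusion $A^{i-1,2}\subseteq A^{i,1}$ of abelian groups. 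Injectivity of $\widetilde{\iota}$ then follows from the diagram.

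Your approach has the advantage that it reuses Proposition~\ref{1st prop Rusiru note better bounds} directly and makes explicit why quotienting by $\langle \uline{\boldsymbol{b}}^{(i+1)} \rangle$ loses nothing: the class maps to $[b]\neq 0$ under $\pi_*$, whereas $\Image(\iota)\subseteq\ker(\pi_*)$. The paper's approach instead packages both steps into Proposition~\ref{res is inj}, avoiding any explicit cocycle computation. Regarding your final worry about Selmer compatibility of $\pi_*$: you do not actually need it. Injectivity of $\iota$ and the disjointness $\Image(\iota)\cap\langle \uline{\boldsymbol{b}}^{(i+1)}\rangle=0$ both hold already at the level of $H^1(G_{\QQ,S},-)$, and the $\Lambda$-Selmer groups are subspaces there, so the conclusion descends automatically.
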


\begin{proof}
Let us first consider the following commutative diagram:
\[ 
\begin{tikzcd}
H^1_\Lambda(G_{\QQ}, A^{i-1,2}) \arrow{r}{\widetilde{\iota}} \arrow[swap]{d}{\Res} & \frac{H^1_\Lambda(G_{\QQ}, A^{i-1,1})}{\langle \underline{\textbf{b}}^{(i+1)} \rangle}  \arrow{d}{\Res} \\%
\Hom(G_L, A^{i-1, 2}) \arrow{r}{}& \Hom(G_L, A^{i, 1})
\end{tikzcd}
\]
By Proposition~\ref{res is inj}, the vertical maps are injective.
Note that the bottom horizontal arrow is injective because it is induced by the injection $A^{i-1,2} \subseteq A^{i,1}$.
This completes the proof.
\end{proof}

We have the following filtration
\begin{footnotesize}
\begin{equation}
\label{filt before prop}
H^1_\Lambda(G_{\QQ}, \Fp(i+j)) \subseteq \ldots \subseteq H^1_\Lambda(G_{\QQ}, A^{i-1, j+1}) \subseteq H^1_\Lambda(G_{\QQ}, A^{i, j}) \subseteq \ldots \subseteq \frac{H^1_\Lambda(G_{\QQ}, A^{i+ j -1, 1}) }{\langle \underline{\boldsymbol{b}}^{(i+j)}\rangle} \subseteq \ldots \subseteq H^1_\Lambda(G_\QQ, A^{p-1, i+j}).
\end{equation}
\end{footnotesize}

With notation as introduced before,
\[
r_{p-1, j} \geq \dim_{\Fp} \left( H^1_\Lambda(G_\QQ, \Fp(j))\right).
\]
Therefore,
\[
\rk_p\left(\Cl(L)\right) \geq r_{p-1, 0} + \sum_{j=2}^{p-2} \dim_{\Fp} \left( H^1_\Lambda(G_\QQ, \Fp(j))\right)
\]

\begin{prop}
With notation as before
\[
r_{p-1,0} = r_{p-3,2} = \rk_p\left(\Cl(\QQ(N^{1/p}))\right)
\]
\end{prop}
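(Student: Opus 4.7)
The plan is to establish two equalities in sequence: first the purely cohomological identity $r_{p-1, 0}=r_{p-3, 2}$, and then the interpretation $r_{p-1, 0}=\rk_p(\Cl(F))$, the latter requiring the $\cG$-module decomposition of $\Cl(L)\otimes \Fp$. Throughout I assume $p$ is regular (the setting of Theorem~\ref{main result - revised lower bounds}), so that Theorem~\ref{no unramified extn} combined with Corollary~\ref{define rij} gives $r_{p-1, j}=r_{p-2, j+1}$ for all $j$, and in particular (via Corollary~\ref{mij in terms of the rij's}, or directly from Proposition~\ref{Rusiru note 4 prop 10}) $m_{p-1, j}=0$ for all $j$.

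For the first equality, apply the $\Lambda$-Selmer long exact sequence to the short exact sequence
\[
0 \longrightarrow A^{p-3, 2} \longrightarrow A^{p-2, 1} \longrightarrow \Fp(1) \longrightarrow 0,
\]
whose surjection is projection onto the last coordinate. Since $H^0(G_\QQ,\Fp(1))=0$ for $p$ odd, this yields
\[
0 \longrightarrow H^1_\Lambda(G_\QQ, A^{p-3, 2}) \longrightarrow H^1_\Lambda(G_\QQ, A^{p-2, 1}) \xrightarrow{\ \pi\ } H^1_\Lambda(G_\QQ, \Fp(1)).
\]
By Theorem~\ref{th: one dim coh grp}, the right-hand term is one-dimensional, generated by $b$. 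By Proposition~\ref{1st prop Rusiru note better bounds} applied with $(i, j)=(p-1, 0)$, the cocycle $[\underline{\boldsymbol{b}}^{(p-1)}]$ lies in $H^1_\Lambda(G_\QQ, A^{p-2, 1})$, and its last coordinate (its image in $\Fp(1)$) is $b$; hence $\pi$ is surjective. It follows that $\dim H^1_\Lambda(G_\QQ, A^{p-3, 2}) = \dim H^1_\Lambda(G_\QQ, A^{p-2, 1}) - 1$, which by the definitions in Corollary~\ref{define rij} reads $r_{p-3, 2}=r_{p-2, 1}$. Since $r_{p-1, 0}=r_{p-2, 1}$ by the regularity hypothesis, we conclude $r_{p-1, 0}=r_{p-3, 2}$.

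For the second equality, since $[L:F]=p-1$ is coprime to $p$, averaging over $\Gal(L/F)$ identifies $\Cl(F)\otimes \Fp \cong (\Cl(L)\otimes \Fp)^{\Gal(L/F)}$. Using $\Cl(L)\otimes \Fp \cong \bigoplus_{i,j} (A^{i,j})^{m_{ij}}$, it suffices to compute $\dim (A^{i,j})^{\Gal(L/F)}$. Any $\delta \in \Gal(L/F)$ fixes $N^{1/p}$, so $b(\delta)=0$ and $\delta$ acts on $A^{i, j}$ via the diagonal matrix $\mathrm{Diag}(\chi^{i+j}(\delta), \chi^{i+j-1}(\delta), \ldots, \chi^j(\delta))$. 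Because $\chi|_{\Gal(L/F)}$ is an isomorphism onto $\Fp^\times$, the invariant basis vectors are exactly the $e_k$ with $0 \leq k \leq i$ and $k \equiv i+j \pmod{p-1}$. For regular $p$, the $i=p-1$ contributions vanish since $m_{p-1, j}=0$, so
\[
\rk_p(\Cl(F)) = \sum_{i=0}^{p-2} \sum_{j \in S_i} m_{ij}, \qquad S_i = \{p-1-i,\, p-i,\, \ldots,\, p-1\} \bmod (p-1).
\]
Unwinding the definition of $w_{p-1, 0}$ from Proposition~\ref{Rusiru note 4 prop 10} and again using $m_{p-1, j}=0$ to discard the $\alpha=p-1$ summand yields precisely the same double sum, whence $r_{p-1, 0} = w_{p-1, 0} = \rk_p(\Cl(F))$.

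The cohomological step is relatively clean, relying only on Theorem~\ref{th: one dim coh grp} and the identification of $\ker(\iota)$ in Proposition~\ref{1st prop Rusiru note better bounds}. The main bookkeeping challenge lies in the second equality: one must carefully reduce the indices $\beta \in \{p-1-\alpha,\ldots, p-1\}$ appearing in the definition of $w_{p-1, 0}$ modulo $p-1$ and verify that they match the sets $S_\alpha$ coming from the $\Gal(L/F)$-invariants computation.
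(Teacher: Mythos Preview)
Your argument is correct but follows a genuinely different path from the paper. The paper establishes both equalities by direct appeal to \cite{SS19}: it shows that any class in $H^1_\Lambda(G_\QQ,A^{p-1,0})\setminus H^1_\Lambda(G_\QQ,A^{p-2,1})$ or in $H^1_\Lambda(G_\QQ,A^{p-2,1})\setminus H^1_\Lambda(G_\QQ,A^{p-3,2})$ would yield an unramified extension forbidden by \cite[Lemmas~3.1.10 and~3.1.11]{SS19} respectively, and then quotes \cite[Theorem~3.2.2]{SS19} for the identification $r_{p-3,2}=\rk_p(\Cl(F))$. Your route is more internal to the present paper: for $r_{p-2,1}=r_{p-3,2}$ you exhibit $[\underline{\boldsymbol{b}}^{(p-1)}]$ as an explicit preimage of $b$ under $\pi$ and invoke Theorem~\ref{th: one dim coh grp}, while for the link to $\rk_p(\Cl(F))$ you compute $\Gal(L/F)$-invariants of each $A^{i,j}$ and match the resulting index sets $S_\alpha$ against the combinatorial formula for $w_{p-1,0}$ from Proposition~\ref{Rusiru note 4 prop 10}. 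The paper's proof is quicker but imports three black-box statements from \cite{SS19}; yours is self-contained and makes transparent why the formula for $w_{p-1,0}$ already encodes $\rk_p(\Cl(F))$. One minor point to tighten: your ``$\Lambda$-Selmer long exact sequence'' is not automatic for arbitrary Selmer conditions, so you should remark that the local $\Lambda$-conditions are preserved both under $A^{p-3,2}\hookrightarrow A^{p-2,1}$ and under the projection to $\Fp(1)$ (at $p$ this holds because $G_{L_p}$ acts trivially, so unramifiedness after restriction is detected on the level of homomorphisms), whence $\ker\pi$ really equals $H^1_\Lambda(G_\QQ,A^{p-3,2})$.
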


\begin{proof}
Consider $x\in H^1_\Lambda(G_\QQ, A^{p-1,0}) \setminus H^1_\Lambda(G_\QQ, A^{p-2,1})$.
This gives a ramified $A^{p-1,0}$ extension of $L$ and a representation
\begin{align*}
\rho_x : G_{\QQ, S} &\longrightarrow \begin{pmatrix}
\cG & \underline{A} \\
0 & 1 \\
\end{pmatrix} \subseteq \GL_{p+1}(\mathbb F_p) \\
g & \mapsto \begin{pmatrix}
  * & \underline{\boldsymbol{a}}_g^{(p-1)}\\
  0 & 1 \\
\end{pmatrix}.
\end{align*}
This contradicts \cite[Lemma~3.1.10]{SS19}. Analogously, if $x\in H^1_\Lambda(G_\QQ, A^{p-2,1}) \setminus H^1_\Lambda(G_\QQ, A^{p-3,2})$ we contradict \cite[Lemma~3.1.11]{SS19}.
The second equality is precisely \cite[Theorem~3.2.2]{SS19}.
\end{proof}

We use the same notation as in \cite{SS19} for defining Selmer condition.
Set $S = \{p, N, \infty\}$ and $T\subseteq S$.
As before, all Selmer conditions discussed here have the unramified condition at places outside of $S$.
For the module $A$, write $H^1_T(G_{\QQ}, A)$ to denote the Selmer group with the unramified condition at all places outside of $T$, and any behaviour allowed at the places of $T$.

\begin{prop}
\label{prop dimFp H1Lambda = 1}
Let $p$ be a regular prime and $j \not\equiv 1 \pmod{p-1}$ be odd.
Then, 
\[
\dim_{\Fp}(H^1_\Lambda\left( G_{\QQ}, \Fp(j)\right)) =1.
\]
\end{prop}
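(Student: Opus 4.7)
The plan is to compute $\dim_{\Fp} H^1_\Lambda(G_\QQ, \Fp(j))$ by passing to $K = \QQ(\zeta_p)$ via inflation--restriction, using Kummer theory, and leveraging regularity of $p$ to kill class-group obstructions.

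First, since $|\Delta| = p-1$ is coprime to $p$, inflation--restriction yields $H^1(G_{\QQ,S}, \Fp(j)) \cong H^1(G_{K,S}, \Fp(j))^{\Delta}$, and the latter is the $\chi^{1-j}$-eigenspace of $H^1(G_{K,S}, \Fp(1))$ under the natural $\Delta$-action on $\mu_p$-cohomology. Kummer theory provides the exact sequence
\[
0 \longrightarrow \cO_{K,S}^{\times} / (\cO_{K,S}^{\times})^p \longrightarrow H^1(G_{K,S}, \Fp(1)) \longrightarrow \Cl_S(K)[p] \longrightarrow 0,
\]
whose right-hand term vanishes because $p$ is regular. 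I would then extract the $\chi^{1-j}$-eigenspace of $\cO_{K,S}^{\times} \otimes \Fp$. Using Dirichlet's unit theorem together with the fact that the $p-1$ primes above $N$ carry a copy of the regular representation of $\Delta$ (while the unique prime above $p$ contributes only to $\chi^0$), this eigenspace has dimension exactly $2$: one contribution comes from a real unit --- valid because $1-j$ is even and nonzero modulo $p-1$ --- and one from the $N$-units.

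Second, imposing the Selmer condition $\Lambda$ is expected to cut the dimension down to $1$. The condition at $N$ imposes no restriction, while the condition at $p$ requires the corresponding Kummer extension to become unramified after base change to $L$. The $N$-unit contribution gives an explicit nonzero Selmer class whose associated Kummer extension is unramified at all primes above $p$ (because the relevant $S$-unit has trivial $p$-adic valuation), providing the lower bound. The real-unit contribution is killed: for regular $p$ the $\chi^{1-j}$-eigenspace of $\cO_K^{\times}/p$ is generated by a cyclotomic unit whose associated Kummer extension $K(u^{1/p})/K$ is wildly ramified at the prime above $p$, and this ramification must be shown to survive base change to $L$.

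The main obstacle will be the rigorous analysis at $p$: showing that the real-unit class really fails the Selmer condition is delicate because the ramification behavior of $L/K$ at $p$ depends on whether $N \equiv 1 \pmod{p^2}$ (Lemma~\ref{p-ramification}), and one must verify that $L/K$ does not absorb the wild ramification coming from $K(u^{1/p})/K$. An alternative strategy I would pursue in parallel is to apply the Greenberg--Wiles Euler characteristic formula, reducing the claim to showing that the dual Selmer group $H^1_{\Lambda^*}(G_\QQ, \Fp(1-j))$ vanishes; since $1-j$ is even this is a plus-part statement that should follow from regularity via the Herbrand direction of Herbrand--Ribet (or equivalently the reflection principle), bypassing the case-by-case local analysis at $p$ altogether.
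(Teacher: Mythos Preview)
Your first approach is essentially the paper's argument, packaged slightly differently. The paper sandwiches $H^1_\Lambda(G_\QQ,\Fp(j))$ between $H^1_N(G_\QQ,\Fp(j))$ and $H^1(G_{\QQ,S},\Fp(j))$, cites \cite[Theorem~2.3.5]{SS19} to get the bounds $1$ and $2$, then assumes the dimension is $2$ and derives a contradiction: a nonzero class $x\in H^1_p(G_\QQ,\Fp(j))$ must (by regularity) be ramified at $p$ over $K$, and then \cite[Lemma~3.1.4]{SS19} guarantees that this ramification \emph{survives} base change to $L_p$, contradicting the $\Lambda$-condition. So the obstacle you flag --- that $L/K$ might absorb the wild ramification of the cyclotomic-unit Kummer extension --- is exactly the content of that cited lemma, and you should invoke it rather than attempt a case split on $N\bmod p^2$. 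One minor correction: your justification that the $N$-unit class is unramified at $p$ ``because the relevant $S$-unit has trivial $p$-adic valuation'' is insufficient at the wild prime; trivial valuation is necessary but not sufficient for a Kummer extension to be unramified above $p$ (cf.\ Theorem~\ref{Gras}(ii)). The lower bound $\dim H^1_\Lambda\ge 1$ is still correct, but it comes from the inclusion $H^1_N\subseteq H^1_\Lambda$ together with $\dim H^1_N=1$, not from a valuation argument.

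Your alternative via Greenberg--Wiles does \emph{not} bypass the local analysis at $p$. The Euler characteristic formula reads
\[
\dim H^1_\Lambda(G_\QQ,\Fp(j)) - \dim H^1_{\Lambda^*}(G_\QQ,\Fp(1-j)) \;=\; 1 + \dim L_p,
\]
where $L_p\subseteq H^1(G_{\QQ_p},\Fp(j))$ is the local condition at $p$. Since $H^1(G_{\QQ_p},\Fp(j))$ is $1$-dimensional and $H^1_{\ur}(G_{\QQ_p},\Fp(j))=0$, the quantity $\dim L_p\in\{0,1\}$ is precisely the question of whether the unique ramified local class becomes unramified over $L_p$ --- the very obstacle you are trying to avoid. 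Even if you prove $H^1_{\Lambda^*}(G_\QQ,\Fp(1-j))=0$, the formula still gives $\dim H^1_\Lambda = 1+\dim L_p$, which is $1$ or $2$ depending on $L_p$. So Herbrand/reflection for the even twist $1-j$ buys you nothing here; you cannot escape the input from \cite[Lemma~3.1.4]{SS19}.
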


\begin{proof}
We first begin with the observation that for the $G_{\QQ}$-module $\Fp(j)$ we have the following inclusions of the Selmer groups associated to corresponding Selmer conditions:
\[
H^1_N(G_{\QQ}, \Fp(j)) \subseteq H^1_{\Lambda}(G_{\QQ}, \Fp(j)) \subseteq H^1(G_{\QQ,S}, \Fp(j)).
\]
It follows from \cite[Theorem~2.3.5]{SS19} that $\dim_{\Fp} \left(H^1_{\Lambda}(G_{\QQ}, \Fp(j))\right)$ is 1 or 2 when $j \not\equiv 1\pmod{p-1}$.
Suppose that the dimension is 2, then
\begin{equation}
  \label{eqn star}
H^1_{p}(G_{\QQ}, \Fp(j)) \subseteq H^1_{S}(G_{\QQ}, \Fp(j)) = H^1_{\Lambda}(G_{\QQ}, \Fp(j)).
\end{equation}
Consider a non-zero element $x\in H^1_{p}(G_{\QQ}, \Fp(j))$.
Since $p$ is a regular prime, $\Res_p(x) \in H^1(G_{\QQ_p}, \Fp(j))$ is non-zero and defines a ramified extension $\cF/\QQ_p(\zeta_p)$ such that $\Gal(\cF/\QQ_p(\zeta_p)) \simeq \Fp(j)$ as a $\Gal(\QQ_p(\zeta_p)/\QQ_p)$-representation.
Consider the following diagram

\begin{center}
\begin{tikzpicture}
  \node (Q1) at (0,0) {$\QQ_p(\zeta_p)$};
  \node (Q2) at (2,2) {$L_p = \QQ_p(\zeta_p, N^{1/p})$};
  \node (Q3) at (0,4) {$\cF' = \cF L_p$};
  \node (Q4) at (-2,2) {$\cF$};

  \draw (Q1)--(Q2) node [pos=0.8, below ,inner sep=0.5cm] {\small{$\Fp(1)$}};
  \draw (Q1)--(Q4) node [pos=0.8, below,inner sep=0.5cm] {\small{$\Fp(j)$}};
  \draw (Q3)--(Q4);
  \draw (Q2)--(Q3);
  \end{tikzpicture}
\end{center}

\emph{Claim:} $\cF'/L_p$ is a ramified extension.
\newline

\emph{Justification:} If the extension $\cF'/L_p$ is unramified, then \cite[Lemma~3.1.4]{SS19} asserts that $\cF/\QQ_p(\zeta_p)$ is also unramified but this is a contradiction.
This proves the claim.
\newline

Since $\cF L_p/L_p$ is ramified it implies that
\[
\Res_p(x) \notin H^1_{\ur}\left( G_{L_p}, \Fp(j)\right).
\]

This contradicts \eqref{eqn star} which means that in our setting $\dim_{\Fp}\left(H^1_\Lambda(G_\QQ, \Fp(j)) \right)=1$.
\end{proof}

We are now in a position to prove the main result.
\begin{proof}[Proof of Theorem~\ref{main result - revised lower bounds}]
The proof of the theorem follows from Proposition~\ref{prop dimFp H1Lambda = 1}.

The second inequality follows from the fact proven in \cite{SS19} that
\[
\rk_p(\Cl(F)) \geq 1 + \dim_{\Fp}\left( H^1_{\Sigma}(G_{\QQ}, \Fp(-1))\right).
\]
\end{proof}

An immediate corollary of the main theorem is the case when $p=5$.

\begin{cor}
\label{p=5 cor for lb}
Let $p=5$.
Then
\[
\rk_5\left(\Cl(L)\right) \geq \begin{cases}
  2 & \textrm{ if } \rk_5\left(\Cl(F)\right)=1\\
  3 & \textrm{ if } \rk_5\left(\Cl(F)\right)=2\\
  6 & \textrm{ if } \rk_5\left(\Cl(F)\right)=3.
\end{cases}
\]
\end{cor}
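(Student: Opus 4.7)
The first two cases follow directly from Theorem~\ref{main result - revised lower bounds}. Specializing to $p=5$, the only even integer in the range $\{2,\dots,p-3\}$ is $j=2$, so the theorem reduces to
\[
\rk_5(\Cl(L)) \;\geq\; \rk_5(\Cl(F)) + 1 + \dim_{\mathbb{F}_5} H^1_\Lambda(G_\QQ, \mathbb{F}_5(2)) \;\geq\; \rk_5(\Cl(F)) + 1.
\]
This immediately yields the claimed bounds of $2$ and $3$ when $\rk_5(\Cl(F))$ equals $1$ or $2$, respectively, since the last term is nonnegative.

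For the third case $\rk_5(\Cl(F))=3$, the above inequality only produces $\rk_5(\Cl(L))\geq 4$, which is insufficient, so a sharper argument is required. My plan is to invoke the regular-prime refinement of Theorem~\ref{main theorem of note 4}:
\[
\rk_5(\Cl(L)) \;=\; r_{3,0}+r_{3,1}+r_{3,2}+r_{3,3},
\]
and then apply the proposition identifying $r_{p-1,0}=r_{p-2,1}=r_{p-3,2}$ with $\rk_p(\Cl(F))$ to conclude $r_{3,1}=3$. For each of the three remaining terms I would use the filtration from~\eqref{filt before prop} together with Proposition~\ref{1st prop Rusiru note better bounds}, coupled with the explicit lower bounds on $\dim H^1_\Lambda(G_\QQ,\mathbb{F}_5(j))$ provided by Proposition~\ref{prop dimFp H1Lambda = 1} (which pins the dimension at $1$ for every odd $j\not\equiv 1\pmod{4}$) and Theorem~\ref{th: one dim coh grp} (which settles the case $j=1$).

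The main obstacle will be ensuring that the combined contribution of $r_{3,0}, r_{3,2}, r_{3,3}$ reaches $3$. Proposition~\ref{1st prop Rusiru note better bounds} introduces a one-dimensional kernel at each filtration step where $p-1$ divides the relevant index, so a naive inclusion from the bottom of the filtration can lose dimensions. The technical heart of the argument is to identify exactly where these kernels arise, and to show that the resulting losses are absorbed because the cohomology groups at the top of the filtration are themselves large—a largeness forced by the hypothesis that $\rk_5(\Cl(F))=3$ attains its maximum value. Equivalently, one can try to replicate, at $r_{3,0}$, the identification-chain argument (in the style of \cite[Lemma 3.1.10]{SS19}) that was used to establish $r_{4,0}=r_{3,1}=r_{2,2}$, in order to bring one of the problematic terms under control by transferring it to a better-understood piece of the filtration.
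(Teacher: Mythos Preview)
Your handling of the cases $\rk_5(\Cl(F))=1,2$ via Theorem~\ref{main result - revised lower bounds} is correct and essentially matches the paper.

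For $\rk_5(\Cl(F))=3$, however, there is a genuine gap. Your plan is to write $\rk_5(\Cl(L))=r_{3,0}+r_{3,1}+r_{3,2}+r_{3,3}$ with $r_{3,1}=3$, and then bound the remaining three terms by descending the filtration all the way to the groups $H^1_\Lambda(G_\QQ,\Fp(j))$. Carrying this out gives
\[
r_{3,0}\geq r_{0,3}=1,\qquad r_{3,2}\geq r_{0,1}=0,\qquad r_{3,3}\geq r_{0,2},
\]
and since $\rk_5(\Cl(F))=3$ forces $\dim H^1_\Sigma(G_\QQ,\Fp(-2))=1$ one gets $r_{0,2}\geq 1$. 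But this only yields $\rk_5(\Cl(L))\geq 3+1+0+1=5$, one short of the target. Neither Proposition~\ref{prop dimFp H1Lambda = 1} nor Theorem~\ref{th: one dim coh grp} can squeeze out the missing dimension, because the $\Lambda$-Selmer groups of the one-dimensional modules $\Fp(j)$ are all at most $1$-dimensional here. Your fallback suggestion of ``replicating the identification-chain argument'' at $r_{3,0}$ is not made concrete, and there is no evident analogue of the SS19 Lemmas~3.1.10--3.1.11 contradictions that would force $r_{3,0}=r_{1,2}$.

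The paper's proof avoids this by \emph{not} descending all the way to $r_{0,j}$. Instead it stops the filtration at $r_{1,2}$ and passes from $\Lambda$ to the smaller $\Sigma$-Selmer condition:
\[
r_{4,3}\;\geq\; r_{1,2}\;=\;\dim_{\Fp} H^1_\Lambda(G_\QQ,A^{1,2})\;\geq\;\dim_{\Fp} H^1_\Sigma(G_\QQ,A^{1,2})\;=\;\rk_5(\Cl(F))-1\;=\;2,
\]
using \cite[Theorem~6.2.1]{SS19}. Combined with $r_{4,2}\geq r_{0,2}\geq \dim H^1_\Sigma(G_\QQ,\Fp(-2))=1$, this gives the bound $3+0+1+2=6$. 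The missing idea in your plan is precisely this use of the $\Sigma$-Selmer group of the \emph{two}-dimensional module $A^{1,2}$, which retains the full contribution $\rk_5(\Cl(F))-1$ rather than collapsing it to the $0/1$ contributions of the individual characters.
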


\begin{proof}
Recall from \cite[Theorem~6.2.1]{SS19} that
\[
\rk_5\left(\Cl(F)\right) =1+\dim_{\mathbb{F}_5}\left(H^1_\Sigma(G_\QQ, A^{1,2})\right)= 1 + \dim_{\mathbb{F}_5}\left(H^1_\Sigma(G_\QQ, \mathbb{F}_5(-1)) \right) + \dim_{\mathbb{F}_5}\left(H^1_\Sigma(G_\QQ, \mathbb{F}_5(-2)) \right).
\]
Observe that
\begin{small}
\begin{align*}
\rk_5\left(\Cl(L)\right) & = \rk_5\left(\Cl(F)\right) + r_{4,1} + r_{4,2} + r_{4,3} \textrm{ by Theorem~\ref{main theorem of note 4}}\\
& \geq \rk_5\left(\Cl(F)\right) + r_{0,2} + r_{1,2} \textrm{ using the filtration on $A^{i,j}$ and ignoring }r_{4,1}\\
&\geq \left(1 + \dim_{\mathbb{F}_5}\left(H^1_\Sigma(G_\QQ, \mathbb{F}_5(-1)) \right) + \dim_{\mathbb{F}_5}\left(H^1_\Sigma(G_\QQ, \mathbb{F}_5(-2)) \right) \right)\\ 
& \qquad + \dim_{\mathbb{F}_5}\left(H^1_\Sigma(G_\QQ, \mathbb{F}_5(-2)) \right) + \dim_{\mathbb{F}_5}\left(H^1_\Sigma(G_\QQ, A^{1,2}) \right)\\
&\geq \left(1 + \dim_{\mathbb{F}_5}\left(H^1_\Sigma(G_\QQ, \mathbb{F}_5(-1)) \right) + \dim_{\mathbb{F}_5}\left(H^1_\Sigma(G_\QQ, \mathbb{F}_5(-2)) \right) \right)\\ 
& \qquad + \dim_{\mathbb{F}_5}\left(H^1_\Sigma(G_\QQ, \mathbb{F}_5(-2)) \right) + \left( \dim_{\mathbb{F}_5}\left(H^1_\Sigma(G_\QQ, \mathbb{F}_5(-1)) \right) + \dim_{\mathbb{F}_5}\left(H^1_\Sigma(G_\QQ, \mathbb{F}_5(-2)) \right) \right) \\
&\geq 1 + 2 \dim_{\mathbb{F}_5}\left(H^1_\Sigma(G_\QQ, \mathbb{F}_5(-1)) \right) + 3 \dim_{\mathbb{F}_5}\left(H^1_\Sigma(G_\QQ, \mathbb{F}_5(-2)) \right).
\end{align*}
\end{small}
Finally, recall that $\dim_{\mathbb{F}_5}\left(H^1_\Sigma(G_\QQ, \Fp(-1)) \right)$ and $\dim_{\mathbb{F}_5}\left(H^1_\Sigma(G_\QQ, \Fp(-2)) \right)$ are either 0 or 1 and this determines $\rk_5\left( \Cl(F)\right)$.

When $\rk_5(\Cl(F))=1$ the claimed inequality follows from the trivial bounds obtained previously.
When $\rk_5(\Cl(F))=2$, the work of Schaefer-Stubley guarantees that 
\[
\dim_{\mathbb{F}_5}\left(H^1_\Sigma(G_\QQ, \mathbb{F}_5(-1)) \right) = 1 \textrm{ and } \dim_{\mathbb{F}_5}\left(H^1_\Sigma(G_\QQ, \mathbb{F}_5(-2)) \right) = 0.
\]
Finally, when $\rk_5(\Cl(F))=3$, it is clear that
\[
\dim_{\mathbb{F}_5}\left(H^1_\Sigma(G_\QQ, \mathbb{F}_5(-1)) \right) = \dim_{\mathbb{F}_5}\left(H^1_\Sigma(G_\QQ, \mathbb{F}_5(-2)) \right) =1.
\]
The inequalities now follow immediately.
\end{proof}

Next we prove a corollary which gives a relationship between $\rk_p(\Cl(F))$ and $\rk_p(\Cl(L))$.
The following statement is written for $p\geq 7$ as that is the more interesting case, but note that it is also true for $p=3,5$. 

\begin{cor}
\label{cor regular prime lower bound neat}
Let $p$ be a regular prime.
Then
\[
\rk_p(\Cl(L)) \geq 2\rk_p(\Cl(F)) + \frac{p-7}{2}.
\]
\end{cor}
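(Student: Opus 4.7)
The plan is to write $\rk_p(\Cl(L)) = \sum_{j=0}^{p-2} r_{p-1, j}$ via Theorem~\ref{main theorem of note 4}, and then to extract two copies of $\rk_p(\Cl(F))$ out of this sum together with $\tfrac{p-5}{2}$ unit contributions indexed by odd twists. The first copy is immediate: from the proposition immediately preceding Theorem~\ref{main theorem of note 4}, $r_{p-1, 0} = r_{p-3, 2} = \rk_p(\Cl(F))$.

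To obtain the second copy, I would consider the chain of $G_\QQ$-module inclusions $A^{p-4, 2} \subseteq A^{p-3, 1} \subseteq A^{p-2, 0} \subseteq A^{p-1, -1} \cong A^{p-1, p-2}$. By Proposition~\ref{1st prop Rusiru note better bounds}, only the middle step $A^{p-3, 1} \to A^{p-2, 0}$ has a non-trivial kernel on $H^1_\Lambda$, namely the one-dimensional subspace spanned by the cocycle class $\underline{\boldsymbol{b}}^{(p-2)}$. Since $\underline{\boldsymbol{b}}^{(p-2)}$ has non-zero last coordinate $b$, whereas every cocycle in the image from $A^{p-4, 2}$ must have vanishing last coordinate, the class $\underline{\boldsymbol{b}}^{(p-2)}$ is not hit; hence the composite $H^1_\Lambda(G_\QQ, A^{p-4, 2}) \to H^1_\Lambda(G_\QQ, A^{p-1, p-2})$ is injective, yielding $r_{p-1, p-2} \geq \dim_{\Fp} H^1_\Lambda(G_\QQ, A^{p-4, 2})$. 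Combining with the inclusion $H^1_\Sigma \subseteq H^1_\Lambda$ and the natural extension of \cite[Theorem~6.2.1]{SS19} to general regular primes, namely $\rk_p(\Cl(F)) = 1 + \dim_{\Fp} H^1_\Sigma(G_\QQ, A^{p-4, 2})$, we obtain $r_{p-1, p-2} \geq \rk_p(\Cl(F)) - 1$.

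The same linear-independence argument, now applied to the composite $H^1_\Lambda(G_\QQ, \Fp(j)) \to H^1_\Lambda(G_\QQ, A^{j-1, 1}) \to H^1_\Lambda(G_\QQ, A^{p-1, j})$ for each odd $j \in \{3, 5, \ldots, p-4\}$, shows that the generator of $H^1_\Lambda(G_\QQ, \Fp(j))$ maps into the ``first coordinate'' subspace and is therefore linearly independent of $\underline{\boldsymbol{b}}^{(j)}$; hence $r_{p-1, j} \geq \dim_{\Fp} H^1_\Lambda(G_\QQ, \Fp(j)) = 1$ by Proposition~\ref{prop dimFp H1Lambda = 1}. Summing these $\tfrac{p-5}{2}$ contributions with the previous two copies yields
\[
\rk_p(\Cl(L)) \geq \rk_p(\Cl(F)) + \bigl(\rk_p(\Cl(F)) - 1\bigr) + \frac{p-5}{2} = 2\rk_p(\Cl(F)) + \frac{p-7}{2}.
\]
The principal obstacle will be verifying the generalization of \cite[Theorem~6.2.1]{SS19} beyond $p=5$ used in the second paragraph; while the cited result explicitly handles the $p=5$ case, the general regular-prime analogue should follow from repeating the Schaefer--Stubley decomposition of $\Cl(F) \otimes \Fp$ into Galois-isotypic components in the style of Section~\ref{sec: p rank via Gal cohom}.
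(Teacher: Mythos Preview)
Your argument is correct and follows essentially the same route as the paper's proof. The paper works with the $r_{p-2,j}$ formulation of Theorem~\ref{main theorem of note 4} (valid for regular $p$) rather than $r_{p-1,j}$, and invokes the filtration \eqref{filt before prop} directly instead of re-deriving the linear-independence facts, but the three ingredients---one term equal to $\rk_p(\Cl(F))$, one term $\geq \rk_p(\Cl(F))-1$ via $H^1_\Sigma \subseteq H^1_\Lambda$, and $\tfrac{p-5}{2}$ unit contributions from Proposition~\ref{prop dimFp H1Lambda = 1}---are identical. Your ``obstacle'' is not one: the identity $\rk_p(\Cl(F)) = 1 + \dim_{\Fp} H^1_\Sigma(G_\QQ, A^{p-4,2})$ holds for every regular prime $p$ and is exactly \cite[Proposition~3.3.1]{SS19}, which the paper cites directly; there is no need to generalize the $p=5$ statement.
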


\begin{proof}
By \cite[Proposition~3.3.1]{SS19},
\begin{align*}
\rk_p(\Cl(F)) &= 1 + \dim_{\Fp}\left(H^1_{\Sigma}\left(G_\QQ, A^{p-4,2}\right)\right)\\
&\leq 1 + \dim_{\Fp}\left(H^1_{\Lambda}\left(G_\QQ, A^{p-4,2}\right)\right)\\
&\leq 1 + \dim_{\Fp}\left(H^1_{\Lambda}\left(G_\QQ, A^{p-2,0}\right)\right) \quad \textrm{by }\eqref{filt before prop}\\
&= 1 + r_{p-2,0}.
\end{align*}
Moreover the filtration also shows that $r_{p-2,j}\geq r_{0,j-1}$.
By Theorem~\ref{main theorem of note 4}
\begin{align*}
\rk_p(\Cl(L)) & = \sum_{j=0}^{p-2} r_{p-2,j}  = r_{p-2,0} + \rk_p(\Cl(F)) + \sum_{j=2}^{p-2} r_{p-2,j}    \\
&\geq \left(\rk_p(\Cl(F))-1\right) + \rk_p(\Cl(F)) + \sum_{j=1}^{p-3} r_{0,j} \textrm{ from above discussion}\\
&\geq 2\rk_p(\Cl(F)) -1 + \left( \frac{p-3}{2} -1\right) \textrm{ by Proposition~}\ref{prop dimFp H1Lambda = 1}. \qedhere 
\end{align*}
\end{proof}

We now record a corollary regarding the structure of $\Cl(L)\otimes \Fp$ in a special case.

\begin{cor}
When $p$ is regular,
\[
\rk_p(\Cl(L)) \geq \frac{p-1}{2}.
\]
Moreover, when equality occurs the following isomorphism is true (as a $\cG$-module)
\[
\Cl(L) \otimes \Fp \simeq \bigoplus_{\substack{j=3\\odd}}^{p-2}\Fp(j) \oplus \Fp.
\]
\end{cor}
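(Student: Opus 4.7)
The lower bound follows at once from Theorem~\ref{main result - revised lower bounds} combined with Theorem~\ref{lower bound theorem}: since $\rk_p(\Cl(F)) \geq 1$ and every term $\dim_{\Fp} H^1_\Lambda(G_\QQ, \Fp(j))$ is non-negative,
\[
\rk_p(\Cl(L)) \;\geq\; \rk_p(\Cl(F)) + \frac{p-3}{2} \;\geq\; \frac{p-1}{2}.
\]

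For the equality case, each inequality in the estimate must be saturated, which forces $\rk_p(\Cl(F)) = 1$ together with $\dim_{\Fp} H^1_\Lambda(G_\QQ, \Fp(j)) = 0$ for every even $j\in\{2,\ldots,p-3\}$. Combined with Proposition~\ref{prop dimFp H1Lambda = 1}, one obtains
\[
\dim_{\Fp} H^1_\Lambda(G_\QQ, \Fp(j)) = \begin{cases} 1 & j \text{ odd}, \ 3\le j\le p-2,\\ 0 & j \text{ even}, \ 2\le j\le p-3.\end{cases}
\]
Tracing through the derivation of the lower bound, the filtration estimate $r_{p-1,j}\ge r_{0,j}$ arising from \eqref{filt before prop} via Proposition~\ref{1st prop Rusiru note better bounds} must be an equality for each $j$. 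Since this estimate comes from the non-decreasing chain $r_{0,j+p-1}\le r_{1,j+p-2}\le\cdots\le r_{p-1,j}$, equality at the endpoints forces the chain to be constant, giving $r_{ij}=r_{0,\,(i+j)\bmod(p-1)}$ for all admissible $(i,j)$.

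Plugging this into $m_{ij}=r_{ij}+r_{i,j+1}-r_{i-1,j+1}-r_{i+1,j}$ (Corollary~\ref{mij in terms of the rij's}), the four terms cancel pairwise whenever $i\ge 1$, forcing $m_{ij}=0$; the boundary convention $r_{-1,j+1}=0$ then yields $m_{0,j}=r_{0,j}$. One checks $r_{0,0}=1$ by bounding it above via the chain ($r_{0,0}\le r_{p-1,0}=\rk_p(\Cl(F))=1$) and below using the degree-$p$ subextension of $\QQ(\zeta_N)/\QQ$, which is unramified outside $\{N,p\}$ and produces a non-zero $\Lambda$-Selmer class. Together with $r_{0,1}=0$ and $r_{0,j}=\dim H^1_\Lambda(G_\QQ,\Fp(j))$ in the remaining cases, the only non-zero multiplicities are $m_{0,0}=1$ and $m_{0,j}=1$ for odd $j\in\{3,\ldots,p-2\}$, which recovers the claimed $\cG$-module decomposition.

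The main obstacle is ensuring that the one-dimensional kernels appearing in Proposition~\ref{1st prop Rusiru note better bounds} at indices $j\equiv 1\pmod{p-1}$ are correctly absorbed into the definition of $r_{ij}$, so that the filtration chain genuinely has equal endpoints and is constant throughout, rather than merely being constant modulo the $\underline{\boldsymbol{b}}^{(k)}$-contributions. A careful bookkeeping of these contributions is needed to legitimately conclude $r_{ij}=r_{0,\,(i+j)\bmod(p-1)}$.
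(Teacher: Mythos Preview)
Your approach is essentially the paper's (equality in the lower bound collapses the filtration, then apply Corollary~\ref{mij in terms of the rij's}), but carried out with considerably more care; the paper's own proof simply asserts that equality forces $r_{0,0}=r_{0,j}=1$ for odd $j\not\equiv 1$ and ``all the other $r_{ij}=0$'', which is in fact imprecise --- your formula $r_{ij}=r_{0,(i+j)\bmod(p-1)}$ is the correct statement of what the filtration collapse actually yields.

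The obstacle you flag does resolve cleanly. The $-1$ built into the definition of $r_{i,1}$ for $i<p-1$ exactly absorbs the one-dimensional kernel at the step $A^{i,1}\subset A^{i+1,0}$ from Proposition~\ref{1st prop Rusiru note better bounds}, giving $r_{i,1}\le r_{i+1,0}$; and Proposition~\ref{2nd prop Rusiru note better bounds} shows that the image of $H^1_\Lambda(G_\QQ,A^{i,2})$ already lands in $H^1_\Lambda(G_\QQ,A^{i+1,1})/\langle\underline{\boldsymbol b}^{(i+2)}\rangle$, giving $r_{i,2}\le r_{i+1,1}$ as well. All other steps are straightforwardly injective, so the chain $r_{0,j}\le r_{1,j-1}\le\cdots\le r_{p-1,j}$ is genuinely non-decreasing and your conclusion is legitimate. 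One small boundary issue: the formula in Corollary~\ref{mij in terms of the rij's} for $m_{p-1,j}$ would involve an undefined $r_{p,j}$, but this is harmless since $m_{p-1,j}=0$ follows directly from Theorem~\ref{no unramified extn}.
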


\begin{proof}
The first statement is immediate from Theorem~\ref{main result - revised lower bounds} and the fact that
\[
\rk_p(\Cl(F)) = r_{p-2,1} \geq r_{0,0} = 1.
\]

The filtration in \eqref{filt before prop} and Theorem~\ref{main theorem of note 4} imply that equality is possible when $r_{0,0} = r_{0,j}=1$ for odd $j\not\equiv 1 \pmod{p-1}$ and all the other $r_{ij}=0$.
The second assertion follows from Corollary~\ref{mij in terms of the rij's}.
\end{proof}

\subsection{Improved upper bounds}
\label{sec: revised UB}
We prove an analogue of \cite[Theorem~3.0.1]{SS19} which provides refined estimates of the upper bound for the $p$-rank of $\Cl(L)$ in the case that $p$ is a regular prime.
The statement of the main theorem is the following.

\begin{Th}
\label{th: better upper bounds}
Let $p$ be a regular prime.
Then
\[
\rk_p(\Cl(L)) \leq \frac{3p-5}{2} + (p-2)\sum_{i=2}^{p-2}\dim_{\Fp}\left( H^1_\Sigma(G_\QQ, \Fp(i))\right) + \sum_{\substack{i=2\\ \textrm{even}}}^{p-3}\dim_{\Fp}\left( H^1_\Lambda(G_\QQ, \Fp(i))\right).
\]
\end{Th}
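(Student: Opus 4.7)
The starting point is Theorem~\ref{main theorem of note 4}: since $p$ is regular,
\[
\rk_p(\Cl(L)) = \sum_{j=0}^{p-2} r_{p-2,j},
\]
so it suffices to bound each $r_{p-2,j} \leq \dim H^1_\Lambda(G_\QQ, A^{p-2,j})$ (with an adjustment of $-1$ when $j\equiv 1 \pmod{p-1}$, absorbed into the constant later) and sum. The plan is to attack each term via the filtration
\[
\Fp(j-1) \;=\; A^{0,j+p-2} \;\subseteq\; A^{1,j+p-3} \;\subseteq\; \cdots \;\subseteq\; A^{p-2,j},
\]
whose successive subquotients are $\Fp(j+p-2-i)$ for $i=1,\dots,p-2$, and to push this information through long exact sequences in Galois cohomology. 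This mirrors the strategy of \cite[Theorem~3.0.1]{SS19}, which bounds $\dim H^1_\Sigma(G_\QQ, A^{p-4,2})$, but here I must instead control $\Lambda$-Selmer groups with coefficients in the larger modules $A^{p-2,j}$.

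The iterative step goes as follows. From each short exact sequence $0 \to A^{i-1,i'+1} \to A^{i,i'} \to \Fp(i') \to 0$ (with $i' = j+p-2-i$) the long exact sequence, combined with Proposition~\ref{1st prop Rusiru note better bounds}, yields
\[
\dim H^1_\Lambda(G_\QQ, A^{i,i'}) \;\leq\; \dim H^1_\Lambda(G_\QQ, A^{i-1,i'+1}) + \dim(\mathrm{Im}_{i,j}) + \epsilon_{i,j},
\]
where $\mathrm{Im}_{i,j}$ is the image of $H^1_\Lambda(G_\QQ, A^{i,i'})$ in $H^1(G_\QQ, \Fp(i'))$ and $\epsilon_{i,j}\in\{0,1\}$ records the one-dimensional kernel that appears exactly when $i' \equiv 0 \pmod{p-1}$. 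Iterating all the way down to the base $H^1_\Lambda(G_\QQ, \Fp(j-1))$, whose dimension is $0$ or $1$ by Proposition~\ref{prop dimFp H1Lambda = 1} and Theorem~\ref{th: one dim coh grp}, produces the master inequality
\[
r_{p-2,j} \;\leq\; \dim H^1_\Lambda(G_\QQ, \Fp(j-1)) + \sum_{i=1}^{p-2}\dim(\mathrm{Im}_{i,j}) + \sum_{i=1}^{p-2}\epsilon_{i,j}.
\]

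The key step is to identify the target Selmer groups for $\mathrm{Im}_{i,j}$. By checking the local conditions at $N$ and $p$ place by place --- unramified at primes outside $S$ is preserved functorially, at $N$ the image of an $\Lambda$-class projects into the $\Sigma$-kernel condition because the quotient $\Fp(i')$ records the diagonal action on which $\Gal(F_N/\QQ_N)$ acts trivially, and at $p$ the $\Lambda$-condition restricts under the quotient in a controlled way --- one obtains
\[
\mathrm{Im}_{i,j} \;\subseteq\; H^1_\Sigma(G_\QQ, \Fp(i'))
\]
for generic $(i,j)$, while for the ``even'' boundary indices the containment is only in the larger $H^1_\Lambda(G_\QQ, \Fp(i'))$. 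This dichotomy, analogous to the odd/even split in \cite[Section~3.3]{SS19}, is exactly what introduces the two distinct sums appearing in the theorem's statement.

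Summing over $j\in\{0,\dots,p-2\}$, I reindex by $m := i' \pmod{p-1}$. Each $m\in\{2,\dots,p-2\}$ appears in the $\Sigma$-part for at most $(p-2)$ pairs $(i,j)$, producing the coefficient $(p-2)$ in front of $\sum_{i=2}^{p-2}\dim H^1_\Sigma(G_\QQ, \Fp(i))$; each even $m\in\{2,\dots,p-3\}$ contributes the companion $H^1_\Lambda$ term; and the finitely many residues $m\in\{0,1,p-1\}$, the base cases $\dim H^1_\Lambda(G_\QQ, \Fp(j-1))$, the $\epsilon_{i,j}$ corrections, and the $-1$ adjustments for $j\equiv 1\pmod{p-1}$ together pile up to the explicit constant $\frac{3p-5}{2}$ (a careful tally of these $O(p)$ contributions being the bookkeeping burden). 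The main obstacle is the local analysis that pins down the image $\mathrm{Im}_{i,j}$ inside the correct Selmer group; once the containment in $H^1_\Sigma$ versus $H^1_\Lambda$ is established, the combinatorial summation is routine but must be done with enough care to recover the sharp constant $\frac{3p-5}{2}$ rather than a larger $O(p)$ term.
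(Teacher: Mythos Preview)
Your overall architecture matches the paper's: start from $\rk_p(\Cl(L)) = \sum_j r_{p-2,j}$ (Theorem~\ref{main theorem of note 4}), filter each $A^{p-2,j}$, feed the short exact sequences into cohomology, and sum. Two points, however, are not handled correctly.

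First, the crucial step is not just that the image $\mathrm{Im}_{i,j}$ lands in some Selmer group with coefficients in $\Fp(i')$, but that it lands in $H^1_\Sigma$ rather than merely $H^1_\Lambda$. The paper does this in two moves: it quotes \cite[Lemma~3.3.2]{SS19} to get the image inside $H^1_{\Lambda\cap\Sigma^*}(G_\QQ,\Fp(j))$ for $j\neq 0,1$, and then proves the separate claim $H^1_{\Lambda\cap\Sigma^*}(G_\QQ,\Fp(j)) = H^1_\Sigma(G_\QQ,\Fp(j))$. The latter equality is where regularity of $p$ enters: a $\Lambda$-class in $\Fp(j)$ cuts out an extension $E/K$ with $E(N^{1/p})/L$ unramified at $p$, hence $E/K$ unramified at $p$ by \cite[Lemma~3.1.4]{SS19}, hence $\Res_p$ trivial since there is no unramified $p$-extension of $K_p$ when $p$ is regular. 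Your phrase ``at $p$ the $\Lambda$-condition restricts under the quotient in a controlled way'' does not capture this; without the regularity argument the image only sits in $H^1_\Lambda$, and the bound degenerates.

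Second, your account of where the $\sum_{i\text{ even}} \dim H^1_\Lambda(G_\QQ,\Fp(i))$ term comes from is off. It is not that for certain ``even boundary indices'' the containment in $H^1_\Sigma$ fails. In the paper, every intermediate quotient with twist $\neq 0,1$ contributes an $H^1_\Sigma$ term uniformly. The $H^1_\Lambda$ terms arise from the \emph{base} of each filtration, namely $\dim H^1_\Lambda(G_\QQ,\Fp(j-1))$; the odd-twist bases are then evaluated to $1$ via Proposition~\ref{prop dimFp H1Lambda = 1} and absorbed into $\frac{3p-5}{2}$, leaving only the even-twist bases as the residual $H^1_\Lambda$ sum. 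The constant $\frac{3p-5}{2}$ then collects these odd-base $1$'s, the twist-$0$ and twist-$1$ steps (handled by \eqref{eqn: two ses for j = 0,1}, contributing $1$ and $0$ respectively), and the $-1$ correction at $j=1$ from the definition of $r_{ij}$.
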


\begin{proof}
By \cite[Lemma~3.3.2]{SS19} the following exact sequence{\footnote{In the reference the result is claimed for $i\leq p-3$, but the proof works for $i=p-2$, as well.}} exists
\[
0 \longrightarrow H^1_\Lambda(G_\QQ, A^{i-1, j+1}) \longrightarrow H^1_\Lambda(G_\QQ, A^{i, j}) \longrightarrow H^1_{\Lambda\cap \Sigma^*}(G_\QQ, \Fp(j))
\]
when $j\neq 0,1$.
[Here we use the notation $\Sigma^*$ to denote the dual Selmer condition, i.e., $\Sigma^* = \{L_v^{\perp}\}$ where $L_v^\perp$ is the annihilator of $L_v$ under the local cup product pairing.]
On the other hand, when $j=0,1$ Propositions~\ref{1st prop Rusiru note better bounds} and \ref{2nd prop Rusiru note better bounds} imply that 
\begin{equation}
\label{eqn: two ses for j = 0,1}
\begin{split}
0 \longrightarrow H^1_\Lambda(G_\QQ, A^{i-1, 2}) & \longrightarrow \frac{H^1_\Lambda(G_{\QQ}, A^{i,1})}{\langle \underline{\textbf{b}}^{(i+1)} \rangle} \longrightarrow \frac{H^1_{\Lambda}(G_\QQ, \Fp(1))}{\langle b \rangle}\\
0 \longrightarrow \frac{H^1_\Lambda(G_\QQ, A^{i-1, 1})}{\langle \underline{\textbf{b}}^{(i)} \rangle} & \longrightarrow H^1_\Lambda(G_{\QQ}, A^{i,0}) \longrightarrow H^1_{\Lambda}(G_\QQ, \Fp).
\end{split}
\end{equation}

Observe that $\Sigma \subseteq \Lambda \cap \Sigma^*$ and the only difference arises at the place $p$.
\newline

\emph{Claim:} When $j\neq 0,1$, the equality $H^1_\Sigma(G_\QQ, \Fp(j)) = H^1_{\Lambda \cap \Sigma^*}(G_\QQ, \Fp(j))$ holds.
\newline

\emph{Justification:}
We only need to check the condition at $p$.
One inclusion is automatic and we only need to check the other one.
Suppose that $x\in H^1_{\Lambda \cap \Sigma^*}(G_\QQ, \Fp(j))$ is a non-zero element.
Then $x$ determines a Galois extension $E/K$ with Galois group isomorphic to $\Fp(j)$ as a $\Gal(K/\QQ)$-module.
Since $x\in H^1_{\Lambda}(G_\QQ, \Fp(j))$, we know that $E(N^{1/p})/L$ is unramified at $p$.
It follows from \cite[Lemma~3.1.4]{SS19} that $E/K$ is unramified at $p$.
Finally, we can deduce from \cite[Lemma~2.2.5]{Sch_thesis} that $\Res_p(x)\in H^1_{\ur}(G_{\Qp}, \Fp(j))=0$, which in turn implies that $x\in H^1_\Sigma(G_\QQ, \Fp(j))$.
\newline

We know that $\dim_{\Fp}(H^1_{\Lambda}(G_{\QQ}, \Fp))=1$, see \cite[Remark~3.2.1]{SS19}.
This fact combined with Theorem~\ref{th: one dim coh grp} and \eqref{eqn: two ses for j = 0,1} allows us to conclude that 
\begin{align*}
r_{p-2,j+1} & \leq \dim_{\Fp}(H^1_{\Lambda}(G_{\QQ}, \Fp(j))) + \sum_{\substack{i=2 \\ i\neq j}}^{p-2} \dim_{\Fp}(H^1_{\Sigma}(G_{\QQ}, \Fp(i))) + 1 & \textrm{ when } j\neq 0,1,\\
r_{p-2,j} & \leq 1 + \sum_{i=2}^{p-2} \dim_{\Fp}(H^1_{\Sigma}(G_{\QQ}, \Fp(i))) & \textrm{ when } j= 0,1.
\end{align*}
To get the final expression in the theorem, recall that $\dim_{\Fp} H^1_\Lambda(G_{\QQ}, \Fp(j))=1$ for odd $j\neq 1$ which was shown in Proposition~\ref{prop dimFp H1Lambda = 1} combined with  Theorem~\ref{main theorem of note 4}.
\end{proof}

\begin{rem} \label{Rem} \leavevmode{}
\label{explicit count of dimensions}
\begin{enumerate}[(a)]
\item Note that $\dim_{\Fp} H^1_\Sigma(G_{\QQ}, \Fp(i))=0$ or $1$, and the explicit conditions for each case is calculated in \cite[Section~5]{SS19}.
\item For even $i>0$, we have that $\dim_{\Fp} H^1(G_{\QQ,S}, \Fp(i))=1$ by \cite[Theorem~2.3.5(3)]{SS19}.
Therefore, $\dim_{\Fp} H^1_{\Lambda}(G_{\QQ}, \Fp(i))=0$ or $1$.
Theorem~\ref{th: better upper bounds} then guarantees
\[
\rk_p(\Cl(L)) \leq \frac{3p-5}{2} + (p-2)(p-3) + \frac{p-3}{2} = (p-1)(p-2)
\]
which matches with the bound obtained using class field theory.
\item When $\rk_p(\Cl(L)) = (p-1)(p-2)$, all sequences in the proof of Theorem~\ref{th: better upper bounds} are short exact.
For all $j\not \equiv 1\pmod{p-1}$, 
\[
m_{p-2,j} =(r_{p-2,j} - r_{p-3, j+1}) - (r_{p-1,j} - r_{p-2, j+1}) = 1-0 =1.
\]
Here for the penultimate equality we are using the first sequence in \eqref{eqn: two ses for j = 0,1} and the fact that $r_{p-1,j}=0$ when $p$ is regular. 
Therefore, as  $\cG$-module
\[
\Cl(L) \otimes \Fp \simeq \bigoplus_{j=0}^{p-3} \Sym^{p-2}(V) \otimes \Fp(-j).  
\]
\end{enumerate}  
\end{rem}

In the remainder of the section, we calculate $\dim_{\Fp} H^1_{\Lambda}(G_{\QQ}, \Fp(i))$ explicitly.

\begin{Lemma}
\label{lemma for last theorem}
Let $p$ be a regular prime and $i\not\equiv 0\pmod{p-1}$ be even.
Then
\[
\dim_{\Fp}\left( H^1_{\Lambda}(G_\QQ, \Fp(i))\right) = \dim_{\Fp}\left( H^1_{N^*}(G_\QQ, \Fp(1-i))\right),
\]
where the Selmer condition $N^*$ means classes which are split at $N$, have any behaviour at $p$, and are unramified elsewhere.
\end{Lemma}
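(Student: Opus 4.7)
My strategy is to apply Poitou--Tate duality (in the form of the Greenberg--Wiles product formula) to $H^1_\Lambda(G_\QQ,\Fp(i))$, compute the local contributions, and then verify that the dual Selmer condition $\Lambda^*$ agrees with $N^*$ on $\Fp(1-i)$.

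First, the Greenberg--Wiles formula reads
\[
\frac{|H^1_\Lambda(G_\QQ,\Fp(i))|}{|H^1_{\Lambda^*}(G_\QQ,\Fp(1-i))|} = \frac{|H^0(G_\QQ,\Fp(i))|}{|H^0(G_\QQ,\Fp(1-i))|}\prod_v \frac{|L_v^\Lambda|}{|H^0(G_{\QQ_v},\Fp(i))|}.
\]
The hypotheses ($i$ even, $i\not\equiv 0\pmod{p-1}$) force $1-i$ odd with $1-i\not\equiv 0\pmod{p-1}$, so both global $H^0$'s vanish. At $\ell\nmid Np\infty$ the unramified local factor is $1$; at $\infty$ the factor is $1/p$ because complex conjugation fixes $\Fp(i)$ for even $i$; at $N$ the factor is $p$ because $N\equiv 1\pmod p$ makes the local Galois action trivial, giving $|H^0|=p$ and $|H^1|=p^2$; and at $p$ the factor is $|L_p^\Lambda|$ since $\chi^i$ is nontrivial on inertia. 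Assembling these, the formula collapses to
\[
\dim_{\Fp} H^1_\Lambda(G_\QQ,\Fp(i)) - \dim_{\Fp} H^1_{\Lambda^*}(G_\QQ,\Fp(1-i)) = \dim_{\Fp} L_p^\Lambda.
\]

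The main technical step---and the main obstacle---is to show $L_p^\Lambda = 0$. By the local Euler characteristic the group $H^1(G_{\Qp},\Fp(i))$ is one-dimensional. Inflation-restriction through $K_p=\Qp(\zeta_p)$ identifies it with the $\Delta$-invariant (that is, $\chi^0$-eigenspace) subspace of $H^1(G_{K_p},\Fp(i))$, and this image embeds further into $H^1(G_{L_p},\Fp(i))$. On the other hand, $H^1_{\mathrm{ur}}(G_{L_p},\Fp(i))$ is one-dimensional and isomorphic to $\Fp(i)$ as a $\Delta$-module: the residue field of $L_p$ is $\Fp$ (whether or not $L_p/K_p$ is ramified), Frobenius acts trivially on $\Fp(i)^{I_{L_p}}=\Fp(i)$ since $\chi$ is trivial on $G_{K_p}\supseteq G_{L_p}$, and $\Delta$ acts on the coefficients by $\chi^i$. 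Since $i\not\equiv 0\pmod{p-1}$, the $\chi^0$- and $\chi^i$-eigenspaces of $H^1(G_{L_p},\Fp(i))$ are disjoint, so the image of $H^1(G_{\Qp},\Fp(i))$ meets $H^1_{\mathrm{ur}}(G_{L_p},\Fp(i))$ only in $0$, forcing $L_p^\Lambda=0$.

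Granting $L_p^\Lambda=0$, one checks $\Lambda^*=N^*$ by comparing local conditions place by place: unramified is self-dual at $\ell\nmid Np\infty$; at $N$, the condition $L_N^\Lambda=H^1(G_{\QQ_N},A)$ annihilates to $0$, matching the ``split at $N$'' condition of $N^*$; and at $p$, by the Tate local pairing $L_p^{\Lambda,\perp}=H^1(G_{\Qp},\Fp(1-i))$, matching the ``any behaviour at $p$'' condition. Combined with the displayed Greenberg--Wiles identity, this gives $\dim_{\Fp} H^1_\Lambda(G_\QQ,\Fp(i)) = \dim_{\Fp} H^1_{N^*}(G_\QQ,\Fp(1-i))$, as claimed.
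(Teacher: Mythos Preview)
Your proof is correct and follows essentially the same strategy as the paper: both reduce to showing that the local condition $L_p^\Lambda$ vanishes (the paper phrases this as $H^1_\Lambda = H^1_N$) and then apply the Greenberg--Wiles formula with identical local computations. The only real difference is in how $L_p^\Lambda = 0$ is established---the paper invokes the field-theoretic lemma \cite[Lemma~3.1.4]{SS19} (an $\Fp(j)$-extension of $K$ with $j\neq 1$ whose compositum with $L$ is unramified at $p$ must itself be unramified at $p$), whereas you give a direct $\Delta$-eigenspace argument in $H^1(G_{L_p},\Fp(i))$; one small point worth making explicit in your version is why the restriction $H^1(G_{\Qp},\Fp(i))\to H^1(G_{L_p},\Fp(i))$ is injective, which follows since its kernel $H^1(\Gal(L_p/\Qp),\Fp(i))\cong \Hom(\Fp(1),\Fp(i))^\Delta=\Fp(i-1)^\Delta$ vanishes for even $i$.
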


\begin{proof}
As observed in the proof of Theorem~\ref{th: better upper bounds}, the classes $H^1_{\Lambda}(G_\QQ, \Fp(i))$ are unramified at $p$.
Therefore,
\[
H^1_{\Lambda}(G_\QQ, \Fp(i)) = H^1_{N}(G_\QQ, \Fp(i)).
\]
Observe that $N^*$ is the dual Selmer condition of the Selmer condition $N$.
Writing $\Lambda=\{ L_v\}$ as in Definition~\ref{Lambda Selmer condn} and using \cite[Theorem~2.1.2]{SS19}
\begin{align*}
  \frac{\abs{H^1_{\Lambda}(G_\QQ, \Fp(i))}}{\abs{H^1_{N^*}(G_\QQ, \Fp(1-i))}} & = \frac{\abs{H^0(G_\QQ, \Fp(i))}}{\abs{H^0(G_\QQ, \Fp(1-i))}} \prod_v \frac{\abs{L_v}}{\abs{H^0(G_{\QQ_v}, \Fp(i))}}\\
  & = \frac{1}{1} \times \frac{\abs{H^1(G_{\QQ_N}, \Fp)}}{\abs{\Fp(i)^{G_{\QQ_N}}}} \times \frac{\abs{H^1_{\ur}(G_{\QQ_p}, \Fp(i))}}{\abs{\Fp(i)^{G_{\QQ_p}}}} \times \frac{\abs{H^1_{\ur}(G_{\QQ_{\mathbb{R}}}, \Fp(i))}}{\abs{\Fp(i)^{G_{\mathbb{R}}}}}\\
  & = \frac{1}{1} \times \frac{p^2}{p} \times \frac{1}{1} \times \frac{1}{p} = 1.
  \qedhere
\end{align*}
\end{proof}

\begin{Th}
\label{last result}
Let $p$ be a regular prime and $i\not\equiv 0\pmod{p-1}$ be even.
Then 
\[
\dim_{\Fp}\left(H^1_\Lambda(G_\QQ, \Fp(-i)) \right) =1 \Longleftrightarrow (1-f)(1-f^2)^{2^i} \ldots (1-f^{p-1})^{(p-1)^i} \in (\mathbb{F}_N^\times)^p,
\]
where $f$ is an element of order $p$ in $\mathbb{F}_N^\times$.
\end{Th}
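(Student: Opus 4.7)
The plan is to apply Lemma~\ref{lemma for last theorem} with $i$ replaced by $-i$ to obtain
\[
\dim_{\Fp}\bigl(H^1_\Lambda(G_\QQ, \Fp(-i))\bigr) = \dim_{\Fp}\bigl(H^1_{N^*}(G_\QQ, \Fp(1+i))\bigr),
\]
and then to compute the right-hand side explicitly via Kummer theory. Since the condition $N^*$ requires classes to vanish locally at $N$, this Selmer group equals $\ker(\Res_N)$ where $\Res_N\colon H^1(G_{\QQ,S}, \Fp(1+i)) \to H^1(G_{\QQ_N}, \Fp(1+i))$. Combined with the bound $\dim H^1_\Lambda(G_\QQ, \Fp(-i)) \leq \dim H^1(G_{\QQ,S}, \Fp(-i)) = 1$ coming from Remark~\ref{Rem}(b), this dimension lies in $\{0,1\}$, so the task reduces to distinguishing the two cases.

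First I would identify $H^1(G_{\QQ,S}, \Fp(1+i))$ via Kummer theory. Inflation-restriction (using $\gcd(|\Delta|, p) = 1$) together with $\Fp(1+i) \cong \mu_p \otimes \Fp(i)$ identifies this group with the multiplicative $\chi^{-i}$-eigenspace of $K(S,p) := \{[\alpha] \in K^\times/K^{\times p} : v_\mathfrak{p}(\alpha) \equiv 0 \pmod{p} \text{ for all } \mathfrak{p} \notin S\}$. Since $p$ is regular, $\Cl(K)[p] = 0$ and hence $K(S,p) = \cO_{K,S}^\times/\cO_{K,S}^{\times p}$. A $\Delta$-module decomposition via the short exact sequence
\[
0 \to \cO_K^\times/\cO_K^{\times p} \to \cO_{K,S}^\times/\cO_{K,S}^{\times p} \to I_S/pI_S \to 0,
\]
where $I_S$ denotes the free abelian group on finite primes of $K$ above $S$, together with the $\Fp[\Delta]$-module isomorphism $I_S/pI_S \cong \Fp \oplus \Fp[\Delta]$ (the first summand from the unique prime above $p$, the second from the orbit of primes above $N$), shows that the $\chi^{-i}$-eigenspace has dimension $2$, with explicit basis $\{u_i, \rho_i\}$. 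The cyclotomic unit $u_i := \prod_{a=1}^{p-1}(1-\zeta_p^a)^{a^i}$ satisfies $\sigma_b(u_i) \equiv u_i^{b^{-i}} \pmod{K^{\times p}}$ by the substitution $a\mapsto ab^{-1}$. The $S$-unit $\rho_i$ is one whose image under the valuation map $\cO_{K,S}^\times \to I_S$ represents the $\chi^{-i}$-eigenvector $\prod_{c\in \Fp^\times}(\sigma_c\mathfrak{n})^{c^i}$ of $I_S/pI_S$; such $\rho_i$ exists by surjectivity of the valuation map modulo $p$ (using $\Cl(K)\otimes \Fp = 0$) and can be projected into the $\chi^{-i}$-eigenspace using the idempotent (valid since $|\Delta|$ is invertible mod $p$). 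A direct count gives $v_\mathfrak{n}(\rho_i) \equiv 1 \pmod{p}$.

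Finally I would analyze $\Res_N$ locally. The identification $H^1(G_{\QQ_N}, \Fp(1+i)) \cong K_\mathfrak{n}^\times/K_\mathfrak{n}^{\times p} \cong \Fp \oplus \mathbb{F}_N^\times/(\mathbb{F}_N^\times)^p$ splits a local class into its valuation and residue-unit components. Under this identification, $u_i$ has trivial valuation component (being a unit at $\mathfrak{n}$ modulo $K^{\times p}$) and residue component $\prod_{a=1}^{p-1}(1-f^a)^{a^i}$ (using $\zeta_p \equiv f \pmod{\mathfrak{n}}$), while $\rho_i$ has nonzero valuation component. The two images are linearly independent in $\Fp^2$ exactly when $\prod_{a=1}^{p-1}(1-f^a)^{a^i} \notin (\mathbb{F}_N^\times)^p$: in that case $\Res_N$ is surjective and $\dim H^1_{N^*} = 0$; otherwise the image is $1$-dimensional and $\dim H^1_{N^*} = 1$. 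Combined with the initial lemma, this establishes the theorem.

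The principal obstacle is the careful construction of $\rho_i$: one must use $\Cl(K)\otimes \Fp = 0$ to lift the $\chi^{-i}$-eigenvector of $I_S/pI_S$ to an $S$-unit lying in the correct eigenspace of $\cO_{K,S}^\times/\cO_{K,S}^{\times p}$, while retaining $v_\mathfrak{n}(\rho_i)\not\equiv 0\pmod{p}$. Additional care is needed to track multiplicative versus additive eigenspace conventions, and to note that since $\sum_{a=1}^{p-1} a^i \equiv 0\pmod{p}$ for $i\not\equiv 0\pmod{p-1}$, the element $u_i$ (though not literally a unit in $\cO_K$) represents a genuine class in $\cO_{K,S}^\times/\cO_{K,S}^{\times p}$ equivalent modulo $K^{\times p}$ to a standard cyclotomic unit such as $\prod_a \xi_a^{a^i}$ with $\xi_a = (1-\zeta_p^a)/(1-\zeta_p)$.
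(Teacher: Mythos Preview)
Your proposal is correct and reaches the same conclusion, but it is more elaborate than the paper's argument in one structural respect. After invoking Lemma~\ref{lemma for last theorem}, you work inside the full $2$-dimensional group $H^1(G_{\QQ,S},\Fp(1+i))$ and construct an explicit basis $\{u_i,\rho_i\}$, where $\rho_i$ is an $S$-unit ramified at $N$; you then determine $\dim\ker(\Res_N)$ by tracking both basis elements locally. The paper instead observes the inclusion $H^1_{N^*}(G_\QQ,\Fp(1+i))\subseteq H^1_{p}(G_\QQ,\Fp(1+i))$ and cites \cite[Theorem~2.3.5(2)]{SS19} to see that $H^1_p$ is already $1$-dimensional; thus only a single generator---the cyclotomic-unit class $\theta=u_i$---needs to be produced, and the criterion reduces immediately to whether $\theta$ is a $p$-th power in $\mathbb{F}_N^\times$. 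In effect, passing to $H^1_p$ kills off your class $\rho_i$ from the outset, so the ``principal obstacle'' you flag (the delicate construction of $\rho_i$ in the correct eigenspace with controlled valuation) simply does not arise in the paper's route. Your approach has the minor advantage of being self-contained about the $2$-dimensionality of $H^1(G_{\QQ,S},\Fp(1+i))$, whereas the paper imports the $1$-dimensionality of $H^1_p$ as a black box; but the paper's argument is shorter and sidesteps the eigenspace bookkeeping for $\rho_i$.
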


\begin{proof}
In view of the Lemma~\ref{lemma for last theorem}, it suffices to work with $H^1_{N^*}(G_\QQ, \Fp(i+1))$ which may be viewed as a subset of $H^1_{p}(G_\QQ, \Fp(i+1))$.
By \cite[Theorem~2.3.5(2)]{SS19}, this latter cohomology group is 1-dimensional.
This means $\dim_{\Fp}\left(H^1_{N^*}(G_\QQ, \Fp(i+1))\right)=1$ precisely when a generator $x$ of $H^1_{p}(G_\QQ, \Fp(i+1))$ in fact lies in $H^1_{N^*}(G_\QQ, \Fp(i+1))$.
Recall that $x$ determines an extension $E_x/K$.
The above criterion can be equivalently rephrased as, 
\begin{equation}
\label{criterion for dim 1}
\dim_{\Fp}\left(H^1_{N^*}(G_\QQ, \Fp(i+1))\right)=1 \Longleftrightarrow E_x/K \textrm{ is split at }N.
\end{equation}

By Kummer theory, $E_x = \QQ(\zeta_p, \theta^{1/p})$ for some $\theta\in K^\times$ which is not a $p$-th power and such that $\theta$ is in the $\chi^{-i}$-eigenspace of $\frac{K^\times}{(K^\times)^p}$.
\newline

\emph{Claim:} $\theta = (1-\zeta_p)(1-\zeta_p^2)^{2^i} \ldots (1-\zeta_p^{p-1})^{(p-1)^i}$ is a viable candidate.
\newline

\emph{Justification:} Note that $\theta\in K^\times$. Let us show that $\theta\not\in (K^\times)^p$.
First, observe
\[
p \mid (1^i + 2^i + \ldots (p-1)^i).
\]
As an element of $\frac{K^\times}{(K^\times)^p}$,
\begin{align*}
  \theta & = \left(\frac{1-\zeta_p^2}{1-\zeta_p} \right)^{2^i} \left(\frac{1-\zeta_p^3}{1-\zeta_p} \right)^{3^i} \ldots \left(\frac{1-\zeta_p^{p-1}}{1-\zeta_p} \right)^{(p-1)^i}\\
  & = \zeta_p^\alpha \left( \left(\frac{1-\zeta_p^2}{1-\zeta_p} \right)^{2^i + (p-2)^i} \left(\frac{1-\zeta_p^3}{1-\zeta_p} \right)^{3^i + (p-3)^i} \ldots \left(\frac{1-\zeta_p^{\frac{p-1}{2}}}{1-\zeta_p} \right)^{(\frac{p-1}{2})^i + (\frac{p+1}{2})^i} \right)\\
  & =: \zeta_p^\alpha \theta'.
\end{align*}
Since $\theta'$ is a unit in $\ZZ[\zeta_p]$, it suffices to show that $\theta'$ is not a $p$-th power in $\ZZ[\zeta_p]^\times$.

Set $K^+$ to denote the totally real subfield of $K$, $\ZZ[\zeta_p]^+$ to denote its ring of integer, $C^+$ to denote cyclotomic units, and $h_p^+$ to denote the class number of $K^+$.
Recall that $\ZZ[\zeta_p]^\times = \langle \zeta_p \rangle \cO^\times_{\ZZ[\zeta_p]^+}$; see \cite[Theorem~4.12 and Corollary~4.13]{Was97}.
By \cite[Theorem~8.2]{Was97}
\[
h_p^+ = [\cO^\times_{\ZZ[\zeta_p]^+} : C^+].
\]
In view of the assumption that $p$ is regular, 
\[
p\nmid [\ZZ[\zeta_p]^\times : \langle \zeta_p\rangle C^+].
\]
Next observe that $\langle \zeta_p\rangle C^+$ is generated by the set $B = \{\zeta_p, \gamma_2, \ldots, \gamma_{\frac{p-1}{2}}\}$ where $\gamma_k = \frac{1-\zeta_p^k}{1-\zeta_p}$.
Thus,
\[
\frac{\ZZ[\zeta_p]^\times}{(\ZZ[\zeta_p]^\times)^p} \simeq \frac{\langle \zeta_p\rangle C^+}{(\langle \zeta_p\rangle C^+)^p}.
\]
Both have $\Fp$-dimension equal to $\frac{p-1}{2}$ by the Dirichlet Unit Theorem, which means that the image of the elements of $B$ also form a basis for $\frac{\ZZ[\zeta_p]^\times}{(\ZZ[\zeta_p]^\times)^p}$.
Going back to the description of $\theta'$, note that $p\nmid (2^i + (p-2)^i)$ since $i$ must be even which means that $\theta'$ is not a $p$-th power in $\ZZ[\zeta_p]^\times$, as desired.
Hence, $\theta\not\in (K^\times)^p$ and $\QQ(\zeta_p, \theta^{1/p})/ K$ is a non-trivial extension.

Let $\sigma\in \Gal(K/\QQ)$ such that $\sigma(\zeta_p) = \zeta_p^\kappa$.
Then $\chi(\sigma) = \kappa\in (\ZZ/p\ZZ)^\times$.
Working inside $\frac{K^\times}{(K^\times)^p}$,
\begin{align*}
  \sigma(\theta) = (1-\zeta_p^\kappa)(1-\zeta_p^{2\kappa})^{2i} \ldots (1-\zeta_p^{(p-1)\kappa})^{(p-1)i} = \theta^{\kappa^{-i}} = \theta^{\chi^{-i}(\sigma)}.
\end{align*}
Therefore, $\theta$ lies in the appropriate eigenspace and this completes the proof of the claim.

Theorem~\ref{Gras} implies that $\QQ(\zeta_p, \theta^{1/p})$ is unramified outside $p$ and hence determines a non-zero class of $H^1_p(G_\QQ, \Fp(1+i))$.
In view of the criterion in \eqref{criterion for dim 1}, it suffices to verify if $\theta$ is a $p$-th power in $\QQ_N(\zeta_p) = \QQ_N$.
Moreover, since $\theta\in \ZZ_N^\times$, it is equivalent to show that $\theta$ $p$-th power in $\ZZ_N^\times$.
But,
\[
\frac{\ZZ_N^\times}{(\ZZ_N^\times)^p} \simeq \frac{\mathbb{F}_N^\times}{(\mathbb{F}_N^\times)^p}.
\]
Set $\fn\mid N$.
Then $\theta$ is a $p$-th power $\ZZ_N^\times$ precisely when $\theta$ is a $p$-th power in $\left( \ZZ[\zeta_p]/\fn \right)^\times \simeq \mathbb{F}_N^\times$.
Set $\zeta_p = f$ in $\mathbb{F}_N^\times$, then $f$ has order $p$.
In $\mathbb{F}_N^\times$, it is possible to write
\[
\theta = (1-f)(1-f^2)^{2^i} \ldots (1-f^{p-1})^{(p-1)^i}.
\]
[The argument is independent of the choice of the root of unity $\zeta_p$.]
This completes the proof.
\end{proof}

\begin{cor} \label{lastcor}
Let $p$ be a regular prime and  $i$ vary over even integers in the range $\{1, \ldots, p-2\}$.
Let $f$ be any element of order $p$ in $\mathbb{F}_N^\times$.
For an integer $0 < k < p-1$, define
\[
\mathcal{M}_k = (1-f)(1-f^2)^{2^{k}} \ldots (1-f^{p-1})^{(p-1)^{k}}.
\]
Then
\[
\frac{p-1}{2} + \alpha \leq \rk_p\left(\Cl(L)\right) \leq (p-1)(p-2) - (p-1)\left( \frac{p-1}{2}-1-\alpha\right)
,\]
where $\alpha$ is the number of $i \pmod{p-1}$ which are positive, even, and such that $\mathcal{M}_{p-1-i}$ is a $p$-th power in $\mathbb{F}_N^\times$. 
\end{cor}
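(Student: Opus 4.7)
The plan is to combine the lower bound of Theorem \ref{main result - revised lower bounds}, the upper bound of Theorem \ref{th: better upper bounds}, and the numerical criterion of Theorem \ref{last result} which computes $\dim_{\Fp}(H^1_\Lambda(G_\QQ, \Fp(i)))$. Since $\Fp(j) \cong \Fp(-(p-1-j))$ as Galois modules, Theorem \ref{last result} translates into the statement that for even $j \in \{2, 4, \ldots, p-3\}$, $\dim_{\Fp}(H^1_\Lambda(G_\QQ, \Fp(j))) = 1$ if and only if $\mathcal{M}_{p-1-j}$ is a $p$-th power in $\mathbb{F}_N^\times$. Summing over such $j$,
\[
\sum_{\substack{j=2\\ j \text{ even}}}^{p-3}\dim_{\Fp}\left( H^1_\Lambda(G_\QQ, \Fp(j))\right) = \alpha.
\]

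For the lower bound, I would apply the second inequality of Theorem \ref{main result - revised lower bounds} and drop the non-negative term $\dim_{\Fp}(H^1_{\Sigma}(G_{\QQ}, \Fp(-1)))$, which immediately yields $\rk_p(\Cl(L)) \geq (p-1)/2 + \alpha$ via the displayed identity above.

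For the upper bound, the first step is to observe that $\Sigma \subseteq \Lambda$ as Selmer conditions at every place: the local condition of $\Sigma$ at $N$ requires splitting whereas $\Lambda$ allows the full local cohomology, the condition at $p$ is $0$ versus $\Res^{-1}(\text{unramified})$, and the conditions agree elsewhere. Hence $\dim_{\Fp}(H^1_\Sigma(G_\QQ, \Fp(i))) \leq \dim_{\Fp}(H^1_\Lambda(G_\QQ, \Fp(i)))$. Splitting the sum over $i \in \{2, \ldots, p-2\}$ by parity, Proposition \ref{prop dimFp H1Lambda = 1} makes each of the $(p-3)/2$ odd indices contribute exactly $1$, while the even indices contribute $\alpha$ by the translation above. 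This produces $\sum_{i=2}^{p-2}\dim_{\Fp}(H^1_\Sigma(G_\QQ, \Fp(i))) \leq (p-3)/2 + \alpha$. Substituting into Theorem \ref{th: better upper bounds} gives
\[
\rk_p(\Cl(L)) \leq \frac{3p-5}{2} + (p-2)\left(\frac{p-3}{2} + \alpha\right) + \alpha,
\]
and the identity $3p - 5 + (p-2)(p-3) = (p-1)^2$ simplifies this to $\tfrac{(p-1)^2}{2} + (p-1)\alpha$. A routine algebraic check identifies this quantity with the claimed $(p-1)(p-2) - (p-1)\left(\tfrac{p-1}{2} - 1 - \alpha\right)$. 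All of the serious cohomological input is already packaged into the cited theorems; the only subtleties worth flagging are the shift $i \mapsto p-1-i$ when applying Theorem \ref{last result} and the verification that $\Sigma \subseteq \Lambda$ so that the $(p-2)\sum \dim H^1_\Sigma$ term can be absorbed, so no genuine obstacle is anticipated.
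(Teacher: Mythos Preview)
Your proposal is correct and follows essentially the same route as the paper: both deduce the lower bound directly from Theorem~\ref{main result - revised lower bounds} together with the identification $\alpha=\sum_{j\text{ even}}\dim_{\Fp}H^1_\Lambda(G_\QQ,\Fp(j))$ coming from Theorem~\ref{last result}, and both obtain the upper bound from Theorem~\ref{th: better upper bounds} by using $\Sigma\subseteq\Lambda$ to force $\dim H^1_\Sigma(G_\QQ,\Fp(i))=0$ at the $\tfrac{p-1}{2}-1-\alpha$ even indices where $H^1_\Lambda$ vanishes, and bounding the remaining terms by~$1$. The only cosmetic difference is that you bound the odd-index $\Sigma$-terms via Proposition~\ref{prop dimFp H1Lambda = 1} whereas the paper invokes the known bound $\dim H^1_\Sigma\le 1$ directly; the resulting numerical bound $\tfrac{(p-1)^2}{2}+(p-1)\alpha$ is identical.
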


\begin{proof}
Note that Theorem \ref{last result} implies that $\alpha$ is independent of the choice of $f$. The lower bound follows by combining Theorems~\ref{main result - revised lower bounds} and \ref{last result}.

To obtain the upper bound, first note that the number of $i\pmod{p-1}$ which are positive, even, and such that $\mathcal{M}_{p-1-i}$ is \emph{not} a $p$-th power in $\mathbb{F}_N^\times$ is given by
\[
\frac{p-1}{2}-1-\alpha.
\]
Since $H^1_{\Lambda}(G_{\QQ}, \Fp(j))=0$ forces that $H^1_{\Sigma}(G_{\QQ}, \Fp(j))=0$, an application of Theorem~\ref{th: better upper bounds} implies 
\begin{align*}
\rk_p(\Cl(L)) & \leq  \frac{3p-5}{2}+ (p-2)\left((p-3) - \left( \frac{p-1}{2} - \alpha - 1\right)\right) + \alpha\\
& = \frac{3p-5}{2} + (p-2)(p-3) -(p-2)\left(\frac{p-1}{2}-1-\alpha \right) + \alpha\\
& = p^2 - \frac{7p}{2} + \frac{7}{2}- (p-1)\left( \frac{p-1}{2}-1-\alpha\right) + \left( \frac{p-1}{2}-1-\alpha\right) + \alpha\\
& = p^2 - 3p + 2 - (p-1)\left( \frac{p-1}{2}-1-\alpha\right) \\
& = (p-1)(p-2) - (p-1)\left( \frac{p-1}{2}-1-\alpha\right). \qedhere
\end{align*}

\end{proof}

\bibliographystyle{amsalpha}
\bibliography{references}

\end{document}